\documentclass{article}
\usepackage{graphicx} 
\usepackage{mathrsfs}
\usepackage{tikz-cd}
\usepackage{hyperref}
\usepackage{amsmath}
\usepackage{quiver}
\usepackage{amsthm}
\usepackage{amssymb}
\usepackage{comment}
\usepackage[skip=10pt]{parskip}
\usepackage[left=2cm,right=2cm,top=2cm,bottom=2cm]{geometry}
\usepackage{ytableau}
\usepackage{multicol}
\renewcommand\labelenumi{\upshape(\roman{enumi})}
\renewcommand\theenumi\labelenumi

\newcommand{\N}{\mathbb{N}}

\newcommand{\T}{\text}

\usepackage{circledsteps}
\pgfkeys{/csteps/inner ysep=10pt}
\pgfkeys{/csteps/inner xsep=10pt}
\usepackage{tikz}
\usepackage{xcolor}
\usepackage{bm}
\usepackage{dashbox}

\usepackage{tikz}
\usepackage[dvipsnames]{xcolor}
\usepackage{bm}
\usepackage{mathdots}

\newcommand{\red}{\color{red}}
\newcommand{\blue}{\color{blue}}
\newcommand{\green}{\color{ForestGreen}}
\newcommand{\purple}{\color{Plum}}
\newcommand{\yellow}{\color{BurntOrange}}

\newtheorem{theorem}{Theorem}[section]
\newtheorem{lemma}[theorem]{Lemma}
\newtheorem{corollary}[theorem]{Corollary}
\newtheorem{proposition}[theorem]{Proposition}
\newtheorem{conjecture}[theorem]{Conjecture}
\newtheorem{claim}[theorem]{Claim}

\theoremstyle{definition} 
\newtheorem{example}[theorem]{Example}

\newtheorem{remark}[theorem]{Remark}
\newtheorem{definition}[theorem]{Definition}
\newtheorem{problem}[theorem]{Problem}

\newtheorem{fact}[theorem]{Fact}
\newcommand{\Dora}[1]{{\color{red} \sf $\clubsuit$ Dora: [#1]}}
\newcommand{\Rosa}[1]{{\color{blue} \sf $\clubsuit$ Rosa: [#1]}}

\title{Temperley-Lieb Immanants, Key Positivity, and Demazure Crystals}
\author{Rosa Paten and Dora Woodruff}
\date{}

\begin{document}

\maketitle

\begin{abstract}
    The main goal of this paper is to extend three important Schur positivity results to key positivity, replacing all Schur polynomials in relevant expressions with \textit{flagged} Schur polynomials. Namely, we first show that the Temperley-Lieb immanants of (many) flagged Jacobi-Trudi matrices are key positive. Using this result, we give a combinatorial rule for the key expansion of (most) products of flagged skew Schur polynomials, and also give a log concavity result inspired by that of Lam-Postnikov-Pylyavskyy. 

    The main tools in our proofs are Demazure crystals, and the recently defined shuffle tableaux of Nguyen and Pylyavskyy. In order to prove our main results, we must develop a new characterization of Demazure crystals, which builds off of prior work of Assaf and Gonzalez. This characterization may be useful in other contexts. 
\end{abstract}

\section{Introduction}\label{sec:intro}

\textit{Schur polynomials} $s_{\lambda}(x_1 \dots x_n)$ form a ubiquitous family of symmetric polynomials. Generalizing Schur polynomials are the \textit{flagged Schur polynomials}, denoted $s_{\lambda}^{\vec{b}}(x_1 \dots x_n)$, certain truncations of Schur polynomials which are no longer necessarily symmetric. Flagged Schur polynomials are themselves contained within \textit{key polynomials}. Key polynomials form a basis for $\mathbb{Z}[x_1, x_2, \dots]$ and arise as characters of certain submodules generated by extremal weight spaces under the Borel action of a Lie algebra \cite{demazure1974nouvelle}. 

It is therefore interesting to ask which important properties of Schur polynomials are also enjoyed by flagged Schur polynomials, and more generally by key polynomials. For example, the celebrated \textit{Littlewood-Richardson rule} \cite{littlewood1934group} tells us that the product of two Schur polynomials expands as a positive sum of Schur polynomials. This property does not extend to key polynomials (that is, the product of two key polynomials need not be positive in the key basis. See \cite{monksrulekey} for example). But, as we will see, this positivity result does extend to a large class of \textit{skew flagged Schur polynomials} (although not all). 

Schur polynomials can be expressed via the \textit{Jacobi-Trudi identities} \cite{gessel1989determinants}; Wachs \cite{wachs1985flagged} showed that flagged Schur polynomials satisfy similar such identities. A second important Schur positivity result is that the \textit{Temperley-Lieb immanants} of generalized Jacobi-Trudi matrices are always Schur positive \cite{rhoades2005temperley} \cite{rhoades2006kazhdan}. This fact has been used to prove several intriguing deep Schur positivity results (for example, see \cite{lam2007schur} and \cite{okounkov1997log}). Inspired by these results, we will also show that the Temperley-Lieb immanants of (many) \textit{flagged} Jacobi-Trudi matrices also expand nonnegatively in the key basis. 

The overall goal of this paper is threefold: first, to extend several Schur positivity results to key positivity, second, to push further the characterizations of Demazure crystals recently developed by Assaf and Gonzalez \cite{extremalidealprincipal}, and third, to demonstrate the utility of Nguyen and Pylyavskyy's shuffle tableaux \cite{nguyen2025temperley} as a model for Littlewood-Richardson coefficients. 

\subsection{Main results and organization}

Our first result is that the Temperley-Lieb immanants of \textit{most} flagged Jacobi-Trudi matrices are key positive. In what follows, a partition $\lambda$ is called \textit{strict} if $\lambda_i > \lambda_{i+1}$ for all $i \leq n$. 
\vspace{1em}

\begin{theorem}\label{thm:TL}
    Let $\mu$ be a partition, $\lambda$ be a \textit{strict} partition, and $\vec{b}$ be a nondecreasing flag. Then, all Temperley-Lieb immanants of the flagged Jacobi-Trudi matrix $A_{\lambda, \mu}^{\vec{b}}$ are key nonnegative. 
\end{theorem}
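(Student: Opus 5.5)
The plan is to realize each Temperley-Lieb immanant $\mathrm{Imm}_\tau(A_{\lambda,\mu}^{\vec{b}})$ as the character of a subset of a crystal, and then show that this subset is a disjoint union of Demazure crystals; since characters of Demazure crystals are key polynomials, this gives key nonnegativity.

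\textbf{Step 1: a combinatorial model.} Following Nguyen and Pylyavskyy, expand $\mathrm{Imm}_\tau(A_{\lambda,\mu}^{\vec{b}})$ via the Lindström--Gessel--Viennot / Rhoades--Skandera route. The entries of $A_{\lambda,\mu}^{\vec{b}}$ are complete homogeneous polynomials $h_{\lambda_i-\mu_j-i+j}(x_1,\dots,x_{b_i})$, which are generating functions of lattice paths whose heights are bounded by the flag. Families of such paths are then grouped according to the Temperley-Lieb element determined by their crossing pattern. This expresses $\mathrm{Imm}_\tau$ as a positive generating function $\sum_{T} x^{\mathrm{wt}(T)}$ over \emph{flagged} shuffle tableaux of type $\tau$, meaning shuffle tableaux whose entries in row $i$ are at most $b_i$. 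The unflagged statement is exactly the theorem of Nguyen--Pylyavskyy. The flag only restricts the path endpoints, so the same sign-reversing involution should work once we check that it preserves the flag condition. Here we use that $\vec{b}$ is nondecreasing: swapping path tails between rows $i<j$ never pushes an entry above its allowed bound.

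\textbf{Step 2: crystal structure.} Equip the set of all shuffle tableaux of type $\tau$ (without the flag) with the crystal operators $e_i, f_i$ induced by a reading word. By Nguyen--Pylyavskyy's Schur positivity, this set is a disjoint union of normal $\mathfrak{gl}_n$-crystals $B(\nu)$. The flagged shuffle tableaux form a subset $S$ of this crystal. It then remains to show that $S$ intersects each connected component in a Demazure crystal $B_w(\nu)$.

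\textbf{Step 3: Demazure characterization (main obstacle).} This is where the new characterization built on Assaf--Gonzalez enters. I would verify the following properties of $S$:
\begin{itemize}
\item $S$ is closed under all raising operators $e_i$, using that raising only decreases entries, so flag bounds are preserved.
\item $S$ contains the highest weight element of each component it meets.
\item In each component, $S$ satisfies the extremal-string condition. For every $i$-string, either the whole string lies in $S$ or only its head does. Equivalently, $S\cap B(\nu)$ is generated by a single extremal element under the Demazure operators $\mathfrak{D}_i$.
\end{itemize}
The difficulty lies in the last property. A flagged filling can fail to be a Demazure set when a lowering operator $f_i$ changes an $i$ to $i+1$ in a row whose flag is exactly $i$ while the string continues. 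Strictness of $\lambda$ is what I would exploit here. It guarantees that distinct rows of the shuffle tableau start in distinct columns, which should force the flag-critical letters to occur at the end of the reading word in a controlled order. As a result, each string is either fully allowed or truncated at its head. I would first attempt to prove that the intersection of $S$ with a component is an ``extremal ideal'' in the sense of Assaf--Gonzalez, by exhibiting the lowest element as $f$-extremal with respect to a reduced word built from $\vec{b}$, and then invoke their principality criterion.

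Given Steps 1--3, the character of $S$ is $\sum_\nu \kappa_{w_\nu\nu}$, which is key nonnegative.
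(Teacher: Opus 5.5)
Your architecture matches the paper's: view flagged shuffle tableaux as a subset of the Nguyen--Pylyavskyy crystal and show that each component meets it in a Demazure crystal. But Step~3, where essentially all of the work lies, has a genuine gap. The properties you list are closure under the $e_i$, containing the highest weight elements, and the $i$-string condition. These are \emph{not} equivalent to $S\cap B(\nu)$ being a single Demazure crystal; they only give a possibly overlapping union of Demazure crystals, whose character need not be key nonnegative. For instance, in $\mathcal{B}(2,1)$ for $\mathfrak{gl}_3$ the set $\mathcal{B}_{s_1}\cup\mathcal{B}_{s_2}=\{b,f_1b,f_2b\}$ satisfies all three, yet its character is $x_1^2x_2+x_1x_2^2+x_1^2x_3=\kappa_{(1,2,0)}+\kappa_{(2,0,1)}-\kappa_{(2,1,0)}$. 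Note also that none of these properties needs strictness; the paper's extremality lemma uses only that $\vec b$ is nondecreasing.

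What is missing is the ideal property and, above all, principality: each component must contain an extremal element whose weight is dominance-below that of every other extremal element in the component. Your closing sentence proposes to exhibit such an element and then ``invoke their principality criterion.'' But exhibiting it \emph{is} the principality criterion, and you give no mechanism for showing that every extremal element of the component lies above your candidate. Nor can a reduced word determined by $\vec b$ alone do the job, since the Demazure permutation varies with the component. Already for the flagged skew shape $(3,2,1)/(2,1)$ with $\vec b=(1,2,2)$ one gets $x_1(x_1+x_2)^2=\kappa_{(3,0,0)}+\kappa_{(2,1,0)}+\kappa_{(1,2,0)}$, where the two components of highest weight $(2,1,0)$ require $w=e$ and $w=s_1$ respectively.

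The paper fills this gap with its new local characterization, Theorem~\ref{thm:principalmoreusable}. A connected union of Demazure crystals is a single one iff it has the extension and gluing properties, which certify that the greedy candidate of Proposition~\ref{rmk:algorithm} is lowest. Gluing, together with two easy closure conditions, also gives the ideal property (Proposition~\ref{prop:idealdecomp}). Verifying extension and gluing for flagged shuffle tableaux is the long tableau-chasing of Section~\ref{sec:principality} (Lemmas~\ref{lemma:rightunpairs1} and~\ref{lemma:rightunpairs2}, the auxiliary tableau $y$, the inductive staircase in the gluing proof). This is where strictness enters. A ``bad'' entry $i$ in a row with flag $i$ must end its row (Lemma~\ref{lemma:easyendofrow}), and strictness forbids any entry directly beneath it, which produces the decisive contradictions.

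So your account of strictness is off. In the paper's convention $A_{\lambda,\mu}=(h_{\lambda_i-\mu_j})$, it means the rows of the shuffle shape \emph{end} in distinct columns (their starts need not differ), and it plays no role in the string condition. Strictness is also silently needed in your Step~2. To view flagged tableaux of type $\tau$ inside the unflagged type-$\tau$ crystal, the type read off from the flagged path family must agree with the unflagged type. The paper observes that this holds for strict $\lambda$, since extending the flagged paths vertically neither creates nor destroys crossings. For non-strict $\lambda$, the crystal operators need not preserve the flagged type.
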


We note that the strictness of the partition $\lambda$ in Theorem \ref{thm:TL} is a necessary assumption; for $\lambda$ nonstrict, it is not hard to find counterexamples. (This is a consequence of the fact that \textit{Demazure subsets} of crystals are quite subtle; see subsection \ref{subsection:demazurecrystals} for some further discussion). 

\vspace{1em}
\begin{example}[Counterexample for nonstrict $\lambda$]
    Let us take $\lambda = (7,7,7,5)$, $\mu = (6,4,3,2)$, and $\vec{b} = (1,2,2,3)$. Here, $\lambda$ is not strict, so Theorem \ref{thm:TL} should not apply. Indeed, we can compute that the Temperley-Lieb immanant corresponding to the matching depicted below is 
    \[k_{3,6,2} + k_{3,7,1} + k_{3,8} + k_{4,5,2} + k_{4,6,1} + k_{4,7} - k_{5,4,2} - k_{6,3,2} - k_{6,4,1} - k_{7,3,1} - k_{7, 4} - k_{8,3}\]
    which is far from being key nonnegative. However, when we change $\lambda$ to $(8,7,6,5)$ so that it is now strict, the immanant indexed by the same diagram becomes
    \[k_{5,4,2} + k_{5,5,1} + k_{5,6} + k_{6,3,2} + k_{6,4,1} + k_{7,3,1} + k_{7,4} + k_{8,3}\]
    Magically, this expression is now key positive!
    (This example was computed with the aid of SageMath \cite{sagemath}). 
    
    \begin{center}
        \begin{tikzcd}
	{\bullet L1} &&& {\bullet R1} \\
	{\bullet L2} &&& {\bullet R2} \\
	{\bullet L3} &&& {\bullet R3} \\
	{\bullet L4} &&& {\bullet R4}
	\arrow[curve={height=-30pt}, no head, from=1-1, to=2-1]
	\arrow[curve={height=30pt}, no head, from=1-4, to=2-4]
	\arrow[curve={height=-30pt}, no head, from=3-1, to=4-1]
	\arrow[curve={height=30pt}, no head, from=3-4, to=4-4]
\end{tikzcd}
    \end{center}
\end{example}

Theorem \ref{thm:TL} has several nice applications. We use it to prove that (most) products of flagged skew Schur polynomials are key positive and give a combinatorial rule for their expansion, building on a theorem of Reiner-Shimozono \cite{reiner1995key}. 
\vspace{1em}

\begin{theorem}\label{thm:flaggedLR}
    Let $\lambda/\mu$ and $\nu/\rho$ be skew shapes satisfying the same two properties:
    \begin{enumerate}
        \item For all $i$, $\lambda_i \neq \nu_i$
        \item $\lambda$ and $\nu$ are \textit{interlacing}: that is, $\lambda_i \geq \nu_{i+1}$ and $\nu_i \geq \lambda_{i+1}$. 
    \end{enumerate}

    Then, $s_{\lambda/\mu}^{\vec{b}} s_{\nu/\rho}^{\vec{b}}$ is key positive. Furthermore, there is a relatively simple combinatorial algorithm for its key expansion. 
\end{theorem}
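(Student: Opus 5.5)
The approach is to realize the product $s_{\lambda/\mu}^{\vec{b}} s_{\nu/\rho}^{\vec{b}}$ as a single Temperley-Lieb immanant, or a nonnegative combination of such immanants, of one flagged Jacobi-Trudi matrix $A^{\vec{b}}_{\alpha,\beta}$ with $\alpha$ strict, and then apply Theorem \ref{thm:TL}. This follows the classical argument of Lam-Postnikov-Pylyavskyy and Rhoades-Skandera in the Schur case. There, a product of two skew Schur functions is written as a Temperley-Lieb immanant of the Jacobi-Trudi matrix built from the ``merged'' shapes $\lambda\cup\nu$ and $\mu\cup\rho$. Concretely, one interleaves the rows of $\lambda$ and $\nu$ in sorted order, and likewise for $\mu,\rho$. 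The Lindström-Gessel-Viennot path families split into two noncrossing subfamilies, one for each factor, and one sums the immanants over the matchings compatible with that colouring of sources and sinks.

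First, I would form the shape $\alpha$ by sorting the multiset $\{\lambda_i + \text{shift}\}\cup\{\nu_i+\text{shift}\}$ so that source $i$ of $\lambda$ and source $i$ of $\nu$ are adjacent. The interlacing hypothesis (ii) is exactly what makes the sorted order alternate, taking $\lambda_1,\nu_1,\lambda_2,\nu_2,\dots$ or its swap pairwise. As a result, the flag assigned to each merged row is the common $b_i$, and the merged flag stays nondecreasing. Hypothesis (i), $\lambda_i\neq\nu_i$, ensures that after the standard shift the merged sequence has no ties, so $\alpha$ is strict. I would do the same for $\beta$ from $\mu,\rho$, which needs no strictness.

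Second, I would invoke the flagged Lindström-Gessel-Viennot setup behind Wachs' flagged Jacobi-Trudi identity. The product of two flagged skew Schur polynomials counts pairs of noncrossing path families with the same flag endpoints. By the Rhoades-Skandera/LPP decomposition, this equals $\sum_{\tau} \mathrm{Imm}_\tau(A^{\vec{b}}_{\alpha,\beta})$, summed over Temperley-Lieb matchings $\tau$ compatible with the two-colouring of the $2n$ boundary points. The step transfers because the flag only restricts the allowed endpoints of paths and does not affect the crossing/uncrossing involution.

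Theorem \ref{thm:TL} then gives key positivity of each summand, and hence of the product. For the combinatorial rule, I would combine the key expansion coming from the proof of Theorem \ref{thm:TL}, via shuffle tableaux and the Demazure crystal characterization, with Reiner-Shimozono's rule. This expresses the coefficients as counts of shuffle tableaux whose reading word lies in a Demazure subcrystal, with a prescribed key.

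The main obstacle is the merging step. I must verify precisely that the flag vector for the merged matrix is consistent: each path of the $\lambda$ family and the matching $\nu$ path must truly carry the same bound $b_i$ after interleaving. The merged row order must also produce a genuinely nondecreasing flag. Without interlacing this fails, which is presumably why (ii) is required. I would first check small cases, such as $n=2$, by hand, to fix the shift convention and confirm strictness.
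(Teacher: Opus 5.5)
Your proposal is correct and is essentially the paper's proof. You interleave the rows of $\lambda,\nu$ (and of $\mu,\rho$) into a single flagged Jacobi--Trudi matrix with doubled flag $(b_1,b_1,b_2,b_2,\dots)$, use (i) and (ii) to get a strict row sequence and a nondecreasing flag, expand $s^{\vec{b}}_{\lambda/\mu}s^{\vec{b}}_{\nu/\rho}=\Delta_{I,J}\Delta_{\overline{I},\overline{J}}$ as $\sum_{\tau\in\Theta(I,J)}\mathrm{Imm}_\tau$, and apply Theorem~\ref{thm:TL}. Two small refinements: the Rhoades--Skandera complementary-minor expansion is a polynomial identity in the matrix entries, so it holds for the flagged matrix with no path-level argument (your claim that flags do not interact with crossings is actually delicate, cf.\ the paper's remark on TL types of flagged shuffle tableaux); and your choice to sort the column sequence is genuinely needed, since the paper's displayed $\mu'$ fails to be weakly decreasing for, e.g., $\mu=(5,5)$, $\rho=(0,0)$.
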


Interestingly, for arbitrary $\lambda$ and $\nu$, this product also need not be key positive. Without these hypotheses, we have found several counterexamples. 

The starting point of this work was to find an analogue of Lam-Postnikov-Pylyavskyy's \cite{lam2007schur} Schur log concavity conjecture for key positivity; an important case of this conjecture follows directly from Theorem \ref{thm:TL}.
\vspace{1em}

\begin{corollary}\label{cor:logconcave}
    Let $\lambda/\mu$ and $\nu/\rho$ be skew shapes satisfying the same two properties as in Theorem \ref{thm:flaggedLR}.

    Then, the difference 

    \[s^{\vec{b}}_{\lambda/\mu \vee \nu/\rho} s^{\vec{b}}_{\lambda/\mu \wedge \nu/\rho} - s^{\vec{b}}_{\lambda/\mu}s^{\vec{b}}_{\nu/\rho}\]

    is key nonnegative. 

\end{corollary}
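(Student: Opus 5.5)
Following the Lam--Postnikov--Pylyavskyy strategy, we realize both products in Corollary \ref{cor:logconcave} as sums of Temperley--Lieb immanants of one flagged Jacobi--Trudi matrix. Then Theorem \ref{thm:TL} shows the difference is key nonnegative.

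\textbf{Setting up a single matrix.} Set $n=\ell(\lambda)$ (padding with zeros so that all four partitions have $n$ parts). Let $\alpha = \lambda\vee\nu$, $\beta=\lambda\wedge\nu$ (componentwise max and min), and similarly $\gamma=\mu\vee\rho$, $\delta=\mu\wedge\rho$, so that $\lambda/\mu\vee\nu/\rho=\alpha/\gamma$ and $\lambda/\mu\wedge\nu/\rho=\beta/\delta$. Using Wachs' flagged Jacobi--Trudi identity, each of the four flagged skew Schur polynomials is a determinant $\det\bigl(h_{\lambda_i-\mu_j-i+j}(x_1,\dots,x_{b_i})\bigr)$. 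Following LPP, we interleave the rows and columns of the two $n\times n$ matrices into a $2n\times 2n$ flagged Jacobi--Trudi matrix $A=A^{\vec{b}'}_{\Lambda,M}$. Here $\Lambda$ is the partition whose parts, after the standard shift, are the sorted entries $\{\lambda_i-i\}\cup\{\nu_i-i\}$. The flag $\vec b'$ repeats each $b_i$ twice and is therefore nondecreasing. The complementary minors of $A$ on a row/column set $(I,J)$ and its complement are then exactly the products $s^{\vec b}_{\lambda/\mu}s^{\vec b}_{\nu/\rho}$ and $s^{\vec b}_{\alpha/\gamma}s^{\vec b}_{\beta/\delta}$, up to the appropriate choice of $(I,J)$. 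The hypotheses now enter as follows. Interlacing guarantees that the interleaved sequence of shifted row indices alternates between $\lambda$ and $\nu$, so that $\Lambda$ is a genuine partition. The condition $\lambda_i\neq\nu_i$ ensures no two shifted entries coincide, so $\Lambda$ is \emph{strict}, as Theorem \ref{thm:TL} requires. We should double-check that the flag indices attached to interleaved rows remain consistent. This works because the two rows coming from index $i$ in $\lambda$ and in $\nu$ both carry flag $b_i$.

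\textbf{Expanding the products.} By the Rhoades--Skandera / LPP identity, a product of complementary minors equals
\[\Delta_{I,J}(A)\,\Delta_{\bar I,\bar J}(A)=\sum_{\tau} \mathrm{Imm}_\tau(A),\]
where $\tau$ runs over Temperley--Lieb noncrossing matchings compatible with the coloring determined by $(I,J)$. This identity is purely algebraic in the entries of $A$, so it holds verbatim for flagged matrices. Let $T_1$ be the set of matchings compatible with the coloring for $(\lambda/\mu,\nu/\rho)$, and $T_2$ the set for $(\alpha/\gamma,\beta/\delta)$. The combinatorial lemma of LPP states that $T_1\subseteq T_2$. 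The reason is that the max/min coloring is the ``most alternating'' one, so every matching compatible with the first coloring is compatible with the second. Hence
\[s^{\vec b}_{\alpha/\gamma}s^{\vec b}_{\beta/\delta}-s^{\vec b}_{\lambda/\mu}s^{\vec b}_{\nu/\rho}=\sum_{\tau\in T_2\setminus T_1}\mathrm{Imm}_\tau(A),\]
and every term on the right is key nonnegative by Theorem \ref{thm:TL}.

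\textbf{Main obstacle.} The delicate step is the bookkeeping in the first paragraph: verifying that the interleaving produces a legitimate flagged Jacobi--Trudi matrix in the paper's sense. Concretely, three things must be checked: $\Lambda$ is strict, the combined flag is nondecreasing and matches rows correctly, and the column shape $M$ is a partition. Once this is done, the remaining steps (the TL expansion and the containment $T_1\subseteq T_2$) transfer directly from LPP. We would first verify the interleaving on a small example such as two-row shapes, and then argue in general by induction on $n$ using interlacing.
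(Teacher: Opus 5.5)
Your proposal is correct and matches the paper's argument. The paper builds the same $2n\times 2n$ flagged Jacobi--Trudi matrix with doubled flag $(b_1,b_1,b_2,b_2,\dots)$ and max/min-interleaved rows (as in Theorem \ref{thm:flaggedLR}), uses the Lam--Postnikov--Pylyavskyy containment of compatible Temperley--Lieb matchings, and applies Theorem \ref{thm:TL} to each immanant in the difference. One correction: sorting already makes $\Lambda$ and $M$ weakly decreasing, so interlacing is not needed for that; it is needed to keep the two index-$i$ rows adjacent in sorted order, which makes the doubled flag nondecreasing and correctly attached, and which, together with $\lambda_i\neq\nu_i$, makes all shifted row parameters distinct.
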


In our attempt to prove Theorem \ref{thm:TL}, we also further develop Assaf and Gonzalez's recent characterization of Demazure crystals, providing a more local and user-friendly set of axioms.

The paper is organized as follows. In Section \ref{sec:prelims}, we describe important background, focusing particularly on crystals, shuffle tableaux, and Demazure subsets. In Section \ref{sec:characterization}, we state our characterization of Demazure crystals and demonstrate our axioms' usability by applying them to flagged skew tableaux. (We note that a couple of technical lemmas that apply to both usual skew tableaux and to shuffle tableaux are delayed to section \ref{sec:principality}). In Sections \ref{sec:extremal+ideal} and \ref{sec:principality}, we prove that our Demazure crystal axioms hold for flagged shuffle tableaux (subject to the strictness condition in Theorem \ref{thm:TL}), completing the proof of Theorem \ref{thm:TL}. In Section \ref{sec:applications}, we prove our applications of Theorem \ref{thm:TL}. Finally, we give a full proof of our characterization of Demazure crystals (based on an approach of Assaf and Gonzalez \cite{extremalidealprincipal}) in an appendix \ref{sec:appendix}. 

\subsection{Acknowledgements}
    This project was started through the MIT SPUR program and continued through an MIT UROP. We  thank the organizers of SPUR, Roman Bezrukavnikov and Jonathan Bloom, for their encouragement and support. We thank Son Nguyen, Pasha Pylyavskyy, and Brendan Rhoades for interesting conversations about their work with Temperley-Lieb immanants. Finally, we especially thank Sami Assaf and Nicolle Gonzalez for their helpful correspondences on Demazure crystals.

\section{Preliminaries}\label{sec:prelims}

\subsection{Skew Schur polynomials and tableau conventions}

A partition $\lambda$, represented pictorially by a \textit{Young diagram}, is just a nondecreasing sequence of positive integers $(\lambda_1, \lambda_2 \dots \lambda_n)$. A \textit{skew diagram} is indexed by a pair of partitions $\lambda/\mu$, where $\lambda_i \geq \mu_i$ for all $i$. The diagram is obtained by removing the boxes of $\mu$ from the boxes of $\lambda$. 
\vspace{1em}

\begin{example}
Below is the skew diagram $(4, 2, 2)/ (2, 1, 0)$: 
\begin{center}

    \begin{ytableau}
        \none & \none & *(yellow) & *(yellow) \\
        \none & *(yellow) \\
        *(yellow) & *(yellow)
    \end{ytableau}
    \end{center}
\end{example}

A \textit{semistandard tableau} on the diagram $\lambda/ \mu$ is a filling of the boxes of the diagram with positive integers, such that labels weakly increase left to right and strictly increase from top to bottom. The \textit{weight} $\text{wt}(T)$ of a tableau $T$ is the vector $(w_1, w_2 \dots)$, where $w_i$ is the numbers of entries $i$ in $T$. 

\begin{example}
Below is a skew tableau of shape $\lambda/\mu$ with weight $(1, 3, 1)$. 

    \begin{center}

    \begin{ytableau}
        \none & \none & *(yellow) $1$ & *(yellow) $2$ \\
        \none & *(yellow) $2$ \\
        *(yellow) $2$ & *(yellow) $3$
    \end{ytableau}
    \end{center}
\end{example}

The set of semistandard tableaux of shape $\lambda/\mu$ is denoted $\text{SSYT}(\lambda/\mu)$ (or $\text{SSYT}(\lambda)$ if $\mu$ is empty). The skew Schur polynomial $s_{\lambda/\mu}$ is defined as 

\[s_{\lambda/\mu}(x_1 \dots x_n) = \sum_{T \in \text{SSYT}(\lambda/\mu)} \bf{x}^{\text{wt}(T)}\]

\subsection{Flagged Schur polynomials}

A \textit{flag} $\vec{b}$ is a vector $(b_1 \dots b_n)$ of nondecreasing positive integers. We say that a tableau $T$ of shape $\lambda$ is \textit{flagged by $\vec{b}$} if every entry in row $i$ of $T$ is bounded above by $b_i$.  
\vspace{1em}

\begin{definition}
    For a flag $\vec{b}$, the \textit{flagged Schur polynomial} $s_{\lambda}^{\vec{b}}$ is given by 
    \[\sum_{T \in \text{SSYT}(\lambda) \text{ flagged by }\vec{b}} \text{wt}(T)\]
\end{definition}

All the same definitions apply to a skew shape $\lambda/\mu$ rather than a straight shape. Flagged Schur polynomials generalize Schur polynomials; to recover $s_{\lambda}$, just let $\vec{b} = (n, n, n\dots)$. While Schur polynomials $s_{\lambda}$ are always symmetric in $x_1 \dots x_{|\lambda|}$, flagged Schur polynomials need not be.

\subsection{Key polynomials}

Key polynomials are indexed by \textit{weak compositions} $\alpha = (\alpha_1, \alpha_2 \dots)$. Alternatively, they are indexed by a pair $(\lambda, w)$, where $\lambda$ is a (nonstrict) partition and $w$ is a permutation (although there may be multiple possible choices for $w$). When $w$ is the longest permutation $w_0$, then $\kappa_{\lambda, w_0}$ is the Schur polynomial $s_{\lambda}$. When $w$ is the identity on the other hand, $\kappa_{\lambda, w}$ is the single monomial $x^{\lambda}$. In general, key polynomials can be regarded as certain truncations of Schur polynomials. Every flagged Schur polynomial is also a key polynomial. 

Importantly, as $\alpha$ ranges over all compositions, $\{\kappa_{\alpha}\}$ forms a basis for the polynomial ring $\mathbb{Z}[x_1, x_2 \dots]$. Since flagged Schur polynomials are not symmetric polynomials, it does not usually make sense to ask about the Schur positivity of expressions involving flagged Schur polynomials. \textit{Key positivity} turns out to be the right notion for us instead. 

There are many combinatorial models for key polynomials (in terms of Kohnert diagrams \cite{kohnert},  tableaux \cite{keytableaux}, and more). However, we will always be interpreting them as \textit{characters of Demazure crystals}; see subsection \ref{subsection:demazurecrystals}.   

\subsection{Jacobi-Trudi matrices}

A \textit{generalized Jacobi-Trudi matrix} $A_{\lambda, \mu}$ is an $n \times n$ matrix of the form 

\[(h_{\lambda_i-\mu_j})^n_{i, j = 1}\]

for partitions $\lambda, \mu$. Here, $h_{\lambda_i- \mu_j}$ is a homogeneous symmetric function.

The \textit{Jacobi-Trudi identity} expresses skew Schur polynomials as minors of generalized Jacobi-Trudi matrices. Specifically, if $\Delta_{I,J}(M)$ denotes the minor with row set $I$ and column set $J$, we have 
\[s_{\lambda/\mu} = \Delta_{I, J}(A_{\lambda, \mu})\]
Here, the associated row and column sets are given by $I = \{\mu_k + 1 < \mu_{k-1} + 2 < \dots < \mu_1 + k\}$ and $J = \{\lambda_k+1 < \lambda_{k-1}+2 \dots < \lambda_1+k\}$. 

Wachs showed that we can similarly express flagged Schur polynomials $s_{\lambda/\mu}^{\vec{b}}$ as minors of a \textit{flagged} Jacobi-Trudi matrix $A^{\vec{b}}_{\lambda, \mu}$. The flagged Jacobi-Trudi matrix $A^{\vec{b}}_{\lambda, \mu}$ is obtained by replacing each $h_{\lambda_i - \mu_j}$ in row $i$ with a homogeneous symmetric polynomial \textit{in $b_i$ variables} \cite{wachs1985flagged}.

\subsection{Temperley-Lieb Immanants}

As mentioned above, the determinant of any (generalized) Jacobi-Trudi matrix is a skew Schur polynomial, hence Schur positive. Determinants are examples of elements of Lusztig's \textit{dual canonical bases} in type $A$ \cite{lusztig1990canonical}. Rhoades-Skandera \cite{rhoades2006kazhdan} showed that \textit{all} elements of the dual canonical bases evaluate to Schur nonnegative expressions on generalized Jacobi-Trudi matrices. Their proof relies on deep earlier work of Haiman \cite{haiman1993hecke}. 

In general, there is no known combinatorial proof of this fact. But for a subset of dual canonical bases known as \textit{Temperley-Lieb immanants}, introduced by Rhoades-Skandera \cite{rhoades2005temperley}, there is a combinatorial proof by Nguyen-Pylyavskyy \cite{nguyen2025temperley}. These Temperley-Lieb immanants have been previously used to prove intriguing Schur positivity facts, such as a log concavity inequality of Lam-Postnikov-Pylyavskyy \cite{lam2007schur}. 

We do not define Temperley-Lieb immanants here, although we will briefly describe the combinatorial rule given by Nguyen-Pylyavskyy in the next section. We will only recall that an immanant is generally a function $I$ evaluated on matrices of the form 

\[I(M) = \sum_{w \in S_n}  f(w) \prod_{i=1}^n m_{i, w(i)}\]

for some function $f: S_n \to \mathbb{C}$. For the determinant of a matrix, $f$ is simply the sign function. Meanwhile, the permanent of a matrix is given by taking $f$ to be constant at $1$. A certain class of functions $f: S_n \to \mathbb{C}$ defined via the \textit{Temperley-Lieb algebra} give us Temperley-Lieb immanants.

\subsection{Shuffle Tableaux}

Given two skew diagrams $\lambda/\mu$ and $\nu/\rho$, their \textit{shuffle diagram} $(\lambda/\mu)\circledast(\nu/\rho)$ is the diagram we obtain by `interlacing the rows of the two shapes' (informally. For more details, see \cite{nguyen2025temperley}).  
\vspace{1em}
\begin{example}
    Let $\lambda = (3,2)$ and $\mu = (2,1)$. The shuffle diagram is 
    \begin{center}
   \begin{ytableau}
       *(red) & \none & *(red) & \none & *(red) \\
  \none  & *(blue) & \none & *(blue)\\
  *(red) & \none & *(red)\\
  \none & *(blue)

\end{ytableau}
\end{center}

  The shaded boxes are boxes of the tableaux; the unshaded area is all `empty.' We only label the shaded squares when writing down tableaux of shape $(\lambda/\mu)\circledast(\nu/\rho)$.  

\end{example}

Shuffle tableaux were first defined by Nguyen-Pylyavskyy \cite{nguyen2025temperley} in order to study Temperley-Lieb immanants. A shuffle tableau is just a labeling of the boxes of $(\lambda/\mu)\circledast(\nu/\rho)$ satisfying the usual rules - labels increase weakly along rows and strictly down columns. So, a shuffle tableau simply corresponds to a pair of tableaux, one of shape $\lambda/\mu$, one of shape $\nu/\rho$. 

For Nguyen and Pylyavskyy, the magic of shuffle tableaux was that they can coherently be assigned `Temperley-Lieb types.' Using this idea, they give a combinatorial rule for Schur expansions of Temperley-Lieb immanants. For us, the magic of shuffle tableaux is also that they `behave well under flagging.' 

\subsection{Permutation conventions}

Throughout this paper, we will use $s_i$ to denote the simple transposition $(i, i+1)$. A \textit{reduced word} for a permutation $w$ is a factorization $w = s_{i_1}s_{i_2} \dots s_{i_k}$ of minimal length. If $\vec{v}$ is a vector of size $n$, then $s_i(\vec{v})$ is obtained by swapping the $i$th and $i+1$st entries of $\vec{v}$, and $w(\vec{v})$ is defined similarly for an arbitrary permutation $w$. 
\vspace{1em}

\begin{definition}\label{def:bruhatorder}
    Let $u, w$ be permutations. Then, $u \leq w$ in the \textit{Bruhat order} if and only if there exist reduced words $u = s_{i_1} s_{i_2} \dots s_{i_k}$ and $w = s_{j_1} s_{j_2} \dots s_{j_m}$ such that the word for $u$ is a subexpression of the word for $w$. 
\end{definition}
\vspace{1em}
\begin{remark}
    In fact, $u\leq w$ in the Bruhat order if and only if, \textit{for any} reduced word for $w$, there exists a reduced word for $u$ which is a subexpression of the reduced word for $w$.
\end{remark}

When we consider permutations $w(\vec{v})$ of vectors, the Bruhat order translates to the \textit{dominance order}. Recall that, if $\vec{v}$ and $\vec{w}$ are vectors, we say $\vec{v} \leq \vec{w}$ in the dominance order if, for all $i$, $v_1 + v_2 \dots + v_i \leq w_1 + w_2 \dots w_i$. That is:
\vspace{1em}

\begin{fact}
    Let $\vec{v}$ be a partition vector (that is, $v_i \geq v_{i+1}$ for all $i$) and $u, w$ be permutations. Then, $u \leq w$ in the Bruhat order if and only if $u(\vec{v}) \leq w(\vec{v})$ in the dominance order. 
\end{fact}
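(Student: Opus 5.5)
The plan is to prove the two implications separately, after fixing conventions. I read $w(\vec v)$ as the vector whose entry in position $w(k)$ is $v_k$. I also orient the order so that the identity sends the partition $\vec v$ to itself, which is the dominant extreme, while $w_0$ sends it to its reversal, the antidominant extreme. With this orientation the statement should be read as ``$u\le w$ in Bruhat order iff $w(\vec v)\le u(\vec v)$ in dominance''; with the opposite action convention the inequality reads as written. In either case the content is that Bruhat order and dominance order correspond monotonically.

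\emph{Forward direction.} By the subword characterization in Definition~\ref{def:bruhatorder} and the remark after it, $u\le w$ is witnessed by a chain $u=w_0'\lessdot w_1'\lessdot\cdots\lessdot w_m'=w$. Each cover has the form $w_{r+1}'=t\,w_r'$ with $t=(a\,b)$ a transposition, $a<b$, and $\ell(w_{r+1}')>\ell(w_r')$. The length increase means that, in the vector $w_r'(\vec v)$, the entry in position $a$ is at least the entry in position $b$. Applying $t$ therefore moves a weakly larger entry from position $a$ to the later position $b$. The partial sums $\sum_{i\le k}$ are then unchanged for $k<a$ and for $k\ge b$, and weakly decrease for $a\le k<b$. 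So each cover moves the vector weakly down in dominance, and transitivity along the chain gives the forward implication. This step is routine and works for every partition $\vec v$, strict or not.

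\emph{Converse.} Here I would first insist that $\vec v$ be strict. If $v_1=\dots=v_n$, then every $w(\vec v)$ equals $\vec v$, so dominance cannot detect anything. For strict $\vec v$, dominance between $w(\vec v)$ and $u(\vec v)$ says that for each $k$ the sum of the $v$-values sitting in the first $k$ positions is larger for $u$. I would try to upgrade this to the tableau (Ehresmann) criterion, which says that for each $k$ the sorted set $u^{-1}\{1,\dots,k\}$ is entrywise at most $w^{-1}\{1,\dots,k\}$. The natural route is to choose $\vec v$ with very rapidly decreasing gaps, say $v_k=N^{n-k}$ with $N$ large. Comparing partial sums is then lexicographic on the indicator vectors of these sets, and I would try to show that, combined over all $k$, this forces the entrywise comparison.

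\emph{Main obstacle.} The converse is where I expect trouble, and I suspect it is only true after strengthening the hypothesis. For a single fixed strict $\vec v$, the comparison at each $k$ is one linear inequality, which is weaker than the entrywise comparison of two $k$-subsets once $k\ge2$ and $n\ge4$. So I would first test small cases, $n=4$ with $\vec v=(3,2,1,0)$, looking for a pair $u\not\le w$ whose vectors are nevertheless dominance-comparable. If such a pair exists, I would prove the converse in the form ``$u(\vec v)$ and $w(\vec v)$ compare in dominance for all strict partitions $\vec v$'', equivalently for every 0/1 step vector, which recovers the tableau criterion exactly. I would then note that the later arguments of the paper only use the forward direction.
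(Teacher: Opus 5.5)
The paper gives no proof of this Fact; it is only asserted. As printed it is false, so you were right to split it and to distrust the converse. Your forward direction is correct: if $t=(a\,b)$ with $a<b$ and $\ell(tw)>\ell(w)$, then $w^{-1}(a)<w^{-1}(b)$. So position $a$ of $w(\vec v)$ holds the weakly larger entry, and $tw(\vec v)\le w(\vec v)$.

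One correction to your set-up: no action convention makes the inequality ``read as written''. Under any convention $\mathrm{id}(\vec v)=\vec v$ is the dominance maximum of the orbit. The literal statement therefore already fails for $n=2$: $\mathrm{id}\le s_1$, but $\vec v\not\le s_1(\vec v)$ when $v_1>v_2$. Only reversing one of the two orders fixes this.

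The small test you propose settles the converse negatively, even for strict $\vec v$. Take $\vec v=(3,2,1,0)$, $u=s_2s_3$, $w=s_3s_2s_1$, and the paper's convention $s_{i_1}\cdots s_{i_k}(\vec v)=s_{i_1}(\cdots s_{i_k}(\vec v))$. Then
\[
u(\vec v)=(3,0,2,1),\qquad w(\vec v)=(2,1,0,3),
\]
with partial sums $(3,3,5,6)\ge(2,3,3,6)$. Yet $u\not\le w$: $w$ has the unique reduced word $s_3s_2s_1$, and none of its length-two subwords $s_3s_2,\ s_3s_1,\ s_2s_1$ equals $s_2s_3$.

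For general $\vec v$, the only nontrivial inequality for this pair is $v_1-v_2\ge v_3-v_4$. This holds for $v_k=N^{n-k}$, so your rapidly-decreasing-gaps route cannot work either. The index sets $\{1,4\}$ and $\{2,3\}$ occupying the first two positions are lexicographically comparable but not entrywise comparable.

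Your repaired converse, quantified over all strict $\vec v$ (equivalently all step vectors $(1^j,0^{n-j})$), is correct. For any permutation $\sigma$, the identity
\[
\sum_{p\le i}\sigma(\vec v)_p=\sum_{j=1}^{n}(v_j-v_{j+1})\,\#\{k\le j:\ \sigma(k)\le i\}\qquad(v_{n+1}:=0)
\]
yields both it and the forward direction at once from the rank-matrix criterion for Bruhat order.

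Your closing remark, however, is not accurate for the paper as written. Theorem~\ref{thm:characterization}(iii) and the appendix work with a dominance-lowest extremal weight. Concluding from such an element that $X$ is a single Demazure crystal is exactly the false converse. For instance, take $\mathcal{B}_{s_3s_2s_1}(\lambda)\cup\mathcal{B}_{s_2s_3}(\lambda)$ with $\lambda=(3,2,1,0)$. It is a connected union of Demazure crystals whose extremal weights have the unique dominance minimum $(2,1,0,3)$, yet it is not a Demazure crystal. Those steps need to be phrased with Bruhat order instead.
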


Finally, the symmetric group $S_n$ is generated by simple transpositions, with relations given by the ubiquitous \textit{braid relations}:
\vspace{1em}

\begin{definition}[Braid relations]\label{def:braidrelations}
    \item For all $i$, $s_i^2 = \text{id}$
    \item For $|i-j| > 1$, $s_is_j = s_js_i$
    \item For $|i-j|=1$, $s_is_js_i = s_js_is_j$
\end{definition}

Many of our proofs will rely on lengthy manipulations with these braid relations.

\subsection{Crystal conventions}

Kashiwara's theory of crystal bases \cite{crystals!} is a powerful tool for proving Schur positivity. In this section, we give a very brief overview of the basic definitions. 

Let $e_1, e_2 \dots e_n$ denote the standard basis for $\mathbb{R}^n$. A \textit{$\mathfrak{gl}_n$-crystal of dimension $n$} consists of the following data:

\begin{enumerate}
    \item A set $\mathcal{B}$ of objects not containing $0$ 
    \item Crystal operators $e_i, f_i: \mathcal{B} \to \mathcal{B} \cup \{0\}$ for $i = 1, 2 \dots n-1$ and
    \item A \textit{weight map} $\text{wt}: \mathcal{B} \to \mathbb{Z}^n$ satisfying $\text{wt}(e_i(b)) = \text{wt}(b) + (e_i-e_{i+1})$
\end{enumerate}

The operators $e_i, f_i$ are called \textit{raising and lowering operators} respectively, and they are always partial inverses: that is, for $b, b' \in \mathcal{B}$, $e_i(b) = b'$ if and only if $f_i(b') = b$. (In this paper, all $\mathfrak{gl}_n$-crystals considered will be finite and normal). 

We will often abuse notation, referring to the set $\mathcal{B}$ as `the crystal' when the underlying crystal operators and weight function are clear from context. 

The set of weight vectors in $\mathbb{Z}^n$ is partially ordered under the \textit{dominance order}. Our crystals will always contain a unique \textit{highest weight element}, often denoted $b_{\lambda}$. This highest weight element satisfies $e_i(b_{\lambda}) = 0$ for all $i = 1, 2 \dots n-1$. 

The \textit{crystal graph} of $\mathcal{B}$ is obtained by taking objects in $\mathcal{B}$ to be vertices, with directed edges given by the lowering operators $f_i$ (pointing from $x \in \mathcal{B}$ to $f_i(x)$). Edges to $0$ are not drawn. If the resulting graph is connected (as an undirected graph), we call the crystal \textit{connected}. Any subset $X \subseteq \mathcal{B}$ comes equipped with the same raising and lowering operators and weight function as $\mathcal{B}$. 
\vspace{1em}
\begin{definition}
    The \textit{character} of a crystal $\mathcal{B}$ is the polynomial 
    \[\sum_{b \in \mathcal{B}} x_1^{\text{wt}(b)_1}x_2^{\text{wt}(b)_2} \dots x_n^{\text{wt}(b)_n}\]

    (The same definition applies for a subset of a crystal). 
\end{definition}

When $\mathcal{B}$ is connected and normal, its character is always a Schur polynomial $s_{\lambda}$. More generally, the character of a (normal) crystal $\mathcal{B}$ is given by 
\[\sum_{b \in \mathcal{B} \text{ highest weight} s_{\text{wt}(b)}}(x_1 \dots x_n)\]

Interpreting a polynomial as a character of a crystal is thus a useful way to prove its Schur positivity. 

An \textit{$i$-string} in a crystal graph is a list of elements $b_1, b_2 \dots b_k \in \mathcal{B}$ such that $b_j = f_i(b_{j-1})$, $f_i(b_k) =0$, and $e_i(b_1) = 0$. An element $b \in \mathcal{B}$ is called \textit{extremal} if its weight is a permutation of $\text{wt}(b_{\lambda})$ (with $b_{\lambda}$ the highest weight element). Extremal weight elements sit at the endpoints of all of their $i$-strings. 

For any partition $\lambda$, there is a corresponding crystal $\mathcal{B}(\lambda)$, with underlying set given by $\text{SSYT}(\lambda)$ (semistandard Young tableaux of shape $\lambda$ with entries in $[n]$). Kashiwara, Nakashima \cite{crystalgraphs1} and Littlemann \cite{crystalgraphs2} gave an explicit combinatorial construction of the crystal graph on tableaux: 

\vspace{1em}

\begin{definition}
    The operator $e_i$ acts on a Young tableau $x$ as follows. First, record the \textit{reading word} of $x$ (by reading off its entries from left to right, bottom to top). 
    
    Next, write a closing parenthesis for every entry $i$ and an opening parenthesis for every entry $i+1$. Match these parentheses in the usual way. 

    Then, $e_i$ changes the leftmost umatched $i+1$ to an $i$. If there is no such $i+1$, then $e_i(x) = 0$. 
\end{definition}
The operator $f_i$ is defined similarly; the only difference is the last step, where $f_i$ changes the rightmost unmatched $i$ to an $i+1$. 

Finally, we will use the following useful notation throughout this paper:

\begin{enumerate}
    \item The expression $f_i^*(x)$ means that we apply $f_i$ to $x$ as many times as possible, until another application would yield $0$. The operator $e_i^*$ is defined similarly. 
    \item If $i > a$, the expression $F_{i, a}(x)$
    denotes 
    \[f_i^*f_{i-1}^* \dots f_{a}^*(x)\]
    The \textit{length} of the expression $F_{i, a}$ is $|i-a+1|$ (the number of $f_j^*$s represented by $F_{i, a}$).  
    \item As a convention, $F_{i, 0}$ for any $i$ is always the \textit{empty expression}
    \item Whenever we mention an `unpaired entry $i$' in a tableau, unless otherwise stated, we always mean that the $i$ is not paired with any $i+1$, \textit{not} that the $i$ is not paired with any $i-1$
\end{enumerate}

It is natural to consider $f_i^*$ because it acts on the content of a tableau $x$ by transposing entries $i$ and $i+1$; thus, the operators $\{f_i^*\}_i$ satisfy the braid relations of Definition \ref{def:braidrelations}. That is:
\vspace{1em}
\begin{definition}[Braid relations for operators]
Let $x\in X$ be extremal.
\begin{enumerate}
    \item For $|i-j|>1$, $f_i^*f_j^*(x) = f_j^*f_i^*(x)$. We refer to throughout to this relation as the \textit{first} braid relation.
    \item For $|i-j|=1$, $f_i^*f_j^*f_i^*(x) = f_j^*f_i^*f_j^*(x)$. We refer to this as the \textit{second} braid relation.
\end{enumerate}

As a convention, we also say that applying $f_i^*$ twice in a row to $x$ yields $0$. 

\end{definition}

To see why operators of the form $F_{i,a_i}$ are natural to consider, see Proposition \ref{rmk:algorithm}. 

\subsection{Crystal operators and shuffle tableaux}

In order to prove their refined Littlewood-Richardson rule, Nguyen and Pylyavskyy showed that shuffle tableaux assemble themselves into crystals:
\vspace{1em}
\begin{theorem}[Nguyen-Pylyavskyy \cite{nguyen2025temperley}]
    There exist natural raising and lowering operators $e_i, f_i$ such that the set of shuffle tableaux of shape $(\lambda/\mu)\circledast(\nu/\rho)$ assemble into type $A$ crystals. Furthermore, these crystal operators preserve the Temperley-Lieb type of the tableaux.  
\end{theorem}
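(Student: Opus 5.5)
The plan is to reduce the crystal structure on shuffle tableaux to the well-known Kashiwara--Nakashima--Littelmann crystal on ordinary semistandard tableaux via the reading word. The key observation is that a shuffle tableau of shape $(\lambda/\mu)\circledast(\nu/\rho)$ is nothing but a pair $(T_1,T_2)$ of semistandard tableaux of shapes $\lambda/\mu$ and $\nu/\rho$. I will define a reading word $\mathrm{rw}(T)$ of a shuffle tableau by reading the shaded boxes of the shuffle diagram left to right, bottom to top, exactly as in the definition of $e_i$ on ordinary tableaux. Then $e_i$ and $f_i$ are defined by the same bracketing rule: $i$ becomes a closing parenthesis, $i+1$ an opening one, and $e_i$ (resp.\ $f_i$) changes the leftmost unmatched $i+1$ (resp.\ rightmost unmatched $i$) in the word, with the change transported back to the corresponding box. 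The weight is the content.

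The steps are as follows.

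First, I check that the output is again a shuffle tableau. Changing an unmatched letter preserves weak row increase and strict column increase, because in the shuffle diagram the boxes in the same row or column of each constituent tableau still appear in the reading word in the standard relative order. Concretely, a cell of row $r$ of $T_1$ and the cell directly below it both precede, in the appropriate sense, the interleaved rows of $T_2$. The usual argument for straight and skew tableaux then applies verbatim: an $i$ directly above an $i+1$ in the same constituent always forms a matched pair, and within a row the $i$'s and $i+1$'s are consecutive in the word.

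Second, the bracketing rule on words is the crystal of the tensor power $\mathcal{B}(\square)^{\otimes N}$. The map $T\mapsto \mathrm{rw}(T)$ is injective and, by the first step, its image is closed under $e_i$ and $f_i$. Hence the shuffle tableaux form a full subcrystal, i.e.\ a union of connected components of a normal type $A$ crystal, and are therefore themselves a normal type $A$ crystal. The partial-inverse and weight axioms are inherited from the word crystal.

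Third, for Temperley--Lieb type, I recall from Nguyen--Pylyavskyy that the type is determined by how consecutive rows of the two constituents ``cross,'' which is read off from comparisons of entries in adjacent interlaced rows. I will show that a single $f_i$ changes one entry $i\mapsto i+1$ at an unmatched position and cannot alter any of these row comparisons. Any comparison it could flip would involve an $i$ in one row facing an $i+1$ in the next interlaced row. That configuration is precisely a bracketed pair in the reading word, so the changed letter was not unmatched there.

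I expect this third step to be the main obstacle, since it depends on the precise combinatorial definition of Temperley--Lieb type, which is not reproduced here. I would first try to express the type via a local criterion on pairs of adjacent interlaced rows, and then do a case analysis showing that the pairing structure forbids exactly the type-changing moves. If the naive reading order fails this, the fallback is to choose the reading order on the shuffle diagram (e.g.\ column-interleaved) that makes the type-determining comparisons coincide with matched pairs.
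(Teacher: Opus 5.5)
Your first step fails, and it fails exactly where the operators the paper uses (from Nguyen--Pylyavskyy) differ from the plain bracket rule. In the shuffle diagram, row $r$ of the $\nu/\rho$ constituent sits between rows $r$ and $r+1$ of the $\lambda/\mu$ constituent. So in the bottom-to-top reading word, all of row $r$ of $T_2$ is read after an $i+1$ in row $r+1$ of $T_1$ and before the $i$ directly above it. The $i$'s of $T_2$ can then absorb that $i+1$, leaving the column-adjacent $i$ unmatched.

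Concretely, take $\lambda=\nu=(1,1)$ with empty inner shapes, let $T_1$ be the column with entries $1,2$, and let $T_2$ be the column with entries $1,3$. The reading word is $3\,2\,1\,1$. The $2$ of $T_1$ is matched with the $1$ of $T_2$, so the rightmost unmatched $1$ is the top entry of $T_1$, and your $f_1$ produces a column $2$ over $2$. If you replace $T_2$ by the column $2,3$, your $e_1$ produces $1$ over $1$ in $T_1$. The fact that an $i$ above an $i+1$ is always bracketed holds for a single skew tableau but not for the interleaved word. Your second step needs the image of $\mathrm{rw}$ to be closed under $e_i,f_i$, so it fails too.

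The missing idea is the column-pairing step recorded in the paper. Before bracketing, delete from the reading word every $i$ and $i+1$ lying in the same column of the same constituent, and only then match the remaining letters. These are genuinely different operators from the word-crystal operators on $\mathrm{rw}(T)$, even when the naive ones happen to stay valid. In the paper's worked example (reading word $32212111$), your $f_1$ would raise the last $1$ in the first row of $T_1$, whereas the actual $f_1$ raises the first entry of the first row of $T_2$. So the crystal axioms are not inherited from the word crystal by your injectivity argument and need a separate proof. The paper does not supply one; it cites Nguyen--Pylyavskyy.

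Note also that the mere existence of some crystal structure on pairs $(T_1,T_2)$ is immediate from the tensor product $\mathcal{B}(\lambda/\mu)\otimes\mathcal{B}(\nu/\rho)$. The real content of the theorem is compatibility with Temperley--Lieb type, and your third step gives no argument for it. The heuristic that an $i$ facing an $i+1$ in an adjacent interlaced row must be a bracketed pair is not valid, since bracketing is global. Any case analysis has to start from Nguyen--Pylyavskyy's wiring-diagram definition of the type, applied to the column-pairing-first operators.
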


We will briefly describe their raising and lowering operators here. If an $i$ appears directly underneath an $i+1$ in a shuffle tableau, the $i$ and the $i+1$ are called \textit{column paired}. The \textit{reading word} of a shuffle tableau is, as usual, the word obtained by reading entries from left to right, and bottom to top. 

In order to apply $e_i$ to a shuffle tableau $x$, perform the following:

\begin{enumerate}
    \item Find all instances of column paired $i$s and $i+1$s
    \item Remove these pairs from the reading word of $x$
    \item Match $i$s and $i+1$s in the remaining word in the usual way (by sending $i$s to closing parentheses and $i+1$s to opening parentheses) 
    \item $e_i$ changes the leftmost unmatched $i+1$ to an $i$. If there is no unmatched $i+1$, then $e_i(x) = 0$. 
\end{enumerate}

The lowering operator $f_{i}$ is defined similarly; the only difference is in the fourth step, where $f_{i}$ changes the rightmost unmatched $i$ to an $i+1$. 
\vspace{1em}
\begin{example}
    Let's apply $f_1$ to the following shuffle tableau: 
        \begin{center}
   \begin{ytableau}
        *(pink)$1$ & \none & *(pink)$1$ & \none & $1$ \\
  \none  & $1$ & \none & $2$\\
  *(pink)$2$ & \none & *(pink)$2$\\
  \none & $3$
  
\end{ytableau}
\end{center}

The pink entries are column paired. The reading word of this tableau is $32212111$, but after removing the column paired $1$s and $2$s, it becomes $3121$. There is only one unpaired $1$, and it raises to a $2$. The result is: 

    \begin{center}
   \begin{ytableau}
        $1$ & \none & $1$ & \none & $1$ \\
  \none  & $2$ & \none & $2$\\
  $2$ & \none & $2$\\
  \none & $3$
  
\end{ytableau}
\end{center}

Let's apply $e_1$ to this resulting tableau. The same $1$s and $2$s are still column paired, so the reading word with those entries removed is $3221$. The leftmost $2$ is unpaired in this word, and gets lowered back to a $1$. The result is the tableau we started with, which is what we expect.

\end{example}

In a nutshell, crystal operators for shuffle tableaux are very similar to crystal operators for usual tableaux, with the difference that they `prioritize' column pairing. 
\vspace{1em}
\begin{remark}
Recall from \cite{nguyen2025temperley} that we can define the Temperley-Lieb type of a shuffle tableau as equal to the Temperley-Lieb type of the corresponding wiring diagram (rows in the tableau correspond to paths in the diagram). For flagged shuffle tableaux, we define the corresponding wiring diagram slightly differently: rather than extending the paths infinitely vertically, we 'flag' the paths so that the path corresponding to row $j$ only extends to $b(j)$. For a given flagged shuffle tableau, we can recover the non-flagged TL type by extending each path vertically. 

As long as $\lambda$ is strict, extending the diagram will not change the TL type (all crossings are preserved and no crossings are added). Thus for $\lambda$ strict, we obtain as a direct corollary from \cite{nguyen2025temperley} that the crystal operators preserve TL type for flagged tableaux. If $\lambda$ is not strict however, the crystal operators do not in general preserve TL type. We refer the reader to section $3$ of \cite{nguyen2025temperley} for more details on the Temperley-Lieb type of shuffle tableaux. 
\end{remark}
\vspace{1em}
We use the following simple lemma throughout our proof that flagged shuffle tableaux form Demazure crystals. 
\vspace{1em}
\begin{lemma}\label{lemma:subsetlemma}
    Let $x\in X$. Suppose that $f_j(x)\not=0$. Then, for all $i\not = j$, the set of unpaired $i$s in $x$ is a subset of the set of unpaired $i$s in $f_j(x)$
\end{lemma}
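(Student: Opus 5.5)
The plan is a direct local analysis. Write $y = f_j(x)$. Then $y$ is obtained from $x$ by changing a single entry $j$ (the rightmost unpaired $j$ in the shuffle pairing procedure) into a $j+1$. All other entries keep their values and positions. The unpaired $i$'s in $x$ are determined only by the positions of the letters $i$ and $i+1$: first through column pairing, then through bracket matching in the reading word with column-paired letters removed. So the only values of $i$ whose pairing can change are those for which $\{i,i+1\}$ meets $\{j,j+1\}$, namely $i = j-1$ and $i = j+1$. For $|i-j|\ge 2$ the configuration of $i$'s and $(i+1)$'s is literally unchanged, so the unpaired sets coincide.

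\textbf{Case $i = j+1$.} The letters involved are $j+1$ (playing the role of ``$i$'') and $j+2$. Passing from $x$ to $y$ adds one new letter $j+1$, and the set of $(j+2)$'s is unchanged. I will check two things:

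\begin{enumerate}
\item \emph{Column pairing.} A new $j+1$ can become column paired with a $j+2$ directly above it. This could steal that $j+2$ from nothing else, because a $j+2$ in a column has at most one cell directly below it. So column pairs of $x$ persist, and old $(j+1)$'s are never newly column paired.
\item \emph{Bracket matching.} Adding one closing parenthesis, or removing an opening-closing column pair, changes the matching. The standard bracketing fact is that inserting a closing parenthesis can only cause some previously unmatched closing parenthesis to stay unmatched, or can absorb a matching that was free. The point to verify is that every previously unmatched $j+1$ stays unmatched. An opening parenthesis $j+2$ matches the nearest available closing parenthesis to its right. So when a new closer appears, each opener either keeps its partner or shifts to an earlier closer. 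An old unmatched closer has no available opener on its left before the insertion, and adding a closer does not create one.
\end{enumerate}

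\textbf{Case $i = j-1$.} The letters are $j-1$ and $j$, and $y$ has one fewer $j$, which acts as an opening parenthesis. Removing an opener can only free closers, never match a previously unmatched $j-1$. Column pairing could change, so I must check the following. If the removed $j$ was column paired with a $j-1$ below it, that $j-1$ becomes free, which only enlarges the unpaired set. It cannot happen that some old unpaired $j-1$ becomes column paired, because column pairs only lose members here.

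The main obstacle is the interaction between the column-pairing removal and the bracket matching. Removing column pairs deletes letters from the reading word on both sides, so monotonicity has to hold after this preprocessing, not just for the raw word. I will handle it by treating the passage $x \to y$ as a composite of elementary moves: delete a letter, add a letter, or delete an adjacent-in-column pair. I then check that each move preserves ``unmatched $i$ stays unmatched,'' using the fact that a column-paired pair behaves like a matched bracket pair.

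One subtle point is the case $i=j+1$ when the new $j+1$ is column paired with a $j+2$ that was previously matching some old $j+1$ in the word. Removing that opener only frees old $(j+1)$'s, which is again in the right direction.
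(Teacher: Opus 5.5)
Your proof is correct and follows the same idea as the paper's one-line argument: changing a single $j$ into $j+1$ cannot give any $i$ a new partner when $i\neq j$. You also make explicit the column-pairing bookkeeping the paper leaves implicit, namely that the reduced $i$-word of $f_j(x)$ (column pairs removed) is unchanged, loses one opener, or gains one closer, and each of these keeps unmatched closers unmatched.

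Two small fixes are needed. First, in the $i=j-1$ column-paired subcase, the freed $j-1$ is an inserted closer that may itself get matched, so the right justification is your closer-insertion fact rather than ``it becomes free''. Second, your ``main obstacle'' paragraph asserts that a column pair behaves like a matched bracket pair; this is false in general (compare Lemma~\ref{lemma:ibetweeni,i+1}), but you never need it, because here the reduced word never gains or loses a whole opener--closer pair.
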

\begin{proof}
    This follows since applying $f_i$ only creates $i+1$s, which do not pair with $j$s since $i\not = j$. 
\end{proof}

\subsection{Demazure crystals}\label{subsection:demazurecrystals}

This section briefly summarizes Demazure's formula for the characters of Demazure modules \cite{demazure1974nouvelle}.  
\vspace{1em}

\begin{definition} 
Let $\mathcal{B}$ be a highest weight crystal with highest weight element $b_{\lambda}$, and let $w \in S_n$ be a Weyl element. Pick \textit{any} reduced expression $s_{i_1}s_{i_2} \dots s_{i_n}$ for $w$. Then, the \textit{Demazure crystal} $\mathcal{B}_w$ is 
\[\bigcup_{m_i \in \mathbb{N}} f_{i_1}^{m_1}f_{i_2}^{m_2} \dots f_{i_n}^{m_n}(b_{\lambda})\]
\end{definition}

A subset $X \subseteq \mathcal{B}$ is called a Demazure crystal, or a Demazure subset, if $X = \mathcal{B}_w$ for some $w \in S_n$. 
\vspace{1em}

\begin{example}
    Let $w = \text{id}$. Then, the empty word is the only reduced expression for $w$, and so $\mathcal{B}_w = \{b_{\lambda}\}$. 

    On the other hand, if $w$ is the longest element of $S_n$, we have $\mathcal{B}_w = \mathcal{B}$. 
\end{example}
\vspace{1em}

\begin{theorem}[\cite{demazure1974nouvelle}]\label{thm:demazure}
    The Demazure crystal $\mathcal{B}_w$ is well-defined (that is, it does not depend on the choice of reduced expression for $w$). Its character is the character of the Demazure module $V_w(\lambda)$, which is the key polynomial $\kappa_{w(\lambda)}$. 
\end{theorem}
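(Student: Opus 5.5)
This theorem is Demazure's classical result, which the paper cites rather than proves. I would establish its two assertions separately: first independence of the reduced word, then the character formula. Both rest on Kashiwara's string property for $\mathfrak{sl}_2$-restrictions of a normal crystal, together with Littelmann's and Kashiwara's refined Demazure character formula.

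\textbf{Independence of the reduced word.} For a subset $S \subseteq \mathcal{B}$ put $\mathfrak{F}_i(S) = \bigcup_{m \ge 0} f_i^m(S) \setminus \{0\}$. Then $\mathcal{B}_w = \mathfrak{F}_{i_1}\cdots\mathfrak{F}_{i_n}(\{b_\lambda\})$. Since $S_n$ is generated by the braid relations of Definition \ref{def:braidrelations}, Matsumoto's theorem says any two reduced words are connected by braid moves. It therefore suffices to show
\[\mathfrak{F}_i\mathfrak{F}_j(S) = \mathfrak{F}_j\mathfrak{F}_i(S) \ (|i-j|>1), \qquad \mathfrak{F}_i\mathfrak{F}_j\mathfrak{F}_i(S)=\mathfrak{F}_j\mathfrak{F}_i\mathfrak{F}_j(S) \ (|i-j|=1),\]
for the relevant sets $S$ arising as intermediate Demazure sets inside a reduced product.

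The commuting case is straightforward. The operators $f_i$ and $f_j$ with $|i-j|>1$ act on independent rank-one substructures, so for tableaux one checks directly that they commute whenever both are defined. The rank-two case is the genuine content. Here I would restrict to the $\mathfrak{gl}_3$-subcrystal generated by $i,j$. Its connected components are highest weight crystals $B(\mu)$, and the relevant $S$ meets each component in a Demazure subset of that component. Thus one reduces to checking $\mathfrak{F}_1\mathfrak{F}_2\mathfrak{F}_1(b_\mu)=\mathfrak{F}_2\mathfrak{F}_1\mathfrak{F}_2(b_\mu)$ in $B(\mu)$, which holds because both sides equal all of $B(\mu)$ (the $w_0$ of $S_3$).

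The main obstacle is justifying that $S$ restricts to a union of Demazure subsets on rank-two components, rather than an arbitrary subset. I would handle it by induction on length, using Kashiwara's theorem that Demazure crystals are compatible with $i$-strings. That is, for every $i$-string $T$, the intersection $\mathcal{B}_w\cap T$ is empty, all of $T$, or the head of $T$ alone.

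\textbf{Character formula.} The string property gives Demazure's recursion: if $\ell(s_iw)>\ell(w)$, then
\[\mathrm{char}(\mathcal{B}_{s_iw}) = \pi_i\,\mathrm{char}(\mathcal{B}_w), \qquad \pi_i f = \frac{x_if - x_{i+1}s_i(f)}{x_i-x_{i+1}}.\]
This identity I would check string by string. On a full string $\pi_i$ fixes the character, since it is $s_i$-symmetric. On a head alone of weight $\alpha$, $\pi_i(x^\alpha)$ is exactly the character of the full string it generates.

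The key polynomials satisfy the same recursion, $\kappa_{s_i\alpha}=\pi_i\kappa_\alpha$ when $\alpha_i>\alpha_{i+1}$, with base case $\kappa_\lambda = x^\lambda = \mathrm{char}(\mathcal{B}_{\mathrm{id}})$. Induction on $\ell(w)$ then gives $\mathrm{char}(\mathcal{B}_w)=\kappa_{w(\lambda)}$. The Demazure character formula (Demazure, with Andersen and Kumar for the general case) identifies this with the character of $V_w(\lambda)$.

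In the paper itself, I would simply cite \cite{demazure1974nouvelle} together with Kashiwara and Littelmann for the crystal version, rather than reprove these facts.
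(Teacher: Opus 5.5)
The paper gives no proof of this theorem; it only cites Demazure. So your outline is a reconstruction, not a variant of an argument in the text. Its skeleton is fine once you import Kashiwara's string property: Matsumoto's theorem plus a rank-two check for independence, and a string-by-string computation of $\pi_i$ for the character.

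\textbf{The main obstacle is misidentified.} You propose to show that $S$ meets each rank-two component $C$ in a (union of) Demazure subset(s), using the rank-one string property. The string property cannot deliver this. In $\mathcal{B}(2,1)$ for $\mathfrak{gl}_3$, with highest weight element $b$, consider the set $\{b,\,f_1b,\,f_2f_1b,\,f_2^2f_1b\}$.
\begin{itemize}
\item It is closed under $e_1$ and $e_2$.
\item It meets every $1$-string and every $2$-string in $\emptyset$, the head, or the whole string.
\item Yet it is not a union of Demazure subsets: every Demazure subset containing the extremal element $f_2^2f_1b$ (of weight $(1,0,2)$) is $\mathcal{B}_{s_2s_1}$ or everything, and both contain $f_2b$.
\end{itemize}
Fortunately you do not need this. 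The braid move only requires that $b_C\in S$ whenever $S\cap C\neq\emptyset$. That follows at once from closure of $S$ under $e_i$ and $e_j$, which is already part of the string property you cite. Then
$C=\mathfrak{F}_i\mathfrak{F}_j\mathfrak{F}_i(b_C)\subseteq\mathfrak{F}_i\mathfrak{F}_j\mathfrak{F}_i(S\cap C)\subseteq C$, and likewise for $\mathfrak{F}_j\mathfrak{F}_i\mathfrak{F}_j$. So both sides equal the union of the $\{i,j\}$-components meeting $S$, and the braid move holds.

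\textbf{The character induction misses a case.} When $\lambda$ has repeated parts, which the paper allows, you can have $\ell(s_iw)>\ell(w)$ with $(w\lambda)_i=(w\lambda)_{i+1}$. The recursion you quote ($\alpha_i>\alpha_{i+1}$) does not cover this. You also need $\pi_i\kappa_\alpha=\kappa_\alpha$ when $\alpha_i=\alpha_{i+1}$. This is true: take $u$ minimal in its coset $uW_\lambda$; then $s_iu=us_k$ with $s_k\in W_\lambda$ simple and lengths adding, and $\pi_k x^\lambda=x^\lambda$. But it should be said.

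\textbf{The independence argument is redundant with your source.} Kashiwara obtains the string property by identifying $\mathcal{B}_w$ with the crystal basis of $V_w(\lambda)$, a module defined without choosing a reduced word. So independence of the reduced word is already available from the same source. Your braid-move argument is then a consistency check rather than an independent proof. This is another reason simply citing the result, as the paper does and as you suggest at the end, is the appropriate route.
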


Demazure crystals are deeply connected to the Bruhat order. In particular, two extremal weight vectors $u\lambda$ and $w\lambda$ satisfy $u\lambda \leq w\lambda$ (in the \textit{dominance order}) if and only if $u \leq w$ in the Bruhat order. With Definition \ref{def:bruhatorder} in mind, the extremal elements of $\mathcal{B}_w(\lambda)$ all have weight vectors $u\lambda$, where $u \leq w$. 

In light of Theorem \ref{thm:demazure}, our goal will be to show that flagged shuffle tableaux (given our strictness condition) assemble themselves into disjoint Demazure crystals. 
\vspace{1em}
\begin{example}\label{example:demazure}

Below is the crystal graph on $\text{SSYT}(2,1)$ with entries in $\{1,2,3\}$ and edges labeled by lowering operators. The yellow highlighted tableaux give the Demazure subset defined by the flag $(2,3)$. The corresponding permutation is $s_1s_2$, so our Demazure subset is $\mathcal{B}(2,1)_{s_1s_2}$. Summing the contents of all the tableaux in $\text{SSYT}(2,1)$ gives us $s_{(2,1)}$. Summing the contents of the yellow subset gives us $s_{(2,1)}^{(2,3)}$, which is also the key polynomial $\kappa_{(2,1), s_1s_2}$.

\begin{center}
        \begin{tikzpicture}
    \node(A) at (8,8) {\begin{ytableau}
        *(yellow)1&*(yellow)1\\
        *(yellow)2
    \end{ytableau}};
    
    \node(B) at (6,6.5) {\begin{ytableau}
        *(yellow)1&*(yellow)2\\
        *(yellow)2
    \end{ytableau}};

    \node(B2) at (6,4.5) {\begin{ytableau}
        1&3\\
        2
    \end{ytableau}};

    \node(B3) at (6,2.5) {\begin{ytableau}
        1&3\\
        3
    \end{ytableau}};

    \node(C) at (10,6.5) {\begin{ytableau}
        *(yellow)1&*(yellow)1\\
        *(yellow)3
    \end{ytableau}};

    \node(C2) at (10,4.5) {\begin{ytableau}
        *(yellow)1&*(yellow)2\\
        *(yellow)3
    \end{ytableau}};

    \node(C3) at (10,2.5) {\begin{ytableau}
        *(yellow)2&*(yellow)2\\
        *(yellow)3
    \end{ytableau}};

    \node(D) at (8,1.5) {\begin{ytableau}
        2&3\\
        3
    \end{ytableau}};
    
    \draw[->] (A) -- (B) node[midway, above] {$f_1$};
    \draw[->] (B) -- (B2) node[midway, left] {$f_2$};
    \draw[->] (B2) -- (B3) node[midway, left] {$f_2$};
    \draw[->] (A) -- (C) node[midway, above] {$f_2$};
    \draw[->] (C) -- (C2) node[midway, right] {$f_1$};
    \draw[->] (C2) -- (C3) node[midway, right] {$f_1$};
    \draw[->] (B3) -- (D) node[midway, below] {$f_1$};
    \draw[->] (C3) -- (D) node[midway, below] {$f_2$};
\end{tikzpicture}
\end{center}
\end{example}

It is clear from the definition that a Demazure crystal must include the highest weight element $b_{\lambda}$ and be closed under applications of $e_i$; it is also clear that the subset of shuffle tableaux flagged by $\vec{b}$, if nonempty, satisfies these two properties. However, these two conditions are wildly short of sufficient to show that flagged shuffle tableaux form Demazure crystals. 

\section{Characterizing Demazure Crystals}\label{sec:characterization}

Our characterization of Demazure crystals is very similar to the characterization recently given by Assaf and Gonzalez \cite{extremalidealprincipal}. The authors describe three properties - \textit{extremal, ideal, principal} - which are necessary and sufficient for $X \subseteq \mathcal{B}$ to be a Demazure subset. In practice, showing that $X \subseteq \mathcal{B}$ is extremal and ideal seems quite doable, whereas it is often difficult to show that $X$ is principal. Our main contribution in this section is a further breakdown of this principality condition. 

\subsection{Statement of our characterization}\label{subsec:characterizationstatement}

First, we will restate the characterization given by Assaf and Gonzalez:
\vspace{1em}

\begin{theorem}[Assaf-Gonzalez \cite{extremalidealprincipal}]\label{thm:characterization}
    A subset $X$ of a crystal $\mathcal{B}$ is a Demazure subset if and only if $X$ satisfies the following three properties:

    \begin{enumerate}
        \item $X$ is extremal: for any $i$-string $S$ in $\mathcal{B}$, either $S \cap X = \emptyset, S$, or $\{b\}$, where $e_i(b) = 0$
        \item $X$ is ideal: if $x, y \in X$ are extremal elements and 
        \[x = e_j^* e_{i_1}^* \dots e_{i_m}^*(y)\]
        then 
        \[f_{i_m}^* \dots f_{i_1}^*(x) \in X \sqcup \{0\}\]
        \item $X$ is principal: for every connected component $X'$ of $X$, there exists an extremal $x \in X'$ which is \textit{lowest weight}. That is, if $y \in X'$ is extremal, then $\text{wt}(y) \leq \text{wt}(x)$ (in dominance order). 
    \end{enumerate}
\end{theorem}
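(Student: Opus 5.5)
We prove the two directions separately. Throughout we work one connected component at a time: each of the three properties decomposes over connected components of $\mathcal{B}$, and a disjoint union of Demazure subsets of distinct components is what we mean by a Demazure subset. So we may assume $\mathcal{B}=\mathcal{B}(\lambda)$ is connected with highest weight element $b_\lambda$, and that $X$ is nonempty. (The statement is implicitly for $X$ nonempty and closed under the $e_i$; otherwise principality alone does not force $b_\lambda\in X$.)

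\textbf{Necessity.} Let $X=\mathcal{B}_w$ and fix a reduced word $w=s_{i_1}\cdots s_{i_k}$.
\begin{enumerate}
\item \emph{Principality.} The extremal elements of $\mathcal{B}_w$ are exactly those of weight $u\lambda$ with $u\le w$. By the Fact relating Bruhat and dominance order, all of these weights are bounded by $w\lambda$. The element $f^*_{i_1}\cdots f^*_{i_k}(b_\lambda)$ has weight $w\lambda$, so it is the required lowest weight extremal element.
\item \emph{Extremality.} We use the standard Demazure string property: each $i$-string meets $\mathcal{B}_w$ in $\emptyset$, in the whole string, or in its head. We would prove this by induction on $\ell(w)$ using $\mathcal{B}_{s_iw}=\bigcup_m f_i^m\mathcal{B}_w$ when $s_iw>w$, exactly as in Kashiwara's and Littelmann's treatments.
\item \emph{Ideality.} Extremal $x,y\in X$ correspond to permutations $u\le w$ and $v\le w$. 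The relation $x=e_j^*e_{i_1}^*\cdots e_{i_m}^*(y)$ records a chain of moves in the Bruhat graph. Applying $f^*_{i_m}\cdots f^*_{i_1}$ to $x$ either gives $0$ or yields an extremal element whose permutation is obtained by left-multiplying $u$ by reflections. Using the subword property of the Remark, that permutation stays below $w$, so the element lies in $X$.
\end{enumerate}

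\textbf{Sufficiency.}
\begin{enumerate}
\item Let $x_0$ be the lowest weight extremal element of $X$ given by principality, with $\mathrm{wt}(x_0)=w\lambda$ for $w$ of minimal length. Writing $w=s_{i_1}\cdots s_{i_k}$ reduced, we have $x_0=f^*_{i_1}\cdots f^*_{i_k}(b_\lambda)$. Extremality forces $X$ to be closed under the $e_i$, so each intermediate element $f^*_{i_j}\cdots f^*_{i_k}(b_\lambda)$ lies in $X$.
\item \emph{$\mathcal{B}_w\subseteq X$.} We induct along the reduced word. Suppose $\mathcal{B}_{w'}\subseteq X$ for the suffix $w'$, and apply $f_i$-strings. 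Extremality says that once a string meets $X$ in more than its head, the whole string lies in $X$. Strings through extremal elements of $X$ are full because of the presence of the lower extremal element, and ideality is exactly what propagates this to the other strings.
\item \emph{$X\subseteq\mathcal{B}_w$.} Take any $b\in X$ and raise it to $b_\lambda$ by $e$'s. Consider the extremal elements of $X$ visited by $f^*$-paths from $b_\lambda$. Each of these has weight $u\lambda\le w\lambda$ by principality, hence $u\le w$. The string property then places $b$ in $\mathcal{B}_u\subseteq\mathcal{B}_w$.
\end{enumerate}

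\textbf{Main obstacle.} The hard step is sufficiency, and specifically showing that every element of $X$, not just the extremal ones, lies in $\mathcal{B}_w$. I would first try to use the recursive description $\mathcal{B}_{s_iw}=\bigcup_m f_i^m\mathcal{B}_w$ together with extremality to run an induction on $\ell(w)$. Ideality would supply the base-step compatibility needed to choose the string direction consistently. Alternatively, since this theorem is due to Assaf and Gonzalez, one may simply cite \cite{extremalidealprincipal}; the appendix gives a reworked proof along these lines.
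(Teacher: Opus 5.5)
\textbf{Where the statement comes from.} The paper does not prove this statement. It is quoted from Assaf--Gonzalez, and the appendix uses it as a black box to prove Theorem~\ref{thm:principalmoreusable}. It does not rework it, so your closing fallback to the appendix is not available.

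\textbf{Necessity.} Your necessity direction is acceptable, provided you cite Kashiwara's string property and the description of the extremal elements of $\mathcal{B}_w$. One correction: the valid implication is $u\le w\Rightarrow w\lambda\le u\lambda$ in dominance order. The paper's Fact states it backwards and as an equivalence.

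\textbf{First gap (step 3 of sufficiency).} You infer $u\le w$ in Bruhat order from a dominance comparison between $u\lambda$ and $w\lambda$. That implication is false. It cannot be repaired, because with principality measured in dominance order the statement itself fails.

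Take $n=4$, $\lambda=(3,2,1,0)$, $u=s_3s_2s_3$ and $w=s_3s_2s_1$, so $u\lambda=(3,0,1,2)$ and $w\lambda=(2,1,0,3)$.
\begin{itemize}
\item $u$ and $w$ are distinct and both have length $3$, so they are Bruhat incomparable.
\item Yet $w\lambda<u\lambda$ in dominance: the partial sums are $2,3,3$ versus $3,3,4$.
\item $X=\mathcal{B}_u(\lambda)\cup\mathcal{B}_w(\lambda)$ is connected, extremal and ideal, since any union of Demazure crystals is.
\item Every extremal element $vb_\lambda$ of $X$ has $v\le u$ or $v\le w$, hence weight weakly above $w\lambda$. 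So $wb_\lambda$ is a dominance-lowest extremal element of $X$.
\item But $X$ is not a Demazure crystal, because its set of extremal permutations $[e,u]\cup[e,w]$ has no maximum.
\end{itemize}
Principality must therefore be stated in Bruhat order: there is an extremal $wb_\lambda\in X'$ with $v\le w$ for every extremal $vb_\lambda\in X'$. With that formulation your step becomes a tautology, and you have to adopt that formulation explicitly.

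\textbf{Second gap (step 2, and the last sentence of step 3).} Here the actual content of the theorem is asserted rather than proved. Ideality only constrains extremal elements. It therefore gives no mechanism for forcing $f_i(b)\in X$ when $b\in\mathcal{B}_{w'}$ is a non-extremal head of its $i$-string. ``Ideality is exactly what propagates this to the other strings'' is not an argument.

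Likewise, ``the string property then places $b$ in $\mathcal{B}_u$'' is exactly the claim that an extremal ideal set equals the union of the Demazure crystals of its extremal elements. That claim is the heart of the Assaf--Gonzalez result. As written, your sufficiency direction amounts to citing the theorem. To prove it you would need a genuine argument controlling the non-extremal elements of $X$ and of $\mathcal{B}_w$.
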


If $X$ is extremal and ideal, then $X$ is a \textit{union} of Demazure crystals. However, to show that $X$ is a single Demazure crystal, we must show that it is principal. Principality is often difficult to prove because we are asked to locate such a lowest weight $x$, and because the condition is global. The following, more local characterization decomposes principality into two somewhat more tractable properties:
\vspace{1em}

\begin{theorem}\label{thm:principalmoreusable}
        Let $X$ be a connected union of Demazure crystals. Then $X$ is a Demazure crystal iff $X$ satisfies the following two properties, which we call \textit{extension} and \textit{gluing}: 
        Let $n,a,b\in \N$.
    \begin{enumerate}
        \item (Extension) Let $x\in X$ be extremal such that $e_i(x)=0$ for all $i\leq n+1$. Suppose that $F_{n,a}F_{n+1,b}x,F_{n+1,b-1}(x)\in X$ with $a \geq b$. Then $F_{n,a}F_{n+1,b-1}(x)\in X$. 
        \item (Gluing) Let $k,m,a$ be integers, and let $x\in X$ be extremal so that:
        \begin{enumerate}
            \item $e_i(x)=0$ for all $a-m\leq i\leq a+k$
            \item $f_{a+k}^*\dots f_{a}^*(x)\in X$ and
            \item $f_{a-m}^*\dots f_{a-1}^*(x)\in X$
        \end{enumerate} Then, either $f_{a+k}^*\dots f_a^*f_{a-1}^*(x)\in X$ or $f_{a-m}^*\dots f_{a-1}^*f_a^*(x)\in X$. 
\end{enumerate}
\end{theorem}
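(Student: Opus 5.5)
By Theorem \ref{thm:characterization}, a connected union of Demazure crystals is a Demazure crystal exactly when it is principal. So the task is to show that $X$ has a unique maximal extremal weight (in dominance order) if and only if extension and gluing hold. I will work throughout with extremal elements of $X$. The weights of these elements are $u\lambda$ for $u$ in a union of Bruhat order ideals $\bigcup_k [\mathrm{id}, w_k]$ (with $\lambda$ the highest weight). Every such element has the form $f_{i_1}^*\cdots f_{i_r}^*(b_\lambda)$ along a reduced word. The operators $f_i^*$ act on extremal elements as $s_i$ acts on weights, so statements about extremal elements of $X$ become statements about a set $W_X\subseteq S_n$ of permutations. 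This set is a union of lower Bruhat intervals. Principality says $W_X$ has a unique maximum (up to the stabilizer of $\lambda$).

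For necessity, assume $X=\mathcal{B}_w$, so $W_X=[\mathrm{id},w]$. For extension, the hypotheses give that the permutation $u$ underlying $F_{n,a}F_{n+1,b}$ lies below $w$. I then check that deleting the factor $f_b^*$ from the inner string still gives a word whose permutation is a subword of a reduced word of $u$. By the Remark after Definition \ref{def:bruhatorder}, that permutation is then $\le u\le w$. The condition $a\ge b$ and the condition $F_{n+1,b-1}(x)\in X$ ensure the shortened expression is still reduced relative to $x$, so the result is nonzero and extremal. For gluing, $[\mathrm{id},w]$ contains both the cycle $s_{a+k}\cdots s_a$ and the cycle $s_{a-m}\cdots s_{a-1}$. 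Hence $w$ contains both as subwords. Looking at the relative order in which $s_{a-1}$ and $s_a$ occur in a reduced word of $w$, one of the two longer products $s_{a+k}\cdots s_a s_{a-1}$ or $s_{a-m}\cdots s_{a-1}s_a$ is a subword of $w$. This is a standard argument about the support of Bruhat intervals.

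For sufficiency, suppose $X$ is extremal, ideal, connected, and satisfies extension and gluing, but is not principal. Then $W_X$ has at least two maximal elements $w_1\ne w_2$. I will induct on $n$ (the rank), using the factorization of permutations into the form $F_{n,a_n}\cdots F_{1,a_1}$. This is the canonical reduced form to which Proposition \ref{rmk:algorithm} refers; every extremal element is uniquely $F_{n-1,a_{n-1}}\cdots F_{1,a_1}(b_\lambda)$. Restricting to the Levi subcrystal for $s_1,\dots,s_{n-1}$, induction gives that the part of $X$ generated by the first $n-1$ factors is a single Demazure crystal with maximum $v$. It remains to show that the set of admissible last factors $F_{n,a}$ has a single maximal choice compatible with the maximal inner part. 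Extension is exactly the tool that lets me shorten an inner factor while keeping the outer factor $F_{n,a}$. Repeated application shows that if $F_{n,a}F'(x)\in X$, then $F_{n,a}F''(x)\in X$ for every smaller admissible $F''$. Gluing handles two maximal elements whose supports meet at adjacent indices $a-1,a$. It forces one of them to extend past the other, which contradicts maximality. Connectedness of $X$ guarantees that two incomparable maxima share a common lower element from which such a gluing configuration can be extracted.

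I expect the main obstacle to be this last combinatorial step. Given two incomparable maxima $w_1,w_2$, I need to find an extremal $x$ and indices $a,k,m$ that exactly match the hypotheses of gluing or extension. This requires careful braid-relation manipulations of the canonical factorizations, moving $f^*$'s past each other using the two braid relations. It also requires checking that no intermediate step produces $0$, which is where the assumptions $e_i(x)=0$ are used. I would first treat the case where $w_1,w_2$ differ only in their last factor $F_{n,a}$, and then reduce the general case to it by extension.
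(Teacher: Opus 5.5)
\textbf{There is a genuine gap, and it starts with a misreading of the extension property.}

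Since $F_{n+1,b-1}=f_{n+1}^*\cdots f_b^*f_{b-1}^*$ is $F_{n+1,b}$ followed by one more operator, $F_{n,a}F_{n+1,b-1}(x)$ corresponds to $u\,s_{b-1}$ with $u=s_n\cdots s_a s_{n+1}\cdots s_b$. It is one step \emph{lower} than $F_{n,a}F_{n+1,b}(x)$, not a subword of it. So your necessity argument (``deleting $f_b^*$ \dots\ hence $\le u\le w$'') does not apply. The conclusion genuinely needs the hypothesis $F_{n+1,b-1}(x)\in X$: for $X=\mathcal{B}_u$ with $\lambda$ regular, it fails without it. The paper's appendix proves only the ``if'' direction, so your necessity discussion has no counterpart there, but as written its extension half is wrong.

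The same misreading enters your sufficiency sketch. Shortening an inner factor under a fixed $F_{n,a}$ is just the ideal property. Lengthening it under a fixed $F_{n,a}$ is false when $a<b$. For instance, take $X=\mathcal{B}_{s_2s_3s_2s_1}(\lambda)$ with $\lambda$ regular. Both $F_{2,1}F_{3,3}(b_\lambda)$ and $F_{3,1}(b_\lambda)$ lie in $X$, but $F_{2,1}F_{3,1}(b_\lambda)$, of length $5$, does not. One must instead pass to $F_{2,2}F_{3,1}(b_\lambda)$, shortening the outer factor as in Lemma~\ref{lemma:braidextend}(i) and case (iii) of Example~\ref{ex:2operators}.

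\textbf{The step you flag as ``the main obstacle'' is in fact the whole proof, and your outline gives no mechanism for it.} Gluing cannot be applied directly to two incomparable multi-factor elements. It only concerns one decreasing and one increasing chain issuing from an $x$ with $e_i(x)=0$ on an interval. The paper bridges this as follows.
\begin{enumerate}
\item Lemma~\ref{lemma:prop2moreusable} upgrades gluing to attaching a chain $F_{n+k+1,b}$ to a staircase $F_{n,a}\cdots F_{n+k,a+k}$, provided $F_{n+k+1,a+k}(x)\notin X$.
\item Definition~\ref{def:extendable}, via the numbers $c_{i,j}$, singles out the elements for which such non-membership hypotheses are available after commuting operators through the staircase.
\item The subsequent proposition shows that in an extendable element the innermost factor $F_{n,a_n}$ can be replaced by any longer $F_{n,b}$ with $F_{n,b}(x)\in X$, using gluing and extension together.
\item Lemma~\ref{lemma:extendableexistance} shows every lowest-weight extremal element is extendable.
\item Any lowest-weight element is then matched factor by factor with the greedy output of Proposition~\ref{rmk:algorithm}, giving uniqueness.
\end{enumerate}

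Your Levi induction does not bypass any of this. In the canonical form $F_{1,a_1}\cdots F_{n,a_n}(b_\lambda)$ the factor $F_{n,a_n}$ is applied \emph{first}. Your form $F_{n-1,a_{n-1}}\cdots F_{1,a_1}(b_\lambda)$ is not reduced in general, e.g.\ $F_{2,1}F_{1,1}=f_2^*f_1^*f_1^*$. So there is one Levi piece for each admissible $a_n$. Comparing these pieces, rather than working within one, is exactly the difficulty. You would also have to show that each piece is again a connected union of Demazure crystals satisfying both properties.
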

\vspace{1em}

The statement of Theorem \ref{thm:principalmoreusable} may seem strange. It is motivated by the following fact (which we believe to be well-known; for example, see \cite{assafgonzalezmacdonald}). 
\vspace{1em}

\begin{proposition}\label{rmk:algorithm}
    Suppose $X$ is a connected Demazure crystal, with highest weight element $b_{1}$. Then, the unique lowest weight element of $X$ can be constructed via a recursive algorithm as follows:
    \begin{enumerate}
        \item Find the smallest integer $a_n$ such that $b_2 = f_n^*f_{n-1}^*\dots f_{a_n}^*(b_{1})$ is nonzero and still in $X$. If there is no such $a_n$, we set $a_n$ to $0$ and set $b_2 = b_1$.  
        \item Next, find the smallest $a_{n-1}$ such that $b_3 = f_{n-1}^* f_{n-2}^* \dots f_{a_{n-1}}^*(b_2)$ is nonzero and still in $X$. If there is no such $a_{n-1}$, set $a_{n-1} = 0$ and $b_3 = b_2$. 
    \end{enumerate}  Continue like this for each $i \geq 1$, until we have chosen an element $b_n$. Then, $b_n$ is the unique lowest weight element of $X$. 
\end{proposition}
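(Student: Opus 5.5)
The plan is to reduce everything to reduced words and the Bruhat order on $S_n$. Since $X$ is a connected Demazure crystal, $X = \mathcal{B}_w(\lambda)$ for some $w \in S_n$, and we may take $w$ minimal in its coset $wW_\lambda$. Here $W_\lambda$ is the stabilizer of $\lambda$, so $w$ is determined by the extremal weight $w\lambda$. By Theorem~\ref{thm:demazure} and the discussion after it, the extremal elements of $X$ are exactly the extremal elements of weight $u\lambda$ with $u \leq w$ in Bruhat order. The lowest weight element of $X$ is the extremal element of weight $w\lambda$, and it is unique since extremal weight spaces of $\mathcal{B}(\lambda)$ are one-dimensional.

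The key observation is that $f_{i}^*$ applied to an extremal element $y$ of weight $u\lambda$ gives the extremal element of weight $s_iu\lambda$ whenever $s_i u\lambda \neq u\lambda$; otherwise $f_i^*$ acts trivially, and we interpret it as ``no change''. So each step of the algorithm moves along extremal elements, and $b_{k+1}$ has weight
\[ s_{n-k+1}s_{n-k}\cdots s_{a_{n-k+1}} u_k \lambda, \]
where $u_k\lambda$ is the weight of $b_k$. Membership in $X$ of the result is equivalent to the corresponding permutation being $\leq w$, modulo $W_\lambda$. To justify this, I would use the Remark after Definition~\ref{def:bruhatorder} together with the Demazure definition $\mathcal{B}_w = \bigcup f_{i_1}^{m_1}\cdots f_{i_k}^{m_k}(b_\lambda)$ applied to a reduced word of $w$. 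This shows that an extremal element of weight $v\lambda$ lies in $X$ iff $v\lambda \leq w\lambda$ in the sense of the Fact relating Bruhat and dominance order.

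The algorithm is therefore the standard greedy construction of the maximal element below $w$ using the factorization
\[ S_n \ni v = (s_{c_1}\cdots s_1)(s_{c_2}\cdots s_2)\cdots \quad\text{equivalently}\quad v = (s_n\cdots s_{a_n})(s_{n-1}\cdots s_{a_{n-1}})\cdots, \]
the canonical reduced-word normal form coming from the chain $S_1 \subset S_2 \subset \cdots \subset S_{n+1}$ of parabolic subgroups. Each $v$ has a unique such factorization, with $a_j$ recording where $v^{-1}(j+1)$, or the analogous statistic, sits. I would prove by induction on $k$ that after step $k$, the partial product $u_k$ agrees with the corresponding partial product of the normal form of $w$ (modulo $W_\lambda$). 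At step $k$, choosing $a_{n-k+1}$ as small as possible subject to staying $\leq w$ reproduces $w$'s factor. A smaller value would give a permutation not below $w$, by the subword criterion, which lets us compare leftmost factors. A larger value would leave a coset representative whose later factors cannot recover $w$'s factor, since later factors only involve $s_j$ with $j<n-k+1$. After all steps, $u_n\lambda = w\lambda$, so $b_n$ is the lowest weight element.

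The main obstacle is the inductive comparison step. I must show that the greedy choice is compatible with Bruhat order in this normal form: if $(s_n\cdots s_{a})v' \leq w$ with $v'$ in the smaller parabolic, then $a$ is at least $w$'s first-factor index. Conversely, the greedy choice does not obstruct later steps. I would handle this with the ``lifting property'' and the tableau criterion for Bruhat order (comparing sorted prefixes of one-line notations), which makes the leftmost factor $s_n\cdots s_a$ correspond to moving a single value into the last position. I would also need care with the $W_\lambda$ degeneracies, where $f_i^*$ does nothing; this is handled by working with weights $v\lambda$ throughout rather than permutations.
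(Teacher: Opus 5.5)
Your route differs from the paper's, which does not prove this proposition at all. It is quoted as well known, and the nearest argument is the appendix proof of Theorem~\ref{thm:principalmoreusable}, which matches the greedy output with an extendable lowest-weight element using the extension and gluing axioms. Reducing to Bruhat order and a parabolic normal form is the natural self-contained route and can be made to work. But the step you call the main obstacle is misstated, and its key lemma is missing.

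The algorithm applies $F_{n,a_n}$ first, so its output has weight $(s_1\cdots s_{a_1})(s_2\cdots s_{a_2})\cdots(s_n\cdots s_{a_n})\lambda$. The first factor chosen is therefore the rightmost one, as your own formula for $b_{k+1}$ says. Your displayed ``equivalent'' form $(s_n\cdots s_{a_n})(s_{n-1}\cdots s_{a_{n-1}})\cdots$ is not a normal form: in $S_3$ it never produces $s_1s_2$, and $(s_2s_1)(s_1)$ is not reduced. For the same reason, ``$(s_n\cdots s_a)v'\le w$ with $v'$ in the smaller parabolic'' is not the inequality you need.

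What you need is a right-cancellation. Suppose by induction that the factors chosen so far multiply to the suffix $u$ of the normal form of $w$. Then $w=xu$ with $x\in S_m=\langle s_1,\dots,s_{m-1}\rangle$ and $u$ of minimal length in $S_mu$, and the claim is that $(s_{m-1}\cdots s_a)u\le xu$ iff $s_{m-1}\cdots s_a\le x$. This follows by induction on $\ell(u)$ from the lifting property in the form ``if $vs<v$ and $ys<y$ then $v\le y\iff vs\le ys$''. Write $u=u's$ with $\ell(u')=\ell(u)-1$; then $u'$ is again minimal in $S_mu'$, and $s$ is a right descent of both products.

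After cancelling, use that the projection sending $x$ to the minimal representative $s_{m-1}\cdots s_{a^*}$ of $S_{m-1}x$ is order-preserving. This forces $a\ge a^*$ by length. Conversely, every $a\ge a^*$ is admissible, because $s_{m-1}\cdots s_a$ is an initial segment of that representative. So the greedy choice is exactly $a^*$, and your ``larger value'' argument is not needed.

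Three supporting claims also need repair.

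First, take $w$ maximal, not minimal, in $wW_\lambda$. Then an extremal element of weight $v\lambda$ lies in $X$ iff $v\le w$ (the interval $[e,w]$ is a union of cosets $vW_\lambda$), and the greedy reproduces the normal form of $w$ exactly. With $w$ minimal, ``a smaller value gives a permutation not below $w$'' is false. For $\lambda=(1,1,0)$ and $X=\mathcal{B}_{s_2}(\lambda)$, the algorithm takes $a_2=1$, since $f_2^*f_1^*(b_\lambda)=f_2^*(b_\lambda)\in X$, yet $s_2s_1\not\le s_2$.

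Second, your key observation about $f_i^*$ is false as stated. If $y$ is extremal of weight $\mu$ with $\mu_i<\mu_{i+1}$, then $f_i^*(y)=y$ although $s_i\mu\neq\mu$. It is rescued by noting that every word the algorithm tests, $(s_{m-1}\cdots s_a)u$, is reduced. So each $f_i^*$ along it meets a weight with $\mu_i\ge\mu_{i+1}$ and acts on weights by $s_i$.

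Third, do not rely on the paper's Fact identifying Bruhat order with dominance on $S_n\lambda$; it already fails in $S_4$. For $\lambda=(4,3,2,1)$, $v=s_2s_3$ and $w=s_3s_2s_1$, the weight $v\lambda=(4,1,3,2)$ dominates $w\lambda=(3,2,1,4)$, but $v\not\le w$. Membership in $X$ must be expressed through Bruhat order on $W/W_\lambda$. Dominance enters only in the easy fact that $w\lambda$ is the lowest extremal weight.
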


    Essentially, this algorithm greedily chooses the `longest' operator $F_{n,a_n}$ we can apply while staying within $X$, and then the longest $F_{n-1, a_{n-1}}$, and so on. In our notation, this `greedy algorithm' tells us to apply $F_{1,a_1}F_{2,a_2} \dots F_{n,a_n}$, where each $a_i$ is chosen successively minimal, to the highest weight element in order to locate the lowest weight element.  
    
    Given this fact, the idea of Theorem \ref{thm:principalmoreusable} is that we want to find a lowest weight element in $X$ in order to prove that $X$ is a Demazure crystal; Proposition \ref{rmk:algorithm} gives us a candidate for this lowest weight element, and the conditions in Theorem \ref{thm:principalmoreusable} help us check that our candidate is actually lowest weight. See section \ref{sec:appendix} for the details of this proof. 

    \vspace{1em}

    \begin{example}

Let's consider a Demazure crystal given by a set of flagged straight tableaux, with $\lambda =  (2, 1, 1)$ and flag $\vec{b} = (2, 2, 5)$. We will use Proposition \ref{rmk:algorithm} to locate the lowest weight element. (Only the tableaux satisfying the flagging condition, not all the tableaux in $\mathcal{B}(\lambda)$, are depicted below). 

First, we compute $a_4$. We must find the minimal integer such that $f_4^*f_3^* \dots f_{a_4}^*(b_{\lambda})$ is nonzero and inside the Demazure crystal. This integer can be computed to be $3$, so we move to the element $x = f_4^*f_3^*(b_{\lambda})$ (which has weight vector $(2, 1, 0, 0, 1)$). Now, we look for $a_3$: $f_3(x) = 0$, and $f_3^*f_2^*(x)$ does not obey the flagging condition. So, we skip to $a_2$ without adding any raising operators to our expression. Here, again because $f_2^*(x)$ disobeys the flagging condition, we set $a_2 = 0$ and skip to $a_1$. Finally, $f_1^*(x)$ is nonzero and in our Demazure crystal, so $a_1 = 1$. 

The result of our algorithm gives us the lowest weight element $f_1^*f_4^*f_3^*(b_{\lambda})$ (in this small case, $f_i$ always happens to be equal to $f_i^*$). This element is, indeed, the bottom-most tableau in the diagram below, and has the unique lowest weight in the dominance order. 

There are many different paths in the crystal graph we could have taken to go from $b_{\lambda}$ to $f_1^*f_4^*f_3^*(b_{\lambda})$. Each path corresponds to a different reduced word for the permutation $s_1s_4s_3$; our algorithm just gives us one preferred path downwards. 
    
        \begin{center}
        \begin{tikzpicture}
    \node(A) at (8,8) {\begin{ytableau}
        1&1\\
        2\\
        3
    \end{ytableau}};
    
    \node(B) at (6,6.5) {\begin{ytableau}
         1&2\\
        2\\
        3
    \end{ytableau}};

    \node(B2) at (10,6.5) {\begin{ytableau}
         1&1\\
        2\\
        4
    \end{ytableau}};

    \node(C) at (8,4.5) {\begin{ytableau}
         1&2\\
        2\\
        4
    \end{ytableau}};

    \node(D) at (12,4.5) {\begin{ytableau}
         1&1\\
        2\\
        5
    \end{ytableau}};

    \node(E) at (10,2.5) {\begin{ytableau}
         1&2\\
        2\\
        5
    \end{ytableau}};
    
    \draw[->] (A) -- (B) node[midway, above] {$f_1$};

\draw[->] (A) -- (B2) node[midway, above] {$f_3$};

\draw[->] (B) -- (C) node[midway, above] {$f_3$};

\draw[->] (B2) -- (C) node[midway, above] {$f_1$};

\draw[->] (B2) -- (D) node[midway, above] {$f_4$};

\draw[->] (D) -- (E)
node[midway, above] {$f_1$};

\draw[->] (C) -- (E) node[midway, above] {$f_4$};

\end{tikzpicture}
\end{center}

    \end{example}

\begin{proposition}\label{prop:applyingcharacterization}
    Assume that $\lambda$ and $\nu$ are strict partitions, and $\vec{b}$ is a nondecreasing flag. Then, the set of flagged shuffle tableaux satisfying flag $\vec{b}$ of shape $(\lambda/\mu)\circledast(\nu/\rho)$ is extremal, ideal, and satisfies the conditions in Theorem \ref{thm:principalmoreusable}. Hence, it forms a disjoint union of Demazure crystals. 
\end{proposition}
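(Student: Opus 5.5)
The plan is to verify the four conditions one at a time for the set $X$ of shuffle tableaux of shape $(\lambda/\mu)\circledast(\nu/\rho)$ flagged by $\vec{b}$. The two Assaf--Gonzalez conditions (extremal and ideal) come first, then extension and gluing from Theorem \ref{thm:principalmoreusable}. Within each connected component of the Nguyen--Pylyavskyy crystal, Theorem \ref{thm:characterization} will then make $X\cap(\text{component})$ a union of Demazure crystals. Theorem \ref{thm:principalmoreusable}, applied to each connected component of $X$, upgrades this to a disjoint union of Demazure crystals.

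\textbf{Extremality.} Since $e_i$ only lowers entries, $X$ is closed under $e_i$. It therefore suffices to show that on an $i$-string $S$ meeting $X$ in more than its head, all of $S$ lies in $X$. An element leaves $X$ under $f_i$ only when some unpaired $i$ in a row $r$ with $b_r=i$ becomes $i+1$. I would argue that if $f_i(x)\in X$, then the rightmost unpaired $i$ lies in a row with $b_r\ge i+1$. Because $\vec{b}$ is nondecreasing and $\lambda,\nu$ are strict, all unpaired $i$'s further left in the reading word (lower rows) also have flag at least $i+1$. The strictness of $\lambda,\nu$ enters here: it guarantees that the rows of the two interlaced shapes with flag exactly $i$ sit weakly above the rows that can still hold $i+1$. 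So every unpaired $i$ in a row with $b_r=i$ is column-paired, or bracket-paired, with an $i+1$ above it, which is impossible by the flag. Hence the whole string stays in $X$.

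\textbf{Ideal.} For extremal $x,y\in X$ with $x=e_j^*e_{i_1}^*\cdots e_{i_m}^*(y)$, I would use that $f_i^*$ on extremal elements acts by permuting content, as for braid relations. Then $f_{i_m}^*\cdots f_{i_1}^*(x)$ is the extremal element whose weight lies between $\mathrm{wt}(x)$ and $\mathrm{wt}(y)$. Extremal shuffle tableaux are determined by their weights, so the flag condition for the target can be checked entrywise. It reduces to the flag conditions on $x$ and on $y$, with Lemma \ref{lemma:subsetlemma} controlling which letters move.

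\textbf{Extension and gluing.} These are the main obstacle. For extension, I would describe $F_{n+1,b}(x)$ explicitly on the extremal element $x$ (highest weight in $e_1,\dots,e_{n+1}$). It shifts a chain of unpaired entries $b,b+1,\dots,n+1$ up by one, so the entries affected by $F_{n,a}$ afterwards can be tracked. I would then show that replacing $F_{n+1,b}$ by $F_{n+1,b-1}$ changes only entries whose rows already satisfied the flag in one of the two given elements. Gluing is the harder case. Here $x$ is simultaneously highest weight for $e_{a-m},\dots,e_{a+k}$, so the two chains $f_{a+k}^*\cdots f_a^*$ and $f_{a-m}^*\cdots f_{a-1}^*$ act on essentially disjoint sets of entries, and the question is which one can absorb the extra operator. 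I would argue by contradiction: if both composites violate the flag, locate the offending rows in each case. The monotonicity of $\vec{b}$ combined with the strict interlacing of rows then forces one of the two given elements to already violate the flag. I expect the delicate part to be the extra column-pairing rule of shuffle tableaux, which can reorder which entry is unpaired. I would isolate this in technical lemmas shared with ordinary skew tableaux (deferred to Section \ref{sec:principality}) that describe unpaired entries of extremal elements row by row.
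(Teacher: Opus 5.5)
Your overall architecture is right and matches the paper. Extremal plus ideal gives a union of Demazure crystals, and extension plus gluing (Theorem \ref{thm:principalmoreusable}) upgrades each connected component to a single one. The core of your extremality argument is also sound and close to Lemma \ref{lemma:extremality}: later-raised $i$'s lie weakly lower, so monotonicity of $\vec b$ suffices. Strictness plays no role there.

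\textbf{The ideal property is not proved.} The assertion that the flag condition on $f_{i_m}^*\cdots f_{i_1}^*(x)$ ``can be checked entrywise'' and ``reduces to the flag conditions on $x$ and on $y$'' is the entire content of the claim, and you give no mechanism for it. Deleting $f_j^*$ changes which entries are unpaired at every later step, so the target's entries are not determined entrywise by those of $x$ and $y$. Also, extremal elements are determined by weight only within a connected component. To make your idea work you would need entrywise monotonicity of extremal shuffle tableaux along Bruhat order, which you neither state nor prove. Your argument also never uses strictness. Lemma \ref{lemma:subsetlemma} only handles deleting the last operator of a descending or ascending chain, i.e.\ conditions (ii) and (iii) of Proposition \ref{prop:idealdecomp}. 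The paper reaches arbitrary subwords only through the braid-relation reduction of Proposition \ref{prop:idealdecomp}, which takes the gluing property as input. So the ideal property is not independent of the hard part.

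\textbf{Extension and gluing are stated as intentions, not arguments, and the heuristics are inaccurate.} $F_{n+1,b}$ does not shift ``a chain of unpaired entries''. Each $f_i^*$ raises every unpaired $i$; on extremal elements it swaps the multiplicities of $i$ and $i+1$. Nor do the two gluing chains act on essentially disjoint entries. The $a$'s created by $f_{a-1}^*$ alter the $a$/$(a+1)$ bracketing seen by $f_a^*$, and that interaction is exactly what gluing controls. The ideas you are missing are these.
\begin{enumerate}
\item The ordering Lemmas \ref{lemma:rightunpairs1} and \ref{lemma:rightunpairs2}: entries newly unpaired by $f_{i\pm1}^*$ become unpaired from right to left. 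This is where column pairing has to be tamed.
\item Lemma \ref{lemma:easyendofrow}: a bad entry ends its row. Strictness of $\lambda,\nu$ then forbids any box directly beneath it. This, not any interlacing of flags, is where strictness enters.
\item For extension: the rewriting of Lemma \ref{lemma:symbolicmanipulation} that discards the outer $f_{n+1}^*$. Then the auxiliary tableau $y$, built from $z',z''$ with inert $n$'s, together with the operator $\tilde f_{n-1}$, reduces everything to the three-operator Claim \ref{claim:mainclaim_y}.
\item For gluing: an induction that builds a staircase of forced column pairings until the bad $a-m$ must have an entry directly beneath it, contradicting strictness.
\end{enumerate}
Without these, the claim that the configuration ``forces one of the two given elements to already violate the flag'' is unsupported.
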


We will prove Theorem \ref{thm:principalmoreusable} more formally in an appendix (section \ref{sec:appendix}). The proof of Proposition \ref{prop:applyingcharacterization} is still somewhat onerous from this point. However, to demonstrate the utility of Theorems \ref{thm:characterization} and \ref{thm:principalmoreusable}, we will first show how they can be used on (usual) flagged skew tableaux.

\subsection{Example: flagged skew tableaux form Demazure crystals}
In this section, we use our characterization of Demazure crystals in Theorem \ref{thm:principalmoreusable} to prove that flagged skew tableaux form a Demazure crystal. Although this fact has been previously known (for example, see section $3$ of \cite{demazureskewtableaux}), we include this section to provide a simpler example application of Theorem \ref{thm:principalmoreusable}. Moreover, this simpler case might give the reader some insight into the more difficult case of shuffle tableaux.

Throughout this section, let $X$ denote a connected subset of flagged Young tableaux containing the highest weight element $b_{\lambda}$. The proof that $X$ is extremal and ideal is nearly identical as the case for shuffle tableaux, and so we focus on proving that $X$ is principal. 
\vspace{1em}

\begin{remark}
    We start with a few remarks on notation that we use throughout our proof (both for skew and shuffle tableaux).
    \begin{itemize}
        \item We call an entry with value $i$ which is in a row with flag $i$ \textit{bad} (ie because if it raises to an $i+1$, the tableau will violate the flag condition).  
        \item We often wish to refer to various different entries in our tableau, some of which have the same value. In order to avoid confusion, we both color code these entries and use markings such as underlines, boxes, circles, bolded entries, and dashed boxes to differentiate each element.
        \item Whenever we say an entry $a$ is \textit{right} of another entry $b$ we mean that $a$ is right of $b$ in the reading word of our tableau. If we say that $a$ is \textit{weakly} right of $b$, this implies either $a$ and $b$ are the same entry or $a$ is right of $b$. Similarly for (weakly) left. 
    \end{itemize}
\end{remark}

We now prove a couple of useful lemmas.
\vspace{1em}

\begin{lemma}\label{lemma:skewrightunpairs}
    Let $x\in X$. \begin{enumerate}
        \item Suppose that $f_{i-1}(x)$ contains an unpaired $i$ that is either paired or not present in $x$. Then the rightmost unpaired $i$ in $f_{i-1}^*(x)$ is also unpaired in $f_{i-1}(x)$.
        \item Suppose that $f_{i+1}(x)$ contains an unpaired $i$ that is paired in $x$. Then the rightmost unpaired $i$ in $f_{i+1}^*(x)$ is also present in $f_{i+1}(x)$.
    \end{enumerate}
\end{lemma}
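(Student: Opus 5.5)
The plan is to reduce both parts to elementary facts about bracket matching in the reading word, restricted to the letters $i$ and $i+1$. In that word each $i$ is a closing parenthesis, each $i+1$ is an opening parenthesis, and "unpaired $i$" means an unmatched closer.

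I would first record two standard facts.
\begin{enumerate}
\item The unmatched letters always form a word $)\cdots)(\cdots($. In particular, an unmatched closer never lies strictly inside a matched arc $(\,\cdots)$.
\item Inserting one closer, or deleting one opener, changes the set of unmatched closers by adding exactly one new element and removing none.
\begin{itemize}
\item If we delete an opener matched to a closer $c$, then $c$ becomes unmatched, since everything strictly between them was matched internally. Nothing else changes.
\item If we insert a closer at position $p$, either it is itself unmatched, or it takes over a matching and the effect ripples rightward. In that case the newly unmatched closer is the first closer weakly right of $p$ that is freed by this ripple.
\end{itemize}
In both cases the freed closer lies strictly left of every closer that was already unmatched and lies to the right of $p$. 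This follows from fact (i): an already-unmatched closer caps the ripple, and no arc can contain it.
\end{enumerate}

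Next I would describe how $f_{i-1}^*$ and $f_{i+1}^*$ act on the positions of these letters. The operator $f_{i-1}$ turns the rightmost $(i-1)$ that is unpaired for the $(i-1,i)$-matching into an $i$. Any $i-1$ still unpaired afterwards lies to its left. Hence successive applications of $f_{i-1}$ insert new $i$'s at positions $p_1>p_2>\cdots>p_r$, each further left than the last. Similarly, successive applications of $f_{i+1}$ turn letters $i+1$ into $i+2$ at positions $p_1>p_2>\cdots$, i.e.\ they delete openers (for the $i$ versus $i+1$ matching) moving leftward. Since $f_{i+1}$ never changes any $i$, "present" in part (ii) just means unpaired in $f_{i+1}(x)$, and I would prove that stronger statement.

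Now I would carry out the argument, which is the same for both parts.
\begin{itemize}
\item By fact (ii), the first step (passing from $x$ to $f_{\pm}(x)$) frees exactly one closer $u$, lying weakly right of $p_1$. The hypothesis of the lemma says exactly that this step creates a new unpaired $i$, and that $i$ is $u$. In part (ii), $u$ is the closer matched to the opener at $p_1$.
\item By monotonicity, every $i$ unpaired in $f_{\pm}(x)$ stays unpaired in $f_{\pm}^*(x)$.
\item Each later step $k\ge 2$ acts at $p_k<p_1\le u$, while $u$ is already unmatched. By fact (ii), the closer freed at step $k$ lies strictly left of $u$.
\end{itemize}
So the unpaired $i$'s of $f_{\pm}^*(x)$ are those of $f_{\pm}(x)$ together with some new ones, all of which lie left of $u$. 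The rightmost unpaired $i$ in $f_{\pm}^*(x)$ is therefore the rightmost unpaired $i$ of $f_{\pm}(x)$, which is at or right of $u$. This is exactly the claim.

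The step I expect to need the most care is fact (ii) for insertions, i.e.\ that the closer freed by inserting at $p_k$ lies strictly left of the already-unmatched $u$. I would prove it with the counting characterization: a closer at position $q$ is unmatched iff every suffix-free window $[j,q]$ contains more closers than openers. Inserting one closer at $p_k$ changes these counts by one, only in windows that contain $p_k$. A short case analysis on the first position where the count changes then locates the freed closer.
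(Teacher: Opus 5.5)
Your proof is correct and is essentially the paper's argument. Both rest on two facts: successive applications of $f_{i\pm1}$ act strictly further left, and an unpaired $i$ can never sit strictly inside a matched $(i+1,i)$ arc. The paper phrases this as a direct contradiction for the rightmost unpaired $i$ of $f_{i\pm1}^*(x)$, while you package it as a general ``ripple'' fact. The paper's argument, like yours, actually establishes the ``unpaired'' version of (ii), since the ``present'' version is trivial.

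One small correction: your fact (ii) should say that \emph{at most} one closer is freed. Deleting an unmatched opener, or inserting a closer that absorbs an unmatched opener to its left, frees nothing. Your argument only uses the at-most-one version, together with the lemma's hypothesis at the first step, so nothing breaks.
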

\begin{proof}
    We start with (i). Note that the rightmost unpaired $\underline{\red{i-1}}$ in $x$ unpairs a $\boxed{\blue{i}}$ weakly to its right after applying $f_{i-1}$. Suppose that the rightmost unpaired $i$ in $f_{i-1}^*x$ is right of this $\boxed{\blue{i}}$. It is then unpaired by an $i-1\mapsto i$ left of the $\underline{\red{i-1}}$ and so is paired with an $i+1$ left of the $i-1\mapsto i$. Summarizing this, we have the following sequence of elements
    \[\begin{tikzcd}
	{i+1} & {i-1\mapsto i} & {\underline{\red{i-1\mapsto i}}} & {\boxed{\blue{i}}} & i
	\arrow[curve={height=-25pt}, no head, from=1-1, to=1-5]
\end{tikzcd}\]
    Note that we may have that the $\underline{\red{i-1\mapsto i}}=\boxed{\blue{i}}$. In either case though, we get a contradiction as we have an unpaired $\boxed{\blue{i}}$ between the paired $i+1,i$ (the $\boxed{\blue{i}}$ should instead pair with the $i+1$).

    We now do (ii). We again have that the rightmost unpaired $\underline{\red{i+1}}$ in $x$ unpairs an $\boxed{\blue{i}}$ to its right after applying $f_{i+1}$. Suppose that the $\boxed{\blue{i}}$ is not the rightmost unpaired $i$ in $f_{i+1}^*x$. Consider the $i+1$ that the rightmost unpaired $i$ in $f_{i+1}^*x$ is paired with. This $i+1$ is right of the $\boxed{\blue{i}}$ (as it is not paired with the $\boxed{\blue{i}}$). In order for the $i$ to get unpaired, it is unpaired by some \dbox{$\purple{i+1}\mapsto i+2$}, which by assumption is left of the $\underline{\red{i+1}}$. There thus must be some $i+1$ weakly left of the \dbox{$\purple{i+1}\mapsto i+2$} which is paired with an $i$ right of the $i+1$. This though is a contradiction, as we get the following sequence of elements:
    \[\begin{tikzcd}
	{i+1} & {\boxed{\blue{i}}} & i \\
	& {\text{unpaired}}
	\arrow[curve={height=-12pt}, no head, from=1-1, to=1-3]
    \end{tikzcd}\]
    This is impossible as the $\boxed{\blue{i}}$ should pair with the left $i+1$.
\end{proof}

The following useful lemma follows entirely by symbolic manipulations, and so we can also use it in later sections for shuffle tableaux and more general crystals.
\vspace{1em}

\begin{lemma}\label{lemma:symbolicmanipulation}
    Assume that $a \geq b$. Then,
    \[(f_n^* \dots f_a^*)(f_{n+1}^* \dots f_{b-1}^*) = F_{n,a}F_{n+1,b-1} = (f_{n+1}^*f_n^* \dots f_a^*)(f_{n+1}^* \dots f_{a+1}^* f_{a-1}^* f_{a-2}^* \dots f_{b-1}^*).\]
    Reorganizing, this in turn equals $f_{n+1}^*(F_{n,b-1}F_{n+1,a+1})$.
\end{lemma}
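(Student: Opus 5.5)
The plan is to treat the claim as an identity of words in the $f_j^*$, proved only with the braid relations for operators (the first and second braid relations in the Definition preceding this subsection). Since $f_j^*$ acts on extremal elements as the transposition $s_j$ on weights, it is enough to transform the word $F_{n,a}F_{n+1,b-1}=(s_n s_{n-1}\cdots s_a)(s_{n+1}s_n\cdots s_{b-1})$ into the word on the right by a sequence of commutations $s_is_j=s_js_i$ ($|i-j|>1$) and braid moves $s_is_{i+1}s_i=s_{i+1}s_is_{i+1}$. No relation $s_i^2=\mathrm{id}$ should be used.

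Both sides have length $(n-a+1)+(n-b+3)$, so the rewriting stays among reduced words. Then each step is a legitimate braid move on operators applied to an extremal element. I would note explicitly that if some intermediate application gives $0$, both sides give $0$. This is because the two words are reduced words of the same permutation, and the braid relations are applied step by step to the same extremal starting element.

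The core computation moves the left factor's letters one at a time into the right factor, starting with the last letter $s_a$ of $F_{n,a}$. Write $F_{n+1,b-1}=s_{n+1}\cdots s_{a+2}\,s_{a+1}s_a\,s_{a-1}\cdots s_{b-1}$; this splitting uses $a\ge b$, so that $s_a$ actually occurs. The steps are as follows.
\begin{enumerate}
\item Commute $s_a$ past $s_{n+1},\dots,s_{a+2}$.
\item Apply $s_as_{a+1}s_a=s_{a+1}s_as_{a+1}$.
\item Commute the new trailing $s_{a+1}$ rightward past $s_{a-1},\dots,s_{b-1}$, all of which have index at most $a-1$.
\end{enumerate}
This gives $F_{n,a}F_{n+1,b-1}=F_{n,a+1}\,F_{n+1,b-1}\,s_{a+1}$. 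Repeating the same three steps with $s_{a+1},s_{a+2},\dots$ (induction on $n-a$) pushes every letter of $F_{n,a}$ through. The letters to the right of $F_{n+1,b-1}$ then shift in index, and they can be recollected.

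The cleaner bookkeeping, which I would actually write, runs in the opposite order. Start from the target word $(s_{n+1}s_n\cdots s_a)(s_{n+1}\cdots s_{a+1}\,s_{a-1}\cdots s_{b-1})$. Its second factor is $F_{n+1,b-1}$ with the letter $s_a$ deleted. Induct downward on $a$, with base case $a=n$. There $s_n s_{n+1}s_n\cdots s_{b-1}=s_{n+1}s_ns_{n+1}\,s_{n-1}\cdots s_{b-1}$ by one braid move. In the inductive step, the single braid move at the junction where $s_a$ meets $s_{a+1}s_a$ reinserts the missing $s_a$. All other letters are moved only by commutations.

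The final reorganization into $f_{n+1}^*(F_{n,b-1}F_{n+1,a+1})$ is a similar regrouping. Pull the leading $s_{n+1}$ out. In the remaining $(s_n\cdots s_a)(s_{n+1}\cdots s_{a+1}s_{a-1}\cdots s_{b-1})$, the block $s_{a-1}\cdots s_{b-1}$ commutes past $s_{n+1}\cdots s_{a+1}$ letter by letter, so it can be joined to $s_n\cdots s_a$ to form $F_{n,b-1}$. What remains is $F_{n+1,a+1}$.

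The main obstacle is purely bookkeeping: keeping track of which commutations are valid, i.e.\ that every index gap is at least $2$ when two letters are swapped. The one place this is tight is the boundary between $s_{a+1}$ and $s_{a-1}$. That is exactly why the hypothesis $a\ge b$ matters: it guarantees that the letter $s_a$ being ``absorbed'' is actually present in $F_{n+1,b-1}$.
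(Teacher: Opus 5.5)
Your proposal is correct and takes essentially the paper's route: an induction that uses only commutations plus a single application of the second braid relation at each step. The only difference is bookkeeping. The paper inducts on $n$, pulling $f_{n+1}^*$ left so that the braid move $f_n^*f_{n+1}^*f_n^*=f_{n+1}^*f_n^*f_{n+1}^*$ happens at the top of the chains, whereas you induct downward on $a$ and make the braid move $s_as_{a+1}s_a=s_{a+1}s_as_{a+1}$ at the bottom junction. Your letter-pushing variant also yields $F_{n,a}F_{n+1,b-1}=F_{n+1,b-1}F_{n+1,a+1}$ directly, and both stated forms follow from that by commutations alone.
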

\begin{proof}
    The first equality is definitional. We prove the second equality by inducting on $n$, using only braid relations. First, repeatedly using the first braid relation, we push the $f_{n+1}$ to the left: 
\[(f_n^* \dots f_a^*)(\textcolor{red}{{f_{n+1}^*}} \dots f_{b-1}^*) = 
(f_n^* \textcolor{red}{f_{n+1}^*})(f_{n-1}^* \dots f_a^*)(f_{n}^* \dots f_{b-1}^*)
\]

Next, by induction, this expression becomes:

\[(f_n^* f_{n+1}^*) (f_{n}^* \dots f_a^*)(f_n^* \dots f_{a+1}^* f_{a-1}^* \dots f_{b-1}^*)\]

Using the second braid relation, we get:
\[(f_{n+1}^*f_n^* f_{n+1}^*)(f_{n-1}^* \dots f_a^*)(f_n^* \dots f_{a+1}^* f_{a-1}^* \dots f_{b-1}^*)\]

Finally, we push an $f_{n+1}^*$ to the right again, repeatedly using the first braid relation again: 

\[(f_{n+1}^*f_n^* \textcolor{red}{f_{n+1}}^*)(f_{n-1}^* \dots f_a^*)(f_n^* \dots f_{a+1}^* f_{a-1}^* \dots f_{b-1}^*) = (f_{n+1}^*f_n^* f_{n-1}^* \dots f_a^*)(\textcolor{red}{f_{n+1}^*}f_n^* \dots f_{a+1}^* f_{a-1}^* \dots f_{b-1}^*)\]
as claimed. The last remark follows by using the first braid relation to commute $f_{a-1}^* \dots f_{b-1}^*$ left.
\end{proof}

\begin{lemma}
    Flagged Young tableaux satisfy the extension property from \ref{thm:principalmoreusable}.
\end{lemma}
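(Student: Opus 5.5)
We must show: if $x\in X$ is extremal with $e_i(x)=0$ for all $i\le n+1$, and both $F_{n,a}F_{n+1,b}(x)$ and $F_{n+1,b-1}(x)$ lie in $X$ with $a\ge b$, then $F_{n,a}F_{n+1,b-1}(x)\in X$. The plan is to use Lemma \ref{lemma:symbolicmanipulation} to rewrite the target as
\[F_{n,a}F_{n+1,b-1}(x)=f_{n+1}^*\bigl(F_{n,b-1}F_{n+1,a+1}(x)\bigr).\]
For flagged tableaux, membership in $X$ only requires that no entry exceeds its row's flag. Since $x$ is extremal and each $f_j^*$ acts on the content by transposing $j$ and $j+1$, every intermediate element is extremal and determined by its weight. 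So the question is purely where the entries of each value sit relative to the flags.

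\textbf{Step 1 (the entries that move).} Because $e_i(x)=0$ for $i\le n+1$, $x$ restricted to values $\le n+2$ is the highest weight filling: in the relevant region, row $r$ contains entries $\le$ its top value and these are arranged column-strictly. I would track which \emph{positions} change under $F_{n+1,b-1}$. This operator takes the unpaired $b-1$'s, pushes them to $b$, then to $b+1$, and so on up to $n+2$. By the pairing rule, the entries that move are the rightmost unpaired ones, i.e.\ a specific set of boxes $P$. Lemma \ref{lemma:skewrightunpairs} is the tool that identifies the rightmost unpaired $i$ after each $f_{i-1}^*$ or $f_{i+1}^*$, so that these boxes can be located.

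Likewise $F_{n,a}$ applied afterward moves a set of boxes $Q$ carrying values $a,\dots,n$ up by one step along the chain.

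\textbf{Step 2 (compare the two hypotheses with the target).}
\begin{itemize}
\item From $F_{n+1,b-1}(x)\in X$: every box of $P$ can hold its final value $n+2$ within its flag.
\item From $F_{n,a}F_{n+1,b}(x)\in X$: every box moved by $F_{n,a}$ after $F_{n+1,b}$ stays within its flag.
\end{itemize}
For the target, I would argue that the boxes moved by $F_{n,a}$ after $F_{n+1,b-1}$ are a subset of, or dominated by, those moved after $F_{n+1,b}$. The extra step $f_{b-1}^*$ only removes some $b-1$'s from consideration, and since $a\ge b$, lower values never feed into the chain $a\to n+1$ beyond what already happened. Meanwhile the boxes reaching $n+2$ are exactly those in $P$, which are already known to be legal. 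The final values in each box are then bounded by the values they receive in one of the two hypothesis tableaux.

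\textbf{Step 3 (the main obstacle).} The hard part is the interaction at value $n+1$: in the target, $F_{n,a}$ creates new $n+1$'s in boxes of $Q$. We must check these do not exceed flags, knowing only that in $F_{n,a}F_{n+1,b}(x)$ the corresponding boxes received $n+1$ legitimately. Here I would use the rewritten form $f_{n+1}^*F_{n,b-1}F_{n+1,a+1}(x)$ together with bad-entry reasoning.
\begin{itemize}
\item Suppose some box becomes bad, i.e.\ an entry equal to its row flag $j$ is raised.
\item Since flags are nondecreasing and tableaux are column-strict, the raised entry can be chosen as the rightmost unpaired $j$ at that stage.
\item Lemma \ref{lemma:skewrightunpairs} then shows that this same box is also raised in one of the two hypothesis tableaux, which contradicts their membership in $X$.
\end{itemize}
I expect this case analysis, on whether the offending value is $\ge a$ or lies in $[b-1,a)$, to be the bulk of the work.
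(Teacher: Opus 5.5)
Your opening move matches the paper's. By Lemma \ref{lemma:symbolicmanipulation} the target is $f_{n+1}^*$ applied to something, and $F_{n,a}$ never touches entries $n+2$. So the $(n+2)$'s of $F_{n,a}F_{n+1,b-1}(x)$ are exactly those of $F_{n+1,b-1}(x)$, and hence legal. There is a genuine gap after that, though.

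Everything else rests on two assumptions: that each $F$-chain pushes a fixed set of boxes $P$, $Q$ up one value at a time, and that the target is therefore boxwise dominated by the two hypothesis tableaux. Both are false. A freshly created $j$ can steal the partner of an old $j$, so $f_j^*$ may move boxes other than the ones $f_{j-1}^*$ just created.

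Here is a concrete example. Take the antidiagonal shape $(4,3,2,1)/(3,2,1)$. No two boxes share a row or column, so every word is a tableau, with the first letter in row $4$. Let $x$ have reading word $3\,1\,2\,1$, which is highest weight, and take $n=a=b=2$.
\begin{itemize}
\item $F_{3,1}(x)=4\,2\,4\,1$. Here $f_1^*$ raises the second letter, but then $f_2^*$ raises the third, because the new $2$ steals the $3$.
\item $F_{2,2}F_{3,2}(x)=4\,1\,3\,1$.
\item The target is $F_{2,2}F_{3,1}(x)=4\,3\,4\,1$.
\end{itemize}
With flag $(1,4,4,4)$ all of these lie in $X$. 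The row-$3$ box holds $3$ in the target but only $1$ and $2$ in the two hypotheses. So your Step 2 claim, that each box's final value is bounded by its value in one of the hypothesis tableaux, is false. Your Step 3 contradiction, ``this same box is also raised in one of the two hypothesis tableaux,'' fails for the same reason. The target is legal here only because a \emph{different} entry, further right in the reading word (the row-$2$ box, holding $4$ in $F_{3,1}(x)$), forces a large flag via monotonicity. Lemma \ref{lemma:skewrightunpairs} is a one-step comparison of $f_{i\pm1}(x)$ with $f_{i\pm1}^*(x)$; it cannot by itself compare two long chains started from different tableaux.

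What is missing is a mechanism that controls these pairing rearrangements along the whole chain. The paper supplies it as follows.
\begin{itemize}
\item It braid-moves the target to $F_{n,a}F_{n+1,a+1}f_{a-1}^*(z)$, where $z=f_{a-2}^*\cdots f_{b-1}^*(x)$.
\item It compares $z'$ (with the extra $f_{a-1}^*$) against $z''$ (without it). By Lemma \ref{lemma:unpairedsubset}, the unpaired $(n-1)$'s of $f_n^*(z'')$ are among those of $f_n^*(z')$.
\item It forms $y$ from $f_n^*(z')$ by raising exactly those $(n-1)$'s to $n$ and making them inert for a modified operator $\tilde f_{n-1}$. It then checks that $y$ is a tableau with $e_n(y)=0$, using Lemma \ref{e_i=0}.
\end{itemize}
This collapses the problem to a local three-operator statement: $f_n^*f_{n+1}^*y\in X$ and $f_n^*\tilde f_{n-1}^*y\in X$ together force $f_n^*f_{n+1}^*\tilde f_{n-1}^*y\in X$. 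That statement is proved by a bracket chase on the rightmost unpaired $n$, which is where flag monotonicity enters. The two inputs are then traced back to $F_{n,a}F_{n+1,b}(x)\in X$ and $F_{n+1,b-1}(x)\in X$.

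Your proposal has no substitute for this reduction. The case analysis you defer to the end is essentially the whole content of the lemma.
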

\begin{proof}
    As this proof is somewhat involved, we start by sketching the structure of the argument. In the first part of the proof, we seek to reduce our claim, which is currently about a long sequence of operators, to a simpler statement involving only three operators $f_n^*,f_{n+1}^*,f_{n-1}^*$. To do this, we define an auxiliary tableau $y$ and modified crystal operator acting on $y$ so that our original claim becomes equivalent to a much simpler result about $y$. We end the proof by proving this result about $y$ and then finally relate all of our work back to our original tableaux.

    First, recall the extension property from \ref{thm:principalmoreusable}:
    
    \textbf{Extension Property}: Let $x\in X$ be extremal such that $e_i(x)=0$ for all $i\leq n+1$. Suppose that $F_{n,a}F_{n+1,b}x,F_{n+1,b-1}(x)\in X$ with $b\leq a$. Then $F_{n,a}F_{n+1,b-1}(x)\in X$. 

    By Lemma \ref{lemma:symbolicmanipulation}, we have
    \begin{equation}\label{equation:F_na}
    F_{n,a}F_{n+1,b-1}(x) = f_{n+1}^*(F_{n,a}F_{n+1,a+1}f_{a-1}^*(f_{a-2}^*\dots f_{b-1}^*(x))).
    \end{equation}
    By assumption $F_{n+1,b-1}(x)\in X$, so applying the leftmost $f_{n+1}$ on the right side of Equation \ref{equation:F_na} cannot break the flagging conditions. Therefore, it suffices to show that 
\[F_{n,a}F_{n+1,a+1}f_{a-1}^*(f_{a-2}^*\dots f_{b-1}^*x)\in X\]
    
    Now define the following auxiliary shuffle tableaux:
    \begin{enumerate}
        \item $z:=f_{a-2}^*\dots f_{b-1}^*(x)$.
        \item $z':=f_{n-2}^*\dots f_a^*\mathbf{f_{a-1}}^*f_{n-1}^*\dots f_{a+1}^*(z)$.
        \item $z'':=f_{n-2}\dots f_a^*f_{n-1}^*\dots f_{a+1}^*(z)$ (the same as $z'$, only without the bolded $f_{a-1}^*$).
    \end{enumerate}
    To motivate these definitions, note that
    \[
    F_{n,a}F_{n+1,a+1}f_{a-1}^*(f_{a-2}^*\dots f_{b-1}^*)x = f_n^*f_{n-1}^*f_{n+1}f_n^*z'
    \]
    and
    \[
    F_{n,a}F_{n+1,a+1}(f_{a-2}^*\dots f_{b-1}^*)x = f_n^*f_{n-1}^*f_{n+1}^*f_n^*z''.
    \]
    We'll later show that $f_n^*f_{n-1}^*f_{n+1}f_n^*(z'')\in X$ (this claim makes intuitive sense as removing the $f_{a-1}^*$ from $z'$ stops the extension by $f_{b-1}^*$ from 'reaching' the leftmost $f_n^*$). Our final goal is to show that $f_n^*f_{n-1}^*f_{n+1}f_n^*z'\in X$.
    

    By exactly the same argument as in Lemma \ref{lemma:unpairedsubset}, the set of unpaired $n-1$s in $f_n^*(z'')$ forms a subset of the unpaired $n-1$s in $f_n^*(z')$. Thus, define a final auxiliary skew tableau $y$ and a `modified' crystal operator $\tilde{f}_{n-1}$ as follows:
    \begin{enumerate}
        \item Start with $f_n^*(z')$. Raise each unpaired $n-1\mapsto n$ in $f_n^*(z')$ iff the corresponding $n-1$ is also unpaired in $f_n^*(z'')$.
        \item The operator $\tilde{f}_{n-1}$ acts on $y$ as follows. After forming the reading word of $y$ as usual, remove each of the $n$s created in the last step. Then, apply $f_{n-1}^*$ to the reading word as usual.
    \end{enumerate}

    We now show that $y$ is a valid skew tableau and that $e_n(y)=0$. 
    \begin{claim}
        $y$ is a valid skew tableau.
    \end{claim}
    We show that the rows of $y$ are weakly increasing while the columns are strictly increasing. Consider some $\underline{\red{n-1}}$ which we raise to an $n$ in (i) while constructing $y$. First, since the $\underline{\red{n-1}}$ is also unpaired in $f_n^*(z'')$, it cannot be directly above an $n$ in $f_n^*(z'')$. It thus also cannot be in the same column as an $n$ in $f_n^*(z')$, and so the columns remain strict. Similarly, for the rows, since the $\underline{\red{n-1}}$ is unpaired in $f_n^*(z'')$, there cannot be any paired $n-1$s to its right in $f_n^*(z'')$. Thus every $n-1$ right of the $\underline{\red{n-1}}$ in $f_n^*(z')$ is unpaired in $f_n^*(z'')$, and so we also raise them to $n$s while constructing $y$. Therefore, rows remain nondecreasing from left to right in $y$. 

    \begin{claim}
        $e_n(y)=0$.
    \end{claim}
    Note that the set of $n$s and $n+1$s in $y$ equals the set of $n$s and $n+1$s in $f_{n-1}^*f_n^*(z'')$ by construction. Then, since $e_n(z'')=e_{n-1}(z'')=0$ by Lemma \ref{e_i=0}, $e_n(y)=0$.

    Note that by our definition of $y$ and $\tilde{f}_{n-1}$, we have that $f_n^*f_{n+1}^*\tilde{f}_{n-1}(y)=f_n^*f_{n+1}^*f_{n-1}^*f_n^*(z')$. The following is the final main claim of our proof.

    \begin{claim}
        If $f_n^*f_{n+1}^*y,f_n^*\tilde{f}_{n-1}^*(y)\in X$ then $f_n^*f_{n+1}^*\tilde{f}_{n-1}^*(y)\in X$.
    \end{claim}
    Suppose for the sake of contradiction that $f_n^*f_{n+1}^*\tilde{f}_{n-1}^*(y)\not\in X$ while both $f_n^*f_{n+1}^*y,f_n^*\tilde{f}_{n-1}^*(y)\in X$. Denote the rightmost unpaired $n$ in $f_{n+1}^*\tilde{f}_{n-1}^*y$ by $\boxed{\blue{n}}$. First suppose that the $\boxed{\blue{n}}$ was not present in $y$. It was then created by an $n-1\mapsto \boxed{\blue{n}}$ but by assumption is paired in $\tilde{f}_{n-1}^*y$. This though is a contradiction, as the $n-1\mapsto\boxed{\blue{n}}$ would unpair an $n$ to its right in $\tilde{f}_{n-1}^*y$.

    We can thus assume that the $\boxed{\blue{n}}$ is present but paired in $y$ with an $\underline{\red{n+1}}$. The $\boxed{\blue{n}}$ is still paired in $f_{n+1}^*y$. We claim in fact that it is still paired with the same $\underline{\red{n+1}}$. If not then there would be an unpaired $n+1$ in $y$ paired with an $n$ which is right of the $\underline{\red{n+1}}$. This though is impossible, as the $n+1\mapsto n+2$ would unpair an $n$ weakly right of the $\boxed{\blue{n}}$. 

    Finally, consider the rightmost unpaired \dbox{\purple{$n-1\mapsto n$}} in $y$. In order for the \dbox{\purple{$n-1\mapsto n$}} to unpair the $\boxed{\blue{n}}$, it is right of the $\underline{\red{n+1}}$. We thus get a final contradiction, as the \dbox{\purple{$n-1\mapsto n$}} would then unpair an $n$ weakly right of the $\boxed{\blue{n}}$. This completes the proof of the claim.

    Recall that by construction, $f_n^*f_{n+1}^*\tilde{f}_{n-1}^*(y)=f_n^*f_{n+1}^*f_{n-1}^*f_n^*(z')$. Thus since $f_n^*f_{n+1}^*f_{n-1}^*f_n^*(z')\not\in X$ by assumption, we either have that $f_n^*f_{n+1}^*(y)\not\in X$ or $f_n^*\tilde{f}_{n-1}^*(y)\not\in X$. We first show that $f_n^*f_{n+1}^*(y)\in X$

    \begin{claim}
        The set of $n$s and $n+1$s in $f_{n+1}^*(y)$ equals the set of $n$s and $n+1$s in $f_{n+1}^*f_{n-1}^*f_n^*(z'')$.
    \end{claim}
    We start with the set of $n$s. The set of $n$s in $f_{n+1}^*y$ equals the set of $n$s in $y$. By construction, the set of $n$s in $y$ equals the union of the set of $n$s in $f_n^*(z')$ and the set of $n$s obtained by raising $n-1\mapsto n$ for every $n-1$ in $f_n^*(z')$ that is also unpaired in $f_n^*(z'')$. Since the set of $n$s in $f_n^*(z')$ equals the set of $n$s in $f_n^*(z'')$, we thus get that the set of $n$s in $f_{n+1}(y)$ equals the set of $n$s in $f_{n+1}^*f_{n-1}^*f_n^*(z'')$. Now consider the set of $n+1$s. The set of $n+1$s in $f_{n+1}^*(y)$ equals the set of $n+1$s in $f_{n+1}^*f_n^*(z')$, which in turn equals the set of $n+1$s in $f_{n+1}^*f_{n-1}^*f_n^*(z'')$. This thus completes the claim.

    Thus $f_n^*f_{n+1}^*(y)\in X$ iff $f_n^*f_{n+1}f_{n-1}^*f_n^*(z'')\in X$. We have,
    \[
    f_n^*f_{n+1}^*f_{n-1}^*f_n^*(z'') = f_n^*f_{n+1}^*f_{n-1}^*f_n^*(f_{n-2}^*\dots f_a^*f_{n-1}^*\dots f_{a+1}^*(z)) = F_{n,a}F_{n+1,a+1}(z) = F_{n,a}f_{a-2}^*\dots f_{b-1}^*F_{n+1,a+1}(x).
    \]
    Since $F_{n+1,b}(x)\in X\implies f_{a-2}^*\dots f_{b-1}^*F_{n+1,a+1}(x)\in X$, it suffices to show that $F_{n,a}F_{n+1,a+1}(x)\in X$.
    We have that $F_{n,a,b}F_{n+1,a+1}(x)\in X$, and so $F_{n,a}F_{n+1,a+1}(x)\in X$ as hoped. Thus $f_n^*f_{n+1}^*f_{n-1}^*f_n^*(z'')\in X$.

    We thus get that $f_n^*\tilde{f}_{n-1}^*(y)\not\in X$. By construction, $f_n^*\tilde{f}_{n-1}^*(y) = f_n^*f_{n-1}^*f_n^*(z')$. We then have,
    \[
    f_n^*f_{n-1}^*f_n^*(z') = f_n^*f_{n-1}^*f_n^*(f_{n-2}^*\dots f_a^*\mathbf{f_{a-1}}f_{n-1}^*\dots f_{a+1}^*(z)) = F_{n,b-1}F_{n,a+1}(x).
    \]
    The last equality follows by the definition of $z$. We can then braid relate $F_{n,a+1}$ leftwards to get
    \[
    F_{n,b-1}F_{n,a+1}x = F_{n-1,a}F_{n,b-1}(x)\not\in X.
    \]
    This though is a contradiction, as we assumed at the beginning that $F_{n,a}F_{n+1,b-1}(x) = f_n^*f_{n+1}^*(F_{n-1,a}F_{n,b-1}(x))$ had a bad $n\mapsto n+1$ and $F_{n+1,b-1}(x) \in X$. This finally completes the proof.
    \end{proof}

    We now prove the gluing property for flagged skew tableaux. This proof is much simpler than that of the extension property.
    \vspace{1em}

    \begin{lemma}
        The gluing property in Theorem \ref{thm:principalmoreusable} holds for flagged skew tableaux.
    \end{lemma}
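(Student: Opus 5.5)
The plan is to argue by contradiction: assume that both $f_{a+k}^*\cdots f_a^*f_{a-1}^*(x)$ and $f_{a-m}^*\cdots f_{a-1}^*f_a^*(x)$ violate the flag. From each violation I will extract a specific bad entry, and then use that the flag $\vec{b}$ is nondecreasing, together with the row and column conditions of $x$, to show the two bad entries cannot coexist.

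First I will describe these operators concretely on $x$. Since $x$ is extremal and $e_i(x)=0$ for $a-m\le i\le a+k$, each $f_i^*$ in this range changes \emph{all} entries $i$ that are unpaired at that moment into $i+1$s, without disturbing the pairing of other letters (this is essentially Lemma \ref{lemma:subsetlemma}). In particular, $f_{a+k}^*\cdots f_a^*(x)$ is a cascade. The unpaired $a$s become $a+1$s, which together with the already unpaired $a+1$s become $a+2$s, and so on. An entry can violate the flag only when it is an entry $j$ in a row $r$ with $b_r=j$ and it is raised to $j+1$.

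Next I will compare each expression with the one known to lie in $X$.

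\emph{First expression.} Because $f_{a+k}^*\cdots f_a^*(x)\in X$, any violation in $f_{a+k}^*\cdots f_a^*f_{a-1}^*(x)$ must come from the extra letters introduced by $f_{a-1}^*$. These are the unpaired $a-1$s of $x$, now $a$s, which also join the cascade. Using Lemma \ref{lemma:skewrightunpairs}(i), I expect to show that the set of entries moved by the cascade only grows by these new $a$s, and by entries they unpair, which lie to their right. So there is an entry of $x$ equal to $a-1$, or an entry unpaired by it, sitting in a row $r$ whose flag $b_r$ satisfies $a\le b_r\le a+k$, and that entry is pushed past $b_r$.

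\emph{Second expression.} Symmetrically, because $f_{a-m}^*\cdots f_{a-1}^*(x)\in X$, a violation in $f_{a-m}^*\cdots f_{a-1}^*f_a^*(x)$ must come from the downward cascade changing. Applying $f_a^*$ first removes $a$s and so unpairs additional $a-1$s; this is where Lemma \ref{lemma:skewrightunpairs}(ii) is used. The result is an entry $j$ with $a-m\le j\le a-1$ in a row $r'$ with $b_{r'}=j$ that is raised.

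Finally I will compare the rows $r$ and $r'$. The offending entry from the first expression is a formerly unpaired $a-1$, whose row has flag at least $a$. The offending entry from the second expression is ultimately unpaired only because of an $a$ that $f_a^*$ raised, so it lies weakly left of, and by column strictness weakly below, an entry of that cascade. Since both chains start at unpaired $a$ and $a-1$ entries to the right in the reading word, I expect to get $r'\ge r$. Nondecreasing flags would then give $b_{r'}\ge b_r\ge a$, which contradicts $b_{r'}=j\le a-1$.

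The main obstacle is making this comparison of rows rigorous. The offending entries may have been moved several times in their cascades, so I must track each one back to a specific unpaired $a-1$ or $a$ in $x$. I would first settle the case $k=m=0$, where only $f_{a-1}^*$ and $f_a^*$ appear. I would then reduce the general case to it by showing, as in Lemma \ref{lemma:skewrightunpairs}, that the rightmost unpaired letter at each stage of a cascade descends from the rightmost one at the previous stage.
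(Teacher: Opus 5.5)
\textbf{There is a genuine gap in the final step.} Your set-up is sound: the contradiction hypothesis, $f_i^*$ raising exactly the currently unpaired $i$'s, and chaining rightmost unpaired letters via Lemma \ref{lemma:skewrightunpairs}. But the step that carries the whole lemma, ``I expect to get $r'\ge r$'', is never argued, and the heuristic you give for it points the wrong way.

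\begin{itemize}
\item An entry $j$ that becomes unpaired because some $j+1$'s were raised away must lie to the \emph{right} of such a $j+1$ in the reading word, since an opening bracket only matches closing brackets to its right. So it sits weakly \emph{above} that $j+1$, and nothing pushes the second offending entry below a row of flag $\ge a$.
\item The bound $b_r\ge a$ holds for \emph{every} unpaired $a-1$ of $x$, simply because $f_{a-1}^*(x)\in X$. So the first violation has not actually been used, and it must be.
\end{itemize}

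Here is an example. Take the skew shape $(3,2,1)/(2,1)$ filled with $1,2,1$ in rows $1,2,3$ (reading word $1\,2\,1$), flag $(1,3,3)$, $a=2$, $k=0$, $m=1$. Then $x$ is highest weight, $f_2^*(x),f_1^*(x)\in X$, and $x$ has an unpaired $1$ in row $3$. Yet $f_1^*f_2^*(x)$ breaks the flag in row $1$, above both that $1$ and the raised $2$. Here $f_2^*f_1^*(x)\in X$, so the lemma survives, but $r'\ge r$ cannot follow from the facts you list.

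\textbf{The missing idea.} Each violation has to be turned into a statement about where rightmost unpaired letters sit, and the proof finished with a bracket argument rather than a row comparison. This is what the paper does.

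\begin{itemize}
\item The rightmost unpaired $j$ lies in the topmost row containing unpaired $j$'s, which has the smallest flag. Hence a step $f_j^*$ leaves $X$ iff the rightmost unpaired $j$ is bad.
\item By the chain argument with Lemma \ref{lemma:skewrightunpairs}, the second violation forces the rightmost unpaired $a-1$ of $f_a^*(x)$ to lie strictly right of the rightmost unpaired $a-1$ of $x$. Call the latter $c$.
\item The first violation forces the rightmost unpaired $a$ of $f_{a-1}^*(x)$ to lie strictly right of the rightmost unpaired $a$ of $x$. By Lemma \ref{lemma:skewrightunpairs}(i), that $a$ is already unpaired after the single step $f_{a-1}$, which changes only $c$. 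This puts $c$ to the right of every unpaired $a$ of $x$.
\item The $a$'s raised by $f_a^*$ are exactly those unpaired $a$'s, all left of the unmatched closing bracket $c$. Deleting opening brackets to the left of an unmatched closing bracket cannot unmatch any $a-1$ to its right. This contradicts the second bullet.
\end{itemize}

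Your plan needs this argument, or an equivalent one, in place of the row comparison.
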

    \begin{proof}
        We first recall the statement of the gluing property.
    
\textbf{Gluing Property}: Let $k,m,a\in \N$ and $x\in X$ be extremal so that $e_i(x)=0$ for all $a-m\leq i\leq a+k$. Let $k,m,a$ be such that $f_{a+k}^*\dots f_{a}^*(x)\in X$ and $f_{a-m}^*\dots f_{a-1}^*(x)\in X$. Then either $f_{a+k}^*\dots f_{a-1}^*(x)\in X$ or $f_{a-m}^*\dots f_{a-1}^*f_a^*(x)\in X$. 

        Let $x\in X$ be extremal with $e_i(x) = 0$ for all $a-m\leq i\leq a+k$. Suppose for the sake of contradiction that $f_{a+k}^*\dots f_{a}^*x\in X$ and $f_{a-m}^*\dots f_{a-1}^*(x)\in X$ while both $f_{a+k}^*\dots f_{a-1}^*(x), f_{a-m}^*\dots f_{a-1}^*f_a^*(x)\not\in X$.  

        We start with a few observations about the structure of $x$.

        \begin{claim}
            The rightmost unpaired $\boxed{\blue{a-1}}$ in $f_a^*(x)$ is right of the rightmost unpaired $a-1$ in $x$.
        \end{claim}
        Suppose that $f_a^*(x),x$ contain the same rightmost unpaired $a-1$. As $f_a^*(x),x$ contain the same set of $a-1$s and $a-2$s, by lemma \ref{lemma:skewrightunpairs}, $f_{a-1}^*f_a^*(x),f_{a-1}^*(x)$
        contain the same rightmost unpaired $a-2$. Repeating this argument, we obtain that $f_{a-m+1}^*\dots f_{a-1}^*f_a^*(x)$ and $f_{a-m+1}^*\dots f_{a-1}^*(x)$ contain the same rightmost unpaired $a-m$. This though is contradictory, as $f_{a-m}^*\dots f_{a-1}^*(x)\in X$ while $f_{a-m}^*\dots f_{a-1}^*f_a^*(x)\not\in X$.
        
        \begin{claim}
            The rightmost unpaired $a$ in $f_{a-1}^*(x)$ is right of the rightmost unpaired $\underline{\red{a}}$ in $x$.
       \end{claim}
       This follows by the same argument as above.

        \begin{claim}
            The rightmost unpaired $a-1$ in $x$ is right of the rightmost unpaired $\underline{\red{a}}$ in $x$.
        \end{claim}
        By Lemma \ref{lemma:skewrightunpairs}, the rightmost unpaired $a-1$ in $x$ unpairs the the rightmost unpaired $a$ in $f_{a-1}^*(x)$. By the previous claim, this $a$ is right of the rightmost unpaired $\underline{\red{a}}$ in $x$. Thus, in order for the $a-1\mapsto a$ to unpair the $a$ in $f_{a-1}^*(x)$, it must be right of the $\underline{\red{a}}$. 

        We thus get the following sequence of elements:
        \[\begin{tikzcd}
	\underline{\red{a}} & {a-1} & {\boxed{\blue{a-1}}}
    \end{tikzcd}\]
        In $x$, the $\boxed{\blue{a-1}}$ is paired with an $a$. In order for the $\underline{\red{a}}$ to unpair the $\boxed{\blue{a-1}}$, we must have that there is an $a$ (weakly) left of the $\underline{\red{a}}$ paired with an $a-1$ (weakly) right of the $a-1$. We then though have the following
        \[\begin{tikzcd}
	a & {a-1} & {a-1} \\
	& {\text{unpaired}}
	\arrow[curve={height=-12pt}, no head, from=1-1, to=1-3]
    \end{tikzcd}\]
    This though is a contradiction, as the $a-1$ should pair with the $a$.
    \end{proof}

    \begin{corollary}
        Flagged skew tableaux form Demazure crystals.
    \end{corollary}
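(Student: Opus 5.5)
The plan is to combine Theorem \ref{thm:characterization} with Theorem \ref{thm:principalmoreusable}. Let $X$ be the set of skew tableaux of shape $\lambda/\mu$ flagged by $\vec{b}$, sitting inside the crystal of all skew tableaux. That ambient crystal is a disjoint union of connected highest weight crystals $\mathcal{B}(\nu)$. I would work one connected component at a time. Let $X'$ be a connected component of $X$; it lies inside a single component $\mathcal{B}(\nu)$ and, being nonempty and closed under the $e_i$, contains its highest weight element. The goal is to show that each such $X'$ is a Demazure subset of $\mathcal{B}(\nu)$. The character of $X$ is then a sum of key polynomials, and $X$ is a disjoint union of Demazure crystals.

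First, I would check that $X$ is extremal and ideal, so that Theorem \ref{thm:characterization} makes each $X'$ a union of Demazure crystals.

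\textbf{Closure under raising.} Each $e_i$ only lowers an entry, so flagged tableaux stay flagged.

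\textbf{Extremality.} Take an $i$-string $S$. Applying $f_i$ changes an $i$ to an $i+1$ in some row. Suppose some element of $S$ lies in $X$ and $f_i$ of it leaves $X$. Then the rightmost unpaired $i$ sits in a row with flag $i$, i.e. it is a \emph{bad} entry. I would argue that along the string the rightmost unpaired $i$ only moves left. Since the flag is nondecreasing down the rows and the reading word runs bottom to top, moving left in the reading word means moving weakly down in rows. Hence once an $f_i$ step is bad, every later step is also bad, and the string meets $X$ in an initial segment.

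\textbf{Reducing to $\{b\}$ or $S$.} An initial segment is not yet of the required form $\emptyset$, $S$ or $\{b\}$. To get there, I would show the stronger fact: if the first step $f_i(b)$ with $b=e_i^*(b)$ stays in $X$, then the whole string does. The reason is that the positions of the unpaired $i$'s in $b$ are fixed, and all of them lie in rows with flag $>i$ as soon as the rightmost one does. This in turn uses nondecreasing flags together with the column strictness that forces lower rows to carry larger entries.

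\textbf{Ideal property.} I would argue as the paper indicates, using braid-relation reasoning on the content of extremal elements. Since $f_j^*$ acts on content by transposition, membership of an extremal element in $X$ is determined by its content pattern row by row.

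Second, I would invoke the two lemmas just proven: flagged skew tableaux satisfy the extension property and the gluing property. By Theorem \ref{thm:principalmoreusable}, each connected $X'$, already a union of Demazure crystals, is then a single Demazure crystal. The one bookkeeping point is that the hypotheses of Theorem \ref{thm:principalmoreusable} refer to a connected union of Demazure crystals. The two lemmas were proven for an arbitrary connected subset $X$ of flagged tableaux containing $b_\lambda$, so they apply verbatim to each $X'$ with its own highest weight element in place of $b_\lambda$.

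The main obstacle I expect is the extremality step, specifically ruling out an $i$-string whose intersection with $X$ is a proper initial segment of length at least two. I would first try to show directly that an entry of value $i$ in a row with flag $i$ can never be unpaired while another unpaired $i$ lies to its left in a row with larger flag. If that fails, I would fall back on citing the analogous statement from \cite{demazureskewtableaux}.
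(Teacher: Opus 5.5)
Your architecture matches the paper's. Extremality and the ideal property give a union of Demazure crystals via Theorem \ref{thm:characterization}, and the extension and gluing lemmas fed into Theorem \ref{thm:principalmoreusable} give principality on each connected component.

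\textbf{Extremality.} Your argument in the ``Reducing to $\{b\}$ or $S$'' paragraph is correct. It is a mild variant of Lemma \ref{lemma:extremality}: you fix the top $b$ of the string and use that $f_i$ raises the same unpaired positions from right to left, all in rows weakly below the rightmost one. The paper instead uses an unpaired $i+1$ as a witness. Two corrections, though.
\begin{enumerate}
\item The sentence ``once an $f_i$ step is bad, every later step is also bad'' has the monotonicity backwards. Moving left in the reading word weakly increases the flag, so what you actually get is ``once a step is good, all later steps are good.'' The initial-segment property comes simply from closure of $X$ under $e_i$.
\item Your ``main obstacle'' is not an obstacle. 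The configuration you propose to rule out does occur: take $(2,1)/(1)$ with flag $(i,i+1)$ and both boxes equal to $i$. It is harmless, because then $f_i(b)\notin X$ and $S\cap X=\{b\}$. Extremality is already finished by your second paragraph.
\end{enumerate}

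\textbf{The gap: the ideal property.} You justify it by saying that $f_j^*$ transposes content, so membership of an extremal element is determined by its row-by-row content. This does not establish the property. The transposition action only tells you the weight of $f_{i_m}^*\cdots f_{i_1}^*(x)$. Whether that element respects the flag depends on the rows in which its larger entries sit, and that is governed by which entries were unpaired at each step of the chain. Content alone does not track this. Deleting the first-applied operator $f_j^*$ from a chain is a nonlocal operation and needs genuine tableau input.

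The paper supplies that input through Proposition \ref{prop:idealdecomp}: the ideal property follows from gluing together with the two chain-truncation properties (ii) and (iii).
\begin{enumerate}
\item Gluing is already available from the preceding lemma. Its proof uses only Lemma \ref{lemma:skewrightunpairs}, so invoking it before you know $X$ is a union of Demazure crystals is not circular.
\item Properties (ii) and (iii) follow by induction on chain length from Lemma \ref{lemma:subsetlemma}. Before the final $f_n^*$ is applied, the unpaired $n$s produced by the shorter chain form a subset of those produced by the longer chain. The latter contain no bad $n$, because applying $f_n^*$ to that element stays in $X$.
\end{enumerate}
Replace your ideal paragraph with this reduction, exactly as in Proposition \ref{prop:idealshuffle}; the rest of your plan then goes through.
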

    \begin{proof}
        By Theorem \ref{thm:principalmoreusable}, flagged skew tableaux are principal. Then, since flagged skew tableaux are also extremal and ideal, they form a Demazure crystal by theorem \ref{thm:characterization}.
    \end{proof}

\section{The ideal and extremal properties for shuffle tableaux}\label{sec:extremal+ideal}

In this section, we will prove that if $\mathcal{B}$ is a crystal of shuffle tableaux, then flagged shuffle tableaux sit inside of $\mathcal{B}$ as an extremal and ideal subset. 
\subsection{The extremal property}

Recall that $X$ is extremal if, for any $i$-string $S$ in $\mathcal{B}$, either $S \cap X = \emptyset, S$, or $\{b\}$, where $e_i(b) = 0$ (so, $b$ is the top endpoint of the $i$-string). 
\vspace{1em}

\begin{lemma}\label{lemma:extremality}
    Let $X$ be the set of shuffle tableaux satisfying flag $\vec{b}$ of skew shape $\lambda\oslash \mu$. Then, $X$ is extremal. 
\end{lemma}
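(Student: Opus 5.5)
Fix an $i$-string $S = (b_1, \dots, b_k)$ in $\mathcal{B}$ with $e_i(b_1) = 0$ and $b_{j+1} = f_i(b_j)$. I will show: if some $b_j$ with $j \geq 2$ lies in $X$, then every element of $S$ lies in $X$. Extremality follows from this. If $S \cap X$ contains some $b_j$ with $j \geq 2$, then $S \cap X = S$. Otherwise $S \cap X \subseteq \{b_1\}$, so $S \cap X$ is either $\emptyset$ or $\{b_1\}$, and $e_i(b_1) = 0$ as required.

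Every element of $S$ has the same underlying shape, and all elements differ only in which entries equal $i$ or $i+1$. Entries with any other value are untouched by $f_i$ and $e_i$, so they satisfy the flag in $b_j$ exactly when they do in $b_1$.

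\textbf{Closure under $e_i$.} Applying $e_i$ turns an $i+1$ into an $i$. This only lowers an entry, so it preserves the flag condition. Hence if $b_j \in X$, then $b_1, \dots, b_{j-1} \in X$ as well (this is the observation at the end of subsection \ref{subsection:demazurecrystals}).

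\textbf{Downward direction (the main step).} Suppose $b_j \in X$ with $j \geq 2$. I need $b_{j+1}, \dots, b_k \in X$. The only way one of these can fail the flag is if some entry $i$ in a row $r$ with $b_r = i$ (a \emph{bad} $i$) gets raised to $i+1$ somewhere along the string. The plan is to show that every entry which $f_i$ raises along the string already sits in a row with flag at least $i+1$. I will use the following structure of the shuffle crystal operators, which should be checked from the definition:
\begin{enumerate}
\item Column-paired $i/i+1$ pairs never change along an $i$-string.
\item The unpaired subword (after removing column pairs and bracket-matching) has the form $(i+1)^p\, i^q$, read left to right in reading order.
\item Along the string, $f_i$ converts the unpaired $i$'s to $i+1$'s from right to left, and $e_i$ reverses this.
\end{enumerate}
So the entries that ever change are exactly the $p+q$ positions of this unpaired block. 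In $b_1$ we have $p = 0$, so all of these positions hold unpaired $i$'s in $b_1$.

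Now use $b_j \in X$ with $j \geq 2$. The rightmost unpaired $i$ of $b_1$ has been raised to $i+1$ in $b_j$, so its row $r_0$ has flag $b_{r_0} \geq i+1$. Every other changing position comes earlier in reading order. Reading order runs bottom to top, so an earlier position is in the same row or a lower row. Since $\vec{b}$ is nondecreasing, each such position lies in a row with flag at least $b_{r_0} \geq i+1$. Therefore raising any of them to $i+1$ respects the flag, and every $b_{j'}$ with $j' \geq j$ lies in $X$.

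\textbf{Expected obstacle.} The delicate point is the claim that all entries affected along the string lie weakly below the first raised entry in the shuffle diagram. Shuffle diagrams interleave the rows of $\lambda/\mu$ and $\nu/\rho$. I must make sure the flag is indexed by the rows of the shuffle diagram, so that the reading order (bottom to top) is compatible with the flag being nondecreasing. If instead the two tableaux carry separate flag indices, I would compare rows via the interleaving: row $t$ of each shape is matched with row $t$ of the other and both get flag $b_t$. The monotonicity argument still goes through because the interleaving preserves the vertical order of rows.
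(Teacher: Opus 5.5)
Your proof is correct and follows essentially the paper's argument: an entry that $f_i$ raises sits earlier in the bottom-to-top reading word than an $i+1$ already present in a flagged element of the string, so by monotonicity of $\vec{b}$ its row has flag at least $i+1$. The paper makes this comparison locally at each $x\in S\cap X$ with $e_i(x),f_i(x)\neq 0$, using only that an unpaired $i+1$ must lie right of the rightmost unpaired $i$, so it never needs your structural facts (i)--(iii); you instead make the comparison once, from the head $b_1$. One slip: the unpaired subword is $i^q(i+1)^p$, not $(i+1)^p i^q$, since an unpaired $i+1$ left of an unpaired $i$ would be matched; this is harmless because you only use it at $b_1$, where $p=0$, and your item (iii) is stated correctly.
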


\begin{proof}
        Let $S$ be an $i$-string in $\mathcal{B}$ and let $x \in X \cap S$. 
        First, consider $e_i(x)$. Since $e_i$ only decreases the values in $x$, if $e_i(x)\neq 0$, then $e_i(x)\in X$. Now suppose $e_i(x) \neq 0$ and $f_i(x)\not=0$. Since $f_i(x)\not=0$, there is an unpaired $i$ in $x$; consider the rightmost one in the reading word of $x$. Moreover, since $e_i(x)\neq 0$, there is also an $i+1\in x$ that is not paired with an $i$. If this $i+1$ is left of the rightmost unpaired $i$, they would be paired. Thus, this $i+1$ is right of the rightmost unpaired $i$, and so the $i$ is in a row with flag at least $i+1$ (since the flag $\vec{b}$ is nondecreasing). Therefore, $f_i(x)\in X$. 
        
        Iterating the above argument, if there is any $x \in S \cap X$ with $e_i(x) \neq 0$ and $f_i(x) \neq 0$, then $S \cap X = S$. Therefore, either $S \cap X$ is empty, $S$, or the top endpoint of $S$.
\end{proof}

\subsection{The ideal property}

In section $2$ of the appendix \ref{sec:appendix}, we decompose the ideal property into the following easier to use characterization.
\vspace{1em}
\begin{proposition}\label{prop:idealdecomp}
    Let $X$ be a subset of a crystal. Suppose that $X$ satisfies the following list of ideal properties. 
    \begin{enumerate}
        \item The gluing property from the statement of Theorem \ref{thm:principalmoreusable}.
        \item Let $x\in X$. Suppose that $f_n^*f_{n-1}^*\dots f_{n-k}^*(x)\in X$. Then $f_n^*\dots f_{n-k+1}^*(x)\in X$.
        \item Let $x\in X$. Suppose that $f_n^*f_{n+1}^*\dots f_{n+k}^*(x)\in X$. Then $f_n^* f_{n+1}^* \dots f_{n+k-1}^*(x)\in X$. 
    \end{enumerate}
    Then $X$ is ideal.
\end{proposition}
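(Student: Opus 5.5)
We have to show that $X$ is ideal. That is, whenever $x,y\in X$ are extremal and $x=e_j^*e_{i_1}^*\cdots e_{i_m}^*(y)$, we need $f_{i_m}^*\cdots f_{i_1}^*(x)\in X\sqcup\{0\}$. The plan is to work entirely at the level of extremal elements, where the operators $f_i^*$ act on weights by the simple transpositions $s_i$ and satisfy the braid relations. This turns the statement into one about words in the $f_i^*$, and the three hypotheses become rewriting moves.

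First I reformulate. Put $z=f_{i_m}^*\cdots f_{i_1}^*(x)$. Since $e_j^*$ and $f_j^*$ are inverse on extremal elements, $y=f_{i_m}^*\cdots f_{i_1}^*f_j^*(x)=w\,f_j^*(x)$, where $w=f_{i_m}^*\cdots f_{i_1}^*$. The claim is therefore a deletion statement: if $w f_j^*(x)\in X$ with $x\in X$, then $w(x)\in X$ (or $w(x)=0$). I may assume the word $w s_j$ is reduced, with each $f^*$ acting nontrivially. Otherwise some factor acts as the identity on the weight, and I can cancel it and reduce the length.

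The main step is an induction on the length of $w$. I write $w$ using braid moves so that the letter adjacent to $f_j^*$ forms, together with $f_j^*$, a consecutive descending run $f_n^*f_{n-1}^*\cdots f_{n-k}^*$ or an ascending run $f_n^*f_{n+1}^*\cdots f_{n+k}^*$. Hypotheses (ii) and (iii) then delete the innermost letter of such a run while staying in $X$. Concretely, I factor the reduced word $w$ as $w=u\cdot v$, where $v$ is the maximal consecutive run ending next to $s_j$. Each reduced word in type $A$ can be brought into a product of such runs (the standard factorization of a permutation as a product of descending segments $F_{i,a}$, as in Proposition \ref{rmk:algorithm}), and I apply (ii) or (iii) to $v f_j^*(\,\cdot\,)$ applied to $x$. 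This gives $v(x)$-type elements in $X$, and I propagate outward by induction on the number of runs. In doing this I use the closure of $X$ under the $e_i$, and the fact that $X$ contains every intermediate element of a chain whose endpoint lies in $X$. That fact follows from the definition of ideal in reverse and is exactly what (ii) and (iii) encode locally.

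The obstacle is the interaction between two runs meeting at the deleted letter, where one run increases and the other decreases around index $j$. Neither (ii) nor (iii) alone applies there. This is precisely the configuration in the gluing property (i): an element $x$ with $f_{a+k}^*\cdots f_a^*(x)$ and $f_{a-m}^*\cdots f_{a-1}^*(x)$ both in $X$. Gluing lets me merge them into a single run containing both $f_a^*$ and $f_{a-1}^*$, after which (ii) or (iii) applies again. The hypotheses $e_i(x)=0$ in the gluing property must be checked. I would secure them by choosing, at each inductive stage, the element from which the remaining word acts to be the top of the relevant strings, using extremality and Lemma \ref{lemma:subsetlemma}-type arguments.

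The delicate part I expect is the bookkeeping showing that every reduced word can be steered, via braid relations alone, into one of these three local configurations without losing membership in $X$ along the way. I would handle this by an induction on length combined with the exchange condition for reduced words in $S_n$.
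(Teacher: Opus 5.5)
\textbf{There is a genuine gap.} Your reformulation (delete the innermost $f_j^*$ from $wf_j^*(x)$) and your list of ingredients are sensible, but the proposal stops where the actual proof begins. Hypotheses (ii) and (iii) only remove the innermost letter of a single run acting on an element of $X$. After using them you know $v(x)\in X$ and $uvf_j^*(x)\in X$, and nothing you wrote yields $uv(x)\in X$. This outward propagation is the whole content of the argument, not bookkeeping.

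Your first step also cannot be carried out as stated. If $f_j^*$ must stay innermost on $x$, its neighbour need not extend it to a consecutive run: for $w=s_{j+1}s_{j+2}$ (with $ws_j$ reduced), every reduced word of $w$ ends in $s_{j+2}$. And once you let the deleted letter move, commuting it through a long run shifts its index, as in the proof of Lemma \ref{lemma:symbolicmanipulation}.

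The paper avoids this in three steps.
\begin{enumerate}
\item It writes the \emph{longer} element canonically, $z=F_{1,a_1}\cdots F_{n,a_n}(b_\lambda)$.
\item It uses the subword property of Bruhat order to see that the shorter element arises by deleting some $f_a^*$ from inside one run, $F_{k,a_k}\mapsto F_{k,a+1}f_{a-1}^*\cdots f_{a_k}^*$.
\item It inducts over the outer runs with an explicit case split.
\begin{itemize}
\item If $a\le a_{k-1}$, Lemma \ref{lemma:symbolicmanipulation} pulls an $f_k^*$ to the front. This either gives $z=f_k^*(y)$ outright or moves the deletion into the $(k-1)$st run, where induction applies.
\item If $a>a_{k-1}$, Lemma \ref{lemma:techlemidealprop} forces $y=0$ unless $a_k>a_{k-1}$. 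One then splits on whether $F_{k,a_{k-1}}f_{a-1}^*\cdots f_{a_k}^*(u)\in X$. In one subcase you use Lemma \ref{lemma:braidextend} with hypothesis (ii); in the other you use hypothesis (iii) plus gluing along an increasing staircase (Claim \ref{claim:speccase}).
\end{itemize}
\end{enumerate}

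Your account of gluing also does not work as written. Gluing does not simply merge two runs: its conclusion is a disjunction, and you give no way to force the alternative you need. In the paper, that is exactly the job of the non-membership hypothesis $F_{k,a_{k-1}}f_{a-1}^*\cdots f_{a_k}^*(u)\notin X$ in the last subcase, which rules out one alternative.

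Likewise, the condition $e_i=0$ on a whole interval of indices comes from Lemma \ref{lemma:e_i=0cor}, which needs the strictly increasing staircase $a_{k-m+1}<\cdots<a_k<a$. Lemma \ref{lemma:subsetlemma} is a statement about unpaired entries of shuffle tableaux and gives nothing for an abstract subset $X$.

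Finally, membership of the intermediate elements of a chain should be justified by closure of $X$ under the $e_i$, which is automatic for extremal $X$. Justifying it by ``the definition of ideal in reverse'' is circular, since idealness is what you are proving.
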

\vspace{1em}
\begin{proposition}\label{prop:idealshuffle}
    Let $\lambda/\mu$ and $\nu/\rho$ be skew shapes, with $\lambda$ and $\nu$ strict partitions. Then, the set $X$ of shuffle tableaux of shape $(\lambda/\mu)\circledast(\nu/\rho)$ flagged by a nondecreasing $\vec{b}$ is an ideal subset.
\end{proposition}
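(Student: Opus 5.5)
The plan is to verify the three hypotheses of Proposition \ref{prop:idealdecomp} for the set $X$ of flagged shuffle tableaux; Proposition \ref{prop:idealdecomp} then gives that $X$ is ideal. I treat the two ``truncation'' conditions (ii) and (iii) first, since they are local statements about a chain of $f^*$'s. The gluing condition (i) I handle by transplanting the argument already given for flagged skew tableaux.

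For condition (ii), suppose $x\in X$ and $f_n^*\cdots f_{n-k}^*(x)\in X$, and write $x'=f_{n-k}^*(x)$. I want to show that $f_n^*\cdots f_{n-k+1}^*(x)\in X$. The obstruction would be a \emph{bad} entry: some $j$ sitting in a row with flag $j$ that gets raised by $f_j^*$ in the shorter chain but not in the longer one. I will track, step by step, the rightmost unpaired $j$ in the two chains. I would prove the shuffle analogue of Lemma \ref{lemma:skewrightunpairs}. In that analogue, column-paired $j,j+1$ are removed first, and the flag hypothesis together with strictness of $\lambda,\nu$ ensures that column pairings are unaffected by the extra $f_{n-k}^*$ except in controlled ways.

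Using this analogue, I would show inductively that the set of unpaired $j$'s raised at step $j$ of the shorter chain is contained in the set raised at step $j$ of the longer chain. This mirrors the subset statement of Lemma \ref{lemma:subsetlemma}. Since the flag condition is monotone in the entries (a nondecreasing flag, with entries only increasing), containment forces the shorter result to satisfy the flag. Condition (iii) is symmetric: there I compare $f_n^*\cdots f_{n+k}^*(x)$ with $f_n^*\cdots f_{n+k-1}^*(x)$. The two chains differ only in the $n+k$'s versus $n+k+1$'s, and the differences propagate downward through the chain. Part (ii) of Lemma \ref{lemma:skewrightunpairs}, adapted to shuffle tableaux, should show that every raised entry in the shorter chain already occurs, in the same box, in the longer chain.

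For condition (i), gluing, I would copy the proof for skew tableaux almost verbatim. Its three claims locate rightmost unpaired $a-1$'s and $a$'s, using the shuffle version of Lemma \ref{lemma:skewrightunpairs}, and it ends with the contradiction configuration ``$a$, unpaired $a-1$, $a-1$ with the outer pair matched''. The only new feature is column pairing. A column-paired $a$ above an $a-1$ is invisible to the matching, so I must check that the contradiction pattern is not created or destroyed by column pairs. This is where I will use strictness of $\lambda$ and $\nu$: boxes of the two interleaved shapes in the same column stay aligned, so column pairs persist under the relevant $f^*$ operations.

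I expect the main obstacle to be the shuffle analogue of Lemma \ref{lemma:skewrightunpairs}. Applying $f_{i\pm1}$ can create or destroy column pairings, which can change which $i$ is rightmost unpaired in a way with no counterpart for ordinary tableaux. My first attempt will be a case split according to whether the newly created entry is column paired, showing in each case that the rightmost unpaired $i$ is still tracked.
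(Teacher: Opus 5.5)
Your reduction to Proposition \ref{prop:idealdecomp} matches the paper's. For (ii) and (iii), your target inclusion is exactly the paper's argument: at each step, the entries raised in the shorter chain are among those raised in the longer chain, so no bad $n$ can appear. You do not need a shuffle analogue of Lemma \ref{lemma:skewrightunpairs} or strictness for this. At each step the longer chain differs only by extra $j$'s or missing $j+1$'s. Neither change, even accounting for column pairs, can re-pair an unpaired $j$, so induction on Lemma \ref{lemma:subsetlemma} suffices.

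The genuine gap is gluing, which the paper proves separately in Section \ref{section:locprop2} by a quite different argument. Transplanting the skew-tableau proof breaks at its final step. That proof ends with an $a$ matched to an $a-1$ on its right and an unpaired $a-1$ between them in the reading word. For shuffle tableaux this is not a contradiction: by Lemma \ref{lemma:ibetweeni,i+1} it only forces the outer pair to be column paired, and that local pattern is legal. This is Case 1 of the paper's base case. There $\boxed{a-1}$ sits directly above the rightmost unpaired $\underline{a}$ of $x$, with the rightmost unpaired $a-1$ of $x$ between them in the reading word. Then $f_a$ sends $\underline{a}\mapsto a+1$, breaking that column pair, and this is how $\boxed{a-1}$ becomes unpaired in $f_a^*(x)$. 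So your proposed mechanism is false: strictness does not make column pairs persist under the relevant $f^*$'s, since breaking a column pair is exactly what $f_a^*$ does here.

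In the paper, strictness enters in a different way. The failure of $f_{a-m}^*\cdots f_{a-1}^*f_a^*(x)\in X$ produces a bad $a-m$, which by Lemma \ref{lemma:easyendofrow} is the last box of its row. Starting from the two possible base configurations, one inductively forces a staircase of column pairings up through levels $a-2,\dots,a-m$. At each level one compares the rightmost unpaired $a-i$ in $f_{a-i+1}^*\cdots f_{a-1}^*(x)$ with that in $f_{a-i+1}^*\cdots f_a^*(x)$, and rules out configurations that would repeat infinitely. The staircase ends by placing a box directly below the bad $a-m$, which is impossible because $\lambda$ and $\nu$ are strict. Without some such global argument reaching the top of the chain, gluing is not established, and so neither is ideality via Proposition \ref{prop:idealdecomp}.
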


\begin{proof}
    By Proposition \ref{prop:idealdecomp}, in combination with the gluing property from Theorem \ref{thm:principalmoreusable} proven in Section \ref{section:locprop2}, it suffices to verify ideal properties (ii) and (iii) for flagged shuffle tableaux.
    
    \textbf{Ideal property (ii):}
    Inducting on the length of our expression, we may assume that $f^*_{n-1} \dots f^*_{n-k+1}(x) \in X$. So, it suffices to show that $f^*_{n-1} \dots f^*_{n-k+1}(x)$ does not contain any unpaired `bad' $n$s (that is, entries $n$ in a row of flag $n$). 
    
    We claim that the set of unpaired $n$s in $f_{n-1}^* \dots f_{n-k+1}^*(x)$ is a subset of the unpaired $n$s in $y = f_{n-1}^* \dots f_{n-k}^*(x)$. This claim again follows by inducting on the length of the expression, and by applying Lemma \ref{lemma:subsetlemma}. Since we assumed that $f_n^*f_{n-1}^* \dots f^*_{n-k}(x) \in X$, the tableau $y$ contains no bad $n$s, as desired.

    \textbf{Ideal property (iii):} This property follows very similarly. As before, it suffices to show that $f_{n-1}^* f_{n-2}^* \dots f_{n+k-1}^*(x)$ does not contain any bad unpaired $n$s. Inducting on the length $k$ of the expression and again applying Lemma \ref{lemma:subsetlemma}, we find that the set of unpaired $n$s in $f_{n-1}^* \dots f_{n+k-1}^*(x)$ is a subset of the set of unpaired $n$s in $f_{n-1}^* \dots f_{n+k}^*(x)$. By the assumption that $f_n^*f_{n-1}^* \dots f_{n+k-1}^*(x) \in X$, we are done. 
    
    Thus, since $X$ satisfies the hypotheses in Proposition \ref{prop:idealdecomp}, $X$ is ideal. 
\end{proof}

\section{Principality for shuffle tableaux}\label{sec:principality}

In this section, we will use Theorem \ref{thm:principalmoreusable} to show that the set of shuffle tableaux of shape $(\lambda/\mu)\circledast(\nu/\rho)$ flagged by $\vec{b}$ is a principal subset of its crystal, as long as $\lambda$ and $\nu$ are strict partitions. Throughout this section, $X$ denotes the set of flagged shuffle tableaux of shape $(\lambda/\mu)\circledast(\nu/\rho)$. 


\subsection{Some preliminary lemmas}

The following unsurprising yet useful lemma states essentially that bad $i$s occur at the end of rows. 
\vspace{1em}

\begin{lemma}\label{lemma:easyendofrow}
    Suppose that $x \in X$ but $f_i(x) \notin X$. Then, the $i$ that is raised by $f_i$ is at the end of its row. 
\end{lemma}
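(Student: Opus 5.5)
Write $x' = f_i(x)$. The plan is to show directly that the raised entry is the last entry of its row, by contradiction using the shuffle-tableau pairing rules. Since $x \in X$ and $f_i$ changes exactly one entry $i \mapsto i+1$, the hypothesis $f_i(x)\notin X$ means the raised entry, call it $\boxed{i}$, lies in a row $r$ with flag $b_r = i$. Thus $\boxed{i}$ is a bad $i$ in the sense of the remark above, and every entry of row $r$ is at most $i$.

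Suppose $\boxed{i}$ is not at the end of row $r$, and let $y$ be the entry immediately to its right in the same row. Row weakness gives $y \ge i$, and the flag gives $y \le b_r = i$, so $y = i$. I will show that $y$ is an unpaired $i$ in the sense of the crystal operator and that it lies to the right of $\boxed{i}$ in the reading word; this contradicts the fact that $f_i$ raises the \emph{rightmost} unpaired $i$. The reading order is immediate, since reading goes left to right within a row. Next, $y$ is not column paired. Column pairing would require an $i+1$ directly above $y$, but the row above row $r$ in the same component shape has a flag at most $b_r = i$ (the flag is nondecreasing), so it contains no $i+1$.

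It remains to show that $y$ is not bracket-paired in the reduced reading word. An $i+1$ pairing with $y$ would have to appear to the left of $y$ in the reduced word with all intermediate $i$'s matched. Any $i+1$ to the left of $y$ in reading order either lies to the left of $\boxed{i}$ as well, or lies strictly between them; the latter is impossible because $\boxed{i}$ and $y$ are adjacent in the reading word. In the former case, standard parenthesis matching nests: if that $i+1$ matched $y$, then the intervening $\boxed{i}$ would have to be matched with some $i+1$ between it and the first one, which means $\boxed{i}$ is paired. But $\boxed{i}$ is unpaired by assumption. Hence $y$ is unpaired and to the right of $\boxed{i}$, giving the contradiction.

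The main obstacle I expect is the column-pairing step. In the shuffle diagram the rows of the two shapes interlace, so "directly above" must be interpreted correctly. I need to check that the box above $y$ in the shuffle diagram lies in a row whose flag is at most $b_r$, or that the box is absent. This should follow from how flags are assigned to interleaved rows, which are indexed in nondecreasing order down the diagram. If that fails, I would instead argue that column-paired pairs are removed symmetrically: if $y$ were column paired, the same $i+1$ would also force strictness against $\boxed{i}$ being unpaired, which yields a contradiction.
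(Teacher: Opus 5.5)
\textbf{Gap in the column-pairing step.} The problem is not the interleaving issue you anticipated; it is the direction of column pairing. In the shuffle crystal, an $i$ is column paired with an $i+1$ sitting directly \emph{below} it in the same shape. In the worked example of $f_1$, the pink $1$s in the top row sit above the pink $2$s, and the later diagrams (e.g.\ in Lemma~\ref{lemma:rightunpairs1}) also draw the $i$ above the $i+1$. The sentence in the preliminaries has the direction reversed; read literally, it describes a configuration that column strictness forbids.

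So your argument about the row above $y$ is beside the point, since column strictness alone already rules out an $i+1$ above $y$. The case that actually matters is an $i+1$ directly below $y$ in row $r+1$, and the flag does not exclude it, because $b_{r+1}\ge b_r=i$ can exceed $i$. In that case $y$ is deleted before bracketing, and your contradiction disappears. Your fallback points the right way, but the relevant $i+1$ is not ``the same $i+1$''.

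The repair goes as follows. Say $\boxed{i}$ is in box $(r,c)$ of its shape, $y$ is in $(r,c+1)$, and there is an $i+1$ in $(r+1,c+1)$. Then $(r+1,c)$ is also a box of the skew shape, because $\mu_{r+1}\le\mu_r<c\le\lambda_{r+1}$. Its entry is $>i$, since it lies below $\boxed{i}$, and $\le i+1$, since it lies left of the $i+1$; hence it equals $i+1$. Thus $\boxed{i}$ is itself column paired, contradicting that $f_i$ acts on it. With this fix, $y$ survives into the reduced word, your bracketing argument in the next step is correct, and the proof goes through.

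\textbf{Comparison with the paper.} The paper's proof is essentially one line. It uses that $f_i(x)$ is again a shuffle tableau (the Nguyen--Pylyavskyy theorem), so an $i$ to the right of the raised entry in row $r$ would leave an $i+1$ followed by an $i$ in that row of $f_i(x)$. You already rely on this fact implicitly in your first step, when you conclude that $f_i(x)\notin X$ can only mean a flag violation at the changed entry. Your bracket analysis therefore reproves a special case of the statement that $f_i$ preserves row weakness. That makes it more self-contained, but it is exactly what forces you to handle column pairing, which the paper's argument never has to touch.
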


\begin{proof}
    Let the $\red{i}$ raised by $f_i$ be in row $j$ of $\lambda$. Then, $\vec{b}(j)$ must be $i$ (otherwise $f_i(x) \in X$). Therefore, if there are any other entries to the right of this $\red{i}$ in row $j$, these entries must all also by $i$s (since $x \in X$, and the rows are nondecreasing left to right). There also though cannot be an $i$ right of the $\red{i}$ in row $j$, as then, since the $\red{i}$ maps to an $i+1$ in $f_ix$, row $j$ of $f_ix$ would violate the non-decreasing condition of $f_ix$. Thus $\red{i}$ is at the end of row $j$.
\end{proof}

\begin{lemma}\label{lemma:ibetweeni,i+1}
    Suppose $x$ is a shuffle tableau containing a sequence of entries 
    \[\begin{tikzcd}
	{i+1} & i & i \\
	& {\text{unpaired}}
	\arrow[curve={height=-12pt}, no head, from=1-1, to=1-3]
    \end{tikzcd}\]
    where the connected elements are paired, while the middle $i$ is unpaired. Then the paired $i+1,i$ are column paired.
\end{lemma}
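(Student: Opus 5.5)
The plan is a proof by contradiction built on the bracketing rule for shuffle tableaux. Recall that $f_i$ and $e_i$ work as follows: first delete the column-paired $i$/$i+1$ pairs from the reading word, then match the remaining $i+1$s (opening parentheses) with $i$s (closing parentheses) in the usual nested way. So every pairing of an $i+1$ with an $i$ is one of two kinds. It is either a \emph{column pairing}, where the $i$ sits directly beneath the $i+1$, or a \emph{bracket pairing} coming from parenthesis matching in the reduced word. I would assume the displayed $i+1$ and the right-hand $i$ are bracket paired and derive a contradiction.

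First, the middle $i$ is unpaired. In particular it is not column paired, so it survives into the reduced reading word.

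Second, if the outer $i+1$ and $i$ are bracket paired, then both also survive into the reduced word. In that word the middle $i$ lies strictly between them.

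Third, I apply the standard property of parenthesis matching. If an opening parenthesis at position $p$ is matched with a closing parenthesis at position $q>p$, then every parenthesis strictly between $p$ and $q$ is matched, and matched to something inside the interval $(p,q)$. This is because the subword between matched brackets is itself balanced. Hence the middle $i$, a closing parenthesis strictly between them, would have to be matched. That contradicts its being unpaired.

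So the outer pair cannot be bracket paired and must be column paired. The one point needing care is that the middle $i$ is genuinely unpaired in the reduced word and not merely absent from it. That is exactly the hypothesis ``unpaired'', since a column-paired $i$ counts as paired.

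I could also argue directly, in the style of the paper's earlier contradictions, that the middle $i$ ``should pair with the $i+1$''. In a greedy left-to-right matching, when the scan reaches the middle $i$ there is an open $i+1$ to its left, namely the outer one, which is not yet closed because it closes later. So the middle $i$ would get matched to it or to a later open $i+1$ rather than remain unmatched. Either way we have a contradiction.

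I expect no real obstacle. The proof should be only a few lines.
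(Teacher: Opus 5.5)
Your proof is correct and is the same argument as the paper's. If the outer $i+1,i$ were bracket paired rather than column paired, the unpaired middle $i$ (which survives into the reduced reading word) would lie inside a balanced matched interval and so would be matched, i.e.\ it ``would pair with the $i+1$''. You simply spell out the parenthesis-matching justification that the paper's one-line proof leaves implicit.
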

\begin{proof}
    If the paired $i+1,i$ were not column paired then, since it is further left and unpaired, the middle $i$ would pair with the $i+1$. Thus the paired $i+1,i$ are column paired.
\end{proof}

\begin{lemma}{\label{e_i=0}}
Let $x\in X$ be extremal so that $e_i(x)=0$. Then,
\begin{enumerate}
    \item If $e_{i-1}(x)=0$, $e_i(f_{i-1}^*f_i^*(x))=0$,
    \item If $e_{i+1}(x)=0$, $e_i(f_{i+1}^*f_i^*(x))=0$.
\end{enumerate}
\end{lemma}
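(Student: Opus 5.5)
The plan is to argue purely crystal-theoretically, using only that the crystal of shuffle tableaux is normal. By Nguyen--Pylyavskyy, each connected component is isomorphic to a tableau crystal $\mathcal{B}(\lambda)$. Restricting to the operators $e_{i-1},f_{i-1},e_i,f_i$ therefore decomposes $\mathcal{B}$ into normal $\mathfrak{gl}_3$-subcrystals, meaning Levi restrictions in the letters $i-1,i,i+1$. I will prove (i) in detail; (ii) is the mirror image, with the roles of $i-1$ and $i+1$ exchanged.

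\textbf{Step 1 (the $\mathfrak{gl}_3$-weight of $x$).} Let $C$ be the connected component of $x$ under $f_{i-1},f_i,e_{i-1},e_i$. Since $e_{i-1}(x)=e_i(x)=0$ and $C$ is normal, $x$ is the highest weight element of $C$. Write its weight in positions $i-1,i,i+1$ as $(p,q,r)$; highest-weight normality gives $p\geq q\geq r$. The fact that $x$ is extremal in the big crystal is used only to guarantee that $f_i^*$ and $f_{i-1}^*$ act on the content of $x$ by transposing entries, as the paper notes before the braid relations. So applying them keeps us among extremal elements.

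\textbf{Step 2 (the weight of $y=f_{i-1}^*f_i^*(x)$).} Normality gives $\varphi_i(x)=q-r$, so $f_i^*(x)$ has local weight $(p,r,q)$. This element is the end of its $i$-string and is extremal in $C$. Next, $\varphi_{i-1}(f_i^*x)-\varepsilon_{i-1}(f_i^*x)=p-r\geq 0$. Because extremal elements sit at endpoints of their strings, $\varepsilon_{i-1}(f_i^*x)=0$, so $f_{i-1}^*$ moves the string fully and $y$ has local weight $(r,p,q)$. Moreover $y$ is again extremal in $C$; its weight is $s_{i-1}s_i(p,q,r)$.

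\textbf{Step 3 (conclude $e_i(y)=0$).} For $y$ we have $\varphi_i(y)-\varepsilon_i(y)=p-q\geq 0$. Since $y$ is extremal, it is an endpoint of its $i$-string. If $p>q$, the only endpoint whose $\varphi_i-\varepsilon_i$ is positive is the top one, so $\varepsilon_i(y)=0$, i.e.\ $e_i(y)=0$. If $p=q$, then extremality forces the $i$-string through $y$ to be a single point, so again $e_i(y)=0$.

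The main obstacle is justifying the facts used in Steps 2--3 for shuffle tableaux: that the shuffle crystal is normal (so $\varphi_j-\varepsilon_j$ equals the weight pairing), and that extremal elements of a normal connected crystal lie at string endpoints. I would cite Nguyen--Pylyavskyy's identification of components with $\mathcal{B}(\lambda)$ to import both from the tableau case. If a more hands-on argument is wanted, Step 3 could alternatively be checked directly with the pairing rule: after $f_i^*$ and then $f_{i-1}^*$, every $i+1$ is paired with an $i$, since the $i$'s created from $i-1$'s lie left of the $i+1$'s in the reading word. That direct route is messier because of column pairing, so I would prefer the abstract argument.
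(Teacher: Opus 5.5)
Your proposal is correct and is essentially the paper's argument: the paper also uses that $f_i^*$ and $f_{i-1}^*$ act on content by transpositions to see that $f_{i-1}^*f_i^*(x)$ has at least as many $i$'s as $i+1$'s, and then uses that an extremal element sits at an endpoint of its $i$-string to conclude $e_i=0$. Your separate handling of the equality case $p=q$ is slightly more careful than the paper's proof, which asserts $f_i(f_{i-1}^*f_i^*(x))\neq 0$ even though this can fail when the two counts are equal; as you note, the conclusion $e_i=0$ still holds in that case.
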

\begin{proof}
First suppose that $e_{i-1}(x)=0$; we prove (i). Note first that because $e_{i-1}(x)=0$, every $i$ in $x$ is paired with an $i-1$. There are thus more $i-1$ in $x$ than $i$s. As $f_{i-1}^*,f_i^*$ act by $s_{i-1},s_i$ on the content of $x$, the number of $i+1$s in $f_{i-1}^*f_i^*(x)$ equals the number of $i$s in $x$ while the number of $i$s equals the number of $i-1$s in $x$. There are thus more $i$s in $f_{i-1}^*f_i^*(x)$ than $i+1$s, and so $f_i(f_{i-1}^*f_i^*(x))\not = 0$. Since $f_{i-1}^*f_i^*(x)$ is extremal though, we either have that $e_i(f_{i-1}^*f_i^*(x))=0$ or $f_i(f_{i-1}^*f_i^*(x))=0$. Thus since $f_i(f_{i-1}^*f_i^*(x))\neq 0$, $e_i(f_{i-1}^*f_i^*(x))=0$.

Now suppose that $e_{i+1}(x)=0$. Again, since $e_{i+1}(x)=0$, every $i+2$ in $x$ is paired with an $i+1$, and so there are more $i+2$s in $x$ than $i+1$s. On the other hand, the number of $i+1$s in $f_{i+1}^*f_i^*(x)$ equals the number of $i+2$s in $x$ while the number of $i$s in $x$ equals the number of $i+1$s in $x$. There are hence more $i$s than $i+1$s in $f_{i+1}^*f_i^*(x)$, and so $f_i(f_{i+1}^*f_i^*(x))\not=0$. We thus as before obtain that $e_i(f_{i+1}^*f_i^*(x))=0$.
\end{proof}

\begin{remark}
    We now briefly motivate the following two lemmas, which are the first two results whose proofs require significant work with shuffle tableaux.
    
    When we apply $f_{i-1}^*$ or $f_{i+1}^*$ to a tableau $x$, we will likely create many new unpaired $i$s (which were either paired in $x$ before or did not exist in $x$). Informally, the following two lemmas prove that these $i$s get unpaired in order from right to left (that is, the first $i$ that gets unpaired by $f_{i-1}$, for example, is the rightmost $i$ that gets unpaired by applying $f_{i-1}^*(x)$). This is intuitively reasonable, as $f_{i-1}$ always lowers the rightmost unpaired $i-1$ to an $i$, but takes some effort to prove. 
\end{remark}

\vspace{1em}

\begin{lemma}\label{lemma:rightunpairs1}
Let $x\in X$. If $f_{i-1}(x)$ contains an unpaired $i$ that is either paired or not contained in $x$, the rightmost $i$ that is unpaired in $f_{i-1}^*(x)$ is also unpaired in $f_{i-1}(x)$.
\end{lemma}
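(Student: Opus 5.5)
I would mirror the proof of Lemma \ref{lemma:skewrightunpairs}(i) and handle the extra phenomenon of column pairing separately. Let $\underline{\red{i-1}}$ be the rightmost unpaired $i-1$ in $x$, which $f_{i-1}$ turns into an $i$. In the reading word with column-paired $(i,i+1)$'s removed, this new $i$ (or, if it becomes column paired, the change it causes) frees some $\boxed{\blue{i}}$ weakly to its right. This $\boxed{\blue{i}}$ is the rightmost $i$ that is newly unpaired in $f_{i-1}(x)$; the hypothesis of the lemma guarantees that it exists.

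First I would record which pairs can change. Raising an $i-1$ to an $i$ creates no new $i+1$. It can affect $(i,i+1)$ column pairs only when the new $i$ sits directly under an $i+1$. In that case the new $i$ becomes column paired with that $i+1$, and the $i+1$ leaves the parenthesis word. I would treat this case separately. The removed $i+1$ was previously matched to some $i$ to its right, and that $i$ now becomes unpaired. That $i$ plays the role of $\boxed{\blue{i}}$, and it again lies to the right of $\underline{\red{i-1}}$ in the reading word. Here I would use semistandardness: the $i+1$ directly above $\underline{\red{i-1}}$ is read later.

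Next, suppose the rightmost unpaired $i$ in $f_{i-1}^*(x)$ lies strictly to the right of $\boxed{\blue{i}}$ and was not unpaired in $f_{i-1}(x)$. Then some later application of $f_{i-1}$ must have unpaired it. Each later step converts an $i-1$ lying to the left of $\underline{\red{i-1}}$, since $f_{i-1}$ proceeds leftward through the unpaired $(i-1)$'s. So in the relevant intermediate tableau this $i$ was paired with an $i+1$ to the left of that converted $i-1$. This produces the configuration
\[ i+1 \quad \cdots \quad \underline{\red{i-1\mapsto i}} \quad \cdots \quad \boxed{\blue{i}} \quad \cdots \quad i, \]
with the outer $i+1$ and $i$ matched and the $\boxed{\blue{i}}$ unpaired in between. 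By Lemma \ref{lemma:ibetweeni,i+1}, the outer pair would have to be column paired. That contradicts the fact that it was a parenthesis-matched pair, not a column pair, which could be broken by an $f_{i-1}$ move. In the column-pair-creating scenario, a column pair that is removed later changes only which $i+1$ is deleted, so I would rerun the same argument with that $i+1$.

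I expect the main obstacle to be the cases where later $f_{i-1}$ moves create new column pairs $(i,i+1)$, whether with $\boxed{\blue{i}}$ itself or with the blocking $i+1$. Such moves could re-pair $\boxed{\blue{i}}$, or unpair an $i$ by a mechanism other than ordinary bracket shifting. To control this, I would first show, using semistandardness of each of the two interleaved tableaux and the shuffle geometry, that an $i-1$ to the left of $\underline{\red{i-1}}$ can only sit directly under an $i+1$ that is also to its left in the reading word. Then any $i$ freed this way lies left of the $i$'s freed earlier, which gives the right-to-left ordering.
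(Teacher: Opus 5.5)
Your argument fails at the first step, in the case where the converted entry $\underline{i-1\mapsto i}$ becomes column paired. Columns of each component tableau strictly increase downward, so a column pair is an $i$ directly \emph{above} an $i+1$, as in the paper's example and pictures. Nothing equal to $i+1$ can sit directly above an $i-1$. So the $i+1$ that absorbs the new $i$ lies directly \emph{below} it and is read \emph{earlier}. The $i$ that this $i+1$ was bracket-matched with can lie in the interleaved row of the other tableau, between the two in the reading word.

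Here is a concrete instance with $i=3$. Place a red $2$ directly over a red $4$. In the blue row between them, place a blue $3$ directly under a blue $2$. Fill all other boxes with $5$; this is realizable with strict shapes, e.g.\ red shape $(3,2,1)/(2)$ and blue shape $(2,1)$. The blue $2$ is column paired for $f_2$, so the red $2$ is the rightmost unpaired $2$. Then $f_2$ makes it a $3$ column paired with the $4$. The $3$ that becomes unpaired is the blue one, which is read \emph{before} the converted entry.

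So your claim that $\boxed{i}$ lies weakly right of $\underline{i-1\mapsto i}$ is false, and the configuration $i+1\cdots\boxed{i}\cdots i$ you need no longer follows. The partner of the eventual rightmost unpaired $i$ is only known to lie left of the later converted $i-1$, and that $i-1$ can be to the right of $\boxed{i}$. Moreover, nothing in your argument excludes that rightmost unpaired $i$ being a new $i$ created between $\boxed{i}$ and $\underline{i-1\mapsto i}$.

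This left-freed situation is exactly what the paper's Cases 3 and 4.2 handle, and it needs shuffle geometry rather than bracket bookkeeping. The key chain of observations is:
\begin{enumerate}
\item Any $i-1$ read between the freed $i$ (which sits in the intermediate row) and $\underline{i-1\mapsto i}$ must lie in the same row as $\underline{i-1\mapsto i}$, to its left.
\item The box beneath that $i-1$ holds $i$ or $i+1$, by column strictness and because the box under $\underline{i-1\mapsto i}$ holds $i+1$.
\item It cannot hold $i$, since then that $i-1$ would be column paired for $f_{i-1}$ and never converted.
\item So it holds $i+1$, and the $i$ created there is itself column paired.
\end{enumerate}
This excludes a new rightmost unpaired $i$ created there. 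It also forces the partner of an old one to be read before the freed $i$, after which your appeal to Lemma~\ref{lemma:ibetweeni,i+1} goes through.

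By contrast, the obstacle your last paragraph concentrates on is harmless. If a later conversion at $q$ creates a column pair with the $i+1$ below $q$, the $i$ it frees was matched to an $i+1$ weakly left of that box, hence left of $q$. Your main argument therefore already covers it once $\boxed{i}$ is right of $\underline{i-1\mapsto i}$. The fix you sketch there (an $i-1$ sitting under an $i+1$) uses the same reversed orientation and does not address the case that actually needs work.
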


\begin{proof}
    We start by fixing some notation. Throughout this proof, let $\underline{\red{i-1\mapsto i}}$ be the rightmost unpaired $i-1$ in $x$ and $\boxed{\blue{i}}$ be the rightmost unpaired $i$ in $f_{i-1}^*(x)$. Assume for the sake of contradiction that $\boxed{\blue{i}}$ is not unpaired in $f_{i-1}(x)$. It is then unpaired after some $\Circled{\green{i-1\mapsto i}}$ unpairs it. By assumption, $\underline{\red{i-1\mapsto i}}$ unpairs some $\mathbf{\yellow{i}}$. We now work through four cases according to whether the rightmost unpaired $\yellow{\mathbf{i}}$ in $f_{i-1}(x)$ and the rightmost unpaired $\boxed{\blue{i}}$ in $f_{i-1}^*x$ are present (but paired) in $x$.
    
    \textbf{Case 1: Both the $\boxed{\blue{i}}$ and $\mathbf{\yellow{i}}$ are not present in $x$}
    
    Then the $\mathbf{\yellow{i}}$ is equal to the $\underline{\red{i-1\mapsto i}}$ and the $\boxed{\blue{i}}$ is created by some other $i-1\mapsto i$. We claim that this situation is impossible. The $\underline{\red{i-1}}\mapsto\mathbf{\yellow{i}}$ is right of the $i-1\mapsto \boxed{\blue{i}}$, since the $\underline{\red{i-1}}$ is the rightmost unpaired $i-1$ in $x$ by definition (and the $i-1 \mapsto \boxed{\blue{i}}$ is also unpaired in $x$).
    But meanwhile, the $\boxed{\blue{i}}$ is right of the $\mathbf{\yellow{i}}$ by assumption. We cannot have both $\boxed{\blue{i}}$ right of $\mathbf{\yellow{i}}$ and $\underline{\red{i-1}}$ right of $i-1 \mapsto \boxed{\blue{i}}$. 

    \textbf{Case 2: The $\mathbf{\yellow{i}}$ is not present in $x$, while the $\boxed{\blue{i}}$ is present in $x$}

    As before, the $\mathbf{\yellow{i}}$ is equal to the $\underline{\red{i-1\mapsto i}}$. On the other hand, the $\boxed{\blue{i}}$ is right of the $\underline{\red{i-1\mapsto i}}$ by assumption. Since the $\Circled{\green{i-1\mapsto i}}$ unpairs the $\boxed{\blue{i}}$, the $i+1$ with which the $\boxed{\blue{i}}$ is initially paired with must be left of the $\Circled{\green{i-1\mapsto i}}$. Finally, by assumption, the $\Circled{\green{i-1\mapsto i}}$ is left of the $\underline{\red{i-1\mapsto i}}$ (since the $\underline{\red{i-1} \mapsto i}$ was the rightmost unpaired $i-1$ in $x$).  We thus have the following sequence of elements:
    \[\begin{tikzcd}
	{i+1} & {\Circled{\green{i-1\mapsto i}}} & {\underline{\red{i-1\mapsto i}}} & {\boxed{\blue{\hspace{0.2em}i\hspace{0.2em}}}}
	\arrow[curve={height=-18pt}, no head, from=1-1, to=1-4]
\end{tikzcd}\]
    where the connected elements are paired in $x$. Note though that before $\Circled{\green{i-1\mapsto i}}$, the $\boxed{\blue{i}}$ and $i+1$ are paired while the $\red{i}$ is unpaired. Thus the $\boxed{\blue{i}}$ and $i+1$ are column paired. This though is a contradiction because if the $\boxed{\blue{i}}$ is column paired with an $i+1$, it can never become unpaired by only applying $f_{i-1}$s.

    \textbf{Case 3: The $\mathbf{\yellow{i}}$ is present in $x$ while the $\boxed{\blue{i}}$ is not present in $x$}

    We thus have that the $\boxed{\blue{i}}$ is created by some $i-1\mapsto \boxed{\blue{i}}$. 
    
    \textbf{Subcase 3.1:}
    This $i-1\mapsto \boxed{\blue{i}}$ is not the $\underline{\red{i-1\mapsto i}}$. 
    
    By assumption, the $\underline{\red{i-1\mapsto i}}$ is right of the $i-1\mapsto \boxed{\blue{i}}$. Since the $\mathbf{\yellow{i}}$ is present in $x$ and left of the $i-1\mapsto i$, the $\mathbf{\yellow{i}}$ is left of the $\underline{\red{i-1\mapsto i}}$. The $\underline{\red{i-1\mapsto i}}$ pairs with some $i+1$ left of the $\mathbf{\yellow{i}}$ (so that the $\mathbf{\yellow{i}}$ becomes unpaired in $f_{i-1}(x)$). We thus have the following sequence of elements (the sequence above implies that the $i+1$ and $\underline{\red{i-1\mapsto i}}$ must be column paired):
    \[\begin{tikzcd}
	{i+1} & {\mathbf{\yellow{i}}} & {i-1\mapsto \boxed{\blue{i}}} & {\underline{\red{i-1\mapsto i}}}
	\arrow[curve={height=18pt}, no head, from=1-4, to=1-1]
    \end{tikzcd}\]
    This sequence determines the following substructure in $x$.

    \begin{center}
    \ytableausetup{boxsize = 4.3em}
\begin{ytableau}
    i-1\mapsto i&\none&\underline{\red{i-1\mapsto i}}\\
    \none&\mathbf{\yellow{i}}\\
    i+1&\none&i+1
\end{ytableau}
\end{center}

    The $i+1$ in the bottom left square is implied by the rest of the diagram (as rows need to weakly increase and columns need to strictly increase). However, this substructure is a contradiction as in the previous case, as it implies the $i-1\mapsto \boxed{\blue{i}}$ is column paired. 

\textbf{Subcase 3.2:} The $i-1\mapsto \boxed{\blue{i}} = \underline{\red{i-1\mapsto i}}$. 

Then, $\underline{\red{i-1\mapsto i}}$ is initially paired in $f_{i-1}(x)$, but later becomes unpaired. We still have  that $\underline{\red{i-1\mapsto i}}$ unpairs an $i$ to its left, and is hence column paired. This though is already contradictory, as then $\underline{\red{i-1\mapsto i}}$ would not be able to get unpaired.

    \textbf{Case 4: Both $\boxed{\blue{i}},\mathbf{\yellow{i}}$ are present in $x$}
    
    \textbf{Subcase 4.1:} The $\underline{\red{i-1\mapsto i}}$ unpairs an $\mathbf{\yellow{i}}$ to its right. 
    
    Observe that both the $\boxed{\blue{i}},\mathbf{\yellow{i}}$ cannot be column paired, as they eventually become unpaired. Thus, in order for the $\boxed{\blue{i}}$ to be right of the $\mathbf{\yellow{i}}$ and paired in $f_{i-1}(x)$, we would have to have an $i+1$ right of the $\mathbf{\yellow{i}}$ that the $\boxed{\blue{i}}$ is paired with. This is a contradiction, as, since the $\Circled{\green{i-1\mapsto i}}$ is left of the $\underline{\red{i-1\mapsto i}}$ and hence left of this $i+1$, it would not be able to unpair the $\boxed{\blue{i}}$.

    \textbf{Subcase 4.2:} The $\mathbf{\yellow{i}}$ is left of the $\underline{\red{i-1\mapsto i}}$. 
    
    This assumption implies that the $\underline{\red{i-1\mapsto i}}$ is column paired with an $i+1$ (as this is the only way the $\underline{\red{i-1\mapsto i}}$ can `steal' the $i+1$ pairing with the $\mathbf{\yellow{i}}$). We still have that the $\boxed{\blue{i}},\mathbf{\yellow{i}}$ both cannot be column paired, and so the $i+1$ that the $\boxed{\blue{i}}$ is paired with before getting unpaired is right of the $\mathbf{\yellow{i}}$. As the $\Circled{\green{i-1\mapsto i}}$ is able to unpair the $\boxed{\blue{i}}$, it is right of this $i+1$. We thus in total get the following sequence of elements
    \[\begin{tikzcd}
	{i+1} & {\mathbf{\yellow{i}}} & {i+1} & {\Circled{\green{i-1\mapsto i}}} & {\underline{\red{i-1\mapsto i}}}
	\arrow[curve={height=-20pt}, no head, from=1-1, to=1-5]
\end{tikzcd}\]
    This translates to the following substructure of $x$:
    \begin{center}
    \ytableausetup{boxsize = 4.3em}
\begin{ytableau}
    \Circled{\green{i-1\mapsto i}}&\none&\underline{\red{i-1\mapsto i}}\\
    \none&\mathbf{\yellow{i}}&\none&i+1\\
    i+1&\none&i+1
\end{ytableau}
    \end{center}

    As in Case $3$, the leftmost $i+1$ is implied by the rest of the diagram. This though is a contradiction, as by assumption the $\Circled{\green{i-1\mapsto i}}$ pairs with the right $i+1$. This completes the fourth case.

    As these four cases are exhaustive, this shows that in fact $\boxed{\blue{\hspace{0.2em}i\hspace{0.2em}}}$ is unpaired in $f_{i-1}x$. 
\end{proof}

\begin{lemma}\label{lemma:rightunpairs2}
 If $f_{i+1}x$ contains an unpaired $i$ that is paired in $x$, the rightmost $i$ that is unpaired in $f_{i+1}^*x$ is also unpaired in $f_{i+1}x$.
\end{lemma}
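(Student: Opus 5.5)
We argue by contradiction, in parallel with the proof of Lemma \ref{lemma:rightunpairs1} and the skew version Lemma \ref{lemma:skewrightunpairs}(ii). Let $\underline{\red{i+1}}$ be the rightmost unpaired $i+1$ (with respect to $i+2$s) in $x$. Applying $f_{i+1}$ raises it to $i+2$, so the $i$ it was paired with (with respect to $i+1$s) becomes unpaired. By hypothesis such an unpaired $\mathbf{\yellow{i}}$ exists in $f_{i+1}(x)$. Let $\boxed{\blue{i}}$ be the rightmost unpaired $i$ in $f_{i+1}^*(x)$. Note that $f_{i+1}$ creates no new $i$s, so $\boxed{\blue{i}}$ is present in $x$. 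If $\boxed{\blue{i}}$ is not unpaired in $f_{i+1}(x)$, it must be unpaired later by some \dbox{$\purple{i+1}\mapsto i+2$}. That $i+1$ lies to the left of $\underline{\red{i+1}}$ in the reading word, since $f_{i+1}$ raises unpaired $i+1$s from right to left. Also, $\boxed{\blue{i}}$ cannot be column paired with an $i+1$ in $x$: a column-paired $i+1$ is never an unpaired $i+1$ for the $(i+1,i+2)$ matching, so it is never raised.

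The case analysis splits on whether $\boxed{\blue{i}}$ is right or left of $\mathbf{\yellow{i}}$, and on whether $\underline{\red{i+1}}$ is row-paired or column-paired with $\mathbf{\yellow{i}}$.

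\textbf{Case 1: $\boxed{\blue{i}}$ is right of $\mathbf{\yellow{i}}$.} Then $\boxed{\blue{i}}$ is paired in $f_{i+1}(x)$ with some $i+1$ strictly to its right in the reading order sense of pairing, i.e. an $i+1$ left of $\boxed{\blue{i}}$ but right of $\mathbf{\yellow{i}}$. Otherwise it would be the one stolen, or $\mathbf{\yellow{i}}$ would have taken that partner. For \dbox{$\purple{i+1}$} to free $\boxed{\blue{i}}$, it must be one of the $i+1$s in this nested matching, hence right of $\mathbf{\yellow{i}}$. But \dbox{$\purple{i+1}$} is left of $\underline{\red{i+1}}$, which is left of $\mathbf{\yellow{i}}$ when the pairing is by reading word. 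This gives the pattern of Lemma \ref{lemma:ibetweeni,i+1}: an $i+1$ paired with an $i$ with an unpaired $i$ between them. That forces a column pairing, contradicting the remark above.

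\textbf{Case 2: $\boxed{\blue{i}}$ is left of $\mathbf{\yellow{i}}$.} This cannot happen unless $\mathbf{\yellow{i}}$ is not the rightmost freed $i$, i.e. unless some column pairing reorders things. So when $\underline{\red{i+1}}$ is column paired with $\mathbf{\yellow{i}}$ (with $\mathbf{\yellow{i}}$ directly below it), we draw the local substructure of rows and columns, as in Cases 3 and 4 of Lemma \ref{lemma:rightunpairs1}. The shuffle diagram's interlacing rows, together with row weak increase and column strict increase, force an extra $i+1$ (or $i+2$) entry. That extra entry either column pairs \dbox{$\purple{i+1}$}'s eventual partner $\boxed{\blue{i}}$, or gives \dbox{$\purple{i+1}$} an $i+2$ to pair with, so that it is not unpaired. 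Either way we contradict the assumption that \dbox{$\purple{i+1}$} frees $\boxed{\blue{i}}$.

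The main obstacle is this column-pairing subcase. The interplay between column-paired $(i+1,i)$ and the $(i+1,i+2)$ matching is where shuffle tableaux differ from ordinary ones. I would first draw the three-row shuffle configuration containing $\underline{\red{i+1}}$, $\mathbf{\yellow{i}}$ and $\boxed{\blue{i}}$, and try to force a filled square whose value contradicts either pairing.
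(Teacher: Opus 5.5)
There is a genuine gap, and it sits exactly where shuffle tableaux differ from ordinary tableaux. Your remark that $\boxed{i}$ cannot be column paired with an $i+1$ in $x$ is false. You justify it by saying ``a column-paired $i+1$ is never an unpaired $i+1$ for the $(i+1,i+2)$ matching.'' But column pairing an $i$ over an $i+1$ only removes them from the $(i,i+1)$ matching. It says nothing about whether that $i+1$ has an $i+2$ partner. Take an $i$ directly above an $i+1$ with no $i+2$ anywhere: the $i+1$ is raised by $f_{i+1}$, and this is precisely how a column-paired $i$ becomes unpaired.

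So the configuration you discard is a real case, and it is the paper's Case 1. In it, $\boxed{i}$ is column paired with an $i+1$ that lies left of $\underline{i+1}$ and is raised at a later step. Your Case 1 contradiction ends by ``contradicting the remark above,'' so without the remark it collapses.

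There is a second omission of the same kind. You model the later raising as deleting an opening parenthesis (``it must be one of the $i+1$s in this nested matching''). But if that $i+1$ is column paired with some other $i$, raising it inserts that $i$ into the reduced word. There it can steal the partner of $\boxed{i}$, even though the raised $i+1$ lies left of $\mathbf{i}$.

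Finally, your Case 2 is vacuous. Unpaired $i$s stay unpaired under further applications of $f_{i+1}$, so $\boxed{i}$ is automatically right of $\mathbf{i}$. The column-pairing difficulty you defer to Case 2 actually lives in Case 1.

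What is missing is the tableau geometry. Organize by the $i+1$ that $\boxed{i}$ is paired with in $x$.

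\textbf{Partner left of $\underline{i+1}$.} In $f_{i+1}(x)$, $\mathbf{i}$ is unpaired between this partner and $\boxed{i}$, so by Lemma \ref{lemma:ibetweeni,i+1} they are column paired. Then:
\begin{itemize}
\item $\mathbf{i}$ must lie in the intermediate shuffle row. In the row of $\boxed{i}$, the skew shape and monotonicity would put an $i+1$ below it that is not raised before $\underline{i+1}$.
\item $\underline{i+1}$ must lie in the bottom row, to the right of the partner.
\item The entry above $\underline{i+1}$ is then forced to be $i$.
\end{itemize}
So $\underline{i+1}$ was itself column paired. Raising it only releases an $i$ read after $\mathbf{i}$, which cannot unpair $\mathbf{i}$.

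\textbf{Partner right of $\underline{i+1}$.} The partner is never raised. If it precedes $\mathbf{i}$, the pair is a permanent column pair. Otherwise $\boxed{i}$ can only be freed by an $i$ between the partner and $\boxed{i}$ that is column paired with an $i+1$ left of $\underline{i+1}$. The same forced $i$ above $\underline{i+1}$ then gives the contradiction.

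These arguments use the interlacing rows, the skew shape, and row/column monotonicity, and your proposal never brings them in. As written, it is essentially the ordinary-tableau bracket argument of Lemma \ref{lemma:skewrightunpairs}(ii), which does not suffice for shuffle tableaux.
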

\begin{proof} We will split this proof into two main cases. As before, by assumption $f_{i+1}$ will create a new unpaired $i$. Consider the furthest right such $\boxed{\blue{i}}$ that gets unpaired in $f_{i+1}^*x$. Suppose for the sake of contradiction that the $\yellow{i}$ that gets unpaired by $f_{i+1}$ is not the furthest right $\boxed{\blue{i}}$. First, in order for $\underline{\red{i+1\mapsto i+2}}$ to unpair the $\yellow{i}$, the $\yellow{i}$ must be right of the $\underline{\red{i+1}}$. Consider the $\dbox{\purple{i+1}}$ that the rightmost $\boxed{\blue{i}}$ is paired with. 

\textbf{Case 1:} This $\dbox{\purple{i+1}}$ is left of the furthest right unpaired $\underline{\red{i+1}}$. 

We thus obtain the following ordering of elements:
\[\begin{tikzcd}
	{\dbox{\purple{\hspace{0.2em}i+1\hspace{0.2em}}}} & {\underline{\red{i+1\mapsto i+2}}} & {\yellow{i}} & {\boxed{\blue{\hspace{0.2em}i\hspace{0.2em}}}}
	\arrow[curve={height=-18pt}, no head, from=1-1, to=1-4]
\end{tikzcd}\]
Fixing notation, the connected elements are paired. In order for the $\boxed{\blue{i}},\dbox{\purple{i+1}}$ to be paired, they are thus in the same column. Then $\yellow{i}$ must be in the row in between, as if it were in the top row it would be column-paired with an $i+1$. The $\underline{\red{i+1\mapsto i+2}}$ is thus in the same row as the other $\dbox{\purple{i+1}}$. The element above it though is then an $i$, which contradicts the fact that the $\underline{\red{i+1\mapsto i+2}}$ unpaired the $\yellow{i}$. The following diagram shows this:

\begin{center}
    \ytableausetup{boxsize = 4.3em}
\begin{ytableau}
    \boxed{\blue{\hspace{0.2em}i\hspace{0.2em}}}&\none&i\\
    \none&\yellow{i}\\
    \dbox{\purple{\hspace{0.2em}i+1\hspace{0.2em}}}&\none&\red{\scriptstyle{i+1\mapsto i+2}}
\end{ytableau}
\end{center}

\textbf{Case 2:} The $\dbox{\purple{i+1}}$ is right of the furthest right unpaired $\underline{\red{i+1}}$. 

The rightmost $\underline{\red{i+1\mapsto i+2}}$ will unpair an $\mathbf{\yellow{i}}$. If the rightmost $\boxed{\blue{i}}$ is paired with an $\dbox{\purple{i+1}}$ left of this $\mathbf{\yellow{i}}$ we would obtain that the $\boxed{\blue{i}},\dbox{\purple{i+1}}$ are column paired. This though is a contradiction as the only way that the $\boxed{\blue{i}}$ can then be unpaired is if $\purple{i+1\mapsto i+2}$ (impossible as this element is right of the furthest right unpaired $\underline{\red{i+1}}$). 

Thus suppose that the $\dbox{\purple{i+1}}$ is right of the $\mathbf{\yellow{i}}$ that becomes unpaired. In order for the rightmost $\boxed{\blue{i}}$ to become unpaired, we must have an $\Circled{\green{i}}$ between the $\dbox{\purple{i+1}},\boxed{\blue{i}}$ which is paired with some $i+1$ left of the $\underline{\red{i+1}}$ (otherwise no unpaired $i+1\mapsto i+2$ would be able to 'reach' the $\boxed{\blue{i}}$ to unpair it). As a sequence, we can summarize this as follows: 
\[\begin{tikzcd}
	{i+1} & {\underline{\red{i+1}}} & {\mathbf{\yellow{i}}} & {\dbox{\purple{\hspace{0.2em}i+1\hspace{0.2em}}}} & {\Circled{\green{i}}} & {\boxed{\blue{\hspace{0.2em}i\hspace{0.2em}}}}
	\arrow[curve={height=-18pt}, no head, from=1-1, to=1-5]
\end{tikzcd}\]
Thus the  $\Circled{\green{i}}$, $i+1$ are column paired, and so we get the following diagram:

\begin{center}
    \ytableausetup{boxsize = 4.3em}
\begin{ytableau}
    \none&\Circled{\green{i}}&\none&\none\\
    \mathbf{\yellow{i}}&\none&\dbox{\purple{\hspace{0.2em}i+1\hspace{0.2em}}}\\
    \none&i+1&\none&\red{\scriptstyle{i+1\mapsto i+2}}
\end{ytableau}
\end{center}

Over the $\underline{\red{i+1\mapsto i+2}}$ we must have an $i$, but this is a contradiction as the $\underline{\red{i+1\mapsto i+2}}$ by assumption unpairs the middle $\mathbf{\yellow{i}}$. 
This finally completes the proof.
\end{proof}

\subsection{Proof of the extension property}
\begin{proposition}[Extension Property]
Let $a\geq b$ and let $x\in X$ be extremal so that $e_i(x)=0$ for all $i\leq n+1$. Suppose that $F_{n,a}F_{n+1,b}(x), F_{n+1,b-1}(x)\in X$. Then $F_{n,a}F_{n+1,b-1}(x)\in X$.
\end{proposition}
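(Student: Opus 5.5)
We follow the template of the skew-tableau proof of the extension property from Section~\ref{sec:characterization}, now for shuffle tableaux, replacing Lemma~\ref{lemma:skewrightunpairs} by Lemmas~\ref{lemma:rightunpairs1} and~\ref{lemma:rightunpairs2} wherever column pairing intervenes.

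\textbf{Reduction.} By Lemma~\ref{lemma:symbolicmanipulation},
\[F_{n,a}F_{n+1,b-1}(x)=f_{n+1}^*\bigl(F_{n,a}F_{n+1,a+1}f_{a-1}^*f_{a-2}^*\cdots f_{b-1}^*(x)\bigr).\]
Since $F_{n+1,b-1}(x)\in X$, the outer $f_{n+1}^*$ cannot create a bad entry. By the same argument as in the skew case (the unpaired $n+1$s being controlled by the earlier operators, using Lemma~\ref{lemma:subsetlemma}), it then suffices to show that $F_{n,a}F_{n+1,a+1}f_{a-1}^*(z)\in X$, where $z=f_{a-2}^*\cdots f_{b-1}^*(x)$. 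Define $z'$ and $z''$ exactly as before, with and without the factor $f_{a-1}^*$. Then the two tableaux to compare are $f_n^*f_{n-1}^*f_{n+1}^*f_n^*(z')$ and $f_n^*f_{n-1}^*f_{n+1}^*f_n^*(z'')$.

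\textbf{Auxiliary tableau.} We first show, by induction along the operator word using Lemma~\ref{lemma:subsetlemma}, that the unpaired $n-1$s of $f_n^*(z'')$ form a subset of those of $f_n^*(z')$. We then build $y$ from $f_n^*(z')$ by raising exactly those $n-1$s that remain unpaired in $f_n^*(z'')$, and define $\tilde f_{n-1}$ as $f_{n-1}^*$ applied to the reading word with these created $n$s deleted. Three checks are needed.
\begin{enumerate}
\item $y$ is a valid shuffle tableau. Column strictness holds because a raised $n-1$ is not column paired with an $n$; row monotonicity holds because every $n-1$ to its right in the same row is also unpaired.
\item $e_n(y)=0$. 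The $n$s and $n+1$s of $y$ coincide with those of $f_{n-1}^*f_n^*(z'')$, and Lemma~\ref{e_i=0} applies.
\item $f_n^*f_{n+1}^*\tilde f_{n-1}^*(y)=f_n^*f_{n+1}^*f_{n-1}^*f_n^*(z')$.
\end{enumerate}

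\textbf{Local claim.} Assume $f_n^*f_{n+1}^*(y)\in X$ and $f_n^*\tilde f_{n-1}^*(y)\in X$; we must show $f_n^*f_{n+1}^*\tilde f_{n-1}^*(y)\in X$. By Lemma~\ref{lemma:easyendofrow}, a violation must come from a bad $n$ at the end of its row, namely the rightmost unpaired $n$ of $f_{n+1}^*\tilde f_{n-1}^*(y)$. We split according to whether this $n$ was created by $\tilde f_{n-1}$ or was already present but paired with some $n+1$ in $y$. In each case we use Lemma~\ref{lemma:rightunpairs1} (for the $f_{n-1}$ side) and Lemma~\ref{lemma:rightunpairs2} (for the $f_{n+1}$ side) to identify this $n$ with the first $n$ unpaired by the corresponding single operator. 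Lemma~\ref{lemma:ibetweeni,i+1} then forces any troublesome pairing to be a column pairing. Such a column-paired $n$ can never become unpaired, giving a contradiction. Where a configuration would otherwise be consistent, we draw the forced substructure as in the proofs of Lemmas~\ref{lemma:rightunpairs1} and~\ref{lemma:rightunpairs2}, fill in the entries implied by row and column monotonicity, and derive a contradiction from it.

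\textbf{Conclusion.} Once the local claim holds, the failure of the target forces either $f_n^*f_{n+1}^*(y)\notin X$ or $f_n^*\tilde f_{n-1}^*(y)\notin X$.
\begin{itemize}
\item The first fails to hold. The $n,n+1$ content of $f_{n+1}^*(y)$ agrees with that of $f_{n+1}^*f_{n-1}^*f_n^*(z'')$, so $f_n^*f_{n+1}^*(y)$ lies in $X$ together with $F_{n,a}F_{n+1,a+1}(z)$. Membership of the latter follows from $F_{n,a}F_{n+1,b}(x)\in X$ via braid relations and ideal property~(iii).
\item The second fails to hold. By braid relations, $f_n^*\tilde f_{n-1}^*(y)=F_{n-1,a}F_{n,b-1}(x)$, which lies in $X$ by the hypothesis $F_{n+1,b-1}(x)\in X$ together with ideal property~(ii).
\end{itemize}

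\textbf{Main obstacle.} We expect the hardest step to be the local claim. Column pairing makes the pairing relation non-monotone, so the skew argument, in which a raised $n-1$ simply unpairs an $n$ weakly to its right, fails. We would first try to isolate the sub-case where the rightmost unpaired $n$ is column paired in $y$ and rule it out directly; the remaining cases should then behave as in the skew case. The strictness of $\lambda$ and $\nu$ should enter exactly here, by preventing two bad entries from sitting at ends of rows in a way that would let the column-paired configurations occur.
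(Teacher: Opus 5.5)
Your scaffolding (the reduction via Lemma~\ref{lemma:symbolicmanipulation}, the tableaux $z,z',z''$, the auxiliary $y$ with inert $n$s, and the reduction to a three-operator local claim) is exactly the paper's. The gap is that the local claim itself is not proved, and the mechanism you sketch for it fails. This claim is where the column-pairing rule and the strictness of $\lambda,\nu$ actually matter.

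What must be ruled out is an $n$ that is unpaired in $\tilde f_{n-1}^*f_{n+1}^*(y)$ but in neither $\tilde f_{n-1}^*(y)$ nor $f_{n+1}^*(y)$. Lemmas~\ref{lemma:rightunpairs1} and~\ref{lemma:rightunpairs2} only describe the order in which a single family of operators unpairs $n$s, and they are not stated for the modified operator $\tilde f_{n-1}$. So they say nothing about this joint effect.

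More seriously, ``a column-paired $n$ can never become unpaired'' is false once $f_{n+1}^*$ acts. In the paper's first case, $\tilde f_{n-1}$ turns an $n-1$ lying directly above an $n+1$ into an $n$. By column priority this new $n$ takes that $n+1$ away from an $n$ to its left, and it becomes unpaired precisely when $f_{n+1}^*$ raises the $n+1$ beneath it. The contradiction comes from strictness in a specific way: this $n$ is bad, so by Lemma~\ref{lemma:easyendofrow} it ends its row. Yet the next row of the same partition has a box directly below it, forcing two equal consecutive parts.

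In the second case (the $n$ already present in $y$) you need a genuine chase. The responsible $n-1\mapsto n$ is paired with one $n+1$ in $\tilde f_{n-1}^*(y)$, and, once $f_{n+1}^*$ raises that one, with a second $n+1$. One shows the second cannot be column paired with an $n+2$, since otherwise the configuration repeats indefinitely. One then again reaches a bad $n$ with a box directly beneath it. Your description of strictness as ``preventing two bad entries from sitting at ends of rows'' does not capture this. Without these cases the argument contains nothing beyond the skew template.

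Your conclusion step also claims more than the ideal property gives. The ideal property only deletes operators. $F_{n-1,a}F_{n,b-1}$ is a subword of neither $F_{n+1,b-1}$ nor $F_{n,a}F_{n+1,b}$ (the latter has no $f_{b-1}^*$). Likewise $F_{n,a}F_{n+1,a+1}(z)$ involves $f_{b-1}^*$ when $a>b$. Both are subwords of the target expression, not of the hypotheses, so you cannot conclude they lie in $X$.

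What the paper uses instead is that the violation is a specific bad $n\mapsto n+1$, and entries $n+1$ are untouched by $f_{n-1}^*,\dots,f_a^*$ and by $f_{a-2}^*,\dots,f_{b-1}^*$.
\begin{itemize}
\item In the second branch, that bad $n+1$ would already lie in $F_{n,b-1}(x)$, contradicting $F_{n+1,b-1}(x)\in X$.
\item In the first branch, after commuting $f_{a-2}^*\cdots f_{b-1}^*$ to the far left, it would lie in $F_{n,a}F_{n+1,a+1}(x)$. That element is in $X$ because it is obtained from $F_{n,a}F_{n+1,b}(x)$ by deleting operators.
\end{itemize}
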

Before proving this, we start with a useful lemma.
\vspace{1em}

\begin{lemma}\label{lemma:unpairedsubset}
    Let $x$ be such that $e_i(x)=0$ for all $i\geq a$. Then the set of unpaired $n$s in $f_{n-1}^*\dots f_{a+1}^*(x)$ is a subset of the unpaired $n$s in $f_{n-1}^*\dots f_a^*(x)$.
\end{lemma}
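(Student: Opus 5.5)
Write $x' = f_{n-2}^*\dots f_{a+1}^*(x)$ for the common inner part, so the two tableaux being compared are $f_{n-1}^*(x')$ and $f_{n-1}^*f_{n-2}^*\dots f_{a+1}^*f_a^*(x)$. The plan is to induct on $n-a$, i.e.\ on the length of the operator string, and to reduce each step to Lemma \ref{lemma:subsetlemma} together with the right-to-left unpairing Lemmas \ref{lemma:rightunpairs1} and \ref{lemma:rightunpairs2}. The hypothesis $e_i(x)=0$ for $i\ge a$ will be used in two ways. First, it keeps every intermediate tableau extremal with vanishing raising operators in the relevant range, via Lemma \ref{e_i=0}. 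Second, it means that at the start every $i+1$ (for $i\ge a$) is paired with an $i$, which controls how the $f^*$'s move entries.

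\emph{Base case.} For $n=a+1$ we must show that the unpaired $n$s in $x$ form a subset of the unpaired $n$s in $f_{n-1}^*(x)$. Applying $f_{n-1}$ only turns $(n-1)$s into $n$s. It never creates an $n+1$, so it cannot pair an existing unpaired $n$. Iterating Lemma \ref{lemma:subsetlemma}, with care about which index plays the role of $j$, gives the inclusion. I would rephrase Lemma \ref{lemma:subsetlemma} for this purpose: applying $f_j$ can only destroy pairings of $i$s with $i+1$s when $j=i$ (a changed $i$) or when $j=i+1$ (a changed $i+1$). For $j=i-1$ new $i$s appear, but old unpaired $i$s stay unpaired, because the number of $i+1$s is unchanged and new $i$s can only take pairings away from old ones if they sit further left. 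That last point is exactly where column pairing in shuffle tableaux must be checked, and this is what Lemma \ref{lemma:ibetweeni,i+1} is for.

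\emph{Inductive step.} Compare $u=f_{n-2}^*\dots f_{a+1}^*(x)$ with $v=f_{n-2}^*\dots f_a^*(x)$. By induction, the unpaired $(n-1)$s of $u$ are a subset of those of $v$. The entries $n$ and $n+1$ are untouched in both, so both have the same multiset of $n$s, the same $n+1$s, and the same column pairings among them. Applying $f_{n-1}^*$ to each raises all unpaired $(n-1)$s to $n$s, because extremality means $f_{n-1}^*$ acts by $s_{n-1}$ on content. So $f_{n-1}^*(v)$ contains the $n$s of $f_{n-1}^*(u)$ plus extra $n$s coming from the additional unpaired $(n-1)$s of $v$. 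The claim then reduces to the statement that adding more $n$s, each created from an $(n-1)$ that was unpaired, cannot pair an $n$ that was previously unpaired with an $n+1$. An $n$ that was unpaired stays unpaired if the new $n$s are further right, and by Lemma \ref{lemma:rightunpairs1} the $n$s are unpaired in right-to-left order. If instead a new $n$ lies to the left and steals an $n+1$, we should get a configuration ($n+1$, new $n$, old unpaired $n$) that Lemma \ref{lemma:ibetweeni,i+1} forces to be column paired. Column pairs are never broken and never steal, which gives a contradiction.

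\textbf{Main obstacle.} The delicate point is the stealing argument in shuffle tableaux. A new $n$ created from an $(n-1)$ that was unpaired in $v$ but not in $u$ could be column paired with an $n+1$ directly below it. In that case the $n+1$ was previously available to an old $n$, and the old $n$ is now displaced. I would rule this out using a local diagram in the style of Lemma \ref{lemma:rightunpairs1}, Case 3. An $(n-1)$ sitting directly above an $n+1$ forces, by row and column monotonicity, an $n$ or $n+1$ in the intermediate shuffled row. That entry would in turn make the $(n-1)$ paired in $v$, contradicting the assumption that it was unpaired. I expect this case analysis, together with interchanging the induction with Lemma \ref{lemma:rightunpairs1}, to be the bulk of the work.
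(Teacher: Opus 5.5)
Your reduction is the same as the paper's. You induct on $n$ and observe that $u=f_{n-2}^*\cdots f_{a+1}^*(x)$ and $v=f_{n-2}^*\cdots f_a^*(x)$ have the same $n$s and $(n+1)$s. The inductive hypothesis then shows that $f_{n-1}^*(v)$ contains all the $n$s of $f_{n-1}^*(u)$ plus extra ones. This reduces the lemma to the claim that adding $n$s never pairs an $n$ that was unpaired.

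\textbf{The gap.} You never actually prove that final claim, and the argument you sketch points the wrong way. A new $n$ that takes an $(n+1)$ away from an old $n$, whether by bracket matching or by column pairing, can only make that old $n$ \emph{unpaired}. That is harmless for the inclusion you want, so your ``main obstacle'' is not an obstacle.

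The step you propose to exclude it is also false. The two components of a shuffle tableau are independent, so nothing is forced in the intermediate row. The configuration really occurs, already for ordinary skew tableaux, where the same lemma is used:
\begin{itemize}
\item Take $a=2$, $n=4$, and $x$ of shape $(2,2,2,1,1)/(1,1,1)$, with column $2$ filled $2,3,4$ and column $1$ filled $2,5$ (top to bottom). The reading word is $5\,2\,4\,3\,2$.
\item Then $e_i(x)=0$ for $i\ge 2$, $u=x$, and $f_3^*(u)=x$.
\item In $v=f_2^*(x)$ the lower $2$ becomes a $3$ that is unpaired with respect to $4$. Then $f_3^*(v)$ turns it into a $4$ directly above the $5$.
\item This new column pair takes the $5$ away from the upper $4$. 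That $4$ is paired in $f_3^*(u)$ but unpaired in $f_3^*(v)$.
\end{itemize}
This also refutes your claim that column pairs ``never steal''. Lemma \ref{lemma:rightunpairs1} does not fill the gap, since it concerns the order in which a single $f_{n-1}^*$ unpairs entries, not a comparison of two tableaux. Lemma \ref{lemma:ibetweeni,i+1} only covers one special pattern.

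\textbf{What is missing.} You need the elementary monotonicity of bracket matching; this is all that the paper's one line ``lowering more $n-1\to n$ can only unpair more $n$s'' uses.
\begin{itemize}
\item Old column pairs survive, because each $n+1$ has only one cell above it. An old $n$ that was not column paired stays that way, because the cell below it is unchanged.
\item Form the reduced words by deleting column pairs, reading $n+1$ as an opening bracket and $n$ as a closing bracket. Passing from $f_{n-1}^*(u)$ to $f_{n-1}^*(v)$ then only \emph{inserts} closing brackets (new $n$s that are not column paired). It also \emph{deletes} opening brackets (the $(n+1)$s that become column paired with new $n$s).
\item Let $h(k)$ be the number of closing minus opening brackets among the first $k$ letters. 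A closing bracket at position $j$ is unmatched if and only if $h(j)>\max_{k<j}h(k)$.
\item Both operations add $1$ to $h$ on a suffix, which preserves this strict-record condition at every surviving position.
\end{itemize}
The same observation gives the base case directly. No appeal to the right-to-left unpairing lemmas or to extremality is needed.
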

\begin{proof}
    We induct on $n$. First observe that since $f_{a}$ does not pair any $a+1$s, the unpaired $a+1$s in $x$ are a subset of the unpaired $a+1$s in $f_a^*(x)$. We can thus by induction assume that the unpaired $n-1$s in $f_{n-2}^*\dots f_{a+1}^*(x)$ are a subset of the unpaired $n-1$s in $f_{n-2}^*\dots f_a^*(x)$. Note that $f_{n-2}^*\dots f_{a+1}^*(x)$ and $f_{n-2}^*\dots f_a^*x$ contain the same $n+1$s and $n$s. Thus by the inductive hypothesis, since lowering more $n-1\to n$ can only unpair more $n$s, the unpaired $n$s in $f_{n-1}^*\dots f_{a+1}^*(x)$ are a subset of the unpaired $n$s in $f_{n-1}^*\dots f_a^*(x)$.
\end{proof}

We now proceed to the main proof.

\begin{proof}[Proof of extension property]
As this proof is long, we sketch a brief road map first to make it more tractable. This proof splits into three main parts.\begin{enumerate}
    \item Construct an auxiliary tableau $y$, operator $\tilde{f}_{n-1}$ acting on $y$, and show that $y$ is a valid shuffle tableaux with $e_n(y) = 0$. To do this, we also introduce a couple other useful auxiliary tableaux $z$, $z'$, and $z''$.
    \item Prove a useful result about $y$ (this is the main 'tableaux-chasing' part of the argument), namely Claim \ref{claim:mainclaim_y}.
    \item Relate all of our work back to the proof of the extension property, and show that it follows quickly from Claim \ref{claim:mainclaim_y} about $y$. 
\end{enumerate}

Suppose for the sake of contradiction that $F_{n,a}F_{n+1,b}(x),F_{n+1,b-1}(x)\in X$ while $F_{n,a}F_{n+1,b-1}(x)\not\in X$ where $a,b\in \N$ are such that $a\geq b$. Lemma \ref{lemma:symbolicmanipulation} tells us that
\[
F_{n,a}F_{n+1,b-1}(x) = f_{n+1}^*(f_n^*\dots f_a^*f_{n+1}^*\dots f_{a+1}^*f_{a-1}^*(f_{a-2}^*\dots f_{b-1}^*))(x).
\]

Since by assumption $F_{n+1,b-1}(x)\in X$, $F_{n,a}F_{n+1,b-1}$ does not have any $n+1\mapsto n+2$ in a row with flag less than or equal to $n+1$. Thus, the last $f_{n+1}^*$ in $f_{n+1}^*(f_n^*\dots f_a^*f_{n+1}^*\dots f_{a+1}^*f_{a-1}^*(f_{a-2}^*\dots f_{b-1}^*))(x)$ cannot cause the entry that disobeys the flag condition. We hence deduce that
    \[
(f_n^*\dots f_a^*)(f_{n+1}^*\dots f_{a+1}^*f_{a-1}^*f_{a-2}^*\dots f_b^*f_{b-1}^*)(x)\not\in X.
    \]


We will consider the following auxiliary tableaux for convenience:

\begin{enumerate}
    \item Let $z:=f_{a-2}^*\dots f_{b-1}^*(x)$
    \item Let $z':=f_{n-2}^*\dots f_a^*\mathbf{f_{a-1}^*}f_{n-1}^*\dots f_{a+1}^*(z)$
    \item Let $z'':=f_{n-2}^*\dots f_a^*f_{n-1}^*\dots f_{a+1}^*(z)$ (similar to $z'$, except we delete the bolded $f_{a-1}^*$ from the end of the first descending chain).
\end{enumerate}

Note that by Lemma \ref{lemma:unpairedsubset}, the unpaired $n-1$s in $f_n^*(z'')$ form a subset of the unpaired $n-1$s in $f_n^*(z')$. 


Thus, we may define an auxiliary shuffle tableau $y$ as follows:

\begin{enumerate}
    \item Start with $f_n^*(z')$, and raise each $n-1\mapsto n$ in $f_n^*(x')$ iff the $n-1$ is unpaired in $f_n^*(z'')$,
    \item We also define a `quasi-lowering operator' $\tilde{f}_{n-1}^*$ that acts on $y$ (as a purely notational tool). This $\tilde{f}_{n-1}^*$ acts as follows: first, make each of the $n$s we added in the step above \textit{inert}, in the sense that when we apply $\tilde{f}_{n-1}^*$ to $y$, we first delete each of the new $n$s created in the last step from the reading word. 
\end{enumerate}
Succinctly, in step (ii), we disallow any of the new $n$s from pairing with any $n-1$s. Intuitively, considering $y$ allows us to only consider the new $n$s that get created by appending the $f_{a-1}^*$ in the expression defining $z'$. We now prove a few properties of this auxiliary $y$. 


\begin{claim} $y$ is a valid shuffle tableau. 
\end{claim}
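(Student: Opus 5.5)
We need to check two things: that rows of $y$ weakly increase and columns strictly increase, and that the new $n$s do not break the interleaving constraints of the shuffle shape. The argument will follow the analogous claim for flagged skew tableaux, adapted to column pairing. Recall that $y$ is obtained from $f_n^*(z')$, which is a valid shuffle tableau, by raising exactly those entries $\underline{\red{n-1}}$ that are unpaired in $f_n^*(z'')$. Here we use Lemma \ref{lemma:unpairedsubset} to know that every such entry is also an unpaired $n-1$ in $f_n^*(z')$. So the only possible violations involve a raised $n-1\mapsto n$ sitting next to some other entry $n-1$ or $n$.

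\textbf{Columns.} Suppose a raised $\underline{\red{n-1}}$ sat directly above an $n$ in $f_n^*(z')$. First note that $z'$ and $z''$ differ only by the application of $f_{a-1}^*$ inside the chain. Hence $f_n^*(z')$ and $f_n^*(z'')$ have the same sets of entries $n$ and $n+1$; only the $n-1$s (and lower entries) may differ. So that $n$ also lies directly below the $\underline{\red{n-1}}$ in $f_n^*(z'')$. But then the two entries are column paired, and column-paired entries are removed before matching, so the $\underline{\red{n-1}}$ would be paired in $f_n^*(z'')$. This contradicts the choice of $\underline{\red{n-1}}$. Raising it to $n$ therefore keeps every column strict, and the entry above it is unaffected because it is at most $n-1$.

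\textbf{Rows.} We must show that if a raised $\underline{\red{n-1}}$ has another $n-1$ to its right in the same row of $f_n^*(z')$, then that entry is raised as well. The plan has three steps.
\begin{enumerate}
\item Show that the entry to the right is already an $n-1$ in $f_n^*(z'')$. Passing from $z''$ to $z'$ only creates new $n-1$s through $f_{a-1}^*$ and the subsequent chain. In a row, these newly created $n-1$s lie to the left of the old ones, as in the standard fact that $f$ changes the rightmost unpaired entry of a row block.
\item Show that this entry is unpaired in $f_n^*(z'')$. Unpaired entries $n-1$ in a row form a right-justified block of the row's $n-1$s in the reading word, and a row cannot contain a paired $n-1$ to the right of an unpaired one. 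If the right entry were paired, we would obtain the configuration forbidden by Lemma \ref{lemma:ibetweeni,i+1}: an unpaired $n-1$ lying between a matched $n$ and $n-1$, which forces that pair to be column paired. We then rule this out, because a column-paired $n-1$ sits directly above an $n$ in its own row position, and the row of the $\underline{\red{n-1}}$ would then force the $\underline{\red{n-1}}$ itself to be column paired or the columns to fail strictness.
\item Conclude that the entry to the right is raised too, by definition of $y$.
\end{enumerate}

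Finally, the shuffle shape is a product of two ordinary skew shapes, so validity is checked row by row and column by column within each component. The two arguments above therefore suffice.

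The main obstacle is the row step. It requires knowing that the unpaired $n-1$s of a single row in a shuffle tableau are right-justified despite column pairing, and that the extra $n-1$s present in $z'$ but not in $z''$ lie to the left. I will try to prove the right-justification directly from the matching rule, using Lemma \ref{lemma:ibetweeni,i+1} and the strictness of $\lambda,\nu$ to exclude the column-paired configuration.
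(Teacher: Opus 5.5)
Your proof is correct. The column step is essentially the paper's: the paper notes that the raised entry is already unpaired in $f_n^*(z')$, so it is not column paired there, while you reach the same conclusion through $f_n^*(z'')$ and the shared $n$'s.

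The row step differs in how you show that an $n-1$ lying to the right of a raised entry is also unpaired in $f_n^*(z'')$.

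\textbf{The paper's route.} It gets this in one line from the fact that $f_{n-1}^*f_n^*(z'')$ is itself a shuffle tableau. Since $f_{n-1}^*$ raises exactly the unpaired $n-1$'s, suppose it raised the given entry but not an $n-1$ to its right in the same row. Then that row of $f_{n-1}^*f_n^*(z'')$ would contain an $n$ followed by an $n-1$, which is impossible.

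\textbf{Your route.} You prove the right-justification directly from the matching rule.
\begin{enumerate}
\item The entries of the row between the two $n-1$'s are all $n-1$. So the $n$ matched with the right one precedes the unpaired one in the reading word, and Lemma \ref{lemma:ibetweeni,i+1} forces that pair to be column paired.
\item The cell below the unpaired entry exists by convexity of skew shapes, since $\mu_{r+1}\le\mu_r<p<q\le\lambda_{r+1}$.
\item That cell must then contain an $n$, so the unpaired entry would itself be column paired, a contradiction.
\end{enumerate}
This is self-contained, since it does not rely on the operators preserving shuffle tableaux, but it is longer. Strictness of $\lambda,\nu$ plays no role here; convexity of the skew shape is all you need.

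\textbf{Step (i).} Your justification that the right-hand entry is already an $n-1$ in $f_n^*(z'')$, via ``newly created $n-1$'s lie to the left of old ones,'' is vaguer than it needs to be. The clean argument, which is the paper's, runs as follows. The entry is $\ge n-1$ in $f_n^*(z'')$ by weak row increase. It cannot be $\ge n$, because $f_n^*(z')$ and $f_n^*(z'')$ agree on all entries $\ge n$. This holds because the chain producing $z'$ and $z''$ only creates entries $\le n-1$, and $f_n^*$ only sees the $n$'s and $n+1$'s. You should state this agreement for all entries $\ge n$, not just for $n$ and $n+1$.
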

We must check that the columns of $y$ are strictly increasing and the rows are weakly increasing. First, consider the columns. As previously noted, if $n-1$ is unpaired in $f_n^*(z'')$, it will also be unpaired in $f_n^*(z')$. Thus, any $n-1$ that is unpaired in both $f_n^*(z'')$ and $f_n^*(z')$ cannot be column paired. So, after raising all of these $n-1$s, the columns will still be strictly increasing. 

Now, consider the rows of $y$. Consider an $\underline{\red{n-1}}$ in $f_n^*(z')$ that is also unpaired in $f_n^*(z'')$. Suppose we have some $n-1$ right of it in $f_n^*(z')$. As $\underline{\red{n-1}}$ is present in $f_n^*(z'')$, this $n-1$ is present (rows of $f_n^*(z'')$ are weakly increasing) in $f_n^*(z'')$. Moreover, since it is right of the $\underline{\red{n-1}}$ which is unpaired in $f_n^*(z'')$, it is unpaired in $f_n^*(z'')$ (rows of $f_{n-1}^*f_n^*(z'')$ are weakly increasing). Hence in the construction of $y$, we raise this right $n-1$ to an $n$. Thus the rows will still weakly increase, and so $y$ is a valid shuffle tableau.

\begin{claim} $e_n(y)=0$:
\end{claim}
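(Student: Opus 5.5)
The plan is to show that the multiset of $n$s and $n+1$s in $y$, together with which of them are column paired, agrees with that in $f_{n-1}^*f_n^*(z'')$, and then to use Lemma \ref{e_i=0}. This mirrors the corresponding claim in the skew-tableau case. By construction, $y$ is obtained from $f_n^*(z')$ by raising exactly those $n-1$s that are unpaired in $f_n^*(z'')$. These are precisely the $n-1$s that $f_{n-1}^*$ would raise when applied to $f_n^*(z'')$, since $f_{n-1}^*$ raises every unpaired $n-1$.

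First I will check that $f_n^*(z')$ and $f_n^*(z'')$ contain the same $n$s and $n+1$s, in the same positions. The operator $f_{a-1}^*$ that distinguishes $z'$ from $z''$ only moves entries $a-1 \mapsto a$, and the subsequent operators $f_a^*,\dots,f_{n-2}^*$ only touch entries of value at most $n-1$. Hence $z'$ and $z''$ agree on all entries $\geq n$. Applying $f_n^*$ depends only on the $n$s and $n+1$s, including their column pairings, which concern only positions of these values. So $f_n^*$ acts identically on both.

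It then follows that the positions holding $n$ or $n+1$ in $y$ are exactly the positions holding $n$ or $n+1$ in $f_{n-1}^*f_n^*(z'')$. On this set the values also agree, and the column pairings between $n$ and $n+1$ agree as well. Since $e_n$ depends only on the reading-word positions of the $n$s and $n+1$s and on their column pairings, we get $e_n(y)=e_n\bigl(f_{n-1}^*f_n^*(z'')\bigr)$ up to identification.

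It remains to show $e_n(f_{n-1}^*f_n^*(z''))=0$. I will apply Lemma \ref{e_i=0}(i) with $i=n$, which needs $z''$ extremal with $e_n(z'')=e_{n-1}(z'')=0$.
\begin{itemize}
\item Extremality holds because $z''$ is obtained from the extremal $x$ by a sequence of $f_j^*$'s.
\item For the vanishing of $e_{n-1}$ and $e_n$, I will use the same content-counting argument as in Lemma \ref{e_i=0}. The hypothesis $e_i(x)=0$ for all $i\le n+1$ says that $\text{wt}(x)$ is dominant in the first $n+2$ coordinates. The operators defining $z''$ act on the weight by a permutation whose only effect on the coordinates $n-1,n,n+1$ is to move the original $n-1$ content into coordinate $n-1$ (via the chain $f_{n-2}^*\cdots$). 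I will verify by tracking the weight vector that the contents of $z''$ in positions $n-1,n,n+1$ remain weakly decreasing. Extremality then forces $e_{n-1}(z'')=e_n(z'')=0$, because an extremal element with weight weakly decreasing in coordinates $i,i+1$ sits at the top of its $i$-string.
\end{itemize}

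The main obstacle is this weight bookkeeping for $z''$, namely confirming that coordinates $n-1,n,n+1$ stay dominant. I would handle it by writing $z''$ as $w(\text{wt}(x))$ for an explicit permutation $w$ and checking the three relevant entries of $w(\text{wt}(x))$ directly, rather than arguing through the tableaux.
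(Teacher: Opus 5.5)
Your proposal is correct and follows the paper's argument. Like the paper, you identify the $n$s and $n+1$s of $y$ with those of $f_{n-1}^*f_n^*(z'')$ and then apply Lemma \ref{e_i=0}(i). The only difference is that you get $e_{n-1}(z'')=e_n(z'')=0$ by computing the weight directly, where the paper just says ``repeatedly apply Lemma \ref{e_i=0}''; that lemma rests on the same content count. One correction to your parenthetical: with $v=\text{wt}(x)$, coordinates $n-1,n,n+1$ of $\text{wt}(z'')$ are $v_a,v_{a+1},v_{n+1}$ rather than the original $n-1$ content. These are still weakly decreasing because $v_1\geq\dots\geq v_{n+2}$, so your check goes through.
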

We seek to show that every $n+1$ in $y$ is paired with an $n$. To see this, note that the set of $n$s and $n+1$s in $y$ equals the set of $n$s and $n+1$s in $f_{n-1}^*f_n^*z''$. Since $e_n(z'')=e_{n-1}(z'')=0$ (to see this, repeatedly apply Lemma \ref{e_i=0}), we by Lemma \ref{e_i=0} have that $e_n(f_{n-1}^*f_n^*z'')=0$. Thus $e_n(y)=0$.

This completes the first part of our proof which was to construct $y$ and $\tilde{f}_{n-1}$ and verify that $y$ is in fact a tableaux with $e_n(y) = 0$. We now move on to the second part: proving Claim \ref{claim:mainclaim_y}.

Consider the rightmost unpaired $n$ in $\tilde{f}_{n-1}^*f_{n+1}^*y$. Note though that because of this modification to $\tilde{f}_{n-1}^*$, $f_n^*\tilde{f}_{n-1}^*f_{n+1}^*y=f_n^*f_{n-1}^*f_{n+1}^*f_n^*z'\not\in X$. We thus seek to prove the following claim:

\begin{claim}\label{claim:mainclaim_y}
If $f_n^*\tilde{f}_{n-1}^*f_{n+1}^*(y)\not\in X$, either $f_n^*f_{n+1}^*(y)\not\in X$ or $f_n^*\tilde{f}_{n-1}^*(y)\not\in X$.
\end{claim}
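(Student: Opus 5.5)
We argue by contrapositive: assume $f_n^*f_{n+1}^*(y)\in X$ and $f_n^*\tilde{f}_{n-1}^*(y)\in X$, and show that $f_n^*\tilde{f}_{n-1}^*f_{n+1}^*(y)\in X$. This mirrors the corresponding claim in the skew-tableau warm-up. The only entries that can violate the flag when passing to $f_n^*\tilde{f}_{n-1}^*f_{n+1}^*(y)$ are $n\mapsto n+1$ raised by the final $f_n^*$. The applications of $f_{n+1}^*$ and $\tilde f_{n-1}^*$ are already flag-safe, since they occur in the two tableaux assumed to lie in $X$, and $f_{n+1}^*$ and $\tilde f_{n-1}^*$ commute on content. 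By Lemma \ref{lemma:easyendofrow}, a violation means some unpaired $n$ at the end of a row with flag $n$ gets raised. Moreover, $f_n^*$ raises unpaired $n$s in order from the right. So it suffices to track the set of unpaired $n$s and show that every unpaired $n$ in $\tilde{f}_{n-1}^*f_{n+1}^*(y)$ is already unpaired in $f_{n+1}^*(y)$ or in $\tilde f_{n-1}^*(y)$. Any bad one would then already be raised in one of the two tableaux assumed to lie in $X$.

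To this end, fix the rightmost unpaired $n$ in $\tilde{f}_{n-1}^*f_{n+1}^*(y)$, call it $\boxed{\blue{n}}$, and suppose it is paired in both $f_{n+1}^*(y)$ and $\tilde f_{n-1}^*(y)$. We split into cases following the proof for skew tableaux, now additionally handling column pairing via Lemma \ref{lemma:ibetweeni,i+1}.

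\textbf{Case A:} $\boxed{\blue{n}}$ is not present in $y$. Then it was created by some $n-1\mapsto n$ under $\tilde f_{n-1}^*$. Using Lemma \ref{lemma:rightunpairs1}, the rightmost newly unpaired $n$ is unpaired already at the first step. An $n-1\mapsto n$ that is paired in $\tilde f_{n-1}^*(y)$ would have to sit under an $n+1$ (column paired) or to the right of a matching $n+1$. We rule this out by producing an $n$ further right that becomes unpaired, contradicting maximality.

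\textbf{Case B:} $\boxed{\blue{n}}$ is present but paired in $y$ with some $\underline{\red{n+1}}$. First we show, using Lemma \ref{lemma:rightunpairs2}, that it stays paired with the same $\underline{\red{n+1}}$ in $f_{n+1}^*(y)$. Otherwise an unpaired $n+1\mapsto n+2$ would unpair an $n$ weakly to its right, contradicting $e_n(y)=0$ or maximality. Next, some $\dbox{\purple{n-1\mapsto n}}$ under $\tilde f_{n-1}^*$ steals the pairing. If the pair is column paired, it can never be broken by $f_{n-1}$ moves, which is a contradiction. If it is not column paired, the stealing $n-1\mapsto n$ must lie right of $\underline{\red{n+1}}$. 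Then already in $\tilde f_{n-1}^*(y)$ it unpairs an $n$ weakly right of $\boxed{\blue{n}}$, and we obtain a sequence pattern of the form in Lemma \ref{lemma:ibetweeni,i+1}. Drawing the forced $3\times 3$ shuffle substructure, as in Cases 3 and 4 of Lemma \ref{lemma:rightunpairs1}, gives the contradiction.

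The main obstacle is Case B in the shuffle setting. There, column pairing lets an $n$ be paired with an $n+1$ to its left even when a free $n$ lies between them, so the one-line argument from the skew case fails. The inert $n$s in $y$ also require checking that they never act as the stealing entry; this holds by the definition of $\tilde f_{n-1}$. I expect to resolve this by the substructure diagrams: interlacing rows force an $n$ or $n+1$ into a specific box, which yields a column pairing incompatible with the assumed unpairing.
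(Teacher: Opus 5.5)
Your proposal has a genuine gap. You correctly note that a violation comes from a bad unpaired $n$, which by Lemma \ref{lemma:easyendofrow} is the last box of its row. But you then replace the goal by ``every (or the rightmost) unpaired $n$ of $\tilde f_{n-1}^*f_{n+1}^*(y)$ is already unpaired in $f_{n+1}^*(y)$ or in $\tilde f_{n-1}^*(y)$.'' After that you never use badness or the strictness of the shapes again, and the replacement statement is false for shuffle tableaux.

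Here is a configuration showing this. Take $y$ with an $n-1$ in row $j$ of one shape, directly above an $n+1$ in row $j+1$ of the same shape. In the interlaced row of the other shape between them, put an $n$ that lies directly below an $n-1$ of its own shape, so it does not bracket with the first $n-1$. Put no other entries in $\{n-1,n,n+1,n+2\}$. Then:
\begin{enumerate}
\item $e_n(y)=0$.
\item $\tilde f_{n-1}^*$ turns the first $n-1$ into an $n$ that is column paired with the $n+1$. This frees the interlaced $n$, which lies to its \emph{left}.
\item In $\tilde f_{n-1}^*f_{n+1}^*(y)$ the new $n$ is the rightmost unpaired $n$. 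Yet it is paired in $\tilde f_{n-1}^*(y)$ and absent from $f_{n+1}^*(y)$.
\end{enumerate}
Row $j$ may continue to the right with large entries, so this is compatible with strict shapes. In particular, your Case A plan to ``produce an $n$ further right that becomes unpaired, contradicting maximality'' cannot work. The $n$ released by the steal (which $e_n(y)=0$ forces) lies to the left, so maximality gives nothing.

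The missing idea is the one the paper uses. The rightmost unpaired $n$ is bad, so it is the last box of its row. Strictness of $\lambda$ (and $\nu$) forbids any entry directly beneath the last box of a row, since that would force two consecutive rows of equal length. The paper's Case 1 is exactly the configuration above, and it is excluded because the new $n$ must end its row while sitting directly above an $n+1$.

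In the paper's Case 2 (the $n$ is present in $y$), it tracks the $n-1\mapsto n$ responsible for the unpairing, its different $n+1$ partners before and after applying $f_{n+1}^*$, and the $n+2$ paired with one of them. It then finishes in one of two ways. If that $n+1$ is column paired, it gets an infinite rightward regress. Otherwise it exhibits a bad $n$ that is column paired, which again contradicts strictness. Your Case B ends with ``interlacing rows force an $n$ or $n+1$ into a specific box,'' which never reaches a contradiction of this kind. Since your argument nowhere invokes strictness, it cannot be completed as planned.
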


Suppose for the sake of contradiction that the rightmost unpaired $n$ in $\tilde{f}_{n-1}^*f_{n+1}^*(y)$ is not present in either $f_{n-1}^*(y)$ or $f_{n+1}^*(y)$. We split this into two cases.

\textbf{Case 1: The rightmost unpaired $n$ in $\tilde{f}_{n-1}^*f_{n+1}(y)$ is not present in $y$:}\\
Suppose first that the rightmost unpaired $n$ in $\tilde{f}_{n-1}^*f_{n+1}^*(y)$ is not present in $y$. It is then present but paired in $\tilde{f}_{n-1}^*y$. Since $e_n(y)=0$, this $n-1\mapsto n$ unpairs an $n$ in $y$. The $n$ that gets unpaired must by assumption be to its left. We thus obtain the following sequence of elements,
\[\begin{tikzcd}
	{n+1} & n & {n-1\mapsto n}
\end{tikzcd}\]
where the middle $n$ was paired in $y$ but is unpaired in $\tilde{f}_{n-1}^*y$ and the $n-1\mapsto n$ is in the same column as the $n+1$. We thus in $y$ have
\begin{center}
\ytableausetup{boxsize = 3em}
\begin{ytableau}
    \none &n-1\\
    n&\none\\
    \none &n+1
\end{ytableau}
\end{center}
This though is impossible as the $n-1\mapsto n$ is bad and thus at the end of its row by Lemma \ref{lemma:easyendofrow} and so this would contradict $\lambda$ being strict.

\textbf{Case 2: The rightmost unpaired $n$ is present in $y$:}\\
Suppose now that the rightmost unpaired $n$ in $\tilde{f}_{n-1}^*f_{n+1}^*y$ is present in $y$. Consider the $\underline{\red{n-1\mapsto n}}$ which unpairs the rightmost unpaired $n$ in $\tilde{f}_{n-1}^*f_{n+1}^*y$. The $\underline{\red{n-1\mapsto n}}$ is by assumption paired with an $\boxed{\blue{n+1}}$ in $f_{n-1}^*f_{n+1}^*y$ (since it unpairs an $n$ that was present in $y$). On the other hand, it is also paired with an $\Circled{\green{n+1}}$ in $\tilde{f}_{n-1}^*y$ (since applying $f_{n+1}^*$ will not pair it). Since $e_n(y)=0$, the $\underline{\red{n-1\mapsto n}}$ unpairs an $n$ in $y$. This $n$ must be left of $\boxed{\blue{n+1}}$ as the rightmost unpaired $n$ in $\tilde{f}_{n-1}^*f_{n+1}^*y$ is not yet unpaired in $f_{n-1}^*y$. We thus get the following sequence of elements
\[\begin{tikzcd}
	{\Circled{\green{n+1}}} & n & {\boxed{\blue{n+1}}} & {\underline{\red{n-1\mapsto n}}}
\end{tikzcd}\]
Placing these in $y$ we then get
\begin{center}
\ytableausetup{boxsize = 3em}
\begin{ytableau}
    \none &\underline{\red{n-1}}\\
    n&\none&\boxed{\blue{\hspace{0.2em} n+1 } \hspace{0.2em}}\\
    \none &\Circled{\green{n+1}}
\end{ytableau}
\end{center}
Since the $\underline{\red{n-1\mapsto n}}$ and $\Circled{\green{n+1}}$ are not paired in $\tilde{f}_{n-1}^*f_{n+1}^*(y)$, the $\Circled{\green{n+1}}$ is unpaired (and so maps to an $n+2$ in $\tilde{f}_{n-1}^*f_{n+1}^*(y)$). On the other hand, the $\boxed{\blue{n+1}}$ is paired as it is present in $f_{n+1}^*y$. Consider the $n+2$ it is paired with. Since the $\Circled{\green{n+1}}$ is unpaired, the $\boxed{\blue{n+1}}$ is either column paired or paired with an $n+2$ in the same row as the $\Circled{\green{n+1}}$. Suppose first that it is column paired. We thus obtain
\begin{center}
\ytableausetup{boxsize = 3em}
\begin{ytableau}
    \none &\underline{\red{n-1}}\\
    n&\none&\boxed{\blue{\hspace{0.2em}n+1}\hspace{0.2em}}\\
    \none &\Circled{\green{n+1}}\\
    n+2&\none&n+2
\end{ytableau}
\end{center}
Note that the $n$ is over an $n+2$ as it gets unpaired in $f_{n-1}^*y$. Since the $\Circled{\green{n+1}}$ is unpaired in $y$, the $n+2$ is paired with an $n+1$ in the middle row. Since $e_n(y) = 0$, this $n+1$ is also paired in $y$ with an entry left of the $n$ (otherwise the $\underline{\red{n-1\mapsto n}}$ would unpair an $n$ right of the $\boxed{\blue{n+1}}$). We thus have the following infinite repeating sequence
\begin{center}
\ytableausetup{boxsize = 3em}
\begin{ytableau}
    \none&\none&\none &\underline{\red{n-1}}\\
    \none&\none[\dots]&n&\none&\boxed{\hspace{0.2em}\blue{n+1}\hspace{0.2em}}\\
    \none[\dots]&n+1&\none &\Circled{\green{n+1}}\\
    \none&\none[\dots]&n+2&\none&n+2
\end{ytableau}
\end{center}
This though is a contradiction, and so the $n+1$ is not column paired. We thus instead have
\begin{center}
\ytableausetup{boxsize = 3em}
\begin{ytableau}
    \underline{\red{n-1}}&\none&\bullet\\
    \none &\boxed{\hspace{0.2em}\blue{n+1}\hspace{0.2em}}\\
    \Circled{\green{n+1}}&\none&n+2
\end{ytableau}
\end{center}
We can assume that the $\underline{\red{n-1\mapsto n}}$ is the rightmost unpaired $n-1$ and so $\bullet=n$. We can then without loss of generality assume that the $n$ is the $n$ that gets unpaired, as if it isn't we would have another $n+1$ in the middle row and so we can repeat the construction rightwards. This though is a contradiction, as since $\bullet=n$ is bad (and thus at the end of its row by Lemma \ref{lemma:easyendofrow}) and column paired, $\lambda$ is not strict. This completes case two, and so we have proven our third main claim.

We end by relating $y$ back to $z',z''$ and finally our main problem.
Since $f_n^*\tilde{f}_{n-1}^*f_{n+1}^*(y)=f_n^*f_{n+1}^*f_{n-1}^*f_n^*(z')\not\in X$, we obtain that either $f_n^*\tilde{f}_{n-1}^*(y)\not\in X$ or $f_n^*f_{n+1}^*(y)\not\in X$. We seek to show that $f_n^*\tilde{f}_{n-1}^*(y)\not\in X$. Suppose otherwise; then $f_n^*f_{n+1}^*(y)\not\in X$. 

\begin{claim}
    The set of $n$s and $n+1$s in $f_{n+1}^*(y)$ equals the set of $n$s and $n+1$s in $f_{n+1}^*f_{n-1}^*f_n^*(z'')$.
\end{claim}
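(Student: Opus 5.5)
We prove the claim by tracking the two letters $n$ and $n+1$ separately. In each case we compare the two tableaux step by step, using the construction of $y$ from $f_n^*(z')$ and the fact that $z'$ and $z''$ differ only by the bolded $f_{a-1}^*$.

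\textbf{Step 1: comparing $f_n^*(z')$ and $f_n^*(z'')$.} The operator $f_{a-1}^*$ only changes entries $a-1$ and $a$. The later operators $f_{n-2}^*\dots f_a^*$ only change entries $\le n-1$. So $z'$ and $z''$ have the same sets of entries $n$, $n+1$ and $n+2$. Moreover, all entries $n$ and $n+1$ are paired in the same way, since the $n$'s and $n+1$'s, their positions, and the column pairings between them coincide. Applying $f_n^*$ therefore acts identically on both. Hence $f_n^*(z')$ and $f_n^*(z'')$ have the same positions of $n$'s, $n+1$'s and $n+2$'s.

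\textbf{Step 2: the entries $n$.} Applying $f_{n+1}^*$ to $y$ does not change any $n$. So the $n$'s of $f_{n+1}^*(y)$ are those of $y$. By construction, these are the $n$'s of $f_n^*(z')$ together with the $n-1$'s of $f_n^*(z')$ that are unpaired in $f_n^*(z'')$, now raised to $n$.

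We need to identify these raised entries inside $f_n^*(z'')$. The $n-1$'s of $f_n^*(z'')$ unpaired there form a subset of the $n-1$'s of $f_n^*(z')$, by Lemma~\ref{lemma:unpairedsubset}. The operator $f_{n-1}^*$ applied to $f_n^*(z'')$ raises exactly its unpaired $n-1$'s. Combined with Step~1, the $n$'s of $f_{n-1}^*f_n^*(z'')$ are exactly the $n$'s of $y$. Since $f_{n+1}^*$ does not touch $n$'s, this settles the entries $n$.

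\textbf{Step 3: the entries $n+1$.} The $n+1$'s of $f_{n+1}^*(y)$ are determined by the $n+1$'s and $n+2$'s of $y$ and by their pairing. The $n+2$'s of $y$ are those of $f_n^*(z')$, which agree with those of $f_n^*(z'')$ and hence with those of $f_{n-1}^*f_n^*(z'')$.

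The $n+1$'s of $y$ likewise agree with those of $f_{n-1}^*f_n^*(z'')$. The only possible difference is in the column pairing of $(n+1,n+2)$, but this depends only on the positions of $n+1$'s and $n+2$'s, which agree. So the $(n+1,n+2)$-reading words coincide, and $f_{n+1}^*$ produces the same $n+1$'s in both tableaux.

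\textbf{Main obstacle.} The delicate point is Step~2: checking that the $n-1$'s raised in constructing $y$ are exactly the entries raised by $f_{n-1}^*$ on $f_n^*(z'')$. One must ensure that no $n-1$ of $f_n^*(z'')$ is absent from $f_n^*(z')$. I expect this to follow because $f_{a-1}^*$ only lowers values, so the $n-1$'s of $z''$ are present in $z'$ up to the subset relation of Lemma~\ref{lemma:unpairedsubset}.
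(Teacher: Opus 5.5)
Your proposal is correct and follows essentially the same route as the paper. The $n$'s are identified through the construction of $y$, using that $f_n^*(z')$ and $f_n^*(z'')$ agree on all entries $\geq n$; the $n+1$'s come from the fact that $f_{n+1}^*$ sees identical $(n+1,n+2)$-configurations in $y$ and in $f_{n-1}^*f_n^*(z'')$, which you justify a bit more explicitly than the paper does.

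Your closing ``main obstacle'' paragraph is unnecessary, and its heuristic is off, since $f_{a-1}^*$ increases entries. All that is needed is that the \emph{unpaired} $n-1$'s of $f_n^*(z'')$ are $n-1$'s of $f_n^*(z')$, which is exactly the subset relation from Lemma~\ref{lemma:unpairedsubset} that the paper records just before defining $y$ and that you already cite in Step~2.
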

We start with the set of $n$s. The set of $n$s in $f_{n+1}^*(y)$ equals the set of $n$s in $y$. Note also that by definition, the set of $n$s in $f_n^*(z')$ equals the set of $n$s in $f_n(z'')$. Finally, the set of $n$s in $y$ equals the set of $n$s in $f_n^*(z')$ along with the $n$s obtained by raising $n-1\mapsto n$ for every $n-1$ in $f_n^*(z')$ that is also unpaired in $f_n(z'')$. Thus, by construction, the set of $n$s in $y$ equals the set of $n$s in $f_{n-1}^*f_n^*(z'')$, which in turn equals the set of $n$s in $f_{n+1}^*f_{n-1}^*f_n^*(z'')$. Now consider the set of $n+1$s. The set of $n+1$s in $f_{n+1}^*(y)$ equals the set of $n+1$s in $f_{n+1}^*f_n^*(z')$, which in turn equals the set of $n+1$s in $f_{n+1}^*f_n^*(z'')$. 

Thus, from the claim, we get that $f_n^*f_{n+1}^*f_{n-1}^*f_n^*(z'')\not\in X$. Recall that
\[
f_n^*f_{n+1}^*f_{n-1}^*f_n^*(z'') = f_n^*f_{n+1}^*f_{n-1}^*f_n^*(f_{n-2}^*\dots f_a^*f_{n-1}^*\dots f_{a+1}^*(z))=f_n^*\dots f_a^*f_{n+1}^*f_n^*f_{n-1}^*\dots f_{a+1}^*(z)=F_{n,a}F_{n+1,a+1}(z).
\]
By commuting the operators $f_{a-2}^*\dots f_{b-1}^*$ in $z$ left, we then get that the set of $n+1$s in $F_{n,a}F_{n+1,a+1}(z)$ equals the set of $n+1$s in $F_{n,a}F_{n+1,a+1}(x)$, and so $F_{n,a}F_{n+1,a+1}(x)\not\in X$. Note though that by Lemma $\ref{lemma:symbolicmanipulation}$, we have
\[
F_{n,a}F_{n+1,b}(x) = f_{n+1}^*(F_{n,b}F_{n+1,a+1})(x) \in X\implies F_{n,b}F_{n+1,a+1}(x)\in X\implies F_{n,a}F_{n+1,a+1}(x)\in X,
\]
which is a contradiction.

Thus $f_n^*\tilde{f}_{n-1}^*(y)\not\in X$. As already noted though, $f_n^*\tilde{f}_{n-1}^*(y)=f_n^*f_{n-1}^*f_n^*(z')\not\in X$. By commuting operators in $f_n^*\dots f_{a+1}^*$ left and applying the second braid relation when necessary, we get
\[
f_n^*f_{n-1}^*f_n^*(z') = f_n^*\dots f_{a-1}^* f_n^*\dots f_{a+1}^*(z) = f_{n-1}^*\dots f_{a}^*(f_n^*\dots f_{a-1}^*(z)) = f_{n-1}^*\dots f_{a}^*F_{n,b-1}(x)\not\in X.
\]
Since $f_{n-1}^*f_n^*(z')$ by assumption contains a bad $n$, this implies that $F_{n,b-1}(x)\not\in X$. This though contradicts our initial assumptions, and so is a final contradiction. We thus obtain that $F_{n,a}F_{n+1,b}(x)\in X$, as hoped.
\end{proof}
\vspace{1em}
\begin{remark}
    Note that in the above proof, steps (i) (construction of $y$) and (iii) (relating back to the original claim) are quite general and do not require much about the specific structure of shuffle tableaux. Because of this, in many cases, the proof of the extension property essentially reduces to proving the simpler statement that if
    \begin{enumerate}
        \item $x\in X$ is extremal with $e_n(x) = 0$
        \item $f_n^*f_{n-1}^*(x)\in X$
        \item $f_n^*f_{n+1}^*(x)\in X$
    \end{enumerate}
    then $f_n^*f_{n+1}^*f_{n-1}^*(x)\in X$. This may be useful for some applications of our axioms.
\end{remark}



\subsection{Proof of the gluing property}\label{section:locprop2}
\begin{proposition}[Gluing Property]
    Let $k,m,a\in \N$ and $x\in X$ be extremal so that $e_i(x)=0$ for all $a-m\leq i\leq a+k$. Suppose that $f_{a+k}^*\dots f_{a}^*(x)\in X$ and $f_{a-m}^*\dots f_{a-1}^*(x)\in X$. Then either $f_{a+k}^*\dots f_{a-1}^*(x)\in X$ or $f_{a-m}^*\dots f_{a-1}^*f_a^*(x)\in X$.
\end{proposition}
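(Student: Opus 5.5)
We argue by contradiction, following the template of the gluing proof for flagged skew tableaux in Section \ref{sec:characterization}. Throughout we replace Lemma \ref{lemma:skewrightunpairs} by its shuffle analogues, Lemmas \ref{lemma:rightunpairs1} and \ref{lemma:rightunpairs2}. Assume $f_{a+k}^*\dots f_a^*(x)\in X$ and $f_{a-m}^*\dots f_{a-1}^*(x)\in X$, while both $f_{a+k}^*\dots f_{a-1}^*(x)$ and $f_{a-m}^*\dots f_{a-1}^*f_a^*(x)$ lie outside $X$.

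\textbf{Step 1: the first two claims.} We first show that the rightmost unpaired $\boxed{\blue{a-1}}$ in $f_a^*(x)$ lies strictly right of the rightmost unpaired $a-1$ in $x$. Suppose instead that the two coincide. Then $f_a^*(x)$ and $x$ contain the same $a-1$s and $a-2$s, so $f_{a-1}^*$ acts on the two tableaux by raising the same entries. Lemma \ref{lemma:rightunpairs1} lets us propagate this: $f_{a-1}^*f_a^*(x)$ and $f_{a-1}^*(x)$ have the same rightmost unpaired $a-2$, and so on down to $a-m$. By Lemma \ref{lemma:easyendofrow}, the only way $f_{a-m}^*\dots f_{a-1}^*f_a^*(x)$ can fail to be in $X$ while $f_{a-m}^*\dots f_{a-1}^*(x)\in X$ is through a bad entry that $f_a^*$ does not touch. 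This gives a contradiction. The symmetric argument, using Lemma \ref{lemma:rightunpairs2} and the chain $f_{a+1}^*,\dots,f_{a+k}^*$, shows that the rightmost unpaired $a$ in $f_{a-1}^*(x)$ lies strictly right of the rightmost unpaired $\underline{\red{a}}$ in $x$.

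There is one wrinkle in this induction. Equality of rightmost unpaired entries is not quite what propagates; what propagates is the statement that the entries changed by each successive $f_j^*$ agree, at least in the rows where the flag is tight. We will need Lemma \ref{lemma:subsetlemma} to control the extra pairings created by column pairs.

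\textbf{Step 2: ordering.} Lemma \ref{lemma:rightunpairs1} says that the rightmost unpaired $a-1$ in $x$ unpairs the rightmost unpaired $a$ of $f_{a-1}^*(x)$. Combined with the second claim, this places that $a-1$ right of $\underline{\red{a}}$. We therefore obtain the ordering $\underline{\red{a}},\ a-1,\ \boxed{\blue{a-1}}$ in the reading word, where $\boxed{\blue{a-1}}$ is paired with some $a$ in $x$ but becomes unpaired after $f_a^*$.

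In the skew case, this configuration immediately forces an unpaired $a-1$ to sit between a paired $a$ and $a-1$, which is a contradiction. For shuffle tableaux that contradiction fails. Lemma \ref{lemma:ibetweeni,i+1} instead tells us the enclosing pair must be \emph{column paired}. A column-paired $a-1$ can never be unpaired by applying $f_a$s, since $f_a$ only changes $a$s that are unpaired after column pairs are removed. It remains to rule out the alternative that $\boxed{\blue{a-1}}$ is freed because its partner $a$ is itself raised.

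\textbf{Main obstacle.} The hardest step is this last case analysis. It requires drawing the local substructure of the shuffle diagram, as in the proofs of Lemmas \ref{lemma:rightunpairs1} and \ref{lemma:rightunpairs2}, and checking where the partner $a$ of $\boxed{\blue{a-1}}$ and the relevant bad entries can sit. The contradiction should come from two facts: by Lemma \ref{lemma:easyendofrow}, a bad entry lies at the end of its row, and strictness of $\lambda$ and $\nu$ forbids two row ends stacked in one column. The forced configuration would require exactly such a stacking. The same argument also supplies the gluing hypothesis used in Proposition \ref{prop:idealshuffle}.
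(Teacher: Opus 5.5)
Your Steps 1 and 2 match the paper's three preliminary claims. These are: the rightmost unpaired $a-1$ of $f_a^*(x)$ lies strictly right of that of $x$; the rightmost unpaired $a$ of $f_{a-1}^*(x)$ lies strictly right of $\underline{a}$; hence the reading-word order is $\underline{a},\ a-1,\ \boxed{a-1}$. After that the argument has a genuine gap, in two places.

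\textbf{The column-pairing claim is false.} You assert that a column-paired $a-1$ can never be unpaired by applying $f_a$. Before matching, $f_a$ only discards column pairs of the form $a$ over $a+1$. The lower $a$ of an ($a-1$ over $a$) column pair can therefore be an unpaired $a$ for $f_a$, and raising it frees the $a-1$ above it. This is exactly what happens in the configurations that actually occur. In the paper's Case 1, $\boxed{a-1}$ is column paired directly above $\underline{a}$ and is freed when $\underline{a}\mapsto a+1$. In Case 2, the $a-1$ directly above $\underline{a}$ is freed and re-matches with the middle-row $a$. That re-matching frees $\boxed{a-1}$, which sits over an $a+1$. Neither configuration is contradictory in itself; the paper uses them as the base case of an induction.

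\textbf{The contradiction cannot be local.} You expect it to come from the $a$/$a-1$ interaction via Lemma \ref{lemma:easyendofrow} and strictness. That can only work when the first failure in the chain $f_{a-m}^*\cdots f_a^*(x)$ is at level $a-1$, essentially $m=1$. There $\boxed{a-1}$ is itself the bad entry and sits over an element.

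In general one reduces, by induction on $m$, to $f_{a-m+1}^*\cdots f_a^*(x)\in X$, so the only bad entry is an $a-m$. Nothing near $\underline{a}$ is bad, and Lemma \ref{lemma:easyendofrow} says nothing about $\boxed{a-1}$.

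The missing idea is the paper's staircase induction. Set $x_j=f_{a-j}^*\cdots f_{a-1}^*(x)$ and $y_j=f_{a-j}^*\cdots f_{a-1}^*f_a^*(x)$. For $j=1,\dots,m$ one shows two things. First, the rightmost unpaired $a-j$ in $y_{j-1}$ lies strictly right of the rightmost unpaired $a-j$ in $x_{j-1}$. Second, it sits directly above some entry: either column paired with the previous level's $\boxed{a-j+1}$, or over an $a-j+2$ as in Case 2.

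Each level needs its own tableau chasing:
\begin{enumerate}
\item an infinite-regress argument placing the $a-j$ freed by the middle $a-j+1$ to the right of $\boxed{a-j+1}$;
\item Claim \ref{claim:colpair} (the rightmost unpaired $a-1$ of $x$ has no $a+1$ below it), together with another regress, to show the relevant $a-j+1$ is not column paired.
\end{enumerate}
Only at level $a-m$, where the entry is bad and hence at the end of its row, does strictness of $\lambda$ give the contradiction. Without this propagation your argument never reaches the bad entry, so at best it handles $m=1$.
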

\begin{proof}
    Suppose that $f_{a+k}^*\dots f_{a-1}^*(x)\not\in X$. We show that $f_{a-m}^*\dots f_{a-1}^*f_a^*(x)\in X$. Inductively assume that $f_{a-m+1}^* \dots f_{a-1}^*f_a^*(x) \in X$. Suppose for the sake of contradiction that $f_{a-m}^*f_{a-m+1}^* \dots f_{a-1}^*f_a^*(x) \not\in X$. Thus, in $f_{a-m+1}^* \dots f_{a-1}^*f_a^*(x)$, there is a `bad' unpaired $a-m$ in a row of flag $a-m$. Note that since $\lambda$ is strict, the bad $a-m\mapsto a-m+1$ cannot have an element directly underneath it in the same column (following lemma \ref{lemma:easyendofrow}). Our final contradiction will be that this rightmost bad $a-m$ must have an element directly underneath it. 
    
    We start with some initial observations about the structure of the tableau $x$. Using these observations, we seek to show that $x$ exhibits one of the following two substructures shown below. This substructure will serve as the base case of a larger structure, which we will inductively show is contained inside $x$. At the end of this larger structure will be our `bad' $a-m$ directly above another element, our desired contradiction. 
    \begin{center}
    Case $1$: \ytableausetup{boxsize = 3em}
\begin{ytableau}
    \none &\boxed{\blue{\hspace{0.2em}a-1}\hspace{0.2em}}\\
    a-1&\none\\
    \none &\underline{\red{a}}
\end{ytableau}  \hspace{2cm}Case $2$:   \ytableausetup{boxsize = 3em}
\begin{ytableau}
    \none &\Circled{\green{a-1}}&\none&\boxed{\blue{\hspace{0.2em}a-1\hspace{0.2em}}}\\
    a-1&\none&a\\
    \none &\underline{\red{a}}&\none&a+1
\end{ytableau}
\end{center}

    Here, the blue $\boxed{\blue{a-1}}$ (in the top right box) is the rightmost unpaired $a-1$ in $f_a^*x$. The red $\underline{\red{a}}$ (in the bottom left box) is the rightmost unpaired $a$ in $x$. In both cases, the black $a-1$ in the middle row is the rightmost unpaired $a-1$ in $x$. Note that in both of the above diagrams, position is relative but not precise (ie column paired entries are indeed column paired but there could be more entries in $x$ not shown).
    
    \begin{remark}
        Proving the base case will take a bit of work, so the reader might want to skip to the inductive case to see how we will build off of these diagrams. 
    \end{remark}
    
    \begin{proof}[Proof of Base Case]
    We start with three general claims that apply to both cases. 

    \begin{claim} The rightmost unpaired $a-1$ in $f_a(x)$ is right of the rightmost unpaired $a-1$ in $x$.
    \end{claim}
    Suppose that $f_a^*(x),x$ had the same rightmost unpaired $a-1$. Note that $x,f_a^*(x)$ have the same set of $a-1$s and $a-2$s. Thus $f_{a-1}^*f_a^*(x)$ has the same rightmost unpaired $a-2$ as $f_{a-1}^*(x)$. Then by Lemma \ref{lemma:rightunpairs1}, as $f_{a-1}^*f_a^*(x),f_{a-1}^*(x)$ contain the same rightmost unpaired $a-2$ as $f_{a-1}f_a(x),f_{a-1}(x)$ respectively, $f_{a-1}^*f_a^*(x),f_{a-1}^*(x)$ contain the same rightmost unpaired $a-2$. Continuing this argument inductively, we obtain that $f_{a-m+1}^*\dots f_{a-1}^*f_a^*(x),f_{a-m+1}^*\dots f_{a-1}^*(x)$ contain the same rightmost unpaired $a-m$, which contradicts the fact that $f_{a-m}^*f_{a-m+1}^*\dots f_{a-1}^*f_a^*(x)\not\in X$ while $f_{a-m}^*f_{a-m+1}^*\dots f_{a-1}^*(x)\in X$. Thus the rightmost unpaired $a-1$ in $f_a^*(x)$ is strictly right of the rightmost unpaired $a-1$ in $x$.
    
    \begin{claim} 
    The rightmost unpaired $a$ in $f_{a-1}^*(x)$ is right of the rightmost unpaired $\underline{\red{a}}$ in $x$. 
    \end{claim}
    
    Similarly, by Lemma \ref{lemma:rightunpairs2}, since $f_{a+k}^*\dots f_{a-1}^*(x)\not\in X$ while $f_{a+k}^*\dots f_{a}^*(x)\in X$, the rightmost unpaired $a$ in $f_{a-1}^*(x)$ is strictly right of the rightmost unpaired $a$ in $x$. 
    
    \begin{claim}
    The rightmost unpaired $a-1$ in $x$ is right of the rightmost unpaired $\underline{\red{a}}$ in $x$.
    \end{claim}
    To see this, note that the rightmost unpaired $a-1$ in $x$ unpairs the rightmost unpaired $\boxed{\blue{a}}$ in $f_{a-1}^*(x)$. This $\boxed{\blue{a}}$ is right of the rightmost unpaired $\underline{\red{a}}$ in $x$. Thus, in order for the $a-1\mapsto a$ to unpair the $\boxed{\blue{a}}$ in $f_{a-1}^*(x)$, it must be right of the $\underline{\red{a}}$. 

    Putting these three claims together, we thus have the following sequence in the reading word of $x$:
    
    \[\begin{tikzcd}
	{\underline{\red{a}}} & {a-1} & {\boxed{\blue{a-1}}}
    \end{tikzcd}\]
    
    Consider the $a$ that the right $\boxed{\blue{a-1}}$ is paired with. If the $\boxed{\blue{a-1}}$ is column paired, since it gets unpaired by the $\underline{\red{a\mapsto a+1}}$, it must be column paired with the $\underline{\red{a}}$ (otherwise it would not get unpaired). We thus obtain Case $1$:

    \begin{center}
    \ytableausetup{boxsize = 3em}
\begin{ytableau}
    \none &\boxed{\blue{\hspace{0.2em}a-1}\hspace{0.2em}}\\
    a-1&\none\\
    \none &\underline{\red{a}}
\end{ytableau}
\end{center}
    
    We now move on to case $2$, which takes a bit more work. If the $\boxed{\blue{a-1}}$ is not column paired, it is paired with an $a$ right of the left $a-1$. We thus get the following sequence in $x$:
    \[\begin{tikzcd}
	{\underline{\red{a}}} & {a-1} &a & {\boxed{\blue{\hspace{0.2em}a-1}\hspace{0.2em}}}
    \end{tikzcd}\]
    Thus, in order for the $\underline{\red{a\mapsto a+1}}$ to unpair the $\boxed{\blue{a-1}}$, the $\underline{\red{a\mapsto a+1}}$ must be paired with an $\Circled{\green{a-1}}$ between the $a,\boxed{\blue{\hspace{0.2em}a-1}\hspace{0.2em}}$. We thus have
    \[\begin{tikzcd}
	{\underline{\red{a}}} & {a-1} &a &\Circled{\green{a-1}}& {\boxed{\blue{\hspace{0.2em}a-1\hspace{0.2em}}}}
    \end{tikzcd}\]
    Since the left $a-1$ is unpaired, the $\underline{\red{a}},\Circled{\green{a-1}}$ are column paired. Note also that the $\boxed{\blue{a-1}}$ is paired; since the left $a-1$ is not paired but it gets unpaired by the $\underline{\red{a\mapsto a+1}}$, it must be paired with an $a$ in the middle row. We thus obtain the following substructure:

    \begin{center}
    \ytableausetup{boxsize = 3em}
\begin{ytableau}
    \none &\Circled{\green{a-1}}&\none&\boxed{\blue{\hspace{0.2em}a-1}\hspace{0.2em}}\\
    a-1&\none&a\\
    \none &\underline{\red{a}}&\none
\end{ytableau}
\end{center}

     Note here that the $\boxed{\blue{a-1}}$ is just somewhere right of the $\Circled{\green{a-1}}$ and not necessarily in the same row (although we will later show that they are in fact in the same row). 

     We now prove one last useful claim about the structure of $x$. 

    \begin{claim}\label{claim:colpair}
    The rightmost unpaired $a-1$ in $x$ is not in the same column as an $a+1$.
    \end{claim}
    Suppose that the $a-1$ is in the same column as an $a+1$. Then the $a+1$ is paired with an $a$ left of the $\underline{\red{a}}$. We thus have

    \begin{center}
    \ytableausetup{boxsize = 3em}
\begin{ytableau}
    \none&a-1&\none\\
    a&\none &\underline{\red{a}}&\none\\
    \none&a+1
\end{ytableau}
\end{center}

    This though is a contradiction as then the $a-1\mapsto a$ would unpair an $a$ left of the rightmost unpaired $a$ in $x$, which contradicts the fact that $f_{a-1}^*(x)$ contains an unpaired $a$ right of the rightmost unpaired $a$ in $x$. This completes the proof of the claim.

    Note that Claim \ref{claim:colpair} applies also in Case $1$; we will use this in the inductive case.

    We can now complete our picture in Case $2$ of the base case. Observe that by assumption, the right $a$ is paired while the $\underline{\red{a}}$ is unpaired. Consider the $a+1$ that the right $a$ is paired with. By Claim 3, the $a$ cannot be column paired. Thus the $a+1$ is right of the $\underline{\red{a}}$. We then have that the $a+1$ has an $a-1$ above it. We can assume without loss of generality that the $a-1$ above the $a+1$ is the rightmost $\boxed{\blue{a-1}}$ that gets unpaired. If it is not we would have another $a$ in the middle row and so we could repeat the above construction. 
    
    Thus in the second case we have the following substructure:

    \begin{center}
    \ytableausetup{boxsize = 3em}
\begin{ytableau}
    \none &\Circled{\green{a-1}}&\none&\boxed{\blue{\hspace{0.2em}a-1\hspace{0.2em}}}\\
    a-1&\none&a\\
    \none &\underline{\red{a}}&\none&a+1
\end{ytableau}
\end{center}

    This completes the proof of what will become our base case.
    \end{proof}

    With this initial rather restrictive substructure in hand, we move on to proving a similar inductive structure for $x$; this will yield our final contradiction. We first fix a little notation. We have shown two possible cases for the structure of $x$; as these two structures will function exactly the same in our induction, we summarize these cases in one diagram as follows:

    \begin{center}
    \begin{ytableau}
    \none &\boxed{\blue{\hspace{0.2em}a-1}\hspace{0.2em}}&\none[\sim]&\boxed{\blue{\hspace{0.2em}a-1\hspace{0.2em}}}\\
    a-1&\none&a\\
    \none &\underline{\red{a}}&\none&a+1
    \end{ytableau}
    \end{center}

    Here, the $\sim$ indicates that the two $\boxed{\blue{a-1}}$s may in fact be "the same" (ie we only have the left $\boxed{\blue{a-1}}$). Similarly, the $a,a+1$ under the right $\boxed{\blue{a-1}}$ would of course not exist in the first case. Although this notation is a bit unclear, we will see that in either case, the induction proceeds the same, and so we can consider the two cases together. As a final piece of notation, set $x_i:=f_{a-i}^*\dots f_{a-1}^*(x)$ and $y_i:=f_{a-i}^*\dots f_{a-1}^*f_a^*(x)$. 

    Observe, by the definition of the substructure, that in the middle row, we have to have an unpaired $a-1$. On the other hand, in the top row and every row above it, every $a-1$ must be paired (in $x$). Moreover, in the middle row and every row above it, every $a$ must be paired. These will be important assumptions throughout the inductive step.

    We now sketch the larger inductive structure we seek to find in $x$: 

    \ytableausetup{boxsize = 4em}
\begin{ytableau}
    \none&\none&\none&\none&\none&\none&\none&\none&\none&\boxed{\blue{\hspace{0.2em}a-i}\hspace{0.2em}}&\none[\sim]&\boxed{\blue{\hspace{0.2em}a-i\hspace{0.2em}}}\\
    \none&\none&\none&\none&\none&\none&\none&\none&a-i&\none&a-i+1\\
    \none&\none&\none&\none&\none&\none&\none&\boxed{\blue{\hspace{0.2em}a-i+1}\hspace{0.2em}}&\none[\sim]&\boxed{\blue{\hspace{0.2em}a-i+1}\hspace{0.2em}}&\none&a-i+2\\
    \none&\none&\none&\none&\none&\none&\none[\iddots]\\
    \none&\none&\none&\boxed{\blue{\hspace{0.2em}a-2\hspace{0.2em}}}&\none[\sim]&\boxed{\blue{\hspace{0.2em}a-2\hspace{0.2em}}}\\
    \none&\none&a-2&\none&a-1\\
    \none &\boxed{\blue{\hspace{0.2em}a-1}\hspace{0.2em}}&\none[\sim]&\boxed{\blue{\hspace{0.2em}a-1}\hspace{0.2em}}&\none&a\\
    a-1&\none&a\\
    \none &\underline{\red{a}}&\none&a+1
\end{ytableau}

    As before, the blue entries $\boxed{\blue{a-j}}$ indicate the rightmost unpaired $a-j$ in $y_{j-1}$. The middle $a-j$s (in black) indicate the rightmost unpaired $a-j$ in $x_{j-1}$. Moreover, although we show the column pairings over the right $\boxed{\blue{a-j}}$, they can be over either $\boxed{\blue{a-j}}$ connected by a $\sim$.
    
    As already argued, all of these hypotheses hold in the base case. Now that we have presented the inductive structure which we seek to show, we prove the inductive case. 
    \begin{proof}[Inductive Step]
    We inductively build the structure upwards. We can assume that the rightmost unpaired $\boxed{\blue{a-i}}$ in $y_{i-1}$ is strictly right of the rightmost unpaired $a-i$ in $x_{i-1}$.
    
    Consider first the rightmost unpaired $a-i$ in $x_{i-1}$. By induction we have 

    \begin{center}
    \ytableausetup{boxsize = 4.3em}
\begin{ytableau}
    \none &\boxed{\blue{a-i+1}}&\none[\sim]&\boxed{\blue{a-i+1}}\\
    a-i+1&\none&a-i+2\\
    \none &\boxed{\blue{a-i+2}}&\none&a-i+3
\end{ytableau}
\end{center}

   We inductively have that the middle $a-i+1$ is unpaired in $x_{i-2}$. Consider the $a-i$ unpaired by this middle $a-i+1\mapsto a-i+2$ when we apply $f_{a-i+1}^*$. 

   \begin{claim} The $a-i$ unpaired by the middle $a-i+1$ is right (in the reading word of $x_{i-2}$) of the rightmost unpaired $\boxed{\blue{a-i+1}}$ in $y_{i-2}$.
   \end{claim}
   
   Suppose otherwise. Then the $a-i$ which gets unpaired is in the same row as the $\boxed{\blue{a-i+1}}$. It is thus in the same column as an $a-i+2$ (if it were above an $a-i+1$, it would be column-paired with that element instead). Since we have by induction that the black $a-i+1$ is unpaired in $x_{i-2}$ while the $a-i+2$ is still present, the $a-i+2$ is paired with an $a-i+1$ left of the middle $a-i+1$. This new $a-i+1$ (red, in the diagram below) is then also paired with an $a-i$ left of the $\boxed{\blue{a-i+1}}$. Inductively, we thus get the following infinitely repeating structure indicated in red:

   \begin{center}
   \begin{ytableau}
    \none &\none[\dots]&\red{a-i}&\none &\boxed{\blue{\hspace{0.2em}a-i+1}\hspace{0.2em}}&\none[\sim]&\boxed{\blue{\hspace{0.2em}a-i+1\hspace{0.2em}}}\\
    \none[\dots]&\red{a-i+1}&\none&a-i+1&\none&a-i+2\\
    \none&\none[\dots]&\red{a-i+2}&\none &\boxed{\blue{\hspace{0.2em}a-i+2}\hspace{0.2em}}&\none&a-i+3
\end{ytableau}
\end{center}

    This is a contradiction, and so the $a-i$ unpaired by the $a-i+1$ must be right of the blue $\boxed{\blue{a-i+1}}$. 

    Thus, since the $\boxed{\blue{a-i}}$ which gets unpaired by the $\boxed{\blue{a-i+1\mapsto a-i+2}}$ is right of the $a-i$ which gets unpaired by the middle $a-i+1\mapsto a-i+2$, we must have that the $\boxed{\blue{a-i+1}}$ is column paired with an $\Circled{\green{a-i}}$ (if it were not column paired, the middle $a-i+1\mapsto a-i+2$ would have unpaired the same $a-i$ which gets unpaired by the $\boxed{\blue{a-i+1\mapsto a-i+2}}$). Moreover, the middle $a-i+1$ unpairs an $a-i$ in the middle row. 
    
    If the $\boxed{\blue{a-i}}$ is column paired, then we have that the $\boxed{\blue{a-i}}=\Circled{\green{a-i}}$, and so we obtain one of the two cases specified by our inductive hypotheses:

    \begin{center}
     \ytableausetup{boxsize = 4.3em}
\begin{ytableau}
    \none&\none&\none&\boxed{\blue{a-i}}\\
    \none&\none&a-i\\
    \none &\boxed{\blue{\hspace{0.2em}a-i+1}\hspace{0.2em}}&\none[\sim]&\boxed{\blue{\hspace{0.2em}a-i+1}\hspace{0.2em}}\\
    a-i+1&\none&a-i+2\\
    \none &\boxed{\blue{\hspace{0.2em}a-i+2}\hspace{0.2em}}&\none&a-i+3
\end{ytableau}
\end{center}

    Now suppose that the $\boxed{\blue{a-i}}$ is not column paired. It is then somewhere right of the $\Circled{\green{a-i}}$. It is paired with an $\underline{\red{a-i+1}}$ in the middle row. This $\underline{\red{a-i+1}}$ is paired with an $a-i+2$. By induction, the $\boxed{\blue{a-i+1}}$ is the rightmost unpaired $a-i+1$ in $y_{i-2}$. Thus the $\underline{\red{a-i+1}}$ is either column paired or paired with an $a-i+2$ right of the $\boxed{\blue{a-i+1}}$. This brings us to the final difficult claim in this proof.

   \begin{claim}The $\underline{\red{a-i+1}}$ is not column paired.
   \end{claim}

    Suppose for the sake of contradiction that the $\underline{\red{a-i+1}}$ is column paired with a $\underline{\red{a-i+2}}$. This $\underline{\red{a-i+2}}$ is then paired. It is also either column paired or paired with an $\underline{\red{a-i+3}}$ right of the $\boxed{\blue{a-i+2}}$. Continue this process downwards. Recall that by Claim 3 from our base case, since the bottom $a-1$ cannot be in the same column as a $\underline{\red{a+1}}$, not every $\underline{\red{a-i+1}},\underline{\red{a-i+2}},\dots,\underline{\red{a+1}}$ can be column paired. We thus have the following substructure:

    \begin{center}
    \ytableausetup{boxsize = 4.3em}
\begin{ytableau}
    \none&\Circled{\green{a-i}}&\none\\
    a-i&\none&\underline{\red{a-i+1}}\\
    \none&\boxed{\blue{\hspace{0.2em}a-i+1}\hspace{0.2em}}&\none\\
    a-i+1&\none&\underline{\red{a-i+2}}\\
    \none[\vdots]&\none[\vdots]&\none[\vdots]\\
    \scriptstyle{a-i+j-1}&\none&\underline{\red{a-i+j}}\\
    \none&\blue{\hspace{0.2em}a-i+j\hspace{0.2em}}&\none&\underline{\red{\scriptstyle{a-i+j+1}}}
\end{ytableau}
\end{center}

    Here, the red elements are all paired. Above the $\underline{\red{a-i+j+1}}$, we have an $a-i+j-1$. As this $a-i+j-1$ is paired in $x_{i-j}$, it is paired with an $a-i+j$ in the row below. Like the $\underline{\red{a-i+j}}$, this $a-i+j$ is either column paired or paired right of the $\boxed{\blue{a-i+j}}$. If it is column paired, we can just continue our process by replacing the $\underline{\red{a-i+j}}$ with this $a-i+j$. On the other hand, if it is not column paired, it is paired with an $a-i+j+1$ in the row below. We can then again just repeat our current construction and obtain an infinite sequence going right. This is hence another contradiction, which completes the proof of the claim.

    Thus the $\underline{\red{a-i+1}}$ is not column paired. Thus the $\underline{\red{a-i+2}}$ is in the row below. Above the $\underline{\red{a-i+2}}$, we must have an $a-i$. We can again without loss of generality assume that this $a-i$ is the $\boxed{\blue{a-i}}$ which gets unpaired in $y_{i-1}$. We thus obtain the second case in our inductive hypothesis:

\begin{center}
    \ytableausetup{boxsize = 4.3em}
\begin{ytableau}
    \none&\none&\none&\Circled{\green{a-i}}&\none&\boxed{\blue{\hspace{0.2em}a-i\hspace{0.2em}}}\\
    \none&\none&a-i&\none&\underline{\red{a-i+1}}\\
    \none &\boxed{\blue{\hspace{0.2em}a-i+1}\hspace{0.2em}}&\none[\sim]&\boxed{\blue{\hspace{0.2em}a-i+1}\hspace{0.2em}}&\none&\underline{\red{a-i+2}}\\
    a-i+1&\none&a-i+2\\
    \none &\boxed{\blue{\hspace{0.2em}a-i+2\hspace{0.2em}}}&\none&a-i+3
\end{ytableau}
\end{center}

    This thus completes the proof of our inductive case.
    \end{proof}
    Combining the constructions in the base case and inductive case, we obtain that the rightmost unpaired $a-m$ in $y_{m-1}$ has an element under it in its same column. Since the $a-m$ is bad by assumption, it is at the end of its row. This thus contradicts $\lambda$ being strict, which finally completes the proof of the gluing property for shuffle tableaux
\end{proof}

\section{Some Applications}\label{sec:applications}

In this section, we will state some immediate corollaries to Proposition \ref{prop:applyingcharacterization}. These corollaries parallel important Schur positivity results, with the same twist as Theorem \ref{thm:TL}: they only hold if the relevant partitions are strict. 

\subsection{A flagged Littlewood-Richardson rule for products}

Reiner-Shimozono \cite{reiner1995key} showed that flagged \textit{skew} Schur polynomials are key-positive, and explicitly described a generalization of the Littlewood-Richardson rule. But for Schur polynomials, more can be said: products $s_{\lambda/\mu}s_{\nu/\rho}$ are also Schur positive, since products of straight Schur polynomials are Schur positive. 

It is thus natural to try to generalize Reiner-Shimozono's theorem to products of skew flagged Schur polynomials:
\[s^{\vec{b}}_{\lambda/\mu}s^{\vec{b}}_{\nu/\rho}\]

\begin{remark}
We believe that shuffle tableaux are the `right' perspective from which to consider products of flagged Schur polynomials, for the following general reasoning. For unflagged Schur polynomials, there is a very simple way to see why the Littlewood-Richardson rule expresses both product and skew expansions simultaneously. Namely, given two straight shapes, there is an easy way to put them together into a skew shape:

\begin{center}
    \ytableausetup{boxsize = 2em}
   \begin{ytableau}
      *(red)  & *(red) & *(red)\\
    *(red) & *(red)
    \end{ytableau} \hspace{1em}, \hspace{1em}\begin{ytableau}
      *(blue)  & *(blue)\\
    *(blue)
    \end{ytableau} \hspace{1em} $\mapsto$ \hspace{1em} \begin{ytableau}
      \none & \none & *(red)  & *(red) & *(red)\\
    \none & \none & *(red) & *(red)\\
    *(blue)  & *(blue)\\
    *(blue)
    \end{ytableau}
    \end{center}

    In this way, pairs of semistandard Young tableaux correspond to skew Young tableaux. There is a problem from the perspective of flagged tableaux, though: this way of gluing together two shapes into a skew shape only preserves the nondecreasing-ness of a flag $\vec{b}$ if that flag is constant. However, interlacing the two shapes with the same flag $\vec{b}$, as a shuffle tableau does, always preserves the nondecreasing-ness of the flag.  
    
\end{remark}

With this vague philosophizing out of the way, we have: 
\vspace{1em}

\begin{theorem}\label{thm:flaggedLR}
    Let $\lambda/\mu$ and $\nu/\rho$ be skew shapes satisfying the same two properties:
    \begin{enumerate}
        \item For all $i$, $\lambda_i \neq \nu_i$
        \item $\lambda$ and $\nu$ are \textit{interlacing}: that is, $\lambda_i \geq \nu_{i+1}$ and $\nu_i \geq \lambda_{i+1}$. 
    \end{enumerate}

    Then, $s_{\lambda/\mu}^{\vec{b}} s_{\nu/\rho}^{\vec{b}}$ is key positive. Furthermore, there is a relatively simple combinatorial algorithm for its key expansion. 
\end{theorem}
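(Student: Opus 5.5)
The plan is to realize the product $s_{\lambda/\mu}^{\vec{b}} s_{\nu/\rho}^{\vec{b}}$ as the character of a set of flagged shuffle tableaux, and then invoke Proposition~\ref{prop:applyingcharacterization} together with Theorem~\ref{thm:demazure}. A flagged shuffle tableau of shape $(\lambda/\mu)\circledast(\nu/\rho)$ is exactly a pair of flagged tableaux, one of shape $\lambda/\mu$ and one of shape $\nu/\rho$, with the same flag $\vec{b}$ applied rowwise in each. (The interleaving of rows preserves the nondecreasing flag, as in the remark above.) Its weight is the sum of the two weights. Hence the character of the set $X$ of all such shuffle tableaux is exactly the product. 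It therefore suffices to show that $X$ is a disjoint union of Demazure crystals. Its character is then a nonnegative sum of key polynomials $\kappa_{w(\mathrm{wt}(b))}$, one for each highest weight element $b$ with its associated Demazure permutation $w$.

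The substantive step is to match the hypotheses to what Proposition~\ref{prop:applyingcharacterization} needs. That proposition was proved assuming $\lambda$ and $\nu$ are strict, but the only place strictness enters the arguments is through Lemma~\ref{lemma:easyendofrow}. A bad entry sits at the end of its row, and the contradictions in the extension and gluing proofs all come from a bad end-of-row entry having some entry of the shuffle diagram directly below it. In the shuffle diagram the rows of $\lambda/\mu$ and $\nu/\rho$ alternate, so the row directly below row $i$ of $\lambda$ is row $i$ of $\nu$, and the row below that is row $i+1$ of $\lambda$. I would re-check each such contradiction, namely Case~1 and Case~2 of Claim~\ref{claim:mainclaim_y} and the final step of the gluing proof. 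In each, the offending configuration is an end-of-row box in one shape with a box of the next (interleaved) row directly beneath it.

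Conditions (i) and (ii) are exactly what rule this out in shuffle coordinates. Interlacing, $\lambda_i \ge \nu_{i+1}$ and $\nu_i \ge \lambda_{i+1}$, guarantees that the shuffle diagram is a genuine diagram, with rows weakly decreasing in the shuffled sense. The condition $\lambda_i \neq \nu_i$, combined with interlacing, guarantees that consecutive shuffled rows never end at the same column, so no end-of-row box has a box directly beneath it. I would therefore restate the standing hypothesis of Sections~\ref{sec:extremal+ideal} and~\ref{sec:principality} as ``the interleaved sequence $\lambda_1,\nu_1,\lambda_2,\nu_2,\dots$ is strictly decreasing in the shuffle sense.'' I would also verify that every invocation of strictness uses only this. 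Extremality (Lemma~\ref{lemma:extremality}) and the ideal property (Proposition~\ref{prop:idealshuffle}) did not really use strictness, apart from the gluing input.

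Finally, the combinatorial rule. Each connected component of $X$ is a Demazure crystal $\mathcal{B}(\mathrm{wt}(b))_w$. I would read off $w$ from the lowest weight element, which is found by the greedy algorithm of Proposition~\ref{rmk:algorithm}: starting at the highest weight shuffle tableau $b$, apply $F_{n,a_n}$ with $a_n$ minimal subject to staying flagged, then $F_{n-1,a_{n-1}}$, and so on. The key polynomial contributed is $\kappa_\alpha$, where $\alpha$ is the weight of that lowest element. The highest weight shuffle tableaux can be enumerated via the Nguyen--Pylyavskyy Littlewood--Richardson-type description. This yields the expansion $\sum_b \kappa_{\mathrm{wt}(\text{lowest}(b))}$.

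I expect the main obstacle to be the careful audit that strictness of $\lambda$ and $\nu$ individually can be replaced by the interleaved condition in every tableau-chasing contradiction. The column-pairing pictures there involve three or four consecutive shuffled rows, and I would need to confirm that none of them secretly uses strictness within a single shape rather than across adjacent shuffled rows.
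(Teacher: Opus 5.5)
\textbf{There is a genuine gap: the audit step cannot succeed.} In the shuffle diagram the two shapes occupy alternate columns: boxes of $\lambda/\mu$ sit in odd columns and boxes of $\nu/\rho$ in even ones. So consecutive shuffled rows never share a column at all. ``Directly beneath'' and column pairing always relate rows $j$ and $j+1$ of the \emph{same} shape.

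Every place strictness is used has this within-shape form: Case~1 and Case~2 of Claim~\ref{claim:mainclaim_y}, and the final step of the gluing proof in Section~\ref{section:locprop2}. Each produces a bad entry at the end of row $j$ of one shape with a box of row $j+1$ of that same shape below it. Excluding this requires $\lambda_j>\lambda_{j+1}$ (respectively $\nu_j>\nu_{j+1}$).

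Hypotheses (i)--(ii) do not give this. Take $\lambda=(2,2)$ and $\nu=(3,1)$: both hypotheses hold. Yet with flag $(1,2)$, the first row of $\lambda$ is $1\,1$, and its bad final $1$ sits directly over a $2$. So the configuration those contradictions must exclude genuinely occurs, and your interleaved condition cannot replace individual strictness. The claim that the full set of flagged shuffle tableaux of shape $(\lambda/\mu)\circledast(\nu/\rho)$ is a union of Demazure crystals is therefore unproved in exactly the cases the theorem is meant to cover.

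Your direct argument and greedy-algorithm rule do work under stronger hypotheses: $\lambda_i\ge\nu_i$, $\mu_i\ge\rho_i$, and $\lambda,\nu$ strict. That is precisely the setting of the paper's ``simpler algorithm'' remark.

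\textbf{The missing idea is the detour through Temperley--Lieb immanants.} The paper forms the $2n\times 2n$ flagged Jacobi--Trudi matrix $A^{\vec b'}_{\lambda',\mu'}$ as follows.
\begin{itemize}
\item $\lambda'$ interleaves $\max\{\lambda_i,\nu_i\}+n-i+1$ and $\min\{\lambda_i,\nu_i\}+n-i+1$, and $\mu'$ is built the same way.
\item The flag is doubled: $\vec b'=(b_1,b_1,b_2,b_2,\dots)$.
\end{itemize}
Conditions (i)--(ii) are exactly the strictness of $\lambda'$. The product is then written as
\[
s^{\vec b}_{\lambda/\mu}s^{\vec b}_{\nu/\rho}=\Delta_{I,J}\,\Delta_{\overline{I},\overline{J}}=\sum_{w\in\Theta(I,J)}\mathrm{Imm}^{TL}_w,
\]
and each immanant is key nonnegative by Theorem~\ref{thm:TL}.

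The strictness that the shuffle-tableau arguments need is supplied by $\lambda'$, not by $\lambda$ and $\nu$ separately. For instance, the split $I=J=\{1,3,5,\dots\}$ gives shapes with outer partitions $\lambda\vee\nu$ and $\lambda\wedge\nu$, both strict under (i)--(ii). The product is then recovered not as a whole flagged shuffle set but as a union of TL-type classes. Each class is a union of connected components, because the crystal operators preserve TL type when $\lambda'$ is strict.
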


\begin{proof} 
    This follows immediately from Theorem \ref{thm:TL}. Suppose that $\lambda,\mu$ each have $n$ parts. Construct the flagged Jacobi-Trudi matrix $A_{\lambda'/\mu'}(\vec{b'})$ with 
    \begin{enumerate}
        \item flag given by $\vec{b'}=(b_1,b_1,b_2,b_2,\dots)$ (the `doubled version' of $\vec{b}$)
        \item partition $\lambda'$ of length $2n$ given by 
        \[\lambda'=(\max\{\lambda_1,\nu_1\}+n,\min\{\lambda_1,\nu_1\}+n,\dots,\max\{\lambda_n,\nu_n\}+1,\,\min\{\lambda_n,\nu_n\}+1).\]
        \item partition $\mu'$ of length $2n$ given by 
        \[\mu'=(\max\{\mu_1,\rho_1\}+n,\min\{\mu_1,\rho_1\}+n,\dots,\max\{\mu_n,\rho_n\}+1,\,\min\{\mu_n,\rho_n\}+1).\]
    \end{enumerate}
    Then $\lambda$ is strict by hypotheses (i) an (ii), and so by Theorem \ref{thm:TL}, the Temperley-Lieb immanants of $A_{\lambda',\mu'}(\vec{b'})$ are key positive. Take $I$ equal to the indices of rows which correspond to $\lambda$ and $J$ equal to the indices of rows which correspond to $\mu$. Denote $\overline{I} = [2n]\setminus I$ and $\overline{J} = [2n]\setminus J$.
    We then have, if $\Theta(I,J)$ is the set of TL basis elements which are compatible with $I,J$ (see \cite{nguyen2025temperley} for details),
    \[
    s_{\lambda/\mu}^{\vec{b}}s_{\nu/\rho}^{\vec{b}} = \Delta_{I,J}(A_{\lambda'/\mu'}(b'))\Delta_{\overline{I},\overline{J}}(A_{\lambda'/\mu'}(\vec{b'})) = \sum_{w\in \Theta(I,J)} \T{Imm}^{TL}_w(A_{\lambda'/\mu'}(\vec{b'})).
    \]
    Thus, by expanding each $\T{Imm}^{TL}_w(A_{\lambda'/\mu'}(b'))$ in the basis of key polynomials, we get a key positive expansion for $s_{\lambda/\mu}^{\vec{b}}s_{\nu/\rho}^{\vec{b}}$.
\end{proof}

Perhaps surprisingly, if the hypotheses on the shapes $\lambda/\mu, \nu/\rho$ are not true, the product $s_{\lambda/\mu}^{\vec{b}} s_{\nu/\rho}^{\vec{b}}$ is \textit{not} key positive in general. 
\vspace{1em}

\begin{remark}
    Theorem \ref{thm:flaggedLR} can be generalized slightly. The flags $\vec{b}, \vec{b}'$ on $s_{\lambda/\mu}$ and $s_{\nu/\rho}$ need not be identical, as long as they are \textit{interlacing} (that is, $b_i \leq b_{i+1}'$ and $b_i' \leq b_{i+1}$). 
\end{remark}
\vspace{1em}

\begin{remark}
    By expanding each TL immanant into key polynomials using shuffle tableaux, we have a combinatorial formula for the product $s_{\lambda/\mu}^{\vec{b}}s_{\nu/\rho}^{\vec{b}}$ in Theorem \ref{thm:flaggedLR}. If we assume slightly stronger hypotheses on our partitions though, we get a much simpler algorithm for the key-positive expansion of a product of flagged Schur polynomials. Namely, suppose that $\lambda/\mu,\nu/\rho$ satisfy the following: \begin{enumerate}
        \item For all $i$, $\lambda_i\geq \nu_i$ and $\mu_i\geq \rho_i$,
        \item $\lambda$ and $\nu$ are strict partitions.
    \end{enumerate}
    We describe this algorithm below.
\end{remark}

Assume the above two hypotheses. Given Theorem \ref{thm:flaggedLR} and Remark \ref{rmk:algorithm}, we describe a relatively straightforward algorithm for this key expansion. To express $s^b_{\lambda/\mu}s^b_{\nu/\rho}$ as a sum of key polynomials, perform the following steps:

\begin{enumerate}
    \item Make the shuffle diagram $(\lambda/\mu)\circledast(\nu/\rho)$. 
    \item Find all highest-weight shuffle tableaux $T$ of this shape. Each one will contribute a key polynomial to the sum $k_{\lambda, w}$, where $\lambda$ is given by the content of $T$. 
    \item For each highest weight $T$, find the corresponding $w$ following the algorithm outlined in Remark \ref{rmk:algorithm}. 
\end{enumerate}

The most computationally inefficient step of this algorithm is step $2$. In principle, one would have to run through all shuffle tableaux $T$, and for each one, check whether every $e_i$ gives us $e_i(T) = 0$. But, thanks to the bijection described in \cite{nguyen2025shuffletableauxlittlewoodrichardsoncoefficients}, this step is actually much easier than that (we must only find all `$D$-peelable (straight) tableaux.') 

\vspace{1em}

\begin{example}
    Let us expand the product $s_{\lambda}^bs_{\nu}^b$, with $\lambda = (3, 2)$, $\nu = (2, 1)$, and flag $\vec{b} = (1, 3)$, according to this rule. 

    The relevant shuffle diagram is 
\begin{center}
\ytableausetup{boxsize = 2em}
   \begin{ytableau}
       *(red) & \none & *(red) & \none & *(red) \\
  \none  & *(blue) & \none & *(blue)\\
  *(red) & \none & *(red)\\
  \none & *(blue)
  
\end{ytableau}
\end{center}
The flagged shuffle tableaux we seek have entries at most $1$ in the first two rows and at most $3$ in the last two rows. Using \cite{nguyen2025shuffletableauxlittlewoodrichardsoncoefficients}, we can quickly compute the highest weight elements supported on this diagram:

\begin{center}
   \begin{ytableau}
       $1$ & \none & $1$ & \none & $1$ \\
  \none  & $2$ & \none & $2$\\
  $3$ & \none & $3$\\
  \none & $4$
\end{ytableau} \hspace{0.7em}  \hspace{0.7em}
\textcolor{red}{\begin{ytableau}
       $1$ & \none & $1$ & \none & $1$ \\
  \none  & $1$ & \none & $1$\\
  $2$ & \none & $2$\\
  \none & $3$
\end{ytableau}  \hspace{0.7em}  \hspace{0.7em}
\begin{ytableau}
       $1$ & \none & $1$ & \none & $1$ \\
  \none  & $1$ & \none & $1$\\
  $2$ & \none & $2$\\
  \none & $2$
\end{ytableau}}  \hspace{0.7em}  \hspace{0.7em}
\begin{ytableau}
       $1$ & \none & $1$ & \none & $1$ \\
  \none  & $1$ & \none & $2$\\
  $2$ & \none & $3$\\
  \none & $3$
\end{ytableau} 

\vspace{8 mm}

\begin{ytableau}
       $1$ & \none & $1$ & \none & $1$ \\
  \none  & $1$ & \none & $2$\\
  $2$ & \none & $3$\\
  \none & $4$
\end{ytableau} 
\hspace{0.7em} \hspace{0.7em}
\begin{ytableau}
       $1$ & \none & $1$ & \none & $1$ \\
  \none  & $1$ & \none & $2$\\
  $2$ & \none & $3$\\
  \none & $2$
\end{ytableau}
\end{center}

Only the red highlighted shuffle tableaux are flagged by $(1,1,3,3)$, so there will be two key polynomials in the expansion. Furthermore, using Remark \ref{rmk:algorithm}, the permutation $s_2$ is associated to both tableaux. Therefore, we have 

\[s^{(1,3)}_{(3,2)}s^{(1,3)}_{(2,1)} = k_{(5,2,1), s_2} + k_{(5,3), s_2} = k_{(5,1,2)} + k_{(5,0,3)}\]
\end{example}

\subsection{A log concavity result}

We recall our log concavity inequality for flagged Schur polynomials, which was the original motivating question for this project:

\vspace{1em}

\begin{corollary}\label{cor:logconcave}
    Let $\lambda/\mu$ and $\nu/\rho$ be skew shapes satisfying the same two properties as in Theorem \ref{thm:flaggedLR}

    Then, the difference 

    \[s^b_{\lambda/\mu \vee \nu/\rho} s^b_{\lambda/\mu \wedge \nu/\rho} - s^b_{\lambda/\mu}s^b_{\nu/\rho}\]

    is key-positive. 

\end{corollary}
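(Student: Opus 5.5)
We follow the strategy of the flagged Littlewood--Richardson rule (Theorem \ref{thm:flaggedLR}) and of Lam--Postnikov--Pylyavskyy: write both products in the difference as sums of Temperley--Lieb immanants of a single flagged Jacobi--Trudi matrix, then use Theorem \ref{thm:TL} to get key nonnegativity of every immanant that survives the subtraction.

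First, build the same matrix as in the proof of Theorem \ref{thm:flaggedLR}. Take the doubled flag $\vec{b'}=(b_1,b_1,b_2,b_2,\dots)$. Let $\lambda'$ interleave $\max\{\lambda_i,\nu_i\}+n+1-i$ and $\min\{\lambda_i,\nu_i\}+n+1-i$, and define $\mu'$ the same way from $\mu$ and $\rho$. By hypotheses (i) and (ii), $\lambda'$ is strict, and $\vec{b'}$ is nondecreasing, so Theorem \ref{thm:TL} applies to $A=A^{\vec{b'}}_{\lambda',\mu'}$.

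Next, identify each product as a complementary pair of minors of $A$. The key observation is that $\lambda/\mu\vee\nu/\rho$ and $\lambda/\mu\wedge\nu/\rho$ (rowwise max/min of the outer shapes and of the inner shapes) select exactly the same multiset of rows of $\lambda'$ and of $\mu'$ as $\lambda/\mu$ and $\nu/\rho$. Only the choice of which rows go into which minor changes. Concretely:
\begin{itemize}
\item the pair $(\lambda/\mu,\nu/\rho)$ gives $\Delta_{I,J}(A)\Delta_{\overline I,\overline J}(A)$;
\item the pair $(\vee,\wedge)$ gives $\Delta_{I',J'}(A)\Delta_{\overline{I'},\overline{J'}}(A)$, where $I'$ takes the odd (max) rows of $\lambda'$ and $J'$ the odd (max) rows of $\mu'$.
\end{itemize}
By the Rhoades--Skandera / Nguyen--Pylyavskyy expansion already used in the proof of Theorem \ref{thm:flaggedLR},
\[\Delta_{I,J}(A)\Delta_{\overline I,\overline J}(A)=\sum_{w\in\Theta(I,J)}\mathrm{Imm}^{TL}_w(A).\]
Hence the difference equals
\[\sum_{w\in\Theta(I',J')}\mathrm{Imm}^{TL}_w(A)-\sum_{w\in\Theta(I,J)}\mathrm{Imm}^{TL}_w(A).\]

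The main step is the containment $\Theta(I,J)\subseteq\Theta(I',J')$. Once it holds, the difference is the sum of $\mathrm{Imm}^{TL}_w(A)$ over $w\in\Theta(I',J')\setminus\Theta(I,J)$, and each such immanant is key nonnegative by Theorem \ref{thm:TL}.

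To prove the containment, we use the compatibility criterion for a noncrossing matching with a coloring of the $2n$ boundary vertices: every edge must join vertices of opposite color (on the same side) or the same color (across sides), in the convention of \cite{nguyen2025temperley}. We follow Lam--Postnikov--Pylyavskyy's argument for the unflagged case.
\begin{itemize}
\item The coloring from $(I',J')$ alternates on both sides, since odd rows are assigned to one minor and even rows to the other.
\item For an alternating coloring, the compatibility condition reduces to a parity condition that every noncrossing matching satisfies. So $\Theta(I',J')$ should be all noncrossing matchings, or all with the appropriate boundary parity.
\item If so, the containment is immediate, with the caveat that a coloring with unequal column counts changes the count; this must be checked against the index conventions.
\end{itemize}
The main obstacle I expect is matching these conventions, in particular how ties $\lambda_i=\nu_i$ are excluded by hypothesis (i) and the orientation of the wiring diagrams. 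Everything else is a formal consequence of Theorem \ref{thm:TL}. If $\Theta(I',J')$ turns out not to be everything, I would instead verify directly that each $w$ compatible with $(I,J)$ is compatible with the alternating coloring, edge by edge: every edge of a noncrossing matching spans an even number of vertices between its endpoints, so its endpoints have opposite parity.
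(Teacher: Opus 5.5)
Your overall route matches the one the paper sketches: the doubled-flag matrix from the proof of Theorem~\ref{thm:flaggedLR}, the Rhoades--Skandera expansion of both products, and termwise use of Theorem~\ref{thm:TL}. Your parity argument is also correct: with $I'=J'$ the odd indices, every noncrossing matching is compatible.

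The gap is the step ``so Theorem~\ref{thm:TL} applies.'' That theorem needs $\mu'$ to be a partition, and you only checked $\lambda'$ and $\vec{b'}$. The hypotheses constrain only $\lambda$ and $\nu$. The interleaved $\mu'$ is weakly decreasing iff $\min(\mu_j,\rho_j)+1\ge\max(\mu_{j+1},\rho_{j+1})$ for all $j$, and this can fail.

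For example, take $\lambda=(5,3)$, $\nu=(4,2)$, $\mu=(0,0)$, $\rho=(3,2)$. Then $\lambda'=(7,6,4,3)$, $\mu'=(5,2,3,1)$, $I=\{1,3\}$ and $J=\{2,4\}$. The identity diagram is not in $\Theta(I,J)$: its strand from left vertex $1$ to right vertex $1$ joins a $\lambda$-row to a $\rho$-column. So $\mathrm{Imm}_{\mathrm{id}}(A)=\det A$ is one of your summands. Swapping columns $2$ and $3$ shows $\det A=-s^{\vec{b'}}_{(4,4,3,3)/(2,1,1,1)}$. This is the negative of a nonzero key-positive polynomial once the flag is large enough, e.g.\ $\vec b=(2,4)$. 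So your termwise positivity step fails for this matrix.

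The defect originates in the construction used for Theorem~\ref{thm:flaggedLR}, which the paper's one-line sketch inherits. Your argument depends on it essentially, though: the alternation of the column coloring is exactly what the unsorted $\mu'$ buys you.

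To repair this, sort $\mu'$ into a partition $\tilde\mu'$. Each of the four minors is still the correct flagged skew Schur polynomial, because the columns selected for $\mu$, $\rho$, $\mu\vee\rho$ and $\mu\wedge\rho$ each stay in decreasing order. Now Theorem~\ref{thm:TL} genuinely applies. However, the $(\vee,\wedge)$ column coloring is no longer alternating, and $\Theta(I',J')$ is no longer everything: in the example it contains $5$ of the $14$ diagrams. So the containment needs the actual Lam--Postnikov--Pylyavskyy comparison rather than parity.

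One clean way to do this:
\begin{enumerate}
\item List the $2N$ boundary vertices cyclically ($N=2n$): the left side top to bottom, then the right side bottom to top.
\item Give a left vertex weight $+1$ if its row lies in the first minor and $-1$ otherwise. Give a right vertex weight $-1$ if its column lies in the first minor and $+1$ otherwise. Let $S$ be the running sum.
\item A noncrossing matching is compatible iff every chord joins opposite weights. By induction on nesting, this holds iff for every chord, $S$ takes the same value just before its first endpoint and just after its second.
\item On the left, after the first $k$ rows, $S=p-q$, where $p$ of these rows come from $\lambda$ and $q$ from $\nu$. Both sequences are strictly decreasing, so exactly $\max(p,q)$ of these rows come from $\lambda\vee\nu$. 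Hence the $(\vee,\wedge)$ value is $|p-q|$.
\item On the right, the value after reading columns $N,\dots,j$ is the analogous difference for the first $j-1$ columns. The same absolute-value relation holds there, provided ties among equal columns are broken consistently.
\end{enumerate}
So the $(\vee,\wedge)$ running sum is the absolute value of the original one. Every equality of values is preserved, which gives $\Theta(I,J)\subseteq\Theta(I',J')$. Your alternating argument is exactly the special case in which the interleaved $\mu'$ was already sorted.
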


Here, the operations $\vee$ and $\wedge$ are defined on skew diagrams as follows. For partitions $\lambda, \mu$, we let $\lambda \vee \mu$ be the partition $(\max(\lambda_1, \mu_1), \max(\lambda_2, \mu_2) \dots)$. We define $\lambda \wedge \mu$ similarly, replacing maxima with minima. Then, for skew shapes, we write 
\[(\lambda/\mu) \wedge (\nu/\rho) =\lambda \wedge \nu/\mu \wedge \rho\]

\begin{example}
    Let $\lambda = (3,1)$, $\nu = (2,2)$, and let $\mu, \rho$ be empty. Let the flag $\vec{b}$ be $(2,4)$. Then, $\lambda \vee \mu = (3,2)$, $\lambda \wedge \mu = (2, 1)$. We can compute that 

    \[s_{(3,2)}^{(2,4)}s_{(2,1)}^{(2,4)} -s_{(3,1)}^{(2,4)}s_{(2,2)}^{(2,4)}= k_{3, 3, 1, 1} + k_{3, 4, 0, 1} + k_{4, 4}\]

    which is indeed key positive. 
\end{example}

The proof is almost immediate from Theorem \ref{thm:TL} using the same construction as in Theorem \ref{thm:flaggedLR} and following a similar strategy to the proof for the unflagged case of Lam-Postnikov-Pylyavskyy \cite{lam2007schur}. 

\section{Appendix}\label{sec:appendix}

Informally, Theorem \ref{thm:principalmoreusable}, as well as Proposition \ref{prop:idealdecomp}, hold whenever the objects of the underlying crystal $\mathcal{B}$ are `tableau-like,' and our subset $X \subseteq \mathcal{B}$ is defined by some kind of `row-flagging conditions.' Examples include flagged skew tableaux and flagged shuffle tableaux. To motivate the utility of the results in this appendix, we state an open problem that should now be solvable with our techniques. 

Furthermore, in light of the fact that every (finite, normal, type $A$) crystal is isomorphic to a crystal $\mathcal{B}_{\lambda}$ of semistandard Young tableaux, the following fact seems intuitive to us (although we have not yet found a reference for it) 
\vspace{1em}

\begin{conjecture}
    Every Demazure crystal $\mathcal{B}_{w, \lambda}$ (in the type $A$, normal, finite case) is isomorphic to a crystal of flagged skew tableaux for some shape $\lambda/\mu$ and nondecreasing flag $\vec{b}$. This would imply that all results in this appendix hold for every Demazure crystal. 
\end{conjecture}


\subsection{Open problem: flagged $P$-Schur polynomials}

It was a problem of Stanley to show that the \textit{$P$-Schur functions} are Schur positive. $P$-Schur functions are generating functions for \textit{shifted tableaux;} we do not define shifted tableaux here, and instead refer the reader to \cite{shiftedtableaux} for a thorough explanation. 
The Schur positivity of $P$-Schur functions follows from Sagan's insertion algorithm for shifted tableaux \cite{shiftedinsertion}, independently developed by Worley \cite{shiftedtableaux}. Another proof was recently given by Assaf and Oguz \cite{shiftedcrystals}, who showed that shifted tableaux organize themselves into crystals. Motivated by their result, we define \textit{flagged $P$-Schur functions} as generating functions for flagged shifted tableaux (where flagging a shifted tableau just means that each row is given an upper bound, as usual). 
\vspace{1em}

\begin{problem}
    Do flagged $P$-Schur functions expand positively in the key polynomial basis? Do flagged shifted tableaux assemble into Demazure crystals? 
\end{problem}

Based on computational examples, the answer seems to be positive. Furthermore, it is easy to see that flagged shifted tableaux form \textit{extremal} subsets. Therefore, it only remains to check the conditions of Theorem \ref{thm:principalmoreusable} and Proposition \ref{prop:idealdecomp} for flagged shifted tableaux. 

\subsection{Proof of Theorem \ref{thm:principalmoreusable}}
In this section of the appendix, we seek to prove that a union of Demazure crystals $X$ is a Demazure crystal if and only if $X$ satisfies the two local properties in Theorem \ref{thm:principalmoreusable}. Throughout this section, assume that $X$ be a connected component of a union of Demazure crystals. The symbol $b_{\lambda}$ will generally denote the unique highest weight element of $X$. 

By Theorem \ref{thm:characterization}, it suffices to show that $X$ has a unique lowest weight element (in each of its connected components). Using the lowest-weight-finding algorithm in Remark \ref{rmk:algorithm}, we already have a candidate for this lowest weight element. Our approach is then to show that for any other extremal $x\in X$, we can pass to a lower weight element using a combination of our two local properties in Theorem \ref{thm:principalmoreusable} and braid relation manipulations (as described in Lemma \ref{lemma:braidextend}). 

Recall the two properties from Theorem \ref{thm:principalmoreusable}: 

Let $n,a,b\in \N$.
\begin{enumerate}
    \item Let $x\in X$ be extremal such that $e_i(x)=0$ for all $i\leq n+1$. Assume $b\leq a<n$. Suppose that $F_{n,a}F_{n+1,b}(x) \in X$ and $F_{n+1,b-1}(x)\in X$. Then $F_{n,a}F_{n+1,b-1}(x)\in X$. 
    \item Let $k,m\in \N$ and $x\in X$ be extremal so that $e_i(x)=0$ for all $a-m\leq i\leq a+k$. Let $k,m,a$ be such that $f_{a+k}^*\dots f_{a}^*(x)\in X$ and $f_{a-m}^*\dots f_{a-1}^*(x)\in X$. Then either $f_{a+k}^*\dots f_{a-1}^*(x)\in X$ or $f_{a-m}^*\dots f_{a-1}^*f_a^*(x)\in X$. 
\end{enumerate}

We start with a few technical lemmas. To understand the purpose of these lemmas and see a warm up to our general approach of extending expressions, the reader may want to skip to example \ref{ex:2operators} (the case of two $F_{a,b}$ operators) before coming back to these proofs. 

We now prove that the gluing property as stated in Theorem \ref{thm:principalmoreusable} implies a clunkier but more usable result described in Lemma \ref{lemma:prop2moreusable}. This allows us to take a short operator $F_{n,b}$ (ie for $b$ close to $n$) and 'glue' it to a longer chain of operators. We refer to this equivalent result also as the gluing property.

To do this, we first need the following lemma.
\vspace{1em}

\begin{lemma}\label{lemma:e_i=0cor}
    Let $b>a+m$. Let $x\in X$ be extremal so that $e_i(x) = 0$ for all $a\leq i\leq k+m+1$.
    Then
    \[
    e_i(F_{k,a}\dots F_{k+m,a+m}F_{k+m+1,b}(x)) = 0
    \]
    for all $a\leq i\leq k+m+1$.
\end{lemma}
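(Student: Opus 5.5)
The plan is to reduce the statement to a computation with weight vectors. This works because every element in sight is extremal, so its behaviour under $e_i$ and $f_i$ is determined by its weight alone.

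\textbf{Step 1: weight criterion.} For an extremal $y$ and any $i$, I will establish:
\begin{itemize}
\item $e_i(y)=0$ if and only if $\mathrm{wt}(y)_i\geq \mathrm{wt}(y)_{i+1}$;
\item $f_i^*(y)$ is again extremal, with $\mathrm{wt}(f_i^*(y))=s_i(\mathrm{wt}(y))$.
\end{itemize}
Both facts follow from the string structure already used in the proof of Lemma \ref{e_i=0}. An extremal element sits at an endpoint of its $i$-string, and the $i$-string through it has length $|\mathrm{wt}(y)_i-\mathrm{wt}(y)_{i+1}|+1$. Hence $y$ is the top of its string exactly when $\mathrm{wt}(y)_i\geq \mathrm{wt}(y)_{i+1}$. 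In that case $f_i^*$ carries $y$ to the bottom of the string, which swaps the two coordinates. In the degenerate cases ($y$ at the bottom, or equal coordinates) the weight is either already swapped or unchanged, which is still consistent with acting by $s_i$ up to ties; I will track this carefully.

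\textbf{Step 2: reduce to a permutation statement.} Let $v=\mathrm{wt}(x)$. The hypothesis $e_i(x)=0$ for $a\leq i\leq k+m+1$ says that $v$ is weakly decreasing on positions $a,a+1,\dots,k+m+2$. The element
\[
y=F_{k,a}\cdots F_{k+m,a+m}F_{k+m+1,b}(x)
\]
is extremal and has weight $w(v)$, where
\[
w=(s_k\cdots s_a)\cdots(s_{k+m}\cdots s_{a+m})(s_{k+m+1}\cdots s_b),
\]
read right to left as in the paper's convention. By Step 1 it then suffices to show $w(v)_i\geq w(v)_{i+1}$ for $a\leq i\leq k+m+1$.

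\textbf{Step 3: track the entries.} The rightmost factor $s_{k+m+1}\cdots s_b$ moves the entry $v_b$ to position $k+m+2$ and shifts $v_{b+1},\dots,v_{k+m+2}$ one step left. Each subsequent block $s_{k+j}\cdots s_{a+j}$ then moves the entry currently at position $a+j$ to position $k+j+1$, shifting the intervening entries left. Carrying this out block by block, I expect the final vector on the window $[a,k+m+2]$ to be a concatenation of runs taken from $v$ in their original order. The hypothesis $b>a+m$ is exactly what keeps the $v_b$-move from interleaving with the moves of $v_{a},\dots,v_{a+m}$. The goal is to verify that each run is weakly decreasing and that each junction between runs is also weakly decreasing.

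I would organise this as an induction on $m$. The base case $m=0$ is a direct check. For the inductive step, I peel off the innermost block $F_{k+m,a+m}F_{k+m+1,b}$, viewing it via Lemma \ref{lemma:symbolicmanipulation}-style braid manipulations as a single $F$ applied to an element that again satisfies the hypotheses with smaller $m$. If it helps, I will also invoke Lemma \ref{e_i=0} to certify the local inequalities at the junctions.

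\textbf{Expected obstacle.} The main difficulty is the index bookkeeping at the junctions. Around position $k+1$ the moved entries $v_a,\dots,v_{a+m}$ land next to entries that came from the right, and I must check that the range constraints among $a,b,k,m$ (implicit in the lemma) force the correct inequality there. The second difficulty is handling ties, where $f_j^*$ acts trivially. There I will argue that equal adjacent coordinates make the weak inequality automatic, so no step is lost.
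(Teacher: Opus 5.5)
Your weight-based approach is the right tool, but if you carry Step 3 to the end it refutes the statement rather than proving it. Write $v=\mathrm{wt}(x)$. Every $f_j^*$ in the word is a genuine top-of-string swap, because the entry being carried right always weakly dominates the entry it passes. The weight of the result on the window $[a,k+m+2]$ is therefore
\[
(v_{a+m+1},\dots,v_{b-1},\,v_{b+1},\dots,v_{k+m+2},\,v_a,\dots,v_{a+m},\,v_b),
\]
with $v_a$ in position $k+1$. All runs are weakly decreasing, and the last junction is fine because $a+m<b$. The junction between positions $k$ and $k+1$, however, requires $v_{k+m+2}\ge v_a$, while the hypothesis gives $v_a\ge v_{k+m+2}$. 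So $e_k$ of the result vanishes only when $v_a=v_{k+m+2}$, and no constraint among $a,b,k,m$ can help once $k\ge a$, i.e.\ once $F_{k,a}$ is nonempty.

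The ``expected obstacle'' you flagged is therefore a genuine failure of the statement at $i=k$, not a bookkeeping issue. Concretely, in the $\mathfrak{gl}_4$ crystal of single boxes take $x=\boxed{1}$, $a=1$, $m=0$, $b=2$, $k=2$. Then $F_{3,2}(x)=x$ and $F_{2,1}(x)=\boxed{3}$, but $e_2(\boxed{3})=\boxed{2}\neq 0$, and $i=2$ lies in the range $1\le i\le 3$.

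What your computation does prove is a corrected statement: the conclusion holds for all $a\le i\le k+m+1$ with $i\neq k$, in particular for $k+1\le i\le k+m+1$. This is all the paper actually uses. For example, in Lemma \ref{lemma:prop2moreusable} the lemma is only invoked for indices at or beyond the position where the first moved entry lands. The paper's own proof does not establish the case $i=k$ either: it treats $i<k$ via Lemma \ref{e_i=0} and then verifies $e_{k+1}$.

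Two smaller issues. First, in Step 1, $\mathrm{wt}(f_i^*y)=s_i\,\mathrm{wt}(y)$ holds only when $y$ is at the top of its $i$-string. At the bottom, $f_i^*$ is the identity, which is not ``$s_i$ up to ties''; this case never arises here, so it does no harm. Second, your proposed induction on $m$ would not go through as described, since $F_{k+m+1,b}(x)$ no longer satisfies $e_{k+m+1}=0$. The direct tracking above makes the induction unnecessary in any case.
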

\begin{proof}
    We induct on $m$. First suppose that $m=0$. The expression we must consider is $F_{k,a}F_{k+1,b}(x)$. This immediately follows for $i<k$ by Lemma \ref{e_i=0}. Thus, consider $i = k+1$ and induct on $k$. First, suppose that $k=a$. For $b>a+1$, we have $e_{a+1}(F_{a,a}(x))=0$. For $b=a+1$ we have $e_{a+1}(F_{a,a}F_{a+1,a+1})=e_{a+1}(f_af_{a+1}(x))=0$ by Lemma \ref{e_i=0}. Now consider the inductive case for $k$. By induction, we have that $e_k(F_{k-1,a}F_{k,b}(x))=0$. We can thus apply Lemma \ref{e_i=0} to get that
    \[
    e_{k+1}(f_k^*f_{k+1}^*(F_{k-1,a}F_{k,b}(x)))=e_{k+1}(F_{k,a}F_{k+1,b}(x))=0.
    \]
    With the base case $m=1$ proved, consider the inductive case $F_{k,a}\dots F_{k+m,a+m}F_{k+m+1,b}(x)$. Suppose that $e_i(F_{k+1,a+1}\dots F_{k+m,a+m}F_{k+m+1,b}x)=0$ for all $k+2\leq i\leq k+m+1$. As before, $e_i(F_{k,a}\dots F_{k+m,a+m}F_{k+m+1,b}x)=0$ for all $i<k$ by Lemma \ref{e_i=0}. For $i>k+1$, we also get that $e_i(F_{k,a}\dots F_{k+m,a+m}F_{k+m+1,b}(x))=0$ by induction. It thus again suffices only to check $e_i(F_{k,a}\dots F_{k+m,a+m}F_{k+m+1,b}(x))$ for $i = k$. We use the same argument as above and induct on $k$ again. First consider the base case $k=a$. For $b>a+m+1$ we have
    \[
    e_{a+1}(F_{k,a}F_{k+1,a+1}\dots F_{k+m,a+m}(x))=0
    \]
    by Lemma \ref{e_i=0}. On the other hand, for $b=a+m+1$ we have
    \[
     e_{a+1}(F_{k,a}F_{k+1,a+1}\dots F_{k+m,a+m}F_{k+m+1,a+m+1}(x))=0,
    \]
    again by Lemma \ref{e_i=0}. Now, consider the inductive case. By induction we have that 
    \[e_k(F_{k-1,a}F_{k,a+1}\dots F_{k+m-1,a+m}F_{k+m,b})=e_k(F_{k-1,a}F_{k,a+1}F_{k+2,a+2}\dots F_{k+m,a+m}F_{k+m+1,b}(x))=0.\]
    Thus, by Lemma \ref{e_i=0},
    \[
    e_{k+1}(f_k^*f_{k+1}^*(F_{k-1,a}F_{k,a+1}F_{k+2,a+2}\dots F_{k+m,a+m}F_{k+m+1,b}))=e_{k+1}(F_{k,a}\dots F_{k+m,a+m}F_{k+m+1,b}(x))=0,
    \]
    which completes the proof.
\end{proof}

We can now prove our more usable extension property:
\vspace{1em}

\begin{lemma}\label{lemma:prop2moreusable}
    Let $X$ be such that the gluing property holds. Then $X$ satisfies the following property. Let $n,k,a,b\in \N$ be so that $a<b$ and $x\in X$ is extremal with $e_i(x)=0$ for all $i\geq a$. Suppose that: 
    \begin{enumerate}
        \item $F_{n,a}\dots F_{n+k,a+k}(x)\in X$
        \item $F_{n+k+1,b}(x)\in X$ and 
        \item $F_{n+k+1,a+k}(x)\not\in X$
    \end{enumerate} 
    Then, we can extend our expression to find: 
    \[
    F_{n,a}\dots F_{n+k,a+k}F_{n+k+1,b}(x)\in X.
    \]
\end{lemma}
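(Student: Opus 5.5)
The plan is to induct on $k$ and repeatedly apply the gluing property (Theorem \ref{thm:principalmoreusable}(ii)), with braid relations used to put the expressions into the shape that gluing requires. Lemma \ref{lemma:e_i=0cor} supplies the hypothesis ``$e_i=0$ on the relevant window'' at each intermediate element. The guiding idea is that $F_{n+k+1,b}$ is the short chain being glued onto the long staircase $F_{n,a}\cdots F_{n+k,a+k}$. Hypothesis (iii), $F_{n+k+1,a+k}(x)\notin X$, is what excludes the ``wrong'' alternative in each application of gluing.

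\textbf{Base case $k=0$.} We must show $F_{n,a}F_{n+1,b}(x)\in X$ given $F_{n,a}(x)\in X$, $F_{n+1,b}(x)\in X$, and $F_{n+1,a}(x)\notin X$.
\begin{itemize}
\item If $b>n+1$, the operator $F_{n+1,b}$ is empty and there is nothing to prove.
\item Otherwise, commute the factors $f_{b-2}^*\cdots f_a^*$ of $F_{n,a}$ past $F_{n+1,b}$ using the first braid relation, which is valid since $b>a$. This reduces the claim to one about the element $x'=f_{b-2}^*\cdots f_a^*(x)$, which lies in $X$ because $X$ is closed under taking prefixes (ideal properties (ii) and (iii) of Proposition \ref{prop:idealdecomp}).
\item Lemma \ref{e_i=0} gives $e_i(x')=0$ for the needed indices.
\item Apply gluing at the element $x'$, with the split point at $b-1$: one chain is $f_{n+1}^*\cdots f_b^*$ (upward) and the other is $f_n^*\cdots f_{b-1}^*$ after rewriting.
\end{itemize}
Gluing then yields one of two alternatives. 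One is the desired element. The other, after undoing the braid moves, is $F_{n+1,a}(x)$ or a prefix-extension of it, and hypothesis (iii) together with the ideal property rules this out.

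\textbf{Inductive step.} Set $y=F_{n+1,a+1}\cdots F_{n+k,a+k}F_{n+k+1,b}(x)$. This lies in $X$ by the induction hypothesis applied to shifted indices $(n+1,a+1,k-1)$. That application needs:
\begin{itemize}
\item $F_{n+1,a+1}\cdots F_{n+k,a+k}(x)\in X$, which holds because it is a suffix-type subexpression of (i) via braid commutation and ideality;
\item (ii) unchanged;
\item (iii) unchanged, since the last index is still $a+k$.
\end{itemize}
Lemma \ref{e_i=0cor} then gives $e_i(y)=0$ for $a\le i\le n+k+1$. It remains to prepend $F_{n,a}$. Write the target as $F_{n,a}(y)$ and compare it with the known element $F_{n,a}\cdots F_{n+k,a+k}(x)$. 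These differ only by the glued tail, so we apply gluing to $y$ (or to a suitable braid-rearranged intermediate element) with the split between $f_{a}^*$ and the chain $f_{n}^*\cdots f_{a+1}^*$. Again the failing alternative reduces via braid relations to an expression that contains $F_{n+k+1,a+k}(x)$ as a right factor, up to commuting and prefixes. That alternative is therefore excluded by (iii) and ideality.

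\textbf{Main obstacle.} The delicate step is the braid bookkeeping in the inductive step. The goal there is to exhibit, at each application of gluing, an extremal element and indices $a',k',m'$ such that:
\begin{itemize}
\item both chains in the gluing hypothesis are genuinely in $X$;
\item the ``bad'' conclusion is exactly an element forced out of $X$ by (iii).
\end{itemize}
My first attempt would be to use Lemma \ref{lemma:symbolicmanipulation}-style rewriting to move the tail $F_{n+k+1,b}$ leftward through the staircase one layer at a time. This splits the step into $k+1$ applications of gluing, one per layer $F_{n+j,a+j}$, with (iii) propagated upward through each layer. The point to watch is that a failure of the layer at level $j$ would force a failure at level $k$.
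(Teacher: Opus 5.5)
There is a genuine gap, and it is in how you use the gluing property. Gluing takes an \emph{upward} chain $f^*_{c+k'}\cdots f^*_{c}$ and a \emph{downward} chain $f^*_{c-m}\cdots f^*_{c-1}$ at the same element. Its only possible conclusions are $f^*_{c+k'}\cdots f^*_{c}f^*_{c-1}(\cdot)$ or the hook $f^*_{c-m}\cdots f^*_{c-1}f^*_{c}(\cdot)$. Your second chain $f^*_n\cdots f^*_{b-1}$ is upward, and no single instance of gluing can output the two-row element $F_{n,b-1}F_{n+1,b}(x')$.

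Your rewriting to $x'=F_{b-2,a}(x)$ is the right start. The correct instance there has split $c=b$, upward chain $F_{n+1,b}$, and downward chain $f^*_{b-1}$; note that $f^*_{b-1}(x')=F_{b-1,a}(x)$ is a first-applied part of $F_{n,a}(x)\in X$. As you say, the alternative $F_{n+1,b-1}(x')=F_{n+1,a}(x)$ is excluded by (iii). But what gluing then delivers is only the hook $f^*_{b-1}f^*_b(x')\in X$. Two things are still needed:
\begin{itemize}
\item[(a)] Move the new letter outside using $F_{m,c}f^*_{c+1}=f^*_cF_{m,c}$, so that $F_{n,b-1}f^*_b(x')=f^*_{b-1}F_{n,b-1}(x')$, and check that this outer $f^*_{b-1}$ does not leave $X$, given $F_{n,b-1}(x')\in X$ and the hook.
\item[(b)] Repeat this for each remaining tail letter $f^*_{b+1},\dots,f^*_{n+1}$, with a new gluing at each updated base point.
\end{itemize}
That iteration is the real content of the lemma, and it is missing even for $k=0$.

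The inductive step on $k$ does not close either. Gluing at $y$ with the split between $f^*_a$ and $f^*_{a+1}$ would need $F_{n,a+1}(y)\in X$ and $f^*_a(y)\in X$. Neither is supplied by (i)--(iii); both are consequences of the conclusion you want. Nothing in your argument shows that (iii) excludes the other alternative $f^*_{a+1-m}\cdots f^*_af^*_{a+1}(y)$. In fact (iii) can only ever forbid extending the tail downward. So in a correct argument the excluded alternative must always be the extended upward chain containing the tail, and the target must be assembled from the hook alternatives.

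This is what the paper does, with no induction on $k$. It rewrites the target by commutations as $F_{n,b-k-1}\cdots F_{n+k,b-1}F_{n+k+1,b}(y_0)$, where $y_0=F_{b-k-2,a}\cdots F_{b-2,a+k}(x)$, and then inducts on the letters of the tail. At step $i$ it glues at $y_i$ with:
\begin{itemize}
\item split $b+i$;
\item upward chain $f^*_{n+k+1}\cdots f^*_{b+i}$;
\item downward chain $f^*_{b+i-k-1}\cdots f^*_{b+i-1}$, the first column of the $(k+1)$-row rectangle, already known to lie in $X$.
\end{itemize}
Lemma \ref{lemma:e_i=0cor} supplies the vanishing of the relevant $e_j(y_i)$. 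After braid moves, the other alternative has $F_{n+k+1,a+k}(x)$ as its first-applied part, so (iii) excludes it. The new letter $f^*_{b+i}$ is then pushed left through all $k+1$ rows at once, where it becomes $f^*_{b+i-k-1}$. So the right iteration is one gluing per tail letter, each handling every layer simultaneously, not one gluing per layer as in your closing plan.
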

\begin{proof}
    First, observe that by commuting some lowering operators rightwards using the first braid relation. we have
    \[
     F_{n,a} F_{n+1, a+1}\dots F_{n+k-1, a+k-1}F_{n+k,a+k}F_{n+k+1,b}(x)=
     \]
     \[F_{n,b-k-1}F_{n+1, b-k}\dots F_{n+k-1, b-2}F_{n+k,b-1}F_{n+k+1,b}(F_{b-k-2,a}F_{b-k-1, a+1}\dots F_{b-2,a+k}(x))
    \]
    To simplify notation, set $y_0:=F_{b-k-2,a}\dots F_{b-2,a+k}(x)$. We then seek to show that
    \[
    F_{n,b-k-1}\dots F_{n+k,b-1}F_{n+k+1,b}(y_0)\in X
    \]
    We first show, as a base case, that $F_{n,b-k-1}\dots F_{n+k,b-1}f_b^*(y_0)\in X$. Observe that since $F_{n+k+1,a+k}(x) \not\in X$, we get that 
    
    \begin{equation}\label{eq:y_0one}f_{n+k+1}^*\dots f_{b-1}^*(y_0)\not\in X\end{equation} (This follows again by braid relating the operators $f_{n+k+1}^* \dots f_{b-1}^*$ to the right, into the defining expression for $y_0$. We can do this manipulation because of the assumption that $a < b$).  
    
    On other other hand, since
    \[
    f_{n+k+1}^*\dots f_b^*(y_0) = F_{b-k-2,a}\dots F_{b-2,a+k}(f_{n+k+1}^*\dots f_b^*(x))\in X,
    \]
    we have \begin{equation}\label{eq:y_0two}f_{n+k+1}^*\dots f_b^*(y_0)\in X\end{equation} Moreover, \begin{equation}\label{eq:y_0three}f_{b-k-1}\dots f_{b-1}(y_0)\in X\end{equation} Finally, note that $e_i(y_0)=0$ for all $b-k-1\leq i\leq n+k+1$ by Lemma \ref{lemma:e_i=0cor}. 
    
    Combining Equations \ref{eq:y_0one}, \ref{eq:y_0two} and \ref{eq:y_0three}, by the gluing property we obtain
    \[
    f_{b-k-1}^*\dots f_{b-1}^*f_b^*(y_0)\in X.
    \]
    Now return to considering $F_{n,b-k-1}\dots F_{n+k,b-1}f_b^*(y_0)$. Braid relating $f_b^*$ leftwards (repeatedly using the first braid relation), we get
    \[
    F_{n,b-k-1}\dots F_{n+k,b-1}f_b^*(y_0)=f_{b-k-1}^*(F_{n,b-k-1}\dots F_{n+k,b-1}(y_0))
    \]
    Since $F_{n,b-k-1}\dots F_{n+k,b-1}y_0\in X$ by assumption, if $F_{n,b-k-1}\dots F_{n+k,b-1}f_by_0\not\in X$, $F_{n,b-k-1}\dots F_{n+k,b-1}f_by_0$ would have a bad entry $b-k-1\mapsto b-k-2$. This though is impossible as we have shown that $f_{b-k-1}^*\dots f_{b-1}^*f_b^*(y_0)\in X$. We thus obtain that
    \[
    f_{b-k-1}^*(F_{n,b-k-1}\dots w_{n+k,b-1}(y_0))=F_{n,b-k-1}\dots F_{n+k,b-1}f_b^*(y_0)\in X.
    \]

    This completes our base case. 
    
    We now induct to show that $F_{n,a}\dots F_{n+k,a+k}F_{n+k+1,b}(x)\in X$. Set 
    \[
    y_i:=(f_{b+i-k-2}^*\dots f_a^*)\dots (f_{b+i-2}^*\dots f_{a+k}^*)(f_{b+i-1}^*\dots f_{b}^*)(x)
    \]
    Inductively assume that for all $j<i$, 
    \[
    (f_n^*\dots f_{b+j-k-1}^*)\dots (f_{n+k}^*\dots f_{b+j-1}^*)f_{b+j}^*(y_j)\in X.
    \]
    We seek to show that 
    \[
    (f_n^*\dots f_{b+i-k-1}^*)\dots (f_{n+k}^*\dots f_{b+i-1}^*)f_{b+i}^*(y_i)\in X.
    \]
    As before, observe that $f_{n+k+1}^*\dots f_{b+i}^*f_{b+i-1}^*y_i\not\in X$ while $f_{n+k+1}^*\dots f_{b+i}^*(y_i)\in X$ by induction. 
    
    Moreover, $f_{b+i-k-1}^*\dots f_{b+i-1}^*(y_i)\in X$ by induction. Finally, note that $e_j(y_i)=0$ for all $b+i-k-1\leq j\leq n+k+1$ by Lemma \ref{lemma:e_i=0cor}.
    Thus again by the gluing property, 
    \[
    f_{b+i-k-1}^*\dots f_{b+i-1}^*f_{b+i}^*(y_i)\in X.
    \]
    Using the same braid relation argument as before, we obtain the desired result.
\end{proof}

The following technical lemma provides some additional valid and extremely useful extension operations on operators that follow entirely by braid relations.
\vspace{1em}
\begin{lemma}\label{lemma:braidextend}
    Let $\alpha = F_{1,a_1}\dots F_{k-1,a_{k-1}}$ be an arbitrary sequence of operators and let $x\in X$ be extremal. Suppose $\alpha F_{k,a}F_{k+1,b}(x)\in X$.
    \begin{enumerate}
        \item Suppose that $a<b$ and $F_{k+1,a}(x)\in X$. Then, $\alpha F_{k,b-1}F_{k+1,a}(x)\in X$.
        \item If $b\leq a$ then $\alpha F_{k,b}F_{k+1,a+1}(x)\in X$.
    \end{enumerate}
\end{lemma}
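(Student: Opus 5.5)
The plan is to use only the braid relations for the operators $f_i^*$, plus one application of the ideal property in part (i). The ideal property is available because $X$ is a union of Demazure crystals. Every operator in $\alpha=F_{1,a_1}\dots F_{k-1,a_{k-1}}$ has index at most $k-1$. Hence every $f_j^*$ with $j\geq k+1$ commutes past all of $\alpha$ by the first braid relation. This lets us treat $\alpha$ as inert and work with the two-block expressions $F_{k,\cdot}F_{k+1,\cdot}(x)$, whose images are extremal because $x$ is.

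For (ii), with $b\leq a$, I would apply Lemma \ref{lemma:symbolicmanipulation} with its $(a,b)$ replaced by $(a,b+1)$. When $a\geq b+1$ this gives
\[F_{k,a}F_{k+1,b}=f_{k+1}^*\,F_{k,b}F_{k+1,a+1}.\]
The boundary case $a=b$ I would check by hand with the second braid relation: $f_{k+1}^*f_k^*f_{k+1}^*=f_k^*f_{k+1}^*f_k^*$, pushed down the chain exactly as in the proof of that lemma. Commuting $f_{k+1}^*$ to the far left past $\alpha$ gives
\[\alpha F_{k,a}F_{k+1,b}(x)=f_{k+1}^*\bigl(\alpha F_{k,b}F_{k+1,a+1}(x)\bigr).\]
Call the element inside the parentheses $y$; it is extremal. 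If $f_{k+1}^*$ acts trivially on $y$ we are done. Otherwise $y=e_{k+1}^*f_{k+1}^*(y)$, since extremal elements sit at the ends of their $(k+1)$-strings. Because $X$ is closed under raising operators, this puts $y$ in $X$.

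For (i), with $a<b$, I would apply Lemma \ref{lemma:symbolicmanipulation} in the other direction, taking its $(a,b)$ to be $(b-1,a+1)$; this is legitimate since $b-1\geq a+1$ or a direct check handles $b=a+1$. This gives
\[F_{k,b-1}F_{k+1,a}=f_{k+1}^*\,F_{k,a}F_{k+1,b},\]
so the target is
\[f_{k+1}^*(z),\qquad z:=\alpha F_{k,a}F_{k+1,b}(x)\in X.\]
So the only issue is that this one extra $f_{k+1}^*$ might leave $X$, and this is exactly where the hypothesis $F_{k+1,a}(x)\in X$ must enter.

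I plan to invoke the ideal property (Theorem \ref{thm:characterization}(ii)). The extremal element $F_{k+1,a}(x)\in X$ is carried to the target by the lowering word $\alpha F_{k,b-1}$, suitably braid-rearranged. Reading that word backwards gives a chain of $e^*$'s from the target down to $F_{k+1,a}(x)$; its last step, $e_{k+1}^*$, lands on $z\in X$. Theorem \ref{thm:characterization}(ii) then says that reapplying the $f^*$'s to an element of $X$ stays in $X\sqcup\{0\}$, and nonvanishing is automatic by weight considerations. The main obstacle is arranging the word so that the hypotheses of the ideal property hold verbatim: both endpoints must lie in $X$ and the word must begin with the correct $e_j^*$. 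If that bookkeeping fails, the fallback is the local Demazure-string argument. In a single Demazure crystal $\mathcal B_w$, $f_{k+1}^*$ preserves membership exactly when $s_{k+1}w'\leq w$ in Bruhat order. That comparison follows from $s_{k+1}\cdots s_a\leq w$, which is the membership of $F_{k+1,a}(x)$, together with the subword property of Definition \ref{def:bruhatorder}.
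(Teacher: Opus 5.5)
Your proof of (ii) is exactly the paper's. So is your reduction of (i), via Lemma~\ref{lemma:symbolicmanipulation}, to showing $f_{k+1}^*(z)\in X$ for $z=\alpha F_{k,a}F_{k+1,b}(x)\in X$. Checking the boundary cases $a=b$ and $b=a+1$ by hand is a harmless addition.

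\textbf{The gap is the last step of (i).} The ideal property of Theorem~\ref{thm:characterization} only deletes letters. It certifies an element only when its lowering word is a subword of the word of an element already known to lie in $X$, so it only moves membership to Bruhat-smaller Weyl group elements, i.e.\ higher weights. The target $f_{k+1}^*(z)$, when it differs from $z$, corresponds to $s_{k+1}u$, where $u$ is the element attached to $z$ and $\ell(s_{k+1}u)=\ell(u)+1$. This is strictly longer than the elements attached to both members you are given, $z$ and $F_{k+1,a}(x)$. So no application of the ideal property starting from them can reach it; your chain of $e^*$'s beginning at the target already presupposes that the target lies in $X$.

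No rearrangement of the word can fix this, because (i) is false under the appendix's standing assumption that $X$ is a connected union of Demazure crystals. Take:
\begin{itemize}
\item $\lambda=(2,1)$ for $\mathfrak{gl}_3$ and $X=\mathcal{B}(\lambda)_{s_1s_2}\cup\mathcal{B}(\lambda)_{s_2s_1}$, which is connected, extremal and ideal;
\item $x=b_\lambda$, $\alpha$ empty, $k=1$, $a=1$, $b=2$.
\end{itemize}
Then $f_1^*f_2^*(x)\in\mathcal{B}(\lambda)_{s_1s_2}$ and $F_{2,1}(x)=f_2^*f_1^*(x)\in\mathcal{B}(\lambda)_{s_2s_1}$. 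But $F_{1,1}F_{2,1}(x)=f_1^*f_2^*f_1^*(x)$ is the lowest weight element, of weight $(0,1,2)$, and lies in neither. This $X$ also satisfies the gluing and extension properties: the only relevant $x$ is $b_\lambda$, where gluing holds and extension is vacuous. So those hypotheses do not rescue (i) either.

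\textbf{Your fallback is circular here.} It assumes $X=\mathcal{B}_w$ is a single Demazure crystal. But Lemma~\ref{lemma:braidextend}(i) is used, e.g.\ in Example~\ref{ex:2operators} and Lemma~\ref{lemma:extendableexistance}, precisely to prove that $X$ is a single Demazure crystal.

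Even granting a single $\mathcal{B}_w$, $s_{k+1}u\le w$ does not follow from $s_{k+1}\cdots s_a\le w$ and the subword property alone. You also need $u\le w$ and a real argument. For regular $\lambda$ and $x=b_\lambda$ the tableau criterion works:
\begin{itemize}
\item In one-line notation, $s_{k+1}\cdots s_a\le w$ says exactly that some value $\ge k+2$ occurs among $w(1),\dots,w(a)$.
\item $s_{k+1}u$ is obtained from $u$ by exchanging the value $k+1$ in position $a$ with the value $k+2$ in position $b>a$.
\item Given $u\le w$, the only new prefix conditions are therefore $\max(w(1),\dots,w(i))\ge k+2$ for $a\le i<b$, which the first point supplies.
\end{itemize}

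The paper's own proof covers this step only with the flag heuristic that $F_{k+1,a}(x)\in X$ rules out a bad entry $k+1\mapsto k+2$. That makes sense for row-flagged tableau models, not for an abstract union of Demazure crystals. So (i) needs an extra input of that kind — a flag structure, or knowing $X$ is a single Demazure crystal — not a cleverer use of the ideal property.
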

\begin{proof}
    We start with part (i). By Lemma \ref{lemma:symbolicmanipulation}, 
    \[
    \alpha F_{k,b-1}F_{k+1,a}x = f_{k+1}^*(\alpha F_{k,b-1}f_{k+1}^*\dots f_{b-2}^*f_b^*\dots f_a^*)(x)=f_{k+1}^*(\alpha F_{k,a}F_{k+1,b}(x)).
    \]
    Since $F_{k+1,a}(x)\in X$, $\alpha F_{k,b-1}F_{k+1,a}(x)$ will not have a bad entry $k+1\mapsto k+2$ taking us outside of $X$. Thus, since $\alpha F_{k,a}F_{k+1,b}(x)\in X$, $f_{k+1}^*(\alpha F_{k,a}F_{k+1,b}(x)) = \alpha F_{k,b-1}F_{k+1,a}(x)\in X$.

    We now do (ii). We again have, by Lemma \ref{lemma:symbolicmanipulation},
    \[
    \alpha F_{k,a}F_{k+1,b}(x) = f_{k+1}^*(\alpha F_{k,b}F_{k+1,a+1}(x))\implies \alpha F_{k,b}F_{k+1,a+1}(x)\in X.
    \]
\end{proof}

We now move on to characterizing when we can extend a sequence of operators. To motivate the following definitions, we start by considering the special case of a sequence of two operators.

\vspace{1em}

\begin{example}\label{ex:2operators}
   Let $y=F_{n,a}F_{n+1,b}(b_\lambda)$. Let $c$ be an integer with $c<b$ and $F_{n+1,c}(b_\lambda) \in X$. The algorithm described in Remark \ref{rmk:algorithm} is \textit{greedy} in the sense that it 'prioritizes' the length of $F_{n+1,a_{n+1}}$ before the operator $F_{n,a_n}$. We thus seek to find a lower weight element $z\in X$. To this end, $z = F_{n,a_n}F_{n+1,a_{n+1}}(b_\lambda)$ with $a_{n+1}=c$ is a good candidate; it just may or may not actually be in $X$. 

   Using our two local properties, this problem naturally splits into three cases.
   \begin{enumerate}
       \item The simplest case: first, suppose that $b\leq a$. Then we can use the extension property to show that $F_{n,a}F_{n+1,c}(b_\lambda)\in X$, and so we can directly take $z=F_{n,a}F_{n+1,c}(b_\lambda)$.
       \item Now, suppose that $a<b$ and $F_{n+1,a}(b_\lambda)\not\in X$. Then we can apply our gluing property in the form of Lemma \ref{lemma:prop2moreusable} to show that $F_{n,a}F_{n+1,c})b_\lambda)\in X$. We can thus again simply take $z=F_{n,a}F_{n+1,c}b_\lambda$.
       \item Finally, suppose that $a<b$ and $F_{n+1,a}b_\lambda\in X$. Then by Lemma \ref{lemma:braidextend}, observe that $F_{n,b-1}F_{n+1,a}(b_\lambda) \in X$. As $F_{n,a}F_{n+1,b}$ is a subword of the expression $F_{n,b-1}F_{n+1,a}$, $F_{n,b-1}F_{n+1,a}(b_\lambda)$ is lower weight than $y$ in the dominance order. We can thus pass to $z=F_{n,b-1}F_{n+1,a}(b_\lambda)$. We can then again apply our extension property, if necessary, to pass to $z:=F_{n,b-1}F_{n+1,c}(b_\lambda)$. Note that if $b=a+1$, we still have $z=F_{n,a}F_{n+1,c}(b_\lambda) \in X$.
   \end{enumerate}
    In all three of these cases, we are able to find a lower weight element in $X$, but the first two cases involve a simpler `extension:' we were able to naively extend $y$ to $z=F_{n,a}F_{n+1,c}(b_\lambda)$. In these two cases, we would call $y$ \textit{extendable} according to definition \ref{def:extendable}. In the third case though, if $b>a+1$, we had to make the $F_{n,a_n}$ operator shorter in order to pass to our lower weight element $z=F_{n,b-1}F_{n+1,c}(b_\lambda)$. Thus in this case, $y$ is an example of an element that is \textit{not} extendable. 
\end{example}

With this motivation in mind, we formalize our notion of extendible elements below.
\vspace{1em}

\begin{definition}\label{def:extendable}
    Let $y = F_{1,a_1}\dots F_{n,a_n}(x)$ so that $e_i(x)=0$ for al $i\leq n$. Let $1 \leq i \leq j \leq n$. Define $c_{i,j}$ as the maximal integer so that there exists a chain of operators $F_{m,k_m}$ as a subword of $F_{1,a_1}\dots F_{n,a_n}$ such that 
    \[i=k_0<k_1<k_2<\dots < k_{c_{i,j}}\leq j\]
    with $a_{k_m}\leq a_i+m$ for all $m$. Call $y$ \textit{extendable} from $x$ if for all such $i,j$, either $a_i+c_{i,j}\geq a_j$ or, if $a_i+c_{i,j}<a_j$, $F_{j,a_i+c_{i,j}}F_{j+1,a_{j+1}}\dots F_{n,a_n}x\not\in X$.
\end{definition}
\vspace{1em}

\begin{remark}
    The definition for extendable may look a bit convoluted. Its purpose is to pick out helpful elements $y \in X$ that can be `extended' further down in the simplest way possible using our two local properties, in order to find lower weight elements of $X$. The following two propositions help illustrate this idea. 

    As already noted, the key example which motivates our definition of extendable is the case of just two operators which we explored in Example \ref{ex:2operators}. In the case of two operators, the $c_{i,j}$ values arose in the third case to differentiate between when $b=a+1$ (extendable: $c_{n,n+1}=1$) and when $b>a+1$ (not extendable: $c_{n,n+1}=0$).
    
    For longer expressions, these values also have a fairly concrete origin. For $b\geq a$, by applying the first braid relation repeatedly and then the second, we get that
    \[
    f_b^*F_{n,a}(x) = f_n^*\dots f_b^*f_{b+1}^*f_b^*f_{b+1}^*\dots f_a^*(x)=F_{n,a}f_{b+1}^*(x).
    \]
    Thus, if we commute an operator $f_b^*$ through a chain of `long' operators (in the hopes of extending some operator to the right), it will increase by $1$ for each operator. The $c_{i,j}$ values just record how many times we increase $f_{b_i}^*$ if we were to 'move' $f_{b_i}^*$ right to $F_{j,b_j}$ via braid relating. To further understand the necessity of the $c_{i,j}$ values, see the proof of Lemma \ref{lemma:extendableexistance}. We also give another example below. 
\end{remark}
\vspace{1em}

\begin{example}
    Consider $y = F_{3,1}F_{4,2}F_{5,5}(b_\lambda)\in X$. Suppose first that $F_{5,2}(b_\lambda)\not\in X$. Then $y$ is extendable. To check this, note that $c_{3,4}=1$, $c_{3,5}=1$ ($F_{3,1}F_{4,2}F_{5,5}$ does not give us $c_{3,5} = 2$, because we have $5>1+c_{3,4}=2$). Finally, $c_{4,5}=0$ for similar reasons.

    Checking the definition of extendibility, we have:
    \begin{enumerate}
        \item For $i=3, j=4$: $a_i + c_{i,j} = 1+1 \geq 2 = a_j$
        \item For $i = 3$, $j = 5$: $a_i+c_{i,j} = 1+1 < a_j$, but $F_{5,2}(b_{\lambda}) \notin X$. 
        \item For $i = 4$, $j = 5$, $a_i+c_{i,j} = 2+0 < a_j$, but again, $F_{5,2}(b_{\lambda}) \notin X$ satisfies extendibility
    \end{enumerate}

    So, we have confirmed that $y$ is extendable. This is motivated by the fact that if, for example, $F_{5,3}(b_\lambda)\in X$, we can apply our gluing property to \textit{extend} $x$, finding a lower weight element in $X$:
    \[
    F_{3,1}F_{4,2}F_{5,3}(b_\lambda)\in X.
    \]
    On the other hand, if $F_{5,2}(b_\lambda)\in X$, $y$ is \textit{not} extendable, as the computations above show that $a_i + c_{i,j} < a_j$ for $i = 3, j = 5$ or $i = 4, j = 5$. This makes sense because we would not necessarily have that $F_{3,1}F_{4,2}F_{5,2}(b_\lambda)\in X$ following from the gluing property. Instead though, we can commute operators from the $F_{4,2}$ rightwards using Lemma \ref{lemma:braidextend} (i) to get that
    \[
    F_{3,1}F_{4,4}F_{5,2}(b_\lambda)\in X
    \]
    This gives us a `less naive' lower weight element of $X$. 
    
\end{example}

    The following proposition justifies the use of the term `extendible.'
    \vspace{1em}

\begin{proposition}
    Let $y = F_{1,a_1}\dots F_{n,a_n}(x)\in X$ be extendable from $x$ and $x$ extremal with $e_i(x)=0$ for all $i\leq n$. Suppose that $F_{n,b}(x)\in X$ where $b<a_n$. Then $y':=F_{1,a_1}\dots F_{n-2, a_{n-2}}F_{n-1,a_{n-1}}F_{n,b}x\in X$.
\end{proposition}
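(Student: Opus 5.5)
We argue by induction on $n$, working from the right end of the expression: only the last operator is shortened from $F_{n,a_n}$ to $F_{n,b}$, so everything turns on how $F_{n-1,a_{n-1}}$ interacts with the new, longer last operator. We mirror the three-case analysis of Example \ref{ex:2operators}, and use the extendability hypothesis of Definition \ref{def:extendable} to exclude the bad third case at each level.

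\textbf{Case 1: $a_n\le a_{n-1}$.} Here we apply the extension property of Theorem \ref{thm:principalmoreusable}, iterated $a_n-b$ times, to the pair $F_{n-1,a_{n-1}}F_{n,a_n}$. The property concerns bare elements, so first we absorb the prefix $\alpha=F_{1,a_1}\cdots F_{n-2,a_{n-2}}$. Using the first braid relation, we commute the operators of $\alpha$ that do not interact with indices $\ge b-1$ to the right of $F_{n-1,a_{n-1}}F_{n,a_n}$, onto $x$. This is the same commuting trick used to define $y_0$ in Lemma \ref{lemma:prop2moreusable}. Lemma \ref{lemma:e_i=0cor} ensures that the new base point still satisfies the $e_i=0$ hypotheses.

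The remaining prefix operators have indices below $n-1$. We then argue as in Lemma \ref{lemma:braidextend}: the difference between $\alpha F_{n-1,a_{n-1}}F_{n,a_n}(x)$ and $y'$ is detected only by entries $n\mapsto n+1$ (via Lemma \ref{lemma:symbolicmanipulation}, $y'$ equals $f_n^*$ applied to an expression obtained by braid moves). The hypothesis $F_{n,b}(x)\in X$ forbids a bad $n\mapsto n+1$. Hence $y'\in X$.

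\textbf{Case 2: $a_{n-1}<a_n$.} Let $i$ be the smallest index for which the chain counted by $c_{i,n}$ reaches $n$, so that $a_i+c_{i,n}$ is the largest length to which the tail can naively be extended.
\begin{itemize}
\item If $b\ge a_i+c_{i,n}$, the chain $F_{i,a_i}\cdots F_{n-1,a_{n-1}}$ has the shape $F_{m,a}\cdots F_{m+k,a+k}$ required by Lemma \ref{lemma:prop2moreusable}. Extendability gives the missing hypothesis $F_{n,a_i+c_{i,n}}(x)\notin X$ (or the inequality $a_i+c_{i,n}\ge a_n$, which is harmless). Lemma \ref{lemma:prop2moreusable}, applied with base point $F_{i,a_i}$-free prefix removed as above, then gives the glued element.
\item If $b<a_i+c_{i,n}$, we split off the portion of $F_{n,b}$ of length up to $a_i+c_{i,n}$, handle it by gluing, and treat the rest by the extension property as in Case 1.
\end{itemize}

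Finally, we must reattach the prefix $F_{1,a_1}\cdots F_{i-1,a_{i-1}}$. Its operators commute past the modified tail by the braid identity $f_b^*F_{n,a}=F_{n,a}f_{b+1}^*$ from the remark after Definition \ref{def:extendable}. The sub-chains of the prefix still satisfy the extendability inequalities, since the $c_{i',j}$ for $i'<i$ only constrain indices $\le n-1$. This lets us induct on the number of operators to the left.

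The main obstacle is this bookkeeping in Case 2. The difficulty is to show that the $c_{i,j}$ recorded by extendability are exactly the shifts produced when $f^*$-operators are commuted through the longer chain. Otherwise the gluing hypothesis ``$F_{n+k+1,a+k}(x)\notin X$'' of Lemma \ref{lemma:prop2moreusable} may not match the extendability condition at the right index. I would first verify the correspondence on the three-operator example $F_{3,1}F_{4,2}F_{5,5}$, and then prove the general matching by induction on the chain length $c_{i,n}$.
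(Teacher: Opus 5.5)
There is a genuine gap, and it sits exactly where the paper does its real work: putting the operators to the left of the modified pair back into the expression. In your Case 1, only the part of $\alpha=F_{1,a_1}\cdots F_{n-2,a_{n-2}}$ that commutes with everything can be moved onto $x$. What remains interacts with the lengthened operator $F_{n,b}$, and for that remainder you claim $y'$ can leave $X$ only through a bad $n\mapsto n+1$. This is not justified. By Lemma \ref{lemma:symbolicmanipulation} (in the form used for Lemma \ref{lemma:braidextend}(ii)), $y=f_n^*\bigl(\alpha F_{n-1,a_n}F_{n,a_{n-1}+1}(x)\bigr)$ while $y'=f_n^*\bigl(\alpha F_{n-1,b}F_{n,a_{n-1}+1}(x)\bigr)$. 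The element under $f_n^*$ in $y'$ is not known to lie in $X$. Lemma \ref{lemma:braidextend}(i) works only because there the element under $f_{k+1}^*$ is literally the given element of $X$. Here, showing the inner element lies in $X$ is again an extension problem: lengthening $F_{n-1,a_n}$ to $F_{n-1,b}$ underneath $\alpha$. Closing this with your induction on $n$ would require that $\alpha F_{n-1,a_n}$ be extendable from $F_{n,a_{n-1}+1}(x)$. You do not address this, and it is not a formal consequence of the extendability of $y$: the $c_{i,j}$ change, and the non-membership conditions you inherit have the form $f_n^*(w)\notin X$, which does not give $w\notin X$.

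Case 2 has the same problem in a different guise. You assert that $F_{i,a_i}\cdots F_{n-1,a_{n-1}}$ has the staircase shape $F_{m,a}\cdots F_{m+k,a+k}$ needed for Lemma \ref{lemma:prop2moreusable}, but nothing forces this. A chain realizing $c_{i,n}$ only satisfies $a_{k_m}\le a_i+m$ and may skip indices. Manufacturing such a staircase is the key idea you are missing. The paper inducts leftward, assuming $F_{k+1,a_{k+1}}\cdots F_{n,b}(x)\in X$. For $a=a_k$ it forms $z=F_{k,b_k}\cdots F_{n,b_n}(x)$, with $b_i=a+c_{k,i}$ if $a_i\le a+c_{k,i}$ and $b_i=0$ otherwise. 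Its defining word is a subexpression of that of $y$, so $z\in X$ by the ideal property of Theorem \ref{thm:characterization}, which your proposal never uses. The paper then rebuilds $y'$ from $z$ one operator at a time: via Lemma \ref{lemma:braidextend} and the extension property when $b_i\neq 0$, and via the gluing property when $b_i=0$. Extendability supplies the required non-membership through Claim \ref{claim:notinX_induction}, itself proved by a nested induction.

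Your reattachment of the prefix also fails as stated. Rewriting with $f_c^*F_{n,a}=F_{n,a}f_{c+1}^*$ does not settle any membership question. Moreover, the $c_{i',j}$ with $i'<i$ do constrain $j=n$, through the condition $F_{n,a_{i'}+c_{i',n}}(x)\notin X$ whenever $a_{i'}+c_{i',n}<a_n$. As you acknowledge, the matching of the $c_{i,j}$ with the gluing hypotheses is left undone. That matching, organized through $z$, is the substance of the proof.
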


\begin{proof}
    We inductively show that $F_{k,a_k}\dots F_{n-1,a_{n-1}}F_{n,b}(x)\in X$. The base case follows immediately as $F_{n,b}(x)\in X$ by assumption. Now, suppose that $F_{k+1,a_{k+1}}\dots F_{n-1,a_{n-1}}F_{n,b}(x)\in X$; we seek to show that $F_{k,a_k}\dots F_{n-1,a_{n-1}}F_{n,b}(x)\in X$. 
    
    For ease of notation, set $a:=a_k$. We introduce the following auxiliary element $z = F_{k,b_k}F_{k+1,b_{k+1}}\dots F_{n-1,b_{n-1}}F_{n,b_n}(x)$, where we define $b_i$ as follows. \begin{enumerate}
        \item If $a_i\leq a + c_{k,i}$, set $b_i:=a+c_{k,i}$.
        \item If $a_i>a+c_{k,i}$, set $b_i:=0$.
    \end{enumerate}
     (Here, $c_{k,i}$ is the integer we defined in Definition \ref{def:extendable}). We refer to this process as \textit{reducing} $y$ and provide several examples of constructing $z$ after the proof in Example \ref{example:proof3.2}. Note that $z\not = 0$. Observe also that the defining expression of $z$ is a subexpression for the defining expression of $y$ (we are always setting $b_i$ to be at most $a_i$, by the extendability of $y$). 
     Then, since $\T{wt}(y)\leq \T{wt}(z)$, by the ideal property from Theorem \ref{thm:characterization}, $z\in X$.
     
     From here, we will use our two local properties to reconstruct $y'$ from $z$ and show that $y'\in X$. More formally, we prove the following claim.

     \begin{claim}
         Let $y = F_{k,a_k}\dots F_{n,a_n}(x)$ be extendable and let $z$ be the reduced element determined by the previous process from $y$. Suppose that $y' = F_{k+1,a_{k+1}}\dots F_{n-1,a_{n-1}}F_{n,c}(x)\in X$ and $z\in X$. Then $F_{k,a_k}(y')\in X$. 
     \end{claim}
    
    Set $y_i':=F_{i,a_i}\dots F_{n,b}(x)$. We now induct on $i$ to show that $F_{k,b_k}\dots F_{i-1,b_{i-1}}(y_i')\in X$. Suppose that we have $F_{k,b_k}\dots F_{i,b_{i}}(y_{i+1}')\in X$. We show that $F_{k,b_k}\dots F_{i-1,b_{i-1}}(y_{i}')\in X$. 

    Define $c:=a_i$ for $i<n$ and $c:=b$ for $i = n$. With this notation, we then have
    \[
    F_{k,b_k}\dots F_{i-1,b_{i-1}}(y_{i}') = F_{k,b_k}\dots F_{i-1,b_{i-1}}F_{i,c}(y_{i+1}').
    \]
    We work through a few cases on $b_i$.\\
    \textbf{Case 1: $b_i \not = 0$.}\\
    Suppose that $b_i = a+m$ for some $m\in \N$. By induction we have
    \[
    F_{k,b_k}\dots F_{i,b_{i}}(y_{i+1}')\in X\implies F_{k,a}F_{k+1,a+1}\dots F_{k+m,a+m}(y_{i+1}')\in X
    \]
    First suppose that $c\geq a$. Let $c = a+j$ for $j<m$. We then can use Lemma \ref{lemma:braidextend} to commute $F_{k+j,a+j} = F_{k+j,c}$ leftwards and get:
    \[
    F_{k,a}\dots F_{k+j,a+j}F_{k+j+1,a+j+1}\dots F_{k+m,a+m}(y_{i+1}')\in X\implies F_{k,a}\dots F_{k+j,a+j}F_{k+j+1,c}\dots F_{k+m,a+m}(y_{i+1}')\in X.
    \]
    Repeating this argument, we get that
    \[
    F_{k,a}\dots F_{k+j,a+j}F_{k+j+1,a+j+1}\dots F_{k+m,c}(y_{i+1}')\in X.
    \]
    Now let $c\leq a$. Note that,
    \[
    F_{k+1,a_{k+1}} \dots F_{i,a_{i}}(y_{i+1}')\in X\implies F_{k+1,a+1}F_{k+2,a+2} \dots F_{k+m,c}(y_{i+1}')\in X
    \]
    by the ideal property.
    Then, using Lemma \ref{lemma:braidextend} (ii) repeatedly, we obtain that
    \[
    F_{k+1,a+1}F_{k+2,a+2} \dots F_{k+m,c}(y_{i+1}')\in X\implies F_{k+1,c}F_{k+2,a+2} \dots F_{k+m,a+m}(y_{i+1}')\in X
    \]
    Thus by extension property,
    \[
    F_{k,a}F_{k+1,c}F_{k+2,a+2} \dots F_{k+m,a+m}(y_{i+1}')\in X.
    \]
    Commuting the operators in $F_{k+1,c}$ right again using Lemma \ref{lemma:braidextend}, we get that
    \[
    F_{k,a}F_{k+1,a+1}\dots F_{k+m,c}(y_{i+1}')\in X.
    \]
    Since $F_{k+1,a_{k+1}} \dots F_{i,a_{i}}(y_{i+1}')\in X$, we then have by Lemma \ref{lemma:braidextend} that
    \[
     F_{k,b_k}\dots F_{i-1,b_{i-1}}F_{i,c}(y_{i+1}')\in X,
    \]
    as hoped.\\
    \textbf{Case 2: $b_i = 0$.}\\
    Let $k+m+1$ be minimal so that $b_{k+m+1}=0$. We show that
    \[
    F_{k,a}F_{k+1,a+1}\dots F_{k+m,a+m}F_{k+m+1,c-d}(F_{k+m+2,b_m}\dots F_{i-2,b_{i-1}})(y_{i+1}')\in X,
    \]
    where $d = c_{a+m,i}$ is the number of operators $F_{j,b_j}$ with $b_j\not = 0$ between $F_{i,b_i}$ and $F_{k+m,a+m}$. 
    We seek to use our gluing property. To apply it though, we first need two claims.
    \begin{claim}
        $F_{k+m+1,c-d}(F_{k+m+2,b_{k+m+2}}\dots F_{i-1,b_{i-1}})(y_{i+1}')\in X$.
    \end{claim}
    \begin{proof}
        Note that
        \[
        F_{k+m+2,b_{k+m+2}}\dots F_{i-1,b_{i-1}}F_{i,c}(y_{i+1}') \in X.
        \]
        We then get that 
        \[
        F_{k+m+1,b_{k+m+2}}\dots F_{i-2,b_{i-1}}F_{i-1,c}(y_{i+1}') \in X.
        \]
        We can then shift operators rightwards using Lemma \ref{lemma:braidextend} to obtain
        \[
        F_{k+m+1,c-d}(F_{k+m+2,b_{k+m+2}}\dots F_{i-1,b_{i-1}})(y_{i+1}')\in X,
        \]
        as claimed.
    \end{proof}
    
    \begin{claim}\label{claim:notinX_induction}
        $F_{k+m+1,a+m}(F_{k+m+2,b_{k+m+2}}\dots F_{i-1,b_{i-1}})(y_{i+1}')\not\in X$.
    \end{claim}
    \begin{proof}
        Suppose for the sake of contradiction that $F_{k+m+1,a+m}(F_{k+m+2,b_{k+m+2}}\dots F_{i-1,b_{i-1}})(y_{i+1}')\in X$. As this is a strictly shorter expression we can then inductively reconstruct this to show that
        \[
        F_{k+m+1,a+m}(y_{a+m+1}')\in X.
        \]
        This, however, contradicts the extendability of $y$. 
    \end{proof}

    Thus, by our gluing property, we obtain 
    \[
    F_{k,a}F_{k+1,a+1}\dots F_{k+m,a+m}F_{k+m+1,c-d}(F_{k+m+2,b_m}\dots F_{i-2,b_{i-1}})(y_{i+1}')\in X.
    \]
    Finally, we can undo our first claim and commute $F_{k+m+1,c-d}$ back rightwards. This completes our induction, and so $y'\in X$. This inductive step also completes our proof.
\end{proof}

\begin{example}\label{example:proof3.2}
As the above proof is a bit complex, we provide several small examples to illustrate the process by which we extended $y$. In each case, we trace through the inductive argument. The reader might find these individual examples more illuminating than the proof itself. 
    \begin{itemize}
        \item Let $y = F_{3,1}F_{4,3}F_{5,3}(b_\lambda)$ and suppose we have that $F_{5,2}(b_\lambda)\in X$ while $F_{3,1},F_{4,1}F_{5,3}(b_\lambda)\not\in X$ (so that $y$ is extendable). We seek to show that $y'=F_{3,1}F_{4,3}F_{5,2}(b_\lambda)\in X$. In this case, the auxiliary element is given simply by $z = F_{3,1}(b_\lambda)$. Reconstructing $y'$ then proceeds in the following steps:
        \begin{enumerate}
            \item $F_{3,1}F_{5,2}(b_\lambda)\in X$. It suffices to show that $F_{3,1}F_{4,2}(b_\lambda)\in X$. If $F_{4,1}(b_\lambda)\in X$ then we would have that $F_{4,1}F_{5,2}\in X$ (by our gluing property), which is a contradiction. Thus by our gluing property, $F_{3,1}F_{5,3}(b_\lambda)\in X$.
            \item We now show that $F_{3,1}F_{4,3}F_{5,2}(b_\lambda)$. This follows immediately by our gluing property since $F_{4,3}F_{5,2}(b_\lambda)\in X$ while $F_{4,1}F_{5,2}(b_\lambda)\not\in X$. 
        \end{enumerate}
        \item Let $y= F_{3,2}F_{4,2}F_{5,1}F_{6,6}(b_\lambda)$. Suppose that $F_{6,5}(b_\lambda)\in X$ while $F_{6,4}(b_\lambda)\not \in X$. Then $z = F_{3,2}F_{4,3}F_{5,4}(b_\lambda)$. Then
        \begin{enumerate}
            \item First note that $F_{3,2}F_{4,3}F_{5,4}F_{6,5}(b_\lambda)\in X$ follows immediately by our gluing property.
            \item Now note that $F_{4,1}F_{5,4}F_{6,5}(b_\lambda)\in X$. Thus by our extension property, $F_{3,2}F_{4,1}F_{5,4}F_{6,5}(b_\lambda)\in X$ and so $F_{3,2}F_{4,3}F_{5,1}F_{6,5}(b_\lambda)$.
            \item Again by our gluing property, $F_{3,2}F_{4,2}F_{5,1}F_{6,5}(b_\lambda)\in X$.
        \end{enumerate}
        \item We end with a longer expression. Let $y = F_{3,1}F_{4,1}F_{5,5}F_{6,1}F_{7,7}F_{8,8}F_{9,5}(b_\lambda)$ be extendable. Suppose that $F_{9,4}(b_\lambda)\in X$ while $F_{9,3}(b_\lambda)\not\in X$. We have that $z = F_{3,1}F_{4,2}F_{6,3}(b_\lambda)$.
        \begin{enumerate}
            \item We first show that $F_{3,1}F_{4,2}F_{6,3}F_{9,4}(b_\lambda)\in X$. To do this, we show first that $F_{3,1}F_{4,2}F_{5,3}F_{6,3}(b_\lambda)\in X$. Suppose for the sake of contradiction that $F_{5,2}F_{6,3}(b_\lambda)\in X$. We show that this would imply that $F_{5,2}F_{6,1}F_{7,7}F_{8,8}F_{9,4}(b_\lambda)\in X$. We start by showing that $F_{5,2}F_{6,3}F_{9,4}(b_\lambda)\in X$. To do this, we show that $F_{5,2}F_{6,3}F_{7,4}(b_\lambda)\in X$. Suppose first for the sake of contradiction that $F_{7,3}(b_\lambda)in X$. Suppose again for the sake of contradiction that $F_{8,3}(b_\lambda)\in X$. Note that by assumption, $F_{9,3}(b_\lambda)\not\in X$. Thus by our gluing property, $F_{8,3}F_{9,4}(b_\lambda)\in X$. This though is a contradiction, as $y$ is extendable. 

            Thus $F_{8,3}(b_\lambda)\not\in X$, and so $F_{7,3}F_{8,4}(b_\lambda)\in X$. As $F_{9,4}(b_\lambda)\in X$, this implies that $F_{7,3}F_{9,4}(b_\lambda)\in X$. By assumption, $F_{8,3}F_{9,4}(b_\lambda)\not\in X$ as $y$ is extendable. Thus $F_{7,3}F_{8,8}F_{9,4}(b_\lambda)\in X$. This is again a contradiction to $y$ being extendable. Thus $F_{7,3}(b_\lambda)\not\in X$. We thus obtain that $F_{5,2}F_{6,3}F_{7,4}(b_\lambda)\in X$. This in turn implies that $F_{5,2}F_{6,3}F_{9,4}(b_\lambda)\in X$. 

            By commutativity, we now immediately get that $F_{5,2}F_{6,3}F_{8,8}F_{9,4}(b_\lambda)\in X$. Now by the gluing property, since by extendability $F_{7,3}F_{8,8}F_{9,4}(b_\lambda)\not\in X$, $F_{5,2}F_{6,3}F_{7,7}F_{8,8}F_{9,4}(b_\lambda)\in X$. We now finally show that $F_{5,2}F_{6,1}F_{7,7}F_{8,8}F_{9,4}(b_\lambda)\in X$. By Lemma \ref{lemma:braidextend}, $F_{5,2}F_{6,2}F_{7,7}F_{8,8}F_{9,4}(b_\lambda)\in X$. Then by our extension property, $F_{5,2}F_{6,1}F_{7,7}F_{8,8}F_{9,4}(b_\lambda)\in X$. This though is a final contradiction to the extendability of $y$.

            This rather complicated process was an example of the inductive reconstruction step of Claim \ref{claim:notinX_induction}. 

            Thus $F_{5,2}F_{6,3}(b_\lambda)\not\in X$. By the gluing property, we thus have $F_{3,1}F_{4,2}F_{5,3}F_{6,3}(b_\lambda)\in X$ as hoped. We then finally have
            \[
            F_{3,1}F_{4,2}F_{5,3}F_{6,3}(b_\lambda) = F_{3,1}F_{4,2}F_{6,3}F_{6,4}(b_\lambda)\in X\implies F_{3,1}F_{4,2}F_{6,3}F_{9,4}(b_\lambda)\in X.
            \]
            \item We now have $F_{3,1}F_{4,2}F_{6,3}F_{9,4}(b_\lambda)\in X$. By commutativity, we then get that $F_{3,1}F_{4,2}F_{6,3}F_{8,8}F_{9,4}(b_\lambda)\in X$.
            \item We again by commutativity get that $F_{3,1}F_{4,2}F_{6,3}F_{7,7}F_{8,8}F_{9,4}(b_\lambda)\in X$.
            \item We now show that $F_{3,1}F_{4,2}F_{6,1}F_{7,7}F_{8,8}F_{9,4}(b_\lambda)\in X$. Note that \[F_{3,1}F_{4,2}F_{6,3}F_{7,7}F_{8,8}F_{9,4}(b_\lambda)\in X\implies F_{3,1}F_{4,2}F_{5,3}F_{7,7}F_{8,8}F_{9,4}(b_\lambda)\in X.\]
            Then using Lemma \ref{lemma:braidextend}, we have
            \[
            F_{3,1}F_{4,2}F_{5,3}F_{7,7}F_{8,8}F_{9,4}(b_\lambda)\in X\implies F_{3,1}F_{4,1}F_{5,3}F_{7,7}F_{8,8}F_{9,4}(b_\lambda)\in X\implies F_{3,1}F_{4,2}F_{5,1}F_{7,7}F_{8,8}F_{9,4}(b_\lambda)\in X.
            \]
            Then by commutativity, $F_{3,1}F_{4,2}F_{6,1}F_{7,7}F_{8,8}F_{9,4}(b_\lambda)\in X$.
            \item The next step is to show that $F_{3,1}F_{4,2}F_{5,5}F_{6,1}F_{7,7}F_{8,8}F_{9,4}(b_\lambda)\in X$. Have immediately by extendability that $F_{5,2}F_{6,1}F_{7,7}F_{8,8}F_{9,4}(b_\lambda)\not\in X$. Thus by the gluing property, 
            \[
            F_{3,1}F_{4,2}F_{5,5}F_{6,1}F_{7,7}F_{8,8}F_{9,4}(b_\lambda)\in X.
            \]
            \item Finally, by Lemma \ref{lemma:braidextend}, $F_{3,1}F_{4,1}F_{5,5}F_{6,1}F_{7,7}F_{8,8}F_{9,4}(b_\lambda)\in X$. This finally completes our last example.
        \end{enumerate}
    \end{itemize}
\end{example}

The following lemma shows that given any extremal element in $X$, we can pass to a lower weight extendable element.
\vspace{1em}

\begin{lemma}\label{lemma:extendableexistance}
    Let $z\in X$ be an extremal lowest weight element (that is, if $z'\in X$ satisfies $\T{wt}(z')\leq \T{wt}(z)$ in the dominance order, then $\T{wt}(z')=\T{wt}(z)$). Then, $z$ is extendable.
\end{lemma}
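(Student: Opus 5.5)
We prove the contrapositive: if $z=F_{1,a_1}\cdots F_{n,a_n}(x)\in X$ fails to be extendable from $x$, we produce some $z'\in X$ with $\text{wt}(z')<\text{wt}(z)$ strictly in dominance order. This contradicts the minimality of $z$. By Definition \ref{def:extendable}, non-extendability means there is a pair $i\le j$ with $a_i+c_{i,j}<a_j$ and
\[F_{j,a_i+c_{i,j}}F_{j+1,a_{j+1}}\cdots F_{n,a_n}(x)\in X.\]
We fix such a pair, choosing $j-i$ minimal, and within that, $j$ maximal. We also fix a chain $i=k_0<k_1<\dots<k_c\le j$ realizing $c=c_{i,j}$, so that $a_{k_m}\le a_i+m$.

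The idea, following case (iii) of Example \ref{ex:2operators}, is to carry the short operator $f_{a_i}^*$ sitting at the bottom of $F_{i,a_i}$ rightward to position $j$, using only braid relations. Each time it passes an operator $F_{k_m,a_{k_m}}$ of the chain, its index increases by one, by the identity $f_b^*F_{n,a}=F_{n,a}f_{b+1}^*$ from the remark after Definition \ref{def:extendable}. Operators not in the chain commute past it by the first braid relation, because maximality of the chain forces their $a$-values to exceed the current index by at least $2$. When the operator reaches position $j$, the index is $a_i+c$. This is exactly where Lemma \ref{lemma:braidextend} (i) applies: there we have the hypothesis $F_{j,a_i+c}F_{j+1,a_{j+1}}\cdots F_{n,a_n}(x)\in X$, together with $a_i+c<a_j$. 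Note that this lemma is stated for $\alpha F_{k,a}F_{k+1,b}$ acting on an extremal element annihilated by the relevant $e$'s. We would first use Lemma \ref{lemma:e_i=0cor} to check that $F_{j+1,a_{j+1}}\cdots F_{n,a_n}(x)$ is still killed by the needed $e_i$. Then we apply Lemma \ref{lemma:braidextend} (i) successively along the chain, pair $(k_{m},k_{m+1})$ by pair, working from $j$ back toward $i$. The result is an element
\[z'=F_{1,a_1}\cdots F_{i,a_i'}\cdots F_{k_m,a'_{k_m}}\cdots F_{j,a_i+c}\cdots F_{n,a_n}(x)\in X,\]
in which the chain operators have each been shifted, and $F_{j,a_j}$ has been lengthened to $F_{j,a_i+c}$.

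Next we compare weights. In each application of Lemma \ref{lemma:braidextend} (i), the word $F_{k,a}F_{k+1,b}$ is replaced by $F_{k,b-1}F_{k+1,a}$, and the latter contains the former as a subexpression. So the new permutation is strictly above the old one in Bruhat order as soon as $b>a+1$; strictness is guaranteed precisely by $a_i+c<a_j$. By the Bruhat–dominance Fact, it follows that $\text{wt}(z')<\text{wt}(z)$ strictly, which is the desired contradiction. We also need $z'\neq 0$ and $z'$ extremal. Extremality is automatic, since each $f^*$ acts by a transposition on weight. Nonvanishing comes from the lemma.

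The main obstacle is the bookkeeping that makes the chain argument legitimate. One must show that the intermediate hypotheses of Lemma \ref{lemma:braidextend} (i), namely $F_{k+1,a}(\cdot)\in X$ at each step, follow from the single hypothesis at position $j$. The first thing to try is to derive them from the ideal property of Theorem \ref{thm:characterization}: each required element has weight above an element already known to lie in $X$, via a subexpression. Where the ideal property alone does not suffice, we would induct on $j-i$, using the minimality of $j-i$ to guarantee that the shorter pairs $(k_m,j)$ do satisfy extendability. Those pairs then supply the missing memberships. A second, more delicate, point is the claim that non-chain operators commute with the travelling $f^*$. Here we would rely on the maximality of $c_{i,j}$: any operator with $a_k\le a_i+m+1$ lying between $k_m$ and $k_{m+1}$ would extend the chain, so every such operator has $a_k\ge a_i+m+2$.
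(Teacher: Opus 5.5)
There is a genuine gap. Your argument never uses the extension or gluing properties, and it cannot get off the ground without them.

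Write $x'=F_{j+1,a_{j+1}}\cdots F_{n,a_n}(x)$ and $\beta=F_{i+1,a_{i+1}}\cdots F_{j-1,a_{j-1}}$. Each swap you perform with Lemma \ref{lemma:braidextend}(i) needs a membership hypothesis. In the paper, these hypotheses all come from one membership, $\beta F_{j,a_i+c}(x')\in X$. That membership is obtained from the proposition immediately preceding Example \ref{example:proof3.2}: an extendable element may have its last block lengthened as soon as the lengthened block alone lies in $X$. It is applied to $\beta F_{j,a_j}(x')$, and this proposition is exactly where the two local properties of Theorem \ref{thm:principalmoreusable} enter. Your choice of $j-i$ minimal does make $\beta F_{j,a_j}(x')$ extendable, so the proposition is available to you, but you have to invoke it.

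Your two substitutes do not work. Extendability of shorter pairs only ever asserts inequalities or \emph{non}-memberships; these turn into memberships only when fed into the gluing property. The ideal property only yields elements Bruhat-below something already in $X$, and the needed element generally is not.

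For instance, take $z=F_{2,1}F_{3,3}F_{4,3}(x)$ with $F_{4,1}(x)\in X$ but $F_{3,1}F_{4,3}(x)\notin X$. Then $(2,4)$ is the only non-extendable pair, and $c=0$. Lemma \ref{lemma:braidextend}(i) cannot be applied to blocks $2,3$, since its hypothesis is exactly $F_{3,1}F_{4,3}(x)\in X$. Blocks $3,4$ have equal starting index, so it cannot be applied there either. The element $F_{3,3}F_{4,1}(x)$ that drives the paper's argument has permutation $s_3s_4s_3s_2s_1$. This has the same length as $s_2s_1s_3s_4s_3$ but differs from it, so it is not below $z$. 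It is also too long to be below $F_{4,1}(x)$.

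The mechanism itself also does not produce the element you need. Braid moves never change the element, so carrying $f_{a_i}^*$ rightward just returns $z$. The new element must be one letter longer. In the paper it is $\alpha F_{i,a_j-c-1}\,\beta\,F_{j,a_i+c}(x')$ with $\alpha=F_{1,a_1}\cdots F_{i-1,a_{i-1}}$: only the two end blocks change and $\beta$ is untouched. Your $z'$, with ``shifted'' chain blocks, is never pinned down.

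Lemma \ref{lemma:braidextend}(i) applies only to \emph{consecutive} blocks $F_{k,a}F_{k+1,b}$ with $a<b$. Successive chain members are usually separated by non-chain blocks and need not increase (e.g.\ $a_{k_1}<a_i$ is allowed), so ``pair $(k_m,k_{m+1})$ by pair'' is not an available move. Also, $f_b^*F_{n,a}=F_{n,a}f_{b+1}^*$ needs $a\le b$. The chain condition permits $a_{k_m}=a_i+m$, one more than the travelling index, and then the letter cannot be passed through.

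The paper handles all of this by induction on the length of $\beta$. It uses Lemma \ref{lemma:braidextend}(ii), which needs no membership hypothesis, in one of two ways. When $c=0$, it moves the last block of $\beta$ past the end block via $(\dots,a_{j-1},a_j)\mapsto(\dots,a_j,a_{j-1}+1)$. Otherwise, it brings the first block with $a_p\le a_i$ next to $F_{i,a_i}$ and swaps. It then recurses and undoes these moves with part (i).
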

\begin{proof}
Let $z$ be an extremal lowest weight element and suppose for the sake of contradiction that $z$ is not extendable. Let $k$ be maximal so that $F_{k,a_k}\dots F_{n,a_n}(b_\lambda)$ is not extendable. Let $m$ be minimal so that $a_k+c_{k,m}<a_m$ and $F_{m,a_k+c_{k,m}}(F_{m-1,a_{m-1}}\dots F_{n,a_n}(b_\lambda))\in X$. 

For ease of notation, set $c:=c_{k,m}$, $a_m:=b$, and $a_k:=a$.

Also define $x:=F_{m+1,a_{m+1}}\dots F_{n,a_n}(b_\lambda)$ and $\alpha:=F_{1,a_1}\dots F_{k-1,a_{k-1}}$. Then $F_{k+1,a_{k+1}}\dots F_{m,a_m}(x)$ is extendable, and so $F_{k+1,a_{k+1}}\dots F_{m,a+c}(x)\in X$. Moreover, since $z$ is lowest weight by assumption, 
\[
\alpha F_{k,b-c-1}F_{k+1,a_{k+1}}\dots F_{m-1,a_{m-1}}F_{m,a+c}(x)\not\in X
\]
We show that this leads to a contradiction. Let $\beta:=F_{k+1,a_{k+1}}\dots F_{m-1,a_{m-1}}$ 

Since $F_{k+1,a_{k+1}}\dots F_{m,a+c}(x)\in X$, the bad element in $\alpha F_{k,b-c-1}\beta F_{m,a+c}(x)$ is less than $k+1\mapsto k+2$. 

We now define a bit of convenient notation to make the following symbolic manipulations more clear. Denote the operator $F_{1,a_1}\dots F_{k,a_k}(x)$ by the length $k$ tuple $(a_1,a_2,\dots,a_k)$ (here $k$ and $a_k$ are arbitrary). Abusing notation a little, we then have that $\alpha F_{k,b-c-1}F_{k+1,a_{k+1}}\dots F_{m-1,a_{m-1}}F_{m,a+c}x$ corresponds to the tuple $(\alpha, b-c-1,a_{k+1},\dots,a_{m-1},a+c)$ (here $\alpha$ corresponds to a length $k-1$ expression). If we let $\beta:=F_{k+1,a_{k+1}}\dots F_{m-1,a_{m-1}}$, we then get the even simpler expression $(\alpha,b-c-1,\beta,a+c)$. 

Using the above observations and notation, we then get that we can do the following two manipulations on $(a,a_{k+1},\dots,a_{m-1},b)$ and still get an element of $X$.
\begin{enumerate}
    \item Consider a tuple $(b_1,\dots,b_k)$. If $b_{i+1}\leq b_i$ then we can transform
    \[
    (b_1,\dots,b_i,b_{i+1},\dots,b_k)\mapsto (b_1,\dots,b_{i+1},b_i+1,\dots,b_k).
    \]
    This follows by (ii) from Lemma \ref{lemma:braidextend}.
    \item We can also invert this as follows. Suppose that $b_{i+1}>b_i$. If $F_{i+1,b_i}F_{i+2,b_{i+2}}\dots F_{k,b_k}x\in X$ then we can transform
    \[
    (b_1,\dots,b_i,b_{i+1},\dots,b_k)\mapsto (b_1,\dots,b_{i+1}-1,b_i,\dots,b_k).
    \]
    This follows from part (i) of Lemma \ref{lemma:braidextend}.
\end{enumerate}

It thus suffices to show that using the above two manipulations, we can map
\[
(\alpha,a,\beta,b)\mapsto (\alpha,b-c-1,\beta,a+c).
\]
We prove this by inducting on the length of $\beta$. For $\beta$ with length $0$, we immediately get this identity by applying (ii).

Now consider $(\alpha,a,\beta,b)$ where $\beta=(a_{k+1},\dots,a_{m-1})$ has length $m-k-1>0$. We consider two cases. For the first case, suppose that every entry $a_i$ in $\beta$ satisfies $a_i>a$ (equivalently, $c=0$). Then in particular, $a_{m-1}>a$. Note also though that $a_{m-1}>b$, as we assumed that $F_{k+1,a_{k+1}}\dots F_{n,a_n}b_\lambda$ was extendable. Let $\beta'=(a_{k+1},\dots,a_{m-2})$. We then get that by (i)
\[
(\alpha,a,\beta,b)\mapsto (\alpha,a,\beta',b,a_{m-1}+1).
\]
Note that $(\beta',b,a_{m-1}+1)$ is still extendable. Thus by induction, since $c=0$ and the length of $\beta'$ is strictly shorter than the length of $\beta$, we then have
\[
(\alpha,a,\beta',b,a_{m-1}+1)\mapsto (\alpha,b-1,\beta',a,a_{m-1}+1).
\]
We can finally use (ii) to invert the above 
\[
(\alpha,b-1,\beta',a,a_{m-1}+1)\mapsto (\alpha,b-1,\beta,a),
\]
and so obtain the desired expression. Now suppose that $c>0$. Then there exists $a_i\leq a$. Take the minimal such $i$. Set $\beta':=(a_{k+1}+1,a_{k+2}+1,\dots,a_{i-1}+1,a_{i+1},\dots,a_{m-1})$.Repeatedly applying (i) we get
\[
(\alpha,a,\beta,b)\mapsto (\alpha,a,a_i,a_{k+1}+1,a_{k+2}+1,\dots,a_{i-1}+1,a_{i+1},\dots,a_{m-1},b) = (\alpha,a,a_i\beta',b).
\]
We then in turn get
\[
(\alpha,a,a_i,\beta',b)\mapsto (\alpha,a_i,a+1,\beta',b).
\]
Note that $(\beta',b)$ is still extendable (although we increase each $a_j$ for $j<i$ by $1$, we also decrease $c$ by $1$), and so we can again induct to get
\[
(\alpha,a_i,a+1,\beta',b)\mapsto (\alpha,a_i,b-c,\beta',a+c).
\]
We can then use (ii) to invert our previous work and finally obtain
\[
(\alpha,a_i,b-c,\beta',a+c)\mapsto (\alpha,b-c-1,a_i,\beta',a+c)\mapsto (\alpha,b-c-1,\beta,a+c).
\]
Here the final mapping is obtained by repeatedly applying $(ii)$ to move $a_i$ back up to the $i$th index. This completes our induction, and so translating all of this back to our original notation, we obtain by braid relation manipulations that
\[
F_{k,a}F_{k+1,a_{k+1}}\dots F_{m-1,a_{m-1}}F_{m,b}x\in X\implies F_{k,b-c-1}F_{k+1,a_{k+1}}\dots F_{m-1,a_{m-1}}F_{m,a+c}x\in X.
\]
This though is a contradiction, as $z$ was by assumption lowest weight. Thus $z$ is extendable.
\end{proof}

With these two results in hand, we can give a quick proof of Theorem \ref{thm:principalmoreusable}.
\vspace{1em}

\begin{theorem}
    Let $X$ be a connected union of Demazure crystals. Then if $X$ satisfies the assumptions in Theorem \ref{thm:principalmoreusable}, $X$ is a Demazure crystal.
\end{theorem}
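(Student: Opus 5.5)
By Theorem \ref{thm:characterization}, and since $X$ is already assumed to be a connected union of Demazure crystals (hence extremal and ideal), it suffices to show that $X$ is principal. That is, we must exhibit an extremal element of $X$ whose weight is below every extremal weight in $X$ in dominance order. The natural candidate is the element $w = F_{1,a_1}F_{2,a_2}\cdots F_{n,a_n}(b_\lambda)$ produced by the greedy algorithm of Proposition \ref{rmk:algorithm}. Here each $a_j$ is chosen minimal, given the already chosen $a_{j+1},\dots,a_n$, so that the resulting element is nonzero and lies in $X$.

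\textbf{Step 1 (lowest elements are extendable).} Since $X$ is finite, pick any extremal $z\in X$ that is minimal in dominance order among extremal elements. Write it in the normal form $z=F_{1,a'_1}\cdots F_{n,a'_n}(b_\lambda)$. Such a form exists because every extremal element of $\mathcal{B}$ is reached from $b_\lambda$ along a reduced word for some $u$, and every $u\in S_n$ has a reduced word of the shape $F_{1,\cdot}\cdots F_{n,\cdot}$; by the ideal property, this word may be taken to stay in $X$. By Lemma \ref{lemma:extendableexistance}, $z$ is extendable from $b_\lambda$, and so are all of its suffixes $F_{k,a'_k}\cdots F_{n,a'_n}(b_\lambda)$.

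\textbf{Step 2 (compare $z$ with the greedy element).} Let $k$ be the largest index with $a_k\neq a'_k$; we aim for a contradiction unless no such $k$ exists. By greedy minimality, $a_k\le a'_k$, so suppose $a_k<a'_k$. The suffix $F_{k,a'_k}\cdots F_{n,a'_n}(b_\lambda)$ is extendable from $x=F_{k+1,a'_{k+1}}\cdots F_{n,a'_n}(b_\lambda)$, and $F_{k,a_k}(x)\in X$ by the choice of $a_k$. The extension proposition proved just before (with the roles of $n$ and $b$ relabeled, i.e.\ applied to the last operator of an extendable expression) then lets us shorten the $k$-th factor. This yields $F_{1,a'_1}\cdots F_{k-1,a'_{k-1}}F_{k,a_k}(x)\in X$, which has strictly lower weight than $z$, since its word contains the word of $z$ as a subexpression and the Bruhat order corresponds to dominance. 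This contradicts minimality, so $a'_k=a_k$ for all $k$ and $z=w$.

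\textbf{Step 3 (uniqueness and conclusion).} The greedy element $w$ does not depend on $z$, so every minimal extremal element equals $w$. By finiteness, any extremal $y\in X$ dominates some minimal extremal element, hence $\mathrm{wt}(w)\le\mathrm{wt}(y)$. Thus $w$ is the unique lowest weight element, $X$ is principal, and Theorem \ref{thm:characterization} finishes the proof.

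\textbf{Main obstacle.} The main obstacle is Step 2. The extension proposition as stated only modifies the \emph{last} factor $F_{n,\cdot}$, with the prefix being arbitrary. To use it, I would apply it to the suffix starting at position $k$ and then reattach the prefix $F_{1,a'_1}\cdots F_{k-1,a'_{k-1}}$. That requires the prefix to remain valid on the modified element, and this must be re-established rather than assumed. My first attempt would be to strengthen the induction: prove that the whole word with $a_k$ substituted is again extendable, so that the prefix can be rebuilt factor by factor using the gluing and extension properties, in the same way as in the inductive reconstruction inside that proposition. A secondary concern is verifying the hypothesis $e_i(x)=0$ for the relevant $i$; I expect this to follow from Lemma \ref{lemma:e_i=0cor}.
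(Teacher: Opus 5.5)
Your proposal is correct and follows the paper's proof essentially step for step. You take a lowest-weight extremal $z$, get its extendability from Lemma~\ref{lemma:extendableexistance}, and then match its factors to those of the greedy element from the top index downward, using the extension proposition and the minimality of $z$. The obstacle you flag is not a real one. That proposition already keeps an arbitrary prefix $F_{1,a_1}\cdots F_{k-1,a_{k-1}}$, so you apply it directly with $n:=k$ and $x:=F_{k+1,a'_{k+1}}\cdots F_{n,a'_n}(b_\lambda)$, and extendability of $z$ from this $x$ is just a subset of the conditions defining extendability from $b_\lambda$.
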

\begin{proof}
    By Theorem \ref{thm:characterization}, it suffices to show that $X$ is principal, or, equivalently, that $X$ contains a \textit{unique} lowest weight element. Let $y\in X$ be an extremal element. Let $z$ be an extremal lowest weight element so that $\T{wt}(y)\leq \T{wt}(z)$ (such an element always exists as $X$ is finite). Let $z'\in X$ be the element obtained via our lowest-weight-finding algorithm \ref{rmk:algorithm}. Write $z'=F_{1,k_1}\dots F_{n,k_n}(b_\lambda)$. We show that $z = z'$, and will in turn get that $\T{wt}(z')<\T{wt}(y)$.
    
    First, by Lemma \ref{lemma:extendableexistance}, $z$ is extendable. Write $z = F_{1,a_1}\dots F_{n,a_n}(b_\lambda)$. We inductively show that $a_i = k_i$. The base case $b_\lambda$ is immediate. Suppose that we have shown that $a_i = k_i$ for all $i>j$. Consider $a_j$. Set $x:=F_{j+1,k_{j+1}}\dots F_{n,k_n}b_\lambda$. Then $z = F_{1,a_1}\dots F_{j,a_j}x$ and $z'=F_{1,k_1}\dots F_{j,k_j}x$. 
    
    By assumption, $k_j\leq a_j$. Thus, since $z = F_{1,a_1}\dots F_{j,a_j}x$ is extendable and $F_{j,k_j}x\in X$, we get that
    \[
    F_{1,a_1}\dots F_{j-1,a_{j-1}}F_{j,k_j}x\in X.
    \]
    Since $z$ is lowest weight, this implies that $k_j = a_j$. Thus in fact $z = z'$, and so $z'$ is the unique lowest weight element. 
\end{proof}

\subsection{Further Characterization of the Ideal Property}
The following proposition provides a more usable characterization of the ideal property for Demazure crystals. Note that it is far from necessary though, as many ideal subsets do not satisfy our gluing property (see Theorem \ref{thm:principalmoreusable}). Since we have to prove our gluing property for shuffle tableaux anyways, this characterization is convenient for our work.
\vspace{1em}

\begin{proposition}\label{prop:idealmoreusable}
    Let $X$ be a subset of a (finite, normal $\mathfrak{gl}_n$) crystal. Suppose that $X$ satisfies the following list of properties. 
    \begin{enumerate}
        \item Our gluing property from Theorem \ref{thm:principalmoreusable} holds for $X$.
        \item Let $x\in X$. Suppose that $f_n^*f_{n-1}^*\dots f_{n-k}^*(x)\in X$. Then $f_n^*\dots f_{n-k+1}^*(x)\in X$.
        \item Let $x\in X$. Suppose that $f_n^*f_{n+1}^*\dots f_{n+k}^*(x)\in X$. Then $f_n^*\dots f_{n+k-1}^*(x)\in X$.
    \end{enumerate}
    Then $X$ is an ideal subset.
\end{proposition}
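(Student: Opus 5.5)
Throughout, the target is the ideal condition of Theorem \ref{thm:characterization}. We fix extremal $x,y\in X$ with $x=e_j^*e_{i_1}^*\cdots e_{i_m}^*(y)$ and must show $f_{i_m}^*\cdots f_{i_1}^*(x)\in X\sqcup\{0\}$. The plan is to induct on $m$, treating the hypotheses (ii) and (iii) as ``deletion'' moves. Each move removes one $f^*$ from the end of a consecutive descending or ascending chain while staying in $X$. Gluing (i) is used to combine two chains meeting at a common index.

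First we rewrite the data in terms of lowering operators. Since $x,y$ are extremal and $f_i^*,e_i^*$ act on extremal elements by the simple transposition $s_i$ on weights, we have $y=f_{i_m}^*\cdots f_{i_1}^*f_j^*(x)$, possibly after discarding trivial steps where $e_i^*$ acts as the identity. Thus $w(x):=f_{i_m}^*\cdots f_{i_1}^*(x)$ is obtained from the word defining $y\in X$ by deleting its \emph{first-applied} letter $f_j^*$. Using the braid relations (first and second braid relations for operators), we bring the word $f_{i_m}^*\cdots f_{i_1}^*f_j^*$ into a normal form $F_{1,a_1}\cdots F_{n,a_n}$ of products of descending chains, as in Proposition \ref{rmk:algorithm} and Lemma \ref{lemma:symbolicmanipulation}. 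The deletion of $f_j^*$ then becomes the deletion of a letter from some chain $F_{k,a_k}$. This corresponds to a Bruhat-smaller element reached by removing one reflection.

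The key step is to show that deleting a single letter from a chain keeps us in $X$. If the deleted letter is the last-applied (rightmost) letter $f_{a_k}^*$ of $F_{k,a_k}$, then (ii) handles it directly. This uses that the preceding chains commute past, as in Lemma \ref{lemma:braidextend}. If the deleted letter sits at the top end of an ascending subword produced by braid moves, then (iii) applies instead. The genuinely new case is deleting an interior letter $f_c^*$ with $a_k<c<k$. This splits $F_{k,a_k}$ into $f_k^*\cdots f_{c+1}^*$ and $f_{c-1}^*\cdots f_{a_k}^*$, which commute only up to the missing $f_c^*$. Here we plan to apply gluing with $a=c$, where the two chains in the gluing hypothesis are precisely these two pieces; their membership in $X$ comes from repeated use of (ii) and (iii). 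Gluing yields one of the two recombined words. We then verify by weight (dominance) comparison that either alternative is $\leq$ the desired element in Bruhat order, so that induction on length finishes. The conditions $e_i=0$ needed for gluing come from Lemma \ref{e_i=0} and Lemma \ref{lemma:e_i=0cor}.

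The main obstacle is this interior-deletion case. We must check that the ``wrong'' output of gluing still lets the induction close. Concretely, if gluing returns $f_{a-m}^*\cdots f_{a-1}^*f_a^*(x)$ instead of the intended word, we need to further delete letters using (ii) and (iii) to reach $w(x)$. We expect to handle this by a secondary induction on the number of chains to the left of $F_{k,a_k}$. At each step we commute the remaining chains through using Lemma \ref{lemma:symbolicmanipulation}, mirroring the bookkeeping in the proof of Lemma \ref{lemma:prop2moreusable}.
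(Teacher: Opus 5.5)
You have the same setup as the paper: write $z=F_{1,a_1}\cdots F_{n,a_n}(b_\lambda)$, observe that the target arises by deleting one letter $f_c^*$ from a single chain $F_{k,a_k}$, and induct over chains. However, the difficulty is in a different place from where you put it, and the step you propose for it does not work.

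Deleting an interior letter is not a new case. Set $u=F_{k+1,a_{k+1}}\cdots F_{n,a_n}(b_\lambda)$ and suppose $F_{k,a_k}$ is the last chain applied. Then hypothesis (ii), applied at the base point $f_{c-1}^*\cdots f_{a_k}^*(u)$, converts $F_{k,a_k}(u)\in X$ directly into $F_{k,c+1}f_{c-1}^*\cdots f_{a_k}^*(u)\in X$. This is the paper's entire base case. (Also, the rightmost letter $f_{a_k}^*$ of $F_{k,a_k}$ is the first one applied, not the last.)

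Your use of gluing at this point is ill-posed. Gluing needs a chain $f_{c+k'}^*\cdots f_c^*$ and a chain $f_{c-m}^*\cdots f_{c-1}^*$ applied to one common extremal base point, with $e_i=0$ in the relevant range. Your pieces $f_k^*\cdots f_{c+1}^*$ and $f_{c-1}^*\cdots f_{a_k}^*$ do not fit:
\begin{itemize}
\item they are composed one after the other rather than applied to a common base point;
\item the first piece omits $f_c^*$;
\item in the second piece the indices increase in order of application rather than decrease.
\end{itemize}
Moreover, both possible conclusions of gluing contain $f_c^*$; they are extensions, not deletions. So getting from either one back to your target by a Bruhat comparison presupposes exactly the closure under subexpressions that you are proving.

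The real content is in the sentence you pass over, that ``the preceding chains commute past.'' After $f_c^*$ is deleted from $F_{k,a_k}$, you must show that the chains $F_{k-1,a_{k-1}},\dots,F_{1,a_1}$ applied afterwards still keep you in $X$. They do not simply commute through, even when $c=a_k$. The paper inducts on the number of these later chains and splits on $c$ versus $a_{k-1}$:
\begin{itemize}
\item If $c\le a_{k-1}$, Lemma \ref{lemma:symbolicmanipulation} pulls an $f_k^*$ to the front and moves the deletion into the chain $F_{k-1,\cdot}$, so induction and (ii) apply.
\item If $a_k\le a_{k-1}<c$, Lemma \ref{lemma:techlemidealprop} shows the word is not reduced, so the target is $0$.
\item If $a_{k-1}<a_k$ and $F_{k,a_{k-1}}f_{c-1}^*\cdots f_{a_k}^*(u)\in X$, then Lemma \ref{lemma:braidextend}(i), the second braid relation and (ii) suffice.
\item In the remaining case, repeated use of (iii) deletes the chain $F_{k,\cdot}$ outright. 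This shows that the later increasing block $F_{k-m+1,\cdot}\cdots F_{k-1,\cdot}$ alone lies in $X$ over a suitable base point. Gluing, in the shape of Lemma \ref{lemma:prop2moreusable} with the failure $F_{k,c-1}(\cdot)\notin X$ as input, then reattaches that block to the shortened chain $F_{k,c+1}$, and one continues leftward.
\end{itemize}
So gluing serves to reattach later chains, not to recombine the two halves of the split chain. Already the two-chain case needs exactly this: take $F_{k,a}F_{k+1,b}(x)$ with $f_c^*$ deleted from $F_{k+1,b}$, where $c>a$ and $F_{k,a}f_{c-1}^*\cdots f_b^*(x)\notin X$. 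Your ``secondary induction on the number of chains'' points in the right direction. Without this case analysis and this use of gluing, however, the argument does not close.
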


To motivate this proposition, let's compare these conditions with the original definition of an ideal subset. In the diagram below, the ideal condition states that if elements $x$ and $y$ are both in $X$, then $z$ must also be in $X$ (and be nonzero). 

\begin{center}
\begin{tikzcd}
	&&&& x \\
	&&& \bullet && \bullet \\
	&& \dots && \dots \\
	& \bullet && \bullet \\
	y && z
	\arrow["{f_j^*}"', from=1-5, to=2-4]
	\arrow["{f_{i_1}^*}", from=1-5, to=2-6]
	\arrow["{f_{i_{1}^*}}"', from=2-4, to=3-3]
	\arrow["{f_{i_2}^*}", from=2-6, to=3-5]
	\arrow["{f_{i_{m-1}^*}}"', from=3-3, to=4-2]
	\arrow["{f_{i_{m-1}^*}}", from=3-5, to=4-4]
	\arrow["{f_{i_m}^*}"', from=4-2, to=5-1]
	\arrow["{f_{i_m}^*}", from=4-4, to=5-3]
\end{tikzcd}
\end{center}

The ideal property is thus essentially saying that Demazure crystals are `closed under taking subexpressions' of raising operators. Conditions (ii) and (iii) of Proposition \ref{prop:idealmoreusable} also ask for closure under taking subexpressions, but greatly limit the expressions we must consider. The nontrivial claim is that, with extra help from our gluing property, it is enough to consider these simple expressions only. 
\vspace{1em}

\begin{example}
We start by working carefully through the case of two operators.

Suppose that $z:= F_{k,a}F_{k+1,b}(x)$ where $e_i(x) = 0$ for all $i\leq k+1$. Suppose that we want to remove $f_c^*$ from this expression, and show that $y:=F_{k,a}F_{k+1,c+1}f_{c-1}^*\dots f_b^*(x)\in X$.
As in Example \ref{ex:2operators}, this problem splits naturally into three cases.
        \begin{enumerate}
            \item $c\leq a$: By Lemma \ref{lemma:braidextend} part (ii),
            \[
            F_{k,a}F_{k+1,b}(x) = f_{k+1}^*(F_{k,b}F_{k+1,a+1}(x)) \in X.
            \]
            \[
            \implies F_{k,b}F_{k+1,a+1}(x)\in X.
            \]
            Now, we may apply condition (ii) of Proposition \ref{prop:idealmoreusable} to the element $f_{c-1}^* f_{c-2}^*\dots f_b^*F_{k+1, a+1}(x) \in X$, getting that $F_{k,c+1}f_{c-1}^*\dots f_b^*F_{k+1,a+1}(x)\in X$. We can also peel off the $F_{k+1, a+1}$ using condition (ii) again, getting that $F_{k+1,c+1} f^*_{c-1} \dots f_b^*(x) \in X$.
            
            Thus by Lemma \ref{lemma:braidextend} part (i) (letting $\alpha = F_{k,a}$), we get
            \[
            F_{k,a}F_{k+1,c+1}f_{c-1}^*\dots f_b^*(x)\in X,
            \]
            as hoped.
            \item $c>a$ and $F_{k,a}f_{c-1}^*\dots f_b^*(x)\in X$: By Lemma \ref{lemma:techlemidealprop} proven below, we can also assume that $b>a$ in this case. By Lemma \ref{lemma:braidextend} (i),
            \[
            F_{k,a}F_{k+1,b}(x)\in X \implies F_{k,c-1}F_{k+1,a}f_{c-1}^*\dots f_b^*(x)\in X.
            \]
            Then since $b>a$, we can commute each operator in $f_{c-1}^*\dots f_b^*$ left through $F_{k+1,a}$ (each operator decreases by $1$). We thus have that $F_{k,b-1}F_{k+1,a}(x)\in X$. Then by (ii), $F_{k,c}f_{c-2}^*\dots f_{b-1}^*F_{k+1,a}(x)\in X$. Commuting the operators in $f_{c-2}^*\dots f_{b-1}^*$ back right (each operator increases by $1$) we get
            \[
            F_{k,c}F_{k+1,a}(f_{c-1}^*\dots f_b^*)(x)\in X.
            \]
            Then by Lemma \ref{lemma:braidextend} (i), this implies that
            \[
            F_{k,a}F_{k+1,c+1}f_{c-1}^*\dots f_b^*(x)\in X.
            \]
            \item $c>a$ and $F_{k,a}f_{c-1}^*\dots f_b^*x\not\in X$: Set $u:=f_{c-2}^*\dots f_a^*f_{c-1}^*\dots f_b^*x$. By Lemma \ref{lemma:e_i=0cor}, $e_i(u)=0$ for all $c-1\leq i\leq k+1$. Note that we have,
            \[
            F_{k,c-1}F_{k+1,c}(u) = f_k^*f_{k+1}^*(F_{k-1,c-1}F_{k,c}(u)).
            \]
            Thus by (iii), $f_k^*(F_{k-1,c-1}F_{k,c}(u)) = F_{k,c-1}F_{k,c}(u)\in X$. We then have, commuting $F_{k,c}$ out left,
            \[
            F_{k,c-1}F_{k,c}(u) = f_{k-1}^*\dots f_{c-1}^*F_{k,c-1}(u)\in X\implies F_{k,c-1}(u)\in X.
            \]
            Then note that $F_{k+1,c-1}(u)\not\in X$ while $F_{k+1,c+1}(u)\in X$ by (ii). Thus, by (i), 
            \[F_{k,c-1}F_{k+1,c+1}(u) = F_{k,a}F_{k+1,c+1}f_{c-1}^*\dots f_b^*(x)\in X.\]
            \end{enumerate}
        \end{example}

Before proving this proposition, we verify a short technical lemma.
\vspace{1em}

\begin{lemma}\label{lemma:techlemidealprop}
    Let $a,b,c\in \N$ be such that $b\leq a<c$. Then, 
    \[(s_n s_{n-1} \dots s_a)(s_{n+1} s_n \dots s_{c+1}) (s_{c-1} s_{c-2} \dots s_b)\]
    is not a reduced word.
\end{lemma}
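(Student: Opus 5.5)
The plan is to exhibit a reduced prefix followed by a letter that creates a descent. This uses the standard criterion that for a permutation $P$ and a simple transposition $s_i$, we have $\ell(Ps_i)<\ell(P)$ if and only if $P(i)>P(i+1)$, where permutations act on $\{1,\dots,n+2\}$ and products are composed right to left. If some contiguous prefix of the word, followed by its next letter $s_i$, satisfies this inequality, then the full word is not reduced: that letter can be cancelled against the prefix, giving a strictly shorter expression. We implicitly need $c\le n+1$ so that the middle factor $s_{n+1}\dots s_{c+1}$ makes sense (it is empty when $c=n+1$). Since $b\le a<c$, the last factor $s_{c-1}\dots s_b$ contains the letter $s_a$, and this is the letter we single out.

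Let $P=(s_n\cdots s_a)(s_{n+1}\cdots s_{c+1})(s_{c-1}\cdots s_{a+1})$ be the prefix ending just before that $s_a$. When $c=a+1$ the third factor is empty. We compute $P(a)$ and $P(a+1)$ by tracking each point through the factors from right to left.

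First take the point $a$. The factor $s_{c-1}\cdots s_{a+1}$ fixes $a$, and so does $s_{n+1}\cdots s_{c+1}$, since $c+1>a+1$. In $s_n\cdots s_a$ the rightmost letter $s_a$ sends $a\mapsto a+1$, then $s_{a+1}$ sends it to $a+2$, and so on, so $a\mapsto n+1$. Hence $P(a)=n+1$.

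Next take the point $a+1$. If $c>a+1$, the factor $s_{c-1}\cdots s_{a+1}$ carries $a+1$ step by step up to $c$. The factor $s_{n+1}\cdots s_{c+1}$ fixes $c$. In $s_n\cdots s_a$, the letters $s_a,\dots,s_{c-2}$ fix $c$, then $s_{c-1}$ sends $c\mapsto c-1$, and the remaining letters $s_c,\dots,s_n$ fix $c-1$. So $P(a+1)=c-1$. If instead $c=a+1$, the point $a+1$ is fixed by $s_{n+1}\cdots s_{a+2}$ and sent to $a$ by $s_n\cdots s_a$. In both cases $P(a+1)\le c-1\le n<n+1=P(a)$.

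Therefore $\ell(Ps_a)<\ell(P)$, so the prefix $P\,s_a$ of the given word is not reduced, and hence neither is the whole word. The only real care needed is in these point-tracking computations, in particular the convention for how the words act and the boundary cases $c=a+1$ and $c=n+1$; I do not expect a genuine obstacle beyond them. As a cross-check, one could also commute $s_{c-1}\cdots s_b$ to the left past $s_{n+1}\cdots s_{c+1}$ (the indices differ by at least $2$) and then use the conjugation identity $(s_n\cdots s_a)s_j=s_{j+1}(s_n\cdots s_a)$ for $a\le j<n$, in the spirit of Lemma \ref{lemma:symbolicmanipulation}, to exhibit an explicit cancellation.
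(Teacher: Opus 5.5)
Your proof is correct, but it takes a different route from the paper.

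\textbf{The paper's route.} It argues purely by braid moves. It first commutes $s_{n+1}\cdots s_{c+1}$ to the right past $s_{c-1}\cdots s_b$. It then pushes $s_{c-1}\cdots s_{a+1}$ leftward through $s_n\cdots s_a$, using $(s_n\cdots s_a)s_j=s_{j-1}(s_n\cdots s_a)$ for $a<j\le n$. This continues until the last letter $s_a$ of the first block sits next to the $s_a$ of the last block.

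\textbf{Your route.} You apply the right-descent criterion $\ell(Ps_i)<\ell(P)\iff P(i)>P(i+1)$ to the prefix ending just before that same letter $s_a$. Your point-tracking is right: $P(a)=n+1$ and $P(a+1)=c-1$, in both cases $c>a+1$ and $c=a+1$.

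\textbf{What each approach buys.} Your version is shorter and purely mechanical. It also makes explicit the hypothesis $c\le n+1$, which the paper uses silently (its conjugation step needs $c-1\le n$). That hypothesis is genuinely needed: with $n=a=b=1$, $c=3$ and an empty middle block, the word is $s_1s_2s_1$, which is reduced. The paper's version instead exhibits an explicit chain of braid moves ending in an adjacent pair $s_as_a$. This transfers verbatim to the operators $f_i^*$, which satisfy the braid relations together with the convention $f_i^*f_i^*=0$. That is how the lemma is actually used in the proof of Proposition~\ref{prop:idealmoreusable}. From your descent argument, you would need Tits' word property to reach the same operator-level conclusion.

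\textbf{A correction to your cross-check.} The identity you state is backwards. For example, $(s_2s_1)s_1=s_2\neq s_1=s_2(s_2s_1)$. The correct form is $(s_n\cdots s_a)s_j=s_{j-1}(s_n\cdots s_a)$ for $a<j\le n$, or equivalently $s_j(s_n\cdots s_a)=(s_n\cdots s_a)s_{j+1}$ for $a\le j<n$. Your main argument does not depend on it.
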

\begin{proof}

    We will apply braid relations until two copies of $s_a$ are next to each other in the word, so that the whole word collapses to $0$. Repeatedly applying the first braid relation to move $(s_{n+1} s_n \dots s_{c+1})$ rightwards, we have
    \[
    (s_n \dots s_a)(s_{n+1} \dots s_{c+1}) s_{c-1} \dots s_b = (s_n s_{n-1} \dots s_a)(s_{c-1} \dots s_b)(s_{n+1} \dots s_{c+1})
    \]
    If $a = c-1$, then we are already done. Otherwise, we can then push $(s_{c-1} \dots s_{a+1})$ leftwards, by repeatedly applying the second braid relation: 
    \[(s_n s_{n-1} \dots s_a)(s_{c-1} \dots s_b)(s_{n+1} \dots s_{c+1})
     = (s_{c-2} s_{c-3}\dots s_a)(s_n s_{n-1} \dots s_a)(s_a s_{a-1} \dots s_b)(s_{n+1}s_n \dots s_{c+1})
    \]

    But now the last $s_a$ in the second parenthesis is next to the first $s_a$ in the third parenthesis, so the entire expression is $0$, as desired. 
    
\end{proof}

We can now prove Proposition \ref{prop:idealmoreusable}.

\begin{proof}[Proof of Proposition \ref{prop:idealmoreusable}]
    Set $z:=f_{i_1}^*\dots f_{i_n}^*f_j^*(x)\in X$ where $s_{i_1}\dots s_{i_n}s_j$ is a reduced word for some permutation $w$ and $z, x \in X$ are extremal. 
    
    We seek to show that $y:=f_{i_1}^*\dots f_{i_n}^*(x)\in X\sqcup \{0\}$. Note that $y$ is also extremal, so we can let $w'$ be the permutation such that $w'(\lambda) = \text{wt}(y)$. 
    
    Write $z:=F_{1,a_1}\dots F_{n,a_n}(b_\lambda)$. Then, since $w'$ is a subexpression of $w$ and $\text{len}(w') + 1 = \text{len}(w)$, we can write $y=F_{1,a_1}F_{2, a_2}\dots (F_{k,a+1}f_{a-1}^* f_{a-2}^*\dots f_{a_k}^*)F_{k+1,a_{k+1}}\dots F_{n,a_n}(b_\lambda)$ for some $a$ and $k$ (we just remove some $f_a^*$ from somewhere within our expression for $z$). 

    Set $u:=F_{k+1,a_{k+1}}\dots F_{n,a_n}(b_\lambda)$. We then have 
    $y=F_{1,a_1}\dots F_{k,a+1}(f_{a-1}^*\dots f_{a_k+1}^*f_{a_k}^*(u))$
    and 
    \begin{equation}\label{eq:zequation}
    z= F_{1, a_1}F_{2, a_2} \dots F_{k, a_k} = F_{1,a_1}\dots F_{k,a+1}\mathbf{f_a}^*(f_{a-1}^*\dots f_{a_k+1}^*f_{a_k}^*(u))
    \end{equation}
    We induct on the number of operators $F_{i,a_i}$ on the right side of Equation \ref{eq:zequation}. 
    
    \textbf{Base case: } suppose there is only one $F_{i, a_i}$ in the expression above, so that 
    \[z = F_{k, a+1}(\mathbf{f_a^*}f_{a-1}^* \dots f_{a_k+1}^*f_{a_k}^*)(u)\]
    
    We wish to extract the bolded $f_a^*$ and show that the resulting expression lies in $X$. But this follows directly from our second assumption: applying condition (ii)  to the element $(f_{a-1}^* \dots f_{a_k+1}^*f_{a_k}^*)(u)$, we have 
    \[F_{k, a+1}(f_{a-1}^* \dots f_{a_k+1}^*f_{a_k}^*)(u) \in X\]
    as desired. 
    
    To prove the inductive step, we do some casework on the integers $a_{k-1},a$. 

    \textbf{Case 1: $a\leq a_{k-1}$.}\\
    By Lemma \ref{lemma:symbolicmanipulation}, 
    \begin{equation}\label{eq:commutingf_k}
    F_{k-1,a_{k-1}}F_{k,a_k}(u)=f_k^*
    (F_{k-1,a_{k-1}})(F_{k,a_{k-1}+1}f_{a_{k-1}-1}^*\dots f_{a_k}^*)(u)
    \end{equation}
    Suppose first that $a=a_{k-1}$. Then, commuting the $f_k^*$ leading the right side of Equation \ref{eq:commutingf_k} leftwards using the first braid relation, we get that \[z=f_k^*(y)\] 
    Thus since $z\in X$, $y\in X$ (by either (ii) or (iii)).

    Now, suppose that $a<a_{k-1}$. Note that we can further simplify the above expression to get
    \[
    f_k^*(F_{k-1,a_{k-1}}F_{k,a_{k-1}+1}f_{a_{k-1}-1}^*\dots f_{a_k}^*(u)) = f_k^*(F_{k-1,a_{k}}F_{k,a_{k-1}+1}(u))
    \]
    (Since $a_k \leq a < a_{k-1}$, we just commute $(f_{a_{k-1}-1}^* \dots f_{a_k}^*)$ leftwards using the first braid relation). We then have, by commuting $f_k^*$ left via the first braid relation: 
    \[
    z = f_k^*(F_{1,a_1}\dots F_{k-1,a_{k}}F_{k,a_{k-1}+1}(u))\T{ and }y=f_k^*(F_{1,a_1}\dots F_{k-1,a+1}f_{a-1}^*\dots f_{a_k}^*F_{k,a_{k-1}+1}(u)).
    \]
    Set $v:=F_{k,a_{k-1}+1}(u)$. We then apply condition (ii) to  
    \[
    F_{1,a_1}F_{2,a_2}\dots F_{k-2,a_{k-2}}F_{k-1,a_k}(v)
    \]
    getting that $F_{1,a_1}\dots F_{k-1,a+1}f_{a-1}^*\dots f_{a_k}^*(v)\in X$ by induction. By condition (ii), $F_{k,a+1}f_{a-1}^*\dots f_{a_k}^*(u)\in X$, and so $y$ cannot have a bad $k\mapsto k+1$. Thus $y\in X$.

    \textbf{Case 2: $a>a_{k-1}$}\\
    First, by Lemma \ref{lemma:techlemidealprop}, we can assume also that $a_k>a_{k-1}$ (otherwise $y=0$, because our word for $w'$ was not reduced). We further decompose this case into two subcases.
    
    Suppose first that $F_{k,a_{k-1}}(f_{a-1}^*\dots f_{a_k+1}^*f_{a_k}^*)(u) \in X$. Note that
    \[
    F_{1,a_1}\dots F_{k-1,a_{k-2}}F_{k-1,a_{k-1}}F_{k,a_k}(u) = F_{1,a_1}\dots F_{k-1,a_{k-2}}F_{k-1,a_{k-1}}F_{k,a}(f_{a-1}^*\dots f_{a_k+1}^*f_{a_k}^*(u))
    \]
    Thus, by Lemma \ref{lemma:braidextend} (i), we get
    \[
    F_{1,a_1}\dots F_{k-2,a_{k-2}}F_{k-1,a-1}F_{k,a_{k-1}}(f_{a-1}^*\dots f_{a_k}^*(u))\in X.
    \]
    Braid relating $f_{a-1}\dots f_{a_k}$ left by repeatedly using the \textit{second} braid relation, we obtain that the above expression equals 
    \[
    F_{1,a_1}\dots F_{k-2,a_{k-2}}F_{k-1,a_k-1}F_{k,a_{k-1}}(u).
    \]
    Setting $v:=F_{k,a_{k-1}}(u)$, we then apply condition (ii) to remove $f_{a-1}^*$ from $F_{k-1,a_k-1}$:
    \[
    F_{1,a_1}\dots F_{k-2,a_{k-2}}F_{k-1,a}(f_{a-2}^*\dots f_{a_{k-1}}^*)F_{k,a_{k-1}}(u)\in X.
    \]
    Braid relating the $(f_{a-2}^* \dots f_{a_{k-1}}^*)$ back through using the second braid relation gives us
    \[
    F_{1,a_1}\dots F_{k-2,a_{k-2}}F_{k-1,a}F_{k,a_{k-1}}f_{a-1}^*\dots f_{a_k}^*(u)\in X.
    \]
    By Lemma \ref{lemma:braidextend}, we finally obtain that
    \[
     F_{1,a_1}\dots F_{k-2,a_{k-2}}F_{k-1,a_{k-1}}F_{k,a+1}f_{a-1}^*\dots f_{a_k}^*(u)\in X,
    \]
    as hoped.

    Now suppose that $F_{k,a_{k-1}}f_{a-1}^*\dots f_{a_k}^*(u)\not\in X$. Moreover, inductively let $m$ be such $a_{k-m+1}<a_{k-m+2}<\dots <a_{k-1}<a_k<a$. We start by showing that the first part of this expression is contained in $X$.
    \begin{claim}\label{claim:speccase}
    \[F_{k-m+1,a_{k-m+1}}\dots F_{k-1,a_{k-1}}F_{k,a+1}f_{a-1}\dots f_{a_k}(u)\in X\]
    \end{claim}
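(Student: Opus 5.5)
We prove Claim \ref{claim:speccase} by induction on $m$, the length of the increasing run $a_{k-m+1}<\dots<a_{k-1}<a_k<a$. The strategy is to use the gluing property (in the usable form of Lemma \ref{lemma:prop2moreusable}) to attach the leading block $F_{k-m+1,a_{k-m+1}}\dots F_{k-1,a_{k-1}}$ onto the shortened operator $F_{k,a+1}$. The hypothesis $F_{k,a_{k-1}}f_{a-1}^*\dots f_{a_k}^*(u)\notin X$ supplies exactly the "non-membership" input (iii) that Lemma \ref{lemma:prop2moreusable} requires.

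The first step is to show that the shortened tail lies in $X$. Set
\[
w:=F_{k,a+1}f_{a-1}^*\dots f_{a_k}^*(u).
\]
We get $w\in X$ from $F_{k,a_k}(u)\in X$ by condition (ii) of Proposition \ref{prop:idealmoreusable}, exactly as in the base case of the proof of the proposition. We also need $F_{k-m+1,a_{k-m+1}}\dots F_{k-1,a_{k-1}}(u')\in X$ for a suitable extremal base $u'$ with $e_i(u')=0$ in the relevant range. We would take $u'=f_{a-1}^*\dots f_{a_k}^*(u)$. Since all operators in $F_{k-1,a_{k-1}}$ and earlier have indices at most $k-1$, we would commute $f_{a-1}^*\dots f_{a_k}^*$ through the block using the braid manipulations of Lemma \ref{lemma:symbolicmanipulation}. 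Membership would then follow by the ideal-type conditions (ii)/(iii) applied to $z\in X$, peeling off the $F_{k,\cdot}$ factor via Lemma \ref{lemma:braidextend}. The vanishing $e_i(u')=0$ would come from Lemma \ref{lemma:e_i=0cor}.

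Next we apply Lemma \ref{lemma:prop2moreusable}. Its chain $F_{n,a}\dots F_{n+k,a+k}$ is here the increasing block, and its single operator $F_{n+k+1,b}$ is here $F_{k,a+1}$. Because the $a_j$ increase but need not increase by exactly one, we first normalize the block. Using Lemma \ref{lemma:braidextend}(ii) (rule (i) in the tuple calculus of Lemma \ref{lemma:extendableexistance}), we move entries so that consecutive indices differ by one. We then invoke gluing and undo the normalization by Lemma \ref{lemma:braidextend}(i). The induction on $m$ handles the block one operator at a time: assuming the claim for the block starting at $k-m+2$, we glue $F_{k-m+1,a_{k-m+1}}$ on. The required non-membership is that $F_{k-m+2,a_{k-m+1}}(\cdots)\notin X$. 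We would derive it from the outer hypothesis $F_{k,a_{k-1}}f_{a-1}^*\dots f_{a_k}^*(u)\notin X$ by the ideal property, since the former expression extends to the latter.

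The main obstacle is this last point: propagating the single non-membership hypothesis down the increasing run to every level of the induction with the correct shifted index. The shifts come from commuting $f^*$'s through long operators, as in the remark after Definition \ref{def:extendable}. If direct propagation fails, the fallback is a contradiction argument modelled on Claim \ref{claim:notinX_induction}. We would assume the relevant element is in $X$ and then reconstruct, via gluing and extension, that $F_{k,a_{k-1}}f_{a-1}^*\dots f_{a_k}^*(u)\in X$, contradicting the hypothesis.
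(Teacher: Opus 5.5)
You have the right engine: the gluing property in the form of Lemma \ref{lemma:prop2moreusable}, with the increasing block as the staircase, $F_{k,a+1}$ as the single operator, membership of $F_{k,a+1}(\cdot)$ from condition (ii), and the subcase hypothesis as the non-membership input. However, the step that makes that lemma applicable is missing, and the tools you propose for it fail.
\begin{itemize}
\item Lemma \ref{lemma:braidextend}(ii) only swaps an adjacent pair whose second entry is at most the first. That never happens in the strictly increasing run $a_{k-m+1}<\dots<a_{k-1}<a+1$, so it cannot make the block consecutive.
\item The base $u'=f_{a-1}^*\cdots f_{a_k}^*(u)$ violates the vanishing hypothesis of Lemma \ref{lemma:prop2moreusable}. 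Since $f_{a-1}^*$ is the last operator applied, $e_{a-1}(u')\neq 0$ unless that string is trivial. Yet your staircase reaches down to index $a_{k-m+1}\le a-m$, so vanishing at $a-1$ is required. Lemma \ref{lemma:e_i=0cor} does not give it.
\item Your one-operator-at-a-time induction does not match the lemma's shape. The lemma attaches a consecutive staircase to a single $F$, not one $F$ to a multi-operator expression.
\item Non-membership only transfers from an element to elements obtained from it by further lowering. So the hypothesis does not yield the intermediate non-memberships $F_{k-m+2,a_{k-m+1}}(\cdots)\notin X$ you need.
\end{itemize}

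The missing idea is a pure commutation that changes the base. Write $F_{k-j,a_{k-j}}=F_{k-j,a-j}\,(f_{a-j-1}^*\cdots f_{a_{k-j}}^*)$. Push each tail (indices $\le a-j-1$) right past $F_{k-j+1,a-j+1}\cdots F_{k,a}$ (indices $\ge a-j+1$). This gives
\[F_{k-m+1,a_{k-m+1}}\cdots F_{k-1,a_{k-1}}F_{k,a_k}(u)=F_{k-m+1,a-m+1}\cdots F_{k-1,a-1}F_{k,a}(b),\qquad b=(f_{a-m}^*\cdots f_{a_{k-m+1}}^*)\cdots(f_{a-1}^*\cdots f_{a_k}^*)(u),\]
and the claim becomes $F_{k-m+1,a-m+1}\cdots F_{k-1,a-1}F_{k,a+1}(b)\in X$. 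Now every input is available at once:
\begin{itemize}
\item $e_i(b)=0$ for $a-m<i\le k$ by Lemma \ref{lemma:e_i=0cor}.
\item $F_{k,a+1}(b)\in X$ by (ii).
\item The consecutive block alone lies in $X$. Rewrite $F_{k-m+1,a-m+1}\cdots F_{k,a}=f_{k-m+1}^*\cdots f_k^*(F_{k-m,a-m+1}\cdots F_{k-1,a})$, delete $f_k^*$ with (iii), and commute the leftover factor out to the left. This uses (iii), not Lemma \ref{lemma:braidextend}.
\item $F_{k,a-1}(b)\notin X$. The tails of index $\le a-3$ commute past $F_{k,a-1}$, which exhibits $F_{k,a-1}(b)$ as $F_{k,a_{k-1}}f_{a-1}^*\cdots f_{a_k}^*(u)$ with further lowering operators applied.
\end{itemize}
A single application of Lemma \ref{lemma:prop2moreusable} then finishes, with no induction on $m$. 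This is the paper's argument.
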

    \begin{proof}
    Set 
    \[
    b:= (f_{a-m}^*\dots f_{a_{k-m+1}}^*) \dots (f_{a-3}^*\dots f_{a_{k-3}}^*)(f_{a-2}^*\dots f_{a_{k-1}}^*)(f_{a-1}^*\dots f_{a_k}^*)(u).
    \]
    Then since $a_{k-m+1}<a_{k-m+2}<\dots <a_{k-1}<a_k<a$, by Lemma \ref{lemma:e_i=0cor}, $e_i(b)=0$ for all $a-m<i\leq k$. Moreover, by commuting operators right,
    \[
    F_{k-m+1,a_{k-m+1}}\dots F_{k-1,a_{k-1}}F_{k,a+1}\mathbf{f_{a}}^*f_{a-1}^*\dots f_{a_k}^*(u) = F_{k-m+1,a-m+1}\dots F_{k-1,a-1}F_{k,a}(b).
    \]
    We thus seek to show that $F_{k-m+1,a-m+1}\dots F_{k-1,a-1}F_{k,a+1}(b)\in X$. We first show that $F_{k-m+1,a-m+1}\dots F_{k-1,a-1}(b)\in X$ (ie we remove the entire operator $F_{k,a}$). Note that
    \[
    F_{k-m+1,a-m+1}\dots F_{k-1,a-1}F_{k,a}(b) = f_{k-m+1}^*\dots f_k^*(F_{k-m,a-m+1}\dots F_{k-2,a-1}F_{k-1,a}(b))
    \]
    Thus by hypothesis (iii), we can remove $f_k^*$ to get
    \[
    f_{k-m+1}^*\dots f_{k-1}^*(F_{k-m,a-m+1}\dots F_{k-2,a-1}F_{k-1,a}(b)) = F_{k-m+1,a-m+1}\dots F_{k-1,a-1}F_{k-1,a}(b)\in X.
    \]
    Then by commuting operators in $F_{k-1,a}$ left out to the outside of our expression (and repeating the above argument if necessary), we obtain that $F_{k-m+1,a-m+1}\dots F_{k-1,a-1}(b)\in X$. Thus by (i), since $F_k(a-1)(b)\not\in X$ by assumption and $F_{k,a+1}(b)\in X$ by (ii), we obtain that
    \[
    F_{k-m+1,a-m+1}\dots F_{k-1,a-1}F_{k,a+1}(b)\in X,
    \]
    as hoped.
    \end{proof}
    Now consider adding the operator $F_{k-m,a_{k-m}}$. We break this again into a couple of cases. 

    If $a_{k-m}<a_{k-m+1}$, we can immediately proceed by induction to the next case $a_{k-m}<a_{k-m+1}<\dots <a_{k-1}<a$. 

    Now suppose that $a_{k-m}\geq a_{k-m+1}$. We then have, by the same argument as in Case 1,
    \[
    z = f_{k-m+1}^*(F_{1,a_1}\dots F_{k-m,a_{k-m+1}}F_{k-m+1,a_{k-m}+1}(F_{k-m+2,a_{k-m+2}}\dots F_{k-1,a_{k-1}}F_{k,a}(f_{a-1}^*\dots f_{a_k}^*(u)))).
    \]
    By Claim \ref{claim:speccase}, since $y$ cannot contain a bad $k-m+1$, it is sufficient to show that 
    \[
    F_{1,a_1}\dots F_{k-m,a_{k-m+1}}F_{k-m+1,a_{k-m}+1}(F_{k-m+2,a_{k-m+2}}\dots F_{k-1,a_{k-1}}F_{k,a+1}(f_{a-1}^*\dots f_{a_k}^*(u)))\in X.
    \]
    If now $a_{k-m}+1<a_{k-m+2}$, we reduce to the case argued in Claim \ref{claim:speccase}. If though $a_{k-m}+1\geq a_{k-m+2}$, we can repeat the above argument and then compare $a_{k-m}+2$ with $a_{k-m+3}$. 
    
    Repeating this, we obtain that we either reduce to Case 1 or can replace $a_{k-m},a_{k-m+1},\dots ,a_{k-1},a$ by another sequence $b_{k-m}<b_{k-m+1}<\dots <b_k$ (as described above). We can then again proceed by induction out to the operator $F_{k-m-1,a_{k-m-1}}$.
\end{proof}

\bibliographystyle{plain}
\bibliography{ref}

\end{document}